\DeclareMathOperator{\PH}{PH}
\DeclareMathOperator{\Mod}{Mod}
\DeclareMathOperator{\Hn}{H}
\DeclareMathOperator{\Ob}{Ob}
\DeclareMathOperator{\Supp}{Supp}
\DeclareMathOperator{\Diam}{diam}
\DeclareMathOperator{\ori}{{or\mspace{2mu}}}
\DeclareMathOperator{\vol}{Vol}
\begin{document}
	

\newcommand{\Lip}{\mathrm{Lip}}

\newcommand{\maxgen}[1]{\mathfrak{R}_{#1 !}}
\newcommand{\maxgeno}[1]{\mathfrak{R}_{#1 *}}

\newcommand{\sliced}{\mathcal{S}}
\newcommand{\V}{\mathbb{V}}
\newcommand{\W}{\mathbb{W}}
\newcommand{\E}{\mathbb{E}}
\newcommand{\RR}{\mathrm{R}}
\newcommand{\C}{\mathbb{C}}
\newcommand{\R}{\mathbb{R}}
\newcommand{\N}{\mathbb{N}}
\newcommand{\Z}{\mathbb{Z}}
\newcommand{\B}{\mathbb{B}}
\newcommand{\Proj}{\mathbb{P}}
\newcommand{\Cont}[1]{\mathscr{C}^{{\,#1}}}	
\newcommand{\dist}{{\mathrm{dist}}}	
\newcommand{\Der}[1][]{\mathsf{D}^{#1}}
\newcommand{\Derb}{\Der[\mathrm{b}]}
\newcommand{\cor}{{\mathbf{k}}}
\newcommand{\ism}[1]{{\mathrm{d}_{#1}}}
\newcommand{\isme}[1]{{\mathrm{\delta}_{#1}}}		\newcommand{\RHom}[1][]{\RR\mathrm{Hom}_{\raise1.5ex\hbox to.1em{}#1}}
\newcommand{\Hom}[1][]{\mathrm{Hom}_{\raise1.5ex\hbox to.1em{}#1}}

\newcommand{\oim}[1]{{#1}_*}
\newcommand{\eim}[1]{{#1}_!}
\newcommand{\roim}[1]{\RR{#1}_*}
\newcommand{\reim}[1]{\RR{#1}_!}
\newcommand{\reiim}[1]{\RR{#1}_{\mspace{1mu}!!}}
\newcommand{\reeim}[1]{\RR{#1}_{\mspace{1mu}!!}}
\newcommand{\opb}[1]{#1^{-1}}
\newcommand{\popb}[1]{#1^{\text{``}-1\text{''}}}
\newcommand{\epb}[1]{#1^{\,!}\,}
\newcommand{\spb}[1]{#1^{*}}
\newcommand{\eoim}[1]{{#1}_{!*}}
\newcommand{\lspb}[1]{\LL{#1}^{*}}
\newcommand{\sect}{\Gamma}
\newcommand{\rsect}{\mathrm{R}\Gamma}
\newcommand{\BBD}{{\mathbb D}}
\newcommand{\omA}[1][M]{\omega_{#1}}
\newcommand{\dual}{\mathrm{D}}
\newcommand{\rhom}[1][]{{\RR\mathscr{H}\mspace{-3mu}om}_{\raise1.5ex\hbox to.1em{}#1}}
\newcommand{\projbar}{\mathscr{P}}
\newcommand{\npprojbar}{\mathsf{P}}
\newcommand{\pt}{\mathrm{pt}}

\newcommand{\opnorm}[1]{{\left\vert\kern-0.25ex\left\vert\kern-0.25ex\left\vert #1 
	\right\vert\kern-0.25ex\right\vert\kern-0.25ex\right\vert}}
\newcommand{\norm}[1]{\left\| #1 \right\|}
\newcommand{\comp}{\mathrm{comp}}

\newcommand{\conv}[1][]{\mathop{\circ}\limits_{#1}}
\newcommand{\npconv}[1][]{\mathop{\circ}\limits^{\rm np}\limits_{#1}}
\newcommand{\npsconv}[1][]{\mathop{\star}\limits^{\rm np}\limits_{#1}}
\newcommand{\SSi}{\mathrm{SS}}

\newcommand{\fF}{\mathfrak{F}}
\newcommand{\fL}{\mathfrak{L}}
\newcommand{\fD}{\mathfrak{D}}
\newcommand{\rc}{{\R\rm{c}}}
\newcommand{\id}{\mathrm{id}}
\newcommand{\Int}{\mathrm{Int}}
\newcommand{\gcg}{{\operatorname{\gamma-cg}}}

	
	\theoremstyle{plain} 
	\newtheorem{theorem}{Theorem}[section]
	\newtheorem{corollary}[theorem]{Corollary}
	\newtheorem{proposition}[theorem]{Proposition}
	\newtheorem{lemma}[theorem]{Lemma}
	\theoremstyle{definition} 
	\newtheorem{definition}[theorem]{Definition}
	\newtheorem{example}[theorem]{Example}
	\newtheorem{remark}[theorem]{Remark}
	\newtheorem{examples}[theorem]{Examples}
	\newtheorem{question}[theorem]{Question}
	\newtheorem{Rem}[theorem]{Remark}
	\newtheorem{Notation}[theorem]{Notations}
	\newtheorem{conjecture}[theorem]{Conjecture}

\numberwithin{equation}{section}

\author{Nicolas Berkouk}
\address{Laboratory for Topology and Neuroscience, EPFL, Lausanne, Switzerland.}
\email{nicolas.berkouk@epfl.ch}

\author{Fran\c{c}ois Petit}
\address{Université Paris Cité and Université Sorbonne Paris Nord, Inserm, INRAE, Center for Research in Epidemiology and StatisticS (CRESS), F-75004 Paris, France}
\email{francois.petit@inserm.fr}

\keywords{Topological Data Analysis, Multi-parameter persistence, Sheaf theory}

\subjclass[2020]{55N31, 55N30, 35A27}

\thanks{N. B. was supported by the Swiss Innovation Agency (Innosuisse project 41665.1 IP-ICT)}
\thanks{F. P. was supported by the French Agence Nationale de la Recherche through the project reference ANR-22-CPJ1-0047-01.}

\title[Projected distances]{Projected distances for multi-parameter persistence modules}

\begin{abstract}

Relying on sheaf theory, we introduce the notions of  \emph{projected barcodes} and \emph{projected distances}  for multi-parameter persistence modules.
Projected barcodes are defined as derived pushforward of persistence modules onto $\R$. Projected distances come in two flavors: the integral sheaf metrics (ISM) and the sliced convolution distances (SCD).
We conduct a systematic study of the stability of projected barcodes and show that the fibered barcode is a particular instance of projected barcodes. 
We prove that the ISM and the SCD provide lower bounds for the convolution distance.
Furthermore, we show that the  $\gamma$-linear ISM and the $\gamma$-linear SCD which are projected distances tailored for $\gamma$-sheaves can be computed using TDA software dedicated to one-parameter persistence modules. Moreover, the time and memory complexity required to compute these two metrics are advantageous since our approach does not require computing nor storing an entire $n$-persistence module.

\end{abstract}

\maketitle

\tableofcontents

\section{Introduction}

The theory of persistence appeared in the 2000s as an algebraic framework for studying the presence of topological features in data. Its main objects of interest are $n$-parameters persistence modules that are functors from the poset category $(\R^n,\leq)$ to the category $\Mod(\cor)$ of $\cor$-vector spaces over a fixed field $\cor$, that can be compared in a meaningful way using a distance defined algebraically: the interleaving distance $d_I$. When $n = 1$, the theory is well-understood. One-parameter persistence modules are entirely determined by a discrete summary called \emph{barcode}, which can be efficiently computed. The interleaving distance can also be computed from the barcodes, thanks to the \emph{bottleneck distance} $d_B$. We refer to \cite{BL22, dey2022computational} for general introductions to $n$-parameter persistence.

One-parameter persistence is often restrictive, in particular in applications where there is no canonical choice of filtering function on the data. Furthermore, there are many situations in which being able to perform machine learning on multi-parameters persistence modules is anticipated to be fruitful \cite{lesnick2015interactive,CarriereBlumberg}. Nevertheless, the theory of $n$-parameters persistence modules when $n \geq 2$, is far more intricate. Indeed, it has been proven that there cannot exist, in a precise sense, an analogue of barcodes in this situation \cite{CZ07}, and that the interleaving distance is NP-hard to compute \cite{bjerkevik2019computing}. In this paper, relying on microlocal sheaf theory, we introduce new invariants and distances for multi-parameter persistence modules, prove that they enjoy several stability properties and are efficiently computable, which are essential requirements for such notions to be useful in practice.

So far, the main invariant that has been developed for multi-parameter persistent modules, efficiently implemented, and which enjoys the desired stability properties, is the fibered barcode \cite{Landi2018} (which is equivalent to the rank-invariant \cite{CZ07}). Roughly speaking, the fibered barcode of a $n$-parameters persistence module $M$ corresponds to the collection of barcodes obtained by restricting $M$ along each affine line of positive slope in $\R^n$. One can compare fibered barcodes by taking the supremum over all lines of positive slopes of the line-wise bottleneck distance (corrected by a specific coefficient) between barcodes of the restrictions of the persistence modules. This distance is called the \emph{matching distance}, usually denoted $d_M$. The matching distance between the fibered barcodes associated with two $n$-parameter persistence modules is bounded above by their interleaving distance \cite{Landi2018}, hence ensuring stability. Moreover, the fibered barcode of 2-parameters persistence modules originating from point-cloud data can be computed and visualized by the software RIVET \cite{lesnick2015interactive}. 

Nevertheless, there are several bottlenecks to the fibered barcode approach. First, it is easy to exhibit two persistence modules at matching distance zero but having arbitrarily large interleaving distance (see section \ref{s:cexamplefibered}). Second, computing and storing an entire $n$-parameters persistence module is time and memory costly. RIVET can currently only handle $2$-parameter persistence modules. Moreover, it cannot deal with persistence modules originating from sublevel sets filtrations of functions (such as images or PL maps on simplicial complexes).

In this paper, we approach the study of multi-parameter persistence modules through the lens of derived, and microlocal sheaf theory of Masaki Kashiwara and Pierre Schapira \cite{KS90} as initiated in \cite{KS18}. In this setting, building upon \cite{KS18}, the authors have proved, in a previous work \cite{BP21}, that a multi-parameter persistence module can be identified isometrically to a $\gamma$-sheaf (see equation \eqref{def:gammasheaf}). Therefore, this allows us to tackle the study of multi-parameter persistence modules via microlocal techniques without leaving apart computational considerations. 

One of the key aspects of sheaf theory is that any continuous map $f : Y \longrightarrow X$ between topological spaces induces a pair of adjoint functors $(f^{-1}, \roim{f})$ (inverse image and derived pushforward) between the associated derived categories of sheaves. In this language, the restriction of a persistence module $M$ to a line of positive slope $\mathcal{L} \subset \R^n$ is the persistence module $i_{\mathcal{L}}^{-1}M$ obtained by applying the inverse image functor of the inclusion of $\mathcal{L}$ into $\R^n$ to $M$. Therefore, the fibered barcode construction can be understood as a dimension reduction technique obtained by using inverse image functors along inclusion of one-dimensional sub-spaces. A natural question follows: what can we say about derived direct images of $n$-parameters persistence modules (or, more generally, of constructible sheaves) along projections onto one-dimensional spaces? Indeed, the pushforward operation does not have an easy description in the language of persistence modules, though it is natural in the sheaf setting. 

In this work, we provide a detailed study of the pushforward operation on sheaves and persistence modules, both from a theoretical and computational perspective. Following the same strategy of reducing the study of multi-parameter persistence modules to the study of families of one-dimensional persistence modules, we introduce the notions of $\fF$-\textit{projected barcodes} and $\fF$-\textit{integral sheaf metric} (see equations \eqref{def:ism} and \eqref{def:ismnp}). The distance between two sheaves is the supremum of the distances between the pushforwards of the sheaves by morphisms belonging to a family $\fF$. Again, in a similar spirit, we introduce sliced convolution distances (see equations \eqref{def:sliced} and \eqref{def:gammasliced}).

Implementing such an approach requires relating precisely the classical theory of persistence modules and sheaf theory. This has largely been achieved thanks to several works: \cite{ BG18, BGO19, BP21, Gui16, KS18, Mil20d}. Heuristically, this correspondence goes as follows.
\begin{center}
\begin{tabular}{c|c}
Persistence theory & Sheaf theory\\

\hline
level sets persistence modules & sheaves for the usual topology\\

\hline
        sublevel sets persistence  modules              & sheaves for the $\gamma$-topology\\

        \hline

interleaving distances & convolution distances\\
\hline
        tame             & constructible at infinity
\end{tabular}
\end{center}

On our way, we prove several results and provide examples that may be of independent interest to the TDA community. In particular, we show that increasing the number of parameters of two filtrations of topological spaces can only increase the interleaving/convolution distance between their associated persistence modules/sheaves. In particular, this partially invalidates the classical saying that "multi-parameter persistence is more robust to outliers than one-parameter persistence."

\bigskip

\subsection*{Structure of the paper}

\subsubsection*{Section 2} We review classical constructions of sheaf theory, such as integral transforms and kernel compositions. We recall the definition of the convolution distance between (derived) sheaves of $\cor$-vector spaces on a finite-dimensional real vector space, as developed by Kashiwara-Schapira \cite{KS18} and provide proof about some properties of the convolution distance that are well-known to the experts but do not appear anywhere (to the best of our knowledge) in the literature. We recall the lemma, proved by Petit and Schapira, stating that the pushforward by a $C$-Lipschitz map is again $C$-Lipschitz for the convolution distance \cite{PS20}. We end this section by exposing the notion of constructible sheaves up to infinity, recently defined by Schapira in \cite{schapira2021constructible}.

\subsubsection*{Section 3} We review the notion of $\gamma$-sheaves, and recall the precise relationship between this type of sheaves and persistence modules \cite{BP21}. We then strengthen one of our previous results, asserting that the interleaving distance between persistence modules equals the convolution distance between their associated $\gamma$-sheaves. Next, we recall the notion of graded-barcodes for constructible sheaves on $\R$ and how we can compute the convolution distance between two such sheaves from their graded-barcode, thanks to the derived isometry theorem \cite{BG18}. We end the section by providing a purely sheaf-theoretic formulation of the fibered barcode.

 \subsubsection*{Section 4} This section is devoted to the study of linear dimensionality reduction of sheaves on a finite-dimensional real vector space $\V$ through pushforwards along linear forms. Using projective duality, we prove that a sheaf $F$ on $\V$ is zero if and only if its pushforwards with compact support by all linear forms on $\V$ are all zero (Proposition \ref{prop:vanishproj}). We then restrict our attention to pushforwards of $\gamma$-sheaves, which corresponds in our setting to persistence modules. We recall the definition of Kashiwara-Schapira of the sublevel sets persistence sheaf $\PH(f)$, associated with a continuous map valued in $\V$. Importantly, we prove in Lemma \ref{lem:comsublevel} that under a positivity assumption on the linear form $u$ and mild hypothesis on the continuous function $f : S \longrightarrow \V$, one has 
 \begin{equation*}\label{eq:funda}
 \roim{u} \PH (f) \simeq \PH(u\circ f).
 \end{equation*}
 We emphasize that the right-hand side is nothing but the sublevel sets persistence of the real-valued function $u\circ f$, which can be computed with already existing software packages dedicated to TDA. Then, we provide a counter-example to the above isomorphism when the positivity assumption is not met (Proposition \ref{p:cexpositivity}). This counter-example shows that post-composing a sublevel set filtration with a linear map is not, in general, a stable operation. The identification of the necessity of the positivity hypothesis is made transparent, again, thanks to the sheaf formalism.
 
 Finally, we carefully study the pushforward of $\gamma$-sheaves along linear forms close to the boundary of the polar cone. This allows us to prove an unexpected result (Corollary \ref{cor:sensitivitymultipers}), unknown to the best of our knowledge, stating that increasing the number of parameters in a sublevel sets filtration can only increase the interleaving distance. More precisely, given $(f_1,...,f_n) : X \to \R^n$ and $(g_1,...,g_n) : Y \to \R^n$ continuous maps from compact good topological spaces to $\R^n$, one has for every $1 \leq i \leq n$:
 
\begin{equation*}
    \dist_\R(\PH(f_i),\PH(g_i)) \leq \dist_{\R^n}(\PH(f),\PH(g)).
\end{equation*}

This result raises several questions regarding the behavior of multi-parameter persistence modules with respects to outlier.

 \subsubsection*{Section 5} In this section, we elaborate on our study of the pushforward operation and introduce in Definition \ref{def:projectedbarcode} the notion of $\fF$-projected barcodes, associated to a family $\fF$ of subanalytic functions up to infinity from $\V$ to $\R$. We motivate the introduction of this concept by studying the classic example of the two persistence modules having the same fibered barcode but being at a strictly positive interleaving distance. We show that these two modules can be distinguished through their pushforward via a linear form onto $\R$. We then study several fundamental continuity properties of the linear and $\gamma$-linear projected barcodes (Proposition \ref{P:nullitytest} and \ref{P:gammalinearnullity}). Finally, we prove in Proposition \ref{P:directinverse} that the fibered barcode can be expressed as a projected barcode. 
 
\subsubsection*{Section 6} We develop the theory of $\fF$-Integral Sheaf Metrics ($\fF$-ISM), which are well-behaved distances between $\fF$-projected barcodes, obtained by taking the supremum over each function in $f \in \fF$ of the pushforward (possibly with proper support) by $f$ of two sheaves. When all functions in $\fF$ are $1$-Lipschitz, we prove in Proposition \ref{prop:lowerbound} that the $\fF$-ISM provides lower bounds for the convolution distance. We then give two detailed examples of $\fF$-ISM: the distance kernel ISM and the linear ISM. Finally, we introduce the sliced convolution distances, obtained by integrating the $p$-th power of the distance between the pushforwards of two sheaves over the unit dual sphere.

\subsubsection*{Section 7} We apply our previous results to the case of $n$-parameters persistence modules (seen as $\gamma$-sheaves over $\R^n$). In particular, we show that the $\gamma$-linear ISM can be obtained by optimizing an almost everywhere differentiable functional whose values and gradient can be evaluated using only one-parameter persistence software packages. We end the section by showing some concrete computations of ISM for multi-parameter persistence modules.\\

\noindent \textbf{Acknowledgement} The authors would like to thank the anonymous referee for his comments
and suggestions which greatly improved the paper.

\section{Sheaves}
This section introduces the necessary background on sheaf theory, convolution distance, and its links with persistence. The main reference for general results on sheaves is \cite{KS90}. The convolution distance for sheaves has been introduced in \cite{KS18} and generalized in \cite{PS20}. A useful notion for our purpose is the one of constructible sheaves up to infinity, which was introduced recently by Schapira in \cite{schapira2021constructible}.

Recall that a topological space is good if it is Hausdorff, locally compact, countable at infinity and of finite flabby dimension. 

Let $\cor$ be a field and let $X$ be a good topological space. We denote by $\Der(\cor_X)$ the derived category of sheaves of $\cor$-vector space on $X$, by $\Derb(\cor_X)$ its bounded counterpart, that is, the  full subcategory of $\Der(\cor_X)$ whose objects are the $F \in \Der(\cor_X)$ such that there exists $n \in \N$ such that for every $k\in \Z$ with $|k|\geq n$, $\Hn^k(F)=0$. We write $\Derb_\comp(\cor_X)$ for the full subcategory of $\Derb(\cor_X)$  spanned by the objects with compact support

In this text, we will freely make use of techniques from micro-local sheaf theory for which, we refer the reader to \cite{KS90}. Nonetheless, micro-local techniques will mostly appears in proofs and not in the results themselves. Hence a reader only interested in the results may ignore them. Let $M$ be a smooth manifold.

\begin{itemize}

\item We denote by $\ori_M$ its orientation sheaf and by $\omA$ its dualizing sheaf. Recall that $\omA = \ori_M[\dim M]$. We will also need the duality functors
\begin{align*}
    &&\dual^\prime_M(\cdot) := \rhom[\cor_M](\cdot,\cor_M), && \dual_M(\cdot) := \rhom[\cor_M](\cdot,\omA).
\end{align*}
\item If $Z$ is a locally closed subset of $M$, we denote by $\cor_Z$ the sheaf associated to the locally closed subset $Z$.

\item We write $T^\ast M$ for the cotangent bundle of $M$ and set $\dot{T}^\ast M= T^\ast M \setminus 0_M$, with $0_M$ the zero section of $T^\ast M$.

\item For $F \in \Derb(\cor_M)$, we denote by $\SSi(F)$ the \textit{micro-support} of $F$. It is a closed conical co-isotropic subset of $T^\ast M$. We refer the reader to \cite[Chapter V]{KS90} for a detailed presentation of this notion.

\item Following \cite[\S 6.1]{KS90}, let $V$ be a subset of $T^\ast M$. We define the full subcategory $\Derb_V(\cor_M)$ of $\Derb(\cor_M)$ by setting:
\begin{equation*}
	\Ob(\Derb_V(\cor_M))= \{F \in \Ob(\Derb(\cor_M)) \mid \; \SSi(F) \subset V\}
\end{equation*}

\item Let $\Omega = T^\ast M \setminus V$, we set 
\begin{equation*}
	\Derb(\cor_M; \Omega)=\Der(\cor_M)/\Ob(\Derb_V(\cor_M))
\end{equation*}
for the localization of $\Der(\cor_M)$ with respects to the null system generated by the object of $\Derb_V(\cor_M)$. The category $\Derb(\cor_M; \Omega)$ is a triangulated category.

\item We will encounter the technical notion of \textit{cohomologically constructible} sheaf for which we refer the reader to \cite[\S 3.4]{KS90}.
\end{itemize}

\subsection{Composition of kernels and integral transforms}
	
In this section, we set up a few notations and present an associativity criterion for non-proper composition of kernels.
	
Given topological spaces $X_i$ $(i=1, \; 2, \;3)$, we write $X_{ij}$ for $X_i \times X_j$, $X_{123}$ for $X_1 \times X_2 \times X_3$, $p_i \colon X_{ij} \to X_i$ and $p_{ij} \colon X_{123} \to X_{ij}$ for the projections. 
One defines the composition of kernels for $K_{ij} \in \Derb(\cor_{X_{ij}})$ as
\begin{align*}		
K_{12} \conv[2] K_{23}:= \reim{p_{13}} (\opb{p_{12}} K_{12} \otimes \opb{p_{23}} K_{23}),\\
K_{12} \npconv[2] K_{23}:= \roim{p_{13}} (\opb{p_{12}} K_{12} \otimes \opb{p_{23}} K_{23}).
\end{align*}

To a sheaf $K \in \Derb(\cor_{X_{12}})$, one associate the following functor
\begin{equation*}
	\Phi_K \colon \Derb(\cor_{X_1})\to \Derb(\cor_{X_2}), \; F \mapsto \reim{p_2}( K \otimes \opb{p_1}F). 
\end{equation*}

\begin{example}\label{ex:projduality}
	Let $\V$ be a real vector space of dimension $n+1$. We set $\dot{\V}:=\V \setminus \{0\}$ and  $\R^\times$ for the multiplicative group $\R \setminus \{0\}$. We 
	consider  $\mathbb{P}^n:=\dot{\V}/ \R^\times$ the projective space of dimension $n$. 
	The dual projective space $\mathbb{P}^{\ast n}$ is defined similarly with $\V$ replaced by $\V^\ast := \Hom[\cor](\V,\cor)$. We consider the subset 
	\begin{equation}\label{eq:projdualkernel}
		A=\{(x,y) \in \mathbb{P}^n \times \mathbb{P}^{\ast n} \mid \, \langle x, y\rangle=0 \}
	\end{equation}
and the sheaf $\cor_A$. The integral transform associated to the kernel $\cor_A$
\begin{equation*}
	\Phi_{\cor_A}\colon \Derb(\mathbb{P}^n) \to \Derb(\mathbb{P}^{\ast n}), \; F \mapsto \cor_A \circ F
\end{equation*}
induces an equivalence of categories
\begin{equation*}
	\widetilde{\Phi}_{\cor_A}\colon \Derb(\mathbb{P}^n, \dot{T}^\ast \mathbb{P}^n ) \to \Derb(\mathbb{P}^{\ast n},\dot{T}^\ast \mathbb{P}^{\ast n})
\end{equation*}
This is a consequence of \cite[Thm~7.2.1]{KS90} and we refer the reader to \cite{Gao17} for a detailed study.
\end{example}

	The proper composition of kernel $ - \, \conv[2] \, -$ is associative. This is not the case of the non-proper one $ - \, \npconv[2] \, -$. Nonetheless, we have the following result.
	
	\begin{theorem}[{\cite[Thm~2.1.8]{PS20}}]\label{th:assocnp}
		Let $X_i$ ($i=1,2,3,4$), be four $C^\infty$-manifolds and let $K_i\in\Derb(\cor_{X_{i,i+1}})$, ($i=1,2,3$). 
		Assume that
		$K_1$ is cohomologically constructible,  $q_1$ is proper on $\Supp(K_1)$ and 
		$\SSi(K_1)\cap (T^*_{X_1}X_1\times T^*X_2)\subset T^*_{X_{12}}X_{12}$. Then
		\begin{equation*}
			K_1\npconv[2](K_2\npconv[3]K_3)\simeq (K_1\npconv[2]K_2)\npconv[3] K_3.
		\end{equation*}
	\end{theorem}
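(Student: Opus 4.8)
The plan is to reduce the associativity statement to the general associativity criterion for non-proper kernel composition, i.e. to the result of Kashiwara–Schapira (or equivalently the statement appearing as Theorem \ref{th:assocnp} in its final form) whose hypotheses concern the behaviour of $\SSi(K_1)$ near the "bad directions" for the projection $q_1 := p_1 \colon X_{12} \to X_1$. First I would set up the base-change square expressing both sides of the claimed isomorphism: writing $p_{ij}$ for the projections out of $X_{1234}$ and $q_{ij}$ for the projections out of the smaller triple products, I would expand $K_1 \npconv[2] (K_2 \npconv[3] K_3)$ and $(K_1 \npconv[2] K_2) \npconv[3] K_3$ as iterated $\roim{(\cdot)}$ of tensor products of inverse images, and identify the discrepancy between the two as a failure of the projection formula / base change to hold with $\roim{(\cdot)}$ instead of $\reim{(\cdot)}$.

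Next I would invoke the microlocal Morse-type estimate for the direct image: by \cite[Prop.~5.4.4 or Cor.~5.4.19]{KS90}, if a map is proper on the support of a sheaf and the microsupport avoids the conormal directions to the fibres, then $\roim{f} F \simeq \reim{f} F$ and the microsupport of the pushforward is controlled. The hypotheses in the statement — $K_1$ cohomologically constructible, $q_1$ proper on $\Supp(K_1)$, and $\SSi(K_1) \cap (T^*_{X_1}X_1 \times T^*X_2) \subset T^*_{X_{12}}X_{12}$ — are exactly what is needed to ensure that, after pulling $K_1$ back to $X_{1234}$ and tensoring, the relevant partial projection (the one that differs between the two bracketings, namely the projection collapsing the $X_2$-factor) is proper on the support and transverse in the microlocal sense to the fibre conormals. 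I would then apply the non-proper base change / projection formula under precisely this transversality condition to commute the two $\roim{(\cdot)}$'s, obtaining the desired isomorphism.

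Concretely, the key steps in order are: (1) write out both composites as honest derived functors on $X_{1234}$ and isolate the single projection $\pi$ along which the two bracketings disagree; (2) check that $\Supp$ of the relevant intermediate sheaf is sent properly by $\pi$, using that $q_1$ is proper on $\Supp(K_1)$ together with the geometry of the fibre products (the other kernels $K_2, K_3$ contribute no properness but the $K_1$-factor "pins down" the $X_1$-coordinate); (3) use the microsupport hypothesis on $K_1$, propagated through $\opb{p_{12}}$ and the microsupport estimate for tensor products \cite[Prop.~5.4.14]{KS90}, to verify that the microsupport of the intermediate sheaf meets $T^*_{X}X \times (\text{fibre directions of }\pi)$ only in the zero section; (4) conclude via the commutation theorem for $\roim{(\cdot)}$ under these two conditions. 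The cohomological constructibility of $K_1$ is used to guarantee that $\opb{p_{12}}K_1$ is again well-behaved (e.g. that the needed microsupport and support estimates are sharp) and that no pathology arises from infinite cohomological amplitude.

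The main obstacle I expect is step (3): carefully tracking how the microsupport condition $\SSi(K_1)\cap (T^*_{X_1}X_1\times T^*X_2)\subset T^*_{X_{12}}X_{12}$ survives the operations of inverse image to the fourfold product and tensoring with $\opb{p_{23}}K_2$ and $\opb{p_{34}}K_3$, and checking that it still rules out exactly the fibre conormal directions of the collapsing projection $\pi$ — the bookkeeping of which factors of $T^*X_{1234}$ are "cotangent-to-base" versus "cotangent-to-fibre" for $\pi$ is delicate, and one must be careful that the hypothesis is asymmetric (it only constrains $K_1$, not $K_2$ or $K_3$), which is why associativity can fail in general. A secondary subtlety is ensuring the support condition in step (2) is genuinely met without any properness assumption on $K_2$ or $K_3$: here one uses that the fibre of $\pi$ over a point, intersected with the support of the intermediate sheaf, is contained in a fibre of $q_1$ over a fixed point of $X_1$ times a point-determined slice, hence compact.
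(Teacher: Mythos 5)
The paper does not contain a proof of this theorem: it is quoted verbatim from \cite[Thm~2.1.8]{PS20} and used as a black box (namely in the proof of Lemma \ref{lem:assoboule}). There is therefore no ``paper's own proof'' to compare against; what you have written is a reconstruction of the argument that one would need to look for in \cite{PS20} itself.

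On its own terms, your outline captures the correct high-level mechanism --- both bracketings become an iterated $\roim{}$ over $X_{1234}$, proper composition is always associative, the non-proper failure is a failure of base change for $\roim{}$ along the projection that differs between the two bracketings, and the hypotheses on $K_1$ (properness of $q_1$ on $\Supp(K_1)$, the microsupport condition $\SSi(K_1)\cap(T^*_{X_1}X_1\times T^*X_2)\subset T^*_{X_{12}}X_{12}$, and cohomological constructibility) are exactly the data one feeds into microlocal non-characteristic base-change results. But as submitted it is a plan rather than a proof: steps (2) and (3) --- the propagation of the properness and microsupport conditions through $\opb{p_{12}}$ and the tensoring with $\opb{p_{23}}K_2$, $\opb{p_{34}}K_3$, and the identification of precisely which commutation lemma in \cite{KS90} licenses swapping $\roim{}$ past base change --- are flagged but not carried out, and the references you cite (\cite[Prop.~5.4.4, Cor.~5.4.19]{KS90}) are direct-image microsupport estimates rather than the base-change statement you actually need. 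Also note a small bookkeeping point: as you set things up, the LHS collapses $X_3$ first and then $X_2$, while the RHS collapses $X_2$ first and then $X_3$, so the single projection $\pi$ along which the two sides ``disagree'' is not quite one collapsing map but the choice of order of two collapsing maps; identifying the precise base-change square you need to control is part of the work. None of this invalidates the approach, but a complete proof would have to supply these computations, which is why this paper defers to \cite{PS20} rather than reproving the statement.
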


\subsection{Convolution distance}

 It is possible to equip the derived categories of sheaves on a good metric space $(X,d_X)$ with a pseudo-metric \cite{PS20}. This pseudo-metric generalizes the convolution distance of \cite{KS18} from normed finite dimensional real vector spaces to good metric spaces. Hence, we will also refer to this extension as the convolution distance. The definition of a good metric space and the construction of this pseudo-metric on $\Derb(\cor_X)$ are involved and we do not need them explicitly. Hence, we do not recall them here and only review the definition of the convolution distance in the special case of sheaves on a normed finite dimensional real vector space. Nonetheless, we will state and prove some of the results at the level of generality of sheaves on a good metric spaces.

We consider a finite dimensional real vector space $\V$ endowed with a norm $\|\cdot\|$. We equip $\V$ with the topology induced by the norm $\|\cdot\|$. Following \cite{KS18}, we briefly present the convolution distance. We introduce the following notations: 

\begin{equation*}
	s : \V \times \V \to \V, ~~~s(x,y) = x + y
\end{equation*}
\begin{equation*}
	p_i : \V \times \V \to \V ~~(i=1,2) ~~~p_1(x,y) = x,~p_2(x,y) = y.
\end{equation*} 

The convolution bifunctor $\star \colon \Derb(\cor_\V)\times \Derb(\cor_\V) \to \Derb(\cor_\V)$ and the non-proper convolution bifunctor $\npsconv \colon \Derb(\cor_\V)\times \Derb(\cor_\V) \to \Derb(\cor_\V)$ are defined as follows. For $F, \; G \in \Derb(\cor_\V)$, we set
	\begin{align*}
		F \star G := \reim{s} (F \boxtimes G),\\
		F \npsconv F :=\roim{s}(F \boxtimes G).
	\end{align*}
	We consider the morphism 
	\begin{equation*}\label{mor:u}
		u: \V \times \V \to \V , \quad (x,y) \mapsto x-y.
	\end{equation*}
	We will need the following elementary formula relating non-proper convolution and non-proper composition.
	
	\begin{lemma}\label{lem:convtocomp} Let $F, \; G \in \Derb(\cor_\V)$. Then
		\begin{enumerate}[(i)]
			\item $(\opb{u}F) \npconv G \simeq F \npsconv G$
			\item $\opb{u}F \npconv \opb{u}G \simeq \opb{u}(F \npsconv G)$
		\end{enumerate}	
	\end{lemma}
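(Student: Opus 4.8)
The plan is to deduce both isomorphisms from changes of variables by linear automorphisms --- of $\V\times\V$ for part (i) and of $\V\times\V\times\V$ for part (ii) --- using nothing beyond the formal identities $\opb{g}(A\otimes B)\simeq\opb{g}A\otimes\opb{g}B$, $\roim{(g\circ h)}\simeq\roim{g}\circ\roim{h}$ and $\roim{\psi}\circ\opb{\psi}\simeq\id$ for an isomorphism $\psi$, together with the K\"unneth formula for $\roim{(\cdot)}$. When composing kernels I regard a sheaf on $\V$ as a sheaf on $\V\times\{\pt\}$, and I write $p_1,p_2\colon\V\times\V\to\V$ for the two projections, so that $F\boxtimes G=\opb{p_1}F\otimes\opb{p_2}G$.

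For part (i), unwinding the definition of $\npconv$ (with middle factor $\V$) gives $(\opb{u}F)\npconv G\simeq\roim{p_1}\bigl(\opb{u}F\otimes\opb{p_2}G\bigr)$. I would introduce the linear automorphism $\phi\colon\V\times\V\to\V\times\V$, $\phi(x,y)=(x-y,y)$, and record the identities $u=p_1\circ\phi$, $p_2=p_2\circ\phi$ and $p_1=s\circ\phi$. The first two give $\opb{u}F\otimes\opb{p_2}G\simeq\opb{\phi}\bigl(\opb{p_1}F\otimes\opb{p_2}G\bigr)=\opb{\phi}(F\boxtimes G)$, whence
\[ (\opb{u}F)\npconv G\ \simeq\ \roim{p_1}\,\opb{\phi}(F\boxtimes G)\ \simeq\ \roim{s}(F\boxtimes G)\ =\ F\npsconv G, \]
the middle isomorphism because $p_1=s\circ\phi$ and $\roim{\phi}\circ\opb{\phi}\simeq\id$.

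For part (ii), I would run the same argument one dimension higher. With $p_{12},p_{23},p_{13}\colon\V\times\V\times\V\to\V\times\V$ the projections, unwinding the definition of $\npconv$ and using $\opb{p_{ij}}\opb{u}\simeq\opb{(u\circ p_{ij})}$ gives $\opb{u}F\npconv\opb{u}G\simeq\roim{p_{13}}\bigl(\opb{(u\circ p_{12})}F\otimes\opb{(u\circ p_{23})}G\bigr)$. Now take the linear automorphism $\Psi$ of $\V\times\V\times\V$ given by $\Psi(z_1,z_2,w)=(z_1+z_2+w,\,z_2+w,\,w)$, with inverse $(x_1,x_2,x_3)\mapsto(x_1-x_2,\,x_2-x_3,\,x_3)$. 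A direct check shows that $u\circ p_{12}\circ\Psi$ and $u\circ p_{23}\circ\Psi$ are the projections onto the first and second factors of $\V\times\V\times\V$, hence $\opb{\Psi}\bigl(\opb{(u\circ p_{12})}F\otimes\opb{(u\circ p_{23})}G\bigr)\simeq(F\boxtimes G)\boxtimes\cor_{\V}$. Using $\roim{\Psi}\circ\opb{\Psi}\simeq\id$, the factorisation $p_{13}\circ\Psi=\beta\circ(s\times\id_{\V})$ with $\beta\colon\V\times\V\to\V\times\V$, $\beta(a,b)=(a+b,b)$ a linear automorphism and $s\times\id_{\V}\colon\V\times\V\times\V\to\V\times\V$, the K\"unneth formula for $\roim{(s\times\id_{\V})}$, and finally $p_1\circ\beta^{-1}=u$, one obtains $\opb{u}F\npconv\opb{u}G\simeq\roim{\beta}\bigl((F\npsconv G)\boxtimes\cor_{\V}\bigr)\simeq\opb{u}(F\npsconv G)$.

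The only ingredient above that is not a purely formal manipulation of $\opb{(\cdot)}$, $\roim{(\cdot)}$ and $\otimes$ is the K\"unneth isomorphism $\roim{(s\times\id_{\V})}\bigl((F\boxtimes G)\boxtimes\cor_{\V}\bigr)\simeq\roim{s}(F\boxtimes G)\boxtimes\cor_{\V}$ used in part (ii); this is a standard base-change statement, expressing that pushforward along $s$ commutes with external tensoring by the constant sheaf, and holds here precisely because one of the two maps in the product $s\times\id_{\V}$ is an identity. I therefore expect the only real care to lie in bookkeeping the coordinate changes and in invoking this K\"unneth/base-change fact --- everything else is automatic.
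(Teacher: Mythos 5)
Your proof of part (i) is the same as the paper's: the automorphism $\phi(x,y)=(x-y,y)$ is exactly the paper's map $(u,p_2)$, its inverse is the paper's $(s,p_2)$, and the chain of isomorphisms you write down (pull $F\boxtimes G$ back along $\phi$, then replace $\roim{p_1}\opb{\phi}$ by $\roim{s}$ using $p_1=s\circ\phi$) coincides with theirs step by step. For part (ii), the paper simply writes ``the proof of (ii) being similar'' and gives no details; your three-variable argument with $\Psi(z_1,z_2,w)=(z_1+z_2+w,z_2+w,w)$ is a correct realization of that program, and I verified the coordinate checks ($u\circ p_{12}\circ\Psi$, $u\circ p_{23}\circ\Psi$, the factorisation $p_{13}\circ\Psi=\beta\circ(s\times\id_\V)$, and $p_1\circ\beta^{-1}=u$). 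The one ingredient beyond the pattern of (i) is the step
\begin{equation*}
\roim{(s\times\id_\V)}\bigl((F\boxtimes G)\boxtimes\cor_\V\bigr)\ \simeq\ \roim{s}(F\boxtimes G)\boxtimes\cor_\V,
\end{equation*}
which you label a K\"unneth formula. More precisely it is a base change: writing $(F\boxtimes G)\boxtimes\cor_\V=\opb{q_{12}}(F\boxtimes G)$, the claim is $\roim{(s\times\id_\V)}\opb{q_{12}}\simeq\opb{p_1}\roim{s}$ in the Cartesian square with horizontal arrows $s\times\id_\V$ and $s$ and vertical arrows the coordinate projections $q_{12}$ and $p_1$, and this holds because $p_1$ (equivalently $s$) is a topological submersion --- a standard base-change result in \cite{KS90}. (The naive K\"unneth formula for $\roim{(f\times g)}$ is \emph{not} valid in general; it is the identity factor that saves you here, exactly as you guessed.) So your proposal is correct and, for (i), is literally the paper's proof; for (ii) it honestly fills in the details the paper declared ``similar,'' at the cost of this one extra, but standard, base-change input.
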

	\begin{proof}
		We will only prove $(i)$, the proof of $(ii)$ being similar. We define the maps $(u,p_2) : \V \times \V \to \V \times \V$ (resp. $(s,p_2)$) by $(u,p_2) (x,y) = (u(x,y),y)$ (resp. $(s,p_2) (x,y) = (s(x,y),y)$). These two continuous maps are invertible, inverse of each other.
		\begin{align*}
			\opb{u}F \npconv G &\simeq \roim{p_1}(\opb{u}F \otimes \opb{p_2}G)\\
			&\simeq \roim{p_1}\left ( \opb{(u,p_2)} \opb{p_1} F \otimes \opb{(u,p_2)}\opb{p_2}G \right)\\
			& \simeq \roim{p_1} \opb{(u,p_2)} (F \boxtimes G)\\
			& \simeq \roim{p_1} \roim{(s,p_2)} (F \boxtimes G)\\
			& \simeq F \npsconv G.
		\end{align*}	
	\end{proof}

	For $r \geq 0$, we set $B_r = \{x \in \V \mid  \| x \| \leq r \}$, and $\Int{(B_r)} = \{x \in \V \mid  \| x \| < r \}$. For all $r \in \R$, we define the following sheaf:
	
	\[K_r := \begin{cases} \cor_{B_r} \textnormal{~if~} r \geq 0 \\  \cor_{\Int{(B_{-r})}}[\dim(\V)] ~\textnormal{otherwise}\end{cases}. \]

	The following proposition is proved in \cite{KS18}.
	
	\begin{proposition}
		
		\label{P:propertiesofconvolution} Let $r, r'\in \R$ and $F \in \Derb(\cor_\V)$. There are functorial isomorphisms 
		
		\begin{equation*}
			( K_{r}  \star K_{r'}) \star F \simeq K_{r + r'} \star F ~~~ and ~~~ K_0 \star F\simeq F.
		\end{equation*}
	\end{proposition}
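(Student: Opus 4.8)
The plan is to deduce both displayed isomorphisms from a single one: the isomorphism of sheaves $K_r\star K_{r'}\simeq K_{r+r'}$, valid for all $r,r'\in\R$. Granting it, applying the functor $-\star F$ yields $(K_r\star K_{r'})\star F\simeq K_{r+r'}\star F$, and functoriality in $F$ comes for free. For the unit law I would note that $K_0=\cor_{\{0\}}$, so $\cor_{\{0\}}\boxtimes F$ is supported on $\{0\}\times\V$, on which $s$ restricts to a homeomorphism onto $\V$; then $K_0\star F=\reim s(\cor_{\{0\}}\boxtimes F)\simeq F$ by functoriality and the projection formula, the isomorphism being the evident one and hence natural in $F$. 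I would also use freely the formal monoidal properties of $\star$ — commutativity, associativity, unit $K_0$ — recalled in \cite{KS18}, together with the elementary remark that $F\star G\simeq F\npsconv G$ whenever $F,G\in\Derb_\comp(\cor_\V)$ (then $s$ is proper on $\Supp F\times\Supp G$); note every $K_r$ lies in $\Derb_\comp(\cor_\V)$, with compact support $B_{|r|}$.

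The core step concerns $r,r'\geq0$, for which $K_r\star K_{r'}=\reim s\,\cor_{B_r\times B_{r'}}$ with $B_r\times B_{r'}$ compact and convex. Here I would exhibit the continuous section $\tau\colon B_{r+r'}\to B_r\times B_{r'}$, $\tau(z)=\bigl(\tfrac r{r+r'}z,\tfrac{r'}{r+r'}z\bigr)$, of the (surjective) restriction of $s$ — well defined by the triangle inequality — and check that the straight-line homotopy $H_t(x,y)=(1-t)(x,y)+t\,\tau(x+y)$ stays inside $B_r\times B_{r'}$ (again by the triangle inequality) and satisfies $s\circ H_t=s$: it is thus a deformation retraction of $B_r\times B_{r'}$ onto $\tau(B_{r+r'})$, fiberwise over $B_{r+r'}$. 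Since $s$ is proper on the compact set $B_r\times B_{r'}$, homotopy invariance of proper pushforward (\cite{KS90}) identifies $\reim s\,\cor_{B_r\times B_{r'}}$ with $\reim s\,\cor_{\tau(B_{r+r'})}\simeq\cor_{B_{r+r'}}=K_{r+r'}$, the last step because $s$ restricts to a homeomorphism from $\tau(B_{r+r'})$ onto $B_{r+r'}$. (The subcases $r=0$ or $r'=0$ are immediate.)

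The other sign patterns I would reach by Verdier duality. One has $\dual_\V(F\star G)\simeq\dual_\V F\npsconv\dual_\V G$ for cohomologically constructible $F,G$ (combine $\dual_\V\reim s\simeq\roim s\,\dual_{\V\times\V}$ with compatibility of $\dual_\V$ with $\boxtimes$), and $\dual_\V K_s\simeq K_{-s}$ for $s\geq0$ (namely $\dual_\V\cor_{B_s}\simeq\cor_{\Int(B_s)}[\dim\V]$ for $s>0$, and $\dual_\V\cor_{\{0\}}\simeq\cor_{\{0\}}$). Dualizing the isomorphism of the previous paragraph for $-r,-r'\geq0$ — and using $\star=\npsconv$ on the compactly supported $K_s$ — then gives $K_r\star K_{r'}\simeq K_{r+r'}$ for $r,r'\leq0$. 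For the mixed case $r\leq0\leq r'$ with $r+r'\geq0$, writing $r'=(r+r')+(-r)$ with both summands $\geq0$ gives $K_{r+r'}\star K_{-r}\simeq K_{r'}$ by the nonnegative case, whence
\[
K_r\star K_{r'}\simeq(K_r\star K_{-r})\star K_{r+r'}\simeq K_0\star K_{r+r'}\simeq K_{r+r'},
\]
using commutativity, associativity, the unit law, and $K_r\star K_{-r}\simeq K_0$ — which for $r<0$ I would verify by a one-point stalk computation: $\reim s\,\cor_{\Int(B_{-r})\times B_{-r}}[\dim\V]$ is supported at $0$, its stalk at $z\neq0$ being the shifted $\RR\Gamma_c$ of a half-open convex ``lens'' (hence $0$), and its stalk at $0$ being that of $\Int(B_{-r})$ (hence $\cor$). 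The last pattern, $r\leq0\leq r'$ with $r+r'<0$, follows by dualizing the mixed case with nonnegative sum; this exhausts all $r,r'\in\R$.

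I expect the one genuinely delicate point to be the case $r,r'\geq0$: the stalkwise description of $\reim s\,\cor_{B_r\times B_{r'}}$ is immediate, but stalks do not pin down a sheaf, so one really needs the explicit fiberwise deformation retraction onto a continuous section of $s$ (available precisely because $B_r\times B_{r'}$ is convex) together with homotopy invariance of proper pushforward to obtain the honest sheaf isomorphism $\reim s\,\cor_{B_r\times B_{r'}}\simeq\cor_{B_{r+r'}}$. The remaining steps are either purely formal or reduce to computing $\RR\Gamma_c$ of convex subsets of $\V$.
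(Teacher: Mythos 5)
The paper itself defers the proof to \cite{KS18}, and your reconstruction follows the expected route: reduce to the sheaf isomorphism $K_r\star K_{r'}\simeq K_{r+r'}$; prove the case $r,r'\geq 0$ by exhibiting $s$ restricted to $B_r\times B_{r'}$ as a proper surjection onto $B_{r+r'}$ with nonempty compact convex fibers (your section $\tau$ and fiberwise straight-line retraction make this explicit); obtain $r,r'\leq 0$ by Verdier duality; and close the mixed case using $K_r\star K_{-r}\simeq K_0$. The unit law via $s\circ i_0=\id$ and the observation that a sheaf supported at a single point is determined by its stalk are both correct, and the duality bookkeeping ($\dual_\V K_s\simeq K_{-s}$, $\dual_\V(F\star G)\simeq\dual_\V F\npsconv\dual_\V G$, $\star=\npsconv$ on compactly supported objects) is all in order.

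The one step I would not accept as written is the parenthetical ``(hence $0$)'' in the $K_r\star K_{-r}$ computation. You are asserting $\rsect_c\bigl(\Int(B_\rho)\cap(z+B_\rho);\cor\bigr)=0$ for $z\neq 0$, $\rho=-r$, and this is \emph{not} a formal consequence of the set being a bounded, convex, locally closed, non-closed ``half-open'' region: for instance $[0,1]\times(0,1)\subset\R^2$ has all of these properties, yet $\rsect_c\bigl([0,1]\times(0,1);\cor\bigr)\simeq\cor[-1]\neq 0$. The vanishing genuinely uses the geometry of two translates of the \emph{same} ball. A clean way to close the gap is the open-closed triangle
\begin{equation*}
\rsect_c\bigl(\Int(B_\rho)\cap(z+B_\rho)\bigr)\to\rsect\bigl(B_\rho\cap(z+B_\rho)\bigr)\to\rsect\bigl(\partial B_\rho\cap(z+B_\rho)\bigr)\stackrel{+1}{\to},
\end{equation*}
where the middle term is $\cor$ (nonempty compact convex), so the claim reduces to showing that the compact ``cap'' $\partial B_\rho\cap(z+B_\rho)$ is contractible and that the restriction map is an isomorphism on $H^0$. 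For the Euclidean norm the cap is a spherical cap and this is immediate, but for a general norm it requires an actual argument (a radial deformation retraction of the cap onto $\rho z/\|z\|$ must be checked to stay inside $z+B_\rho$), which the proposal does not supply. This is a localized and repairable gap, but it is a gap; everything else in the proposal is sound.
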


	If $r \geq r' \geq 0 $, there is a canonical morphism 
	$\chi_{r, r'} \colon K_{r}\to K_{r'}$ in $\Derb(\cor_\V)$. It induces a canonical morphism $\chi_{r, r'} \star F \colon K_{r} \star F \to  K_{r'} \star F $. 
	In particular when $r' = 0$, we get
	\begin{equation}
		\chi_{r,0} \star F \colon K_{r} \star F \to  F.
	\end{equation}
	
	Following \cite{KS18}, we recall the notion of $c$-isomorphic sheaves. 
	
	\begin{definition}
		Let $F,G \in \Derb(\cor_\V)$ and let $c \geq  0$. The sheaves $F$ and $G$ are $c$-isomorphic if there are morphisms $f : K_r  \star F \to G$ and $g : K_r \star G \to F$ such that the  diagrams
		
		\begin{align*}
			\xymatrix{ K_{2r} \star F \ar[rr]^-{{K_{2r}} \star f} \ar@/_2pc/[rrrr]_{{\chi_{2r,0}} \star F} && K_{r} \star G \ar[rr]^-{g} &&   F
			},\\
			\xymatrix{ K_{2r} \star G \ar[rr]^-{{K_{2r}} \star g} \ar@/_2pc/[rrrr]_{{\chi_{2r,0}} \star G} && K_{r} \star F \ar[rr]^-{f} &&   G
			}.
		\end{align*}
		are commutative. The pair of morphisms $f,g$ is called a pair of $r$-isomorphisms.	
	\end{definition}

\begin{definition}For $F,G \in \Derb(\cor_\V) $, their \textit{convolution distance} is 
	\begin{equation*}
		\dist_\V(F,G) := \inf(\{r \geq 0 \mid F ~\text{and}~G~\text{are}~r -\text{isomorphic}\} \cup \{ \infty \}).
	\end{equation*}

\end{definition} 
	
It is proved in \cite{KS18} that the convolution distance is, indeed, a pseudo-extended metric, that is, it satisfies the triangular inequality. The following properties of the convolution distance are well-known to the specialists. We include them with proofs for the convenience of the reader, as we do not know any references for them.	

\begin{lemma} \label{lem:transaffine} Let $(\V, \norm{\cdot})$ be a real finite dimensional normed vector space. Let $F, G \in \Derb(\cor_\V)$. 
	\begin{enumerate}[(i)]
		\item Let $v \in \V$, and $\tau_v \colon \V \to \V$, $x \mapsto x-v$. Then
		\begin{equation*}
			\dist_\V(\oim{\tau_v}F,\oim{\tau_v}G)=\dist_\V(F,G).
		\end{equation*}
		
		\item let $\lambda \in \R$ and $h_\lambda \colon \V \to \V$, $x \mapsto \lambda  x$. Then
		\begin{equation*}
			\dist_\V(\oim{h_\lambda}F,\oim{h_\lambda}G)= \vert \lambda \vert \, \dist_\V(F,G).
		\end{equation*}
	\end{enumerate}
The functors $\oim{\tau_v}$ and $\oim{h_\lambda}$ are exact, hence we do not need to derive them.
\end{lemma}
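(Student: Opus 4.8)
The plan is to reduce the statement to the elementary behaviour of the family of sheaves $K_r$ under the affine transformations $\tau_v$ and $h_\lambda$. The key observation is that convolution distance is computed entirely through the $K_r$ and the morphisms $\chi_{r,r'}$, so it suffices to understand how pushforward along $\tau_v$ or $h_\lambda$ interacts with the convolution bifunctor $\star$ and with $K_r$. Throughout, I use that $\oim{\tau_v}$ and $\oim{h_\lambda}$ are exact (they are pushforwards along closed embeddings composed with isomorphisms, or directly homeomorphisms), so there is nothing to derive, and that they are equivalences of categories with inverses $\oim{\tau_{-v}}$ and $\oim{h_{1/\lambda}}$ respectively (the latter for $\lambda>0$; the case $\lambda=0$ is handled separately).

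For part (i): first I would establish the compatibility $\oim{\tau_v}(F\star G)\simeq (\oim{\tau_v}F)\star G\simeq F\star(\oim{\tau_v}G)$, which follows from the base-change/projection-formula identity $s\circ(\tau_v\times\id)=\tau_v\circ s$ together with properness of $s$ restricted to suitable supports, and from $\oim{\tau_v}K_r\simeq K_r\star\cor_{\{v\}}$ — but since we only translate one factor, the cleaner route is: $\tau_v$ is an isometry of $(\V,\norm{\cdot})$, hence it sends balls to balls, so $\oim{\tau_v}K_r$ is a translate of $K_r$ and in particular still satisfies the conclusion of Proposition \ref{P:propertiesofconvolution} up to the bookkeeping translate. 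Concretely, given a pair of $r$-isomorphisms $f\colon K_r\star F\to G$, $g\colon K_r\star G\to F$ witnessing $\dist_\V(F,G)\le r$, I would apply the exact equivalence $\oim{\tau_v}$ to all objects and morphisms in the two commuting triangles; using $\oim{\tau_v}(K_r\star F)\simeq K_r\star\oim{\tau_v}F$ (translation commutes with convolution since $s$ is equivariant) and the fact that $\oim{\tau_v}$ sends $\chi_{r,0}\star F$ to $\chi_{r,0}\star\oim{\tau_v}F$, one obtains a pair of $r$-isomorphisms between $\oim{\tau_v}F$ and $\oim{\tau_v}G$. This gives $\dist_\V(\oim{\tau_v}F,\oim{\tau_v}G)\le\dist_\V(F,G)$; applying the same with $v$ replaced by $-v$ to the pushed-forward sheaves gives the reverse inequality, since $\oim{\tau_{-v}}\oim{\tau_v}=\id$.

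For part (ii), the argument is the same in structure but the $K_r$ behave differently under $h_\lambda$: since $h_\lambda$ scales the norm by $\lambda$, it sends the ball $B_r$ homeomorphically onto $B_{\lambda r}$, so for $\lambda>0$ one has $\oim{h_\lambda}K_r\simeq K_{\lambda r}$ (for $r\ge 0$; for $r<0$ one uses that $h_\lambda$ sends an open ball to an open ball and is a diffeomorphism, so it commutes with the shift $[\dim\V]$ up to the orientation sheaf, which is trivial here). Combined with $h_\lambda\circ s=s\circ(h_\lambda\times h_\lambda)$, giving $\oim{h_\lambda}(K_r\star F)\simeq K_{\lambda r}\star\oim{h_\lambda}F$, a pair of $r$-isomorphisms $f,g$ for $(F,G)$ transports under $\oim{h_\lambda}$ to a pair of $\lambda r$-isomorphisms for $(\oim{h_\lambda}F,\oim{h_\lambda}G)$, whence $\dist_\V(\oim{h_\lambda}F,\oim{h_\lambda}G)\le\lambda\,\dist_\V(F,G)$. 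For $\lambda>0$, applying this with $\lambda^{-1}$ to $\oim{h_\lambda}F,\oim{h_\lambda}G$ yields the opposite inequality. The degenerate case $\lambda=0$ must be checked by hand: $h_0$ collapses $\V$ to the origin, $\oim{h_0}F\simeq\cor_{\{0\}}\otimes\rsect(\V;F)$ depends only on the global sections complex, and the claim reduces to $\dist_\V(\oim{h_0}F,\oim{h_0}G)=0$, i.e. that collapsing makes any two sheaves with the appropriate global sections $0$-isomorphic — which is immediate since $\dist_{\{0\}}$ on the point is either $0$ or $\infty$, and here one checks the relevant sheaves are genuinely isomorphic, or simply notes $0\cdot\dist_\V(F,G)=0$ and that the zero map data trivially satisfies the triangle diagrams when $r=0$.

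The main obstacle I anticipate is purely bookkeeping rather than conceptual: carefully verifying that the isomorphisms $\oim{h_\lambda}(K_r\star F)\simeq K_{\lambda r}\star\oim{h_\lambda}F$ are \emph{compatible} with the canonical morphisms $\chi_{r,r'}$ (so that the two commuting triangles in the definition of $r$-isomorphism are genuinely transported, not merely their objects), and handling the $r<0$ sheaves $K_r=\cor_{\Int(B_{-r})}[\dim\V]$ where the degree shift and the open-versus-closed ball distinction interact with $h_\lambda$. One should invoke that $h_\lambda$ for $\lambda>0$ is an orientation-preserving diffeomorphism of $\V$, so $\oim{h_\lambda}$ commutes with $\dual_\V$ and hence with the shift, making $\oim{h_\lambda}K_r\simeq K_{\lambda r}$ for all $r\in\R$; the naturality of $\chi_{r,r'}$ then follows from the functoriality of $\oim{h_\lambda}$ applied to the defining morphism of $\chi$.
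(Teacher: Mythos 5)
Your proposal follows essentially the same route as the paper's proof: establish $\oim{\tau_v}(K_r\star F)\simeq K_r\star\oim{\tau_v}F$ and $\oim{h_\lambda}(K_r\star F)\simeq K_{\lambda r}\star\oim{h_\lambda}F$ from the equivariance of $s$, check compatibility with $\chi_{r,0}$ (the paper notes $\oim{h_\lambda}\chi_{r,0}\simeq\chi_{\lambda r,0}$), and transport the two commuting triangles defining an $r$-isomorphism, then invert $\tau_v$ resp.\ $h_\lambda$ for the reverse inequality.

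One small caveat, shared with the paper: your treatment of $\lambda=0$ is shaky. The assertion that ``the relevant sheaves are genuinely isomorphic'' is not automatic — $\roim{h_0}F\simeq\rsect(\V;F)_{\{0\}}$ and $\roim{h_0}G\simeq\rsect(\V;G)_{\{0\}}$ need not be isomorphic unless one already knows $\dist_\V(F,G)<\infty$ (which forces $\rsect(\V;F)\simeq\rsect(\V;G)$ by \cite[Rem.~2.5]{KS18}), and ``the zero map data trivially satisfies the triangle diagrams when $r=0$'' is false, since the $r=0$ triangles force $F\simeq G$. Also note that the paper's parenthetical claim ``$\oim{h_\lambda}$ is exact'' fails at $\lambda=0$, since $h_0$ is the constant map and $(h_0)_*=\Gamma(\V;\cdot)$ is only left exact; the paper's own proof silently assumes $\lambda>0$. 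None of this affects the main cases $\lambda>0$, where your argument is sound and agrees with the paper.
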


\begin{proof}
	\noindent (i) For $F \in \Derb(\cor_X)$, there are the following natural isomorphisms

	\begin{align*}
		K_{r} \star \oim{\tau_v} F &\simeq \reim{s}\opb{(\id \times \tau_{-v})} (K_{r} \boxtimes F)\\ 
		& \simeq \oim{\tau_{v}} (K_{r} \star F).
	\end{align*}
	Let $F, G \in \Derb(\cor_\V)$ and assume that they are $c$-isomorphic. Let $f \colon K_{r} \star F \to G$ and $g \colon K_{r} \star G \to F$ be a pair of $c$-isomorphisms. Then, we have the morphisms
	\begin{align*}
       f^\prime \colon K_{r} \star \oim{\tau_v} F \simeq & \oim{\tau_{v}} (K_{r} \star F) \stackrel{\oim{\tau_v}f}{\longrightarrow} \oim{\tau_v} G,\\
       g^\prime \colon K_{r} \star \oim{\tau_v} G \simeq & \oim{\tau_{v}} (K_{r} \star G) \stackrel{\oim{\tau_v}f}{\longrightarrow} \oim{\tau_v} F
	\end{align*}
It is straightforward to verify that $(f^\prime,g^\prime)$ is a pair of $c$-isomorphisms. One shows similarly that if $\oim{\tau_v}F$ and $\oim{\tau_v}G$ are $c$-isomorphic, then $F$ and $G$ are $c$-ismorphic. \\

\noindent (ii) We first observe that 
\begin{align*}
		\oim{h_{\lambda}}(K_{r} \star  F) & \simeq \opb{h_{1/\lambda}}K_{r} \star \oim{h_{\lambda}} F\\
		& \simeq  K_{ \vert \lambda \vert r} \star \oim{h_\lambda} F.\\
	\end{align*}
Then, we notice that $\oim{h_\lambda} (\chi_{r,0}) \simeq \chi_{\lambda r,0}$ which concludes the proof.
\end{proof}	
	
We now recall several key inequalities for the convolution distance.

\begin{proposition}[{\cite[Prop.~2.6]{KS18}}]\label{prop:dualdis}
\begin{enumerate}[(i)]
\item Let $F, G \in \Derb(\cor_\V)$, then 
\begin{equation*}
     \dist_\V(\dual_\V(F),\dual_\V(G)) \leq \dist_\V(F,G).
\end{equation*}
\item Assume that $\dist_\V(F_i,G_i) \leq a_i$ ($i=1,2$) then one has  
\begin{equation*}
\dist_\V(F_1 \star F_2,G_1 \star G_2) \leq a_1 + a_2.
\end{equation*}
\end{enumerate}
\end{proposition}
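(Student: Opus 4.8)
The plan is to prove the two assertions separately, the shared technical point being that both the convolution bifunctor and the Verdier dual $\dual_\V$ interact in a controlled way with the sheaves $K_r$.

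For (ii), I would argue directly from the definition of $c$-isomorphism. I fix $r_i>a_i$ and, for $i=1,2$, a pair of $r_i$-isomorphisms $f_i\colon K_{r_i}\star F_i\to G_i$, $g_i\colon K_{r_i}\star G_i\to F_i$. Using associativity and commutativity of $-\star-$, together with the elementary identity $K_{r_1}\star K_{r_2}\simeq K_{r_1+r_2}$ for $r_1,r_2\ge 0$ (which refines Proposition \ref{P:propertiesofconvolution}), one gets a natural isomorphism
\begin{equation*}
K_{r_1+r_2}\star(F_1\star F_2)\;\simeq\;(K_{r_1}\star F_1)\star(K_{r_2}\star F_2),
\end{equation*}
and I would set $f:=f_1\star f_2$ and $g:=g_1\star g_2$, viewed through this isomorphism as morphisms $K_{r_1+r_2}\star(F_1\star F_2)\to G_1\star G_2$ and $K_{r_1+r_2}\star(G_1\star G_2)\to F_1\star F_2$. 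The two pentagons certifying that $(f,g)$ is a pair of $(r_1+r_2)$-isomorphisms are then obtained by applying $-\star-$ to the corresponding pentagons for $(f_1,g_1)$ and $(f_2,g_2)$; the only thing to record is that, under $K_{a+b}\simeq K_a\star K_b$ and $K_0\simeq K_0\star K_0$, the morphism $\chi_{a,0}\star\chi_{b,0}$ is identified with $\chi_{a+b,0}$ — true because $\mathrm{Hom}(K_{a+b},K_0)$ is one-dimensional and both are the canonical restriction map. Letting $r_i$ decrease to $a_i$ yields $\dist_\V(F_1\star F_2,G_1\star G_2)\le a_1+a_2$.

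For (i), the key ingredient is a transport isomorphism for $\dual_\V$ across convolution by $K_r$. Combining Verdier duality $\dual_\V\circ\reim{s}\simeq\roim{s}\circ\dual_{\V\times\V}$, the Künneth isomorphism $\dual_{\V\times\V}(K_r\boxtimes F)\simeq\dual_\V K_r\boxtimes\dual_\V F$ (valid since $K_r$ is cohomologically constructible), the computation $\dual_\V K_r\simeq K_{-r}$ for every $r\in\R$, and the properness of $s$ on $\Supp(K_{-r})\times\V$ — which makes $\roim{s}$ and $\reim{s}$ agree on $K_{-r}\boxtimes\dual_\V F$, since $\Supp(K_{-r})$ is compact — I obtain a natural isomorphism
\begin{equation*}
\dual_\V(K_r\star F)\;\simeq\;K_{-r}\star\dual_\V F,\qquad r\in\R.
\end{equation*}
I would also record $K_r\star K_{-r}\simeq K_0$ (a direct computation, compatible with Proposition \ref{P:propertiesofconvolution}; both supports are compact, so the proper and non-proper convolutions coincide). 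Then, from $r>a$ and a pair of $r$-isomorphisms $f\colon K_r\star F\to G$, $g\colon K_r\star G\to F$, applying $\dual_\V$ and the displayed isomorphism gives $\dual_\V g\colon\dual_\V F\to K_{-r}\star\dual_\V G$ and $\dual_\V f\colon\dual_\V G\to K_{-r}\star\dual_\V F$; convolving the former with $K_r$ and using $K_r\star K_{-r}\simeq K_0$ and $K_0\star(-)\simeq\id$ produces $\tilde f\colon K_r\star\dual_\V F\to\dual_\V G$, and symmetrically $\tilde g\colon K_r\star\dual_\V G\to\dual_\V F$.

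The main obstacle is the final verification that $(\tilde f,\tilde g)$ satisfies the two defining pentagons. I would handle it by dualizing the pentagons for $(f,g)$, rewriting each $\dual_\V(K_\bullet\star-)$ via the transport isomorphism, identifying $\dual_\V(\chi_{2r,0}\star F)$ with $\iota_{2r}\star\dual_\V F$ where $\iota_{2r}\colon K_0\to K_{-2r}$ is the canonical morphism (once more via one-dimensionality of the relevant $\mathrm{Hom}$), and finally convolving the resulting commutative diagrams with $K_{2r}\star(-)$, collapsing the $K$'s by $K_{2r}\star K_{-r}\simeq K_r$ and $K_{2r}\star K_{-2r}\simeq K_0$. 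Naturality of all the isomorphisms involved makes the diagrams commute, so $\dual_\V F$ and $\dual_\V G$ are $r$-isomorphic; letting $r$ decrease to $a$ finishes the proof. I expect this last step to be the only real work — it is pure bookkeeping of canonical identifications, with no conceptual difficulty once the two transport identities ($\dual_\V(K_r\star-)\simeq K_{-r}\star\dual_\V(-)$ and $K_r\star K_{-r}\simeq K_0$) are in hand.
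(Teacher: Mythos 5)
The paper only states this result with a citation to \cite[Prop.~2.6]{KS18} and gives no proof of its own, so there is nothing internal to compare against; your argument reconstructs the natural proof from the definition of $c$-isomorphism and is, in substance, the argument of \cite{KS18}. For (ii), convolving the two defining pentagons and using $K_{r_1}\star K_{r_2}\simeq K_{r_1+r_2}$ is exactly right; for (i), the two transport identities $\dual_\V(K_r\star -)\simeq K_{-r}\star\dual_\V(-)$ (Verdier duality plus K\"unneth plus $\dual_\V K_r\simeq K_{-r}$ plus properness of $s$ on $\Supp(K_{-r})\times\V$) and $K_r\star K_{-r}\simeq K_0$ are the heart of the matter, and your derivation of each is correct. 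The one point you treat a bit too lightly is the recurring appeal to one-dimensionality of $\Hom(K_s,K_0)$: that only identifies $\chi_{a,0}\star\chi_{b,0}$ with $\chi_{a+b,0}$, and $K_{2r}\star\iota_{2r}$ with $\chi_{2r,0}$ (where $\iota_{2r}=\dual_\V(\chi_{2r,0})$), up to a nonzero scalar, whereas commutativity of the pentagons requires equality on the nose. You should add a short normalization check — e.g.\ that each side induces the identity on global sections $\rsect(\V;K_{a+b})\simeq\cor\to\rsect(\V;K_0)\simeq\cor$, using $\rsect(\V;K_a\star K_b)\simeq\rsect_c(\V;K_a)\otimes\rsect_c(\V;K_b)$ — to pin the scalar to $1$. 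With that small addition, both parts are complete.
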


\begin{lemma}\label{lem:dualiso}
For $F, G \in \Derb_{\rc}(\cor_\V)$,
$\dist_\V(\dual_\V(F),\dual_\V(G))=\dist_\V(F,G)$.
\end{lemma}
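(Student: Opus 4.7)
The plan is to deduce the equality from the one-sided inequality already recorded in Proposition \ref{prop:dualdis}(i), combined with the biduality isomorphism available in the $\R$-constructible setting. More precisely, Proposition \ref{prop:dualdis}(i) gives, for any $F,G\in \Derb(\cor_\V)$, the inequality
\begin{equation*}
\dist_\V(\dual_\V(F),\dual_\V(G)) \leq \dist_\V(F,G).
\end{equation*}
So only the reverse inequality needs to be established under the $\R$-constructibility assumption.

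For the reverse inequality, I would apply Proposition \ref{prop:dualdis}(i) a second time, this time to the pair $(\dual_\V(F), \dual_\V(G))$, yielding
\begin{equation*}
\dist_\V\!\left(\dual_\V \dual_\V(F),\, \dual_\V \dual_\V(G)\right) \leq \dist_\V(\dual_\V(F),\dual_\V(G)).
\end{equation*}
The key point now is that $\R$-constructible sheaves are in particular cohomologically constructible, so by \cite[Prop.~3.4.3]{KS90} biduality holds: there are functorial isomorphisms $\dual_\V \dual_\V(F) \simeq F$ and $\dual_\V \dual_\V(G) \simeq G$. Substituting these in the previous inequality gives $\dist_\V(F,G) \leq \dist_\V(\dual_\V(F),\dual_\V(G))$, and combining with the first inequality yields the claimed equality.

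I do not expect any serious obstacle in this proof: the argument is essentially a formal consequence of the inequality in Proposition \ref{prop:dualdis}(i) once biduality is invoked. The only non-trivial input is the biduality theorem for $\R$-constructible sheaves, which is a standard result from \cite[Chap.~III]{KS90} and requires no new computation here. One small care point is to confirm that the isomorphisms $\dual_\V\dual_\V(F)\simeq F$ and $\dual_\V\dual_\V(G)\simeq G$ are canonical and compatible in a way that lets us freely replace them inside the convolution distance, but this follows immediately because the convolution distance only depends on the isomorphism class of the arguments in $\Derb(\cor_\V)$.
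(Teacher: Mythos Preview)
Your proof is correct and follows essentially the same approach as the paper: apply Proposition~\ref{prop:dualdis}(i) once for the forward inequality, then apply it again to the duals and use biduality for $\R$-constructible sheaves (\cite[Prop.~3.4.3]{KS90}) to obtain the reverse inequality.
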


\begin{proof}
    By Proposition \ref{prop:dualdis}, we have that $\dist_\V(\dual_\V(F),\dual_\V(G)) \leq \dist_\V(F,G)$. As $F$ and $G$ are constructible, it follows from \cite[Prop.~3.4.3]{KS90}, that $\dual_\V(\dual_\V(F))\simeq F$ and similarly for $G$. It follows that
    \begin{align*}
        \dist_\V(F,G) &= \dist_\V(\dual_\V(\dual_\V((F)),\dual_\V(\dual_\V((G)))\\
        &\leq \dist_\V(\dual_\V(F),\dual_\V(G)).
    \end{align*}
    Hence, $\dist_\V(F,G)=\dist_\V(\dual_\V(F),\dual_\V(G))$.
\end{proof}

Let X be a topological space, $(\V, \norm{\cdot})$ be a normed finite dimensional real vector space and $f_1, \; f_2 \colon X \to \V$ be two continuous maps. We set
\begin{equation*}
	\norm{f_1-f_2}_\infty=\sup_{x \in X} \, \norm{f_1(x)-f_2(x)}.
\end{equation*}

\begin{theorem}[{\cite[Thm.~2.7]{KS18}}]\label{thm:stability}
Let $X$ be a good topological space and $f_1, \;f_2 :  X \rightarrow \V$ be two continuous maps. Then for any $F \in \Derb(\cor_X)$, we have
\begin{align*}
    \dist_\V(\reim{f_1} F,\reim{f_2} F) \leq \norm{f_1-f_2}_\infty && \dist_\V(\roim{f_1} F,\roim{f_2} F) \leq \norm{f_1-f_2}_\infty.
\end{align*}
\end{theorem}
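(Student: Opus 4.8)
The statement we must prove is that for continuous maps $f_1, f_2 : X \to \V$ with $X$ a good topological space, and any $F \in \Derb(\cor_X)$, one has $\dist_\V(\reim{f_1} F,\reim{f_2} F) \leq \norm{f_1-f_2}_\infty$, and likewise for $\roim{f_i}$. Since this is cited as \cite[Thm.~2.7]{KS18}, the plan is to reproduce the Kashiwara–Schapira argument, which reduces the bound to a comparison of the two pushforwards through a single homotopy-type kernel.

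The plan is to set $a = \norm{f_1-f_2}_\infty$, assume $a < \infty$ (otherwise there is nothing to prove), and build an explicit pair of $a$-isomorphisms between $\reim{f_1}F$ and $\reim{f_2}F$. First I would introduce the map $g = (f_1, f_2) : X \to \V \times \V$ and note that $\Supp(g)$ lands in the ``tube'' $\{(v_1,v_2) : \norm{v_1 - v_2} \le a\}$. The key geometric input is that for $r \ge a$, the sheaf $K_r = \cor_{B_r}$ satisfies: the constant sheaf on the diagonal-tube pushes forward correctly, i.e. there are natural morphisms $K_a \star \reim{f_1}F \to \reim{f_2}F$ and $K_a \star \reim{f_2}F \to \reim{f_1}F$. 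Concretely, I would produce these by factoring $\reim{f_i}F \simeq \reim{(p_i)} (\cor_{\Gamma_g} \otimes \opb{q}F)$ where $\Gamma_g \subset X \times \V\times\V$ is the graph of $g$, $q : X\times\V\times\V \to X$ and $p_i : X \times \V \times \V \to \V$ the projection to the $i$-th $\V$-factor; then a containment of supports gives, for $r \ge a$, a morphism $\cor_{\Gamma_g} \to \opb{(p_1,p_2)}K_r'$ for an appropriate ball-sheaf $K_r'$ on $\V\times\V$, which after applying the integral-transform machinery yields the required maps $f : K_r \star \reim{f_1}F \to \reim{f_2}F$ and $g : K_r \star \reim{f_2}F \to \reim{f_1}F$.

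Next I would verify the two triangle-commutativity conditions in the definition of $r$-isomorphism: that $K_{2r}\star \reim{f_1}F \to K_r \star \reim{f_2}F \to \reim{f_1}F$ equals $\chi_{2r,0}\star \reim{f_1}F$, and symmetrically. This follows because all the morphisms involved are induced by inclusions of balls / tubes in $\V$ (compatibly with $s : \V\times\V\to\V$), so the composite is induced by $B_0 \hookrightarrow B_{2r}$, which is precisely $\chi_{2r,0}$; here I would use Proposition \ref{P:propertiesofconvolution} to identify $K_r \star K_r \star (-)$ with $K_{2r}\star(-)$ and track the canonical morphisms through this identification. Taking $r = a$ and letting $r \downarrow a$ if needed then gives $\dist_\V(\reim{f_1}F,\reim{f_2}F) \le a$. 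The argument for $\roim{f_i}$ is identical after replacing $\reim{p_i}$ by $\roim{p_i}$ throughout (equivalently, apply duality via Lemma \ref{lem:dualiso} when $F$ is constructible, or just repeat the construction verbatim in general).

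The main obstacle is the bookkeeping in the second paragraph: producing the morphisms $f, g$ with enough naturality that the triangle identities of the third paragraph come out cleanly, rather than only up to some non-canonical choice. The cleanest route is to phrase everything as composition of kernels — writing $\reim{f_i}F = \opb{f_i}$-type operations as $\Phi_{K}$ for the graph kernel — and then the needed morphisms $K_a \star \reim{f_i}F \to \reim{f_j}F$ become morphisms of kernels $\cor_{B_a} \conv \cor_{\Gamma_{f_i}} \to \cor_{\Gamma_{f_j}}$, which exist by the support inclusion $\Gamma_{f_j} \subset B_a \conv \Gamma_{f_i}$ (this inclusion is exactly the statement $\norm{f_1 - f_2}_\infty \le a$). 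Associativity of proper kernel composition, which holds unconditionally, then makes the triangle identities automatic, so no subtle constructibility or microsupport hypothesis on $F$ is needed.
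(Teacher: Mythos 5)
The paper does not prove this theorem; it is quoted directly from \cite[Thm.~2.7]{KS18}, so there is no internal argument to compare against. Your reconstruction of the Kashiwara--Schapira proof is structurally correct for the $\reim{}$-statement: the morphisms $K_a\star\reim{f_i}F\to\reim{f_j}F$ come from the closed inclusion of the graph $\Gamma_{f_j}\subset X\times\V$ inside the $a$-thickened tube around $\Gamma_{f_i}$ (equivalently, $\Gamma_{f_j}\subset\Delta_a\conv\Gamma_{f_i}$ with $\Delta_a$ the thickened diagonal kernel on $\V\times\V$), and the triangle identities reduce to composites of restriction morphisms between constant sheaves on nested closed tubes, which are checked on stalks where all the fibres are intersections of balls, hence convex and contractible. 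This is the same mechanism as in \cite{KS18}, just packaged through graph kernels rather than through the auxiliary map $\widetilde f_i\colon X\times B_a\to\V$, $(x,v)\mapsto f_i(x)+v$ and its section $x\mapsto(x,f_j(x)-f_i(x))$.

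The one place where the proposal has a genuine gap is the last sentence about $\roim{}$. The duality route via Lemma~\ref{lem:dualiso} needs biduality $\dual_\V\dual_\V F\simeq F$ and hence constructibility, which the statement does not assume, so it covers only a special case (you acknowledge this). The ``repeat verbatim'' route, however, is not literally verbatim if the argument has been phrased via kernel composition: the step ``associativity of proper kernel composition makes the triangle identities automatic'' relies on associativity of $\conv$, and the non-proper composition $\npconv$ is \emph{not} associative without hypotheses (cf.\ Theorem~\ref{th:assocnp}). What rescues the argument is that every convolution in play is by $K_\varepsilon$ with $\varepsilon\geq 0$, which is compactly supported: the sum map $s$ is then proper on $B_\varepsilon\times\Supp(G)$, so $K_\varepsilon\star G\simeq\roim{s}(\cor_{B_\varepsilon}\boxtimes G)$ as well, and together with smooth base change along the (submersive) projections and the projection formula against the flat kernel $\cor_{B_\varepsilon}$ one gets $K_\varepsilon\star\roim{f_i}F\simeq\roim{\widetilde f_i}(\cor_{X\times B_\varepsilon}\otimes\opb{q}F)$; from there the construction and the triangle verification carry over unchanged. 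That extra compactness input needs to be stated; without it, the $\roim{}$ half of the theorem is not yet proved.
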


\begin{theorem}[{\cite[Corollary 2.5.9.]{PS20}}]\label{thm:lipstability}
Let $(X,d_X)$ and $(Y,d_Y)$ be two good metric spaces and $f\colon X \rightarrow Y$ a $K$-Lipschitz map. Then, for every $F, G \in \Derb(\cor_X)$
\begin{equation*}
    \dist_X(\reim{f}F,\reim{f}G) \leq K \, \dist_Y(F,G),
\end{equation*}
If moreover $X$ and $Y$ are finite dimensional vector spaces and $d_X$ and $d_Y$ are euclidean distances, then one also has
\begin{equation*}
    \dist_Y(\roim{f}F,\roim{f}G) \leq K \, \dist_X(F,G).
\end{equation*}
\end{theorem}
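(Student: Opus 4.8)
The statement has two parts. The first — that the proper pushforward $\reim{f}$ is $K$-Lipschitz for a $K$-Lipschitz map $f$ — is the robust, general one, and I would prove it by the kernel-composition technique underlying the convolution distance. The key observation is that if $F$ and $G$ are $r$-isomorphic on $X$, then applying $\Phi_{\cor_{\Gamma_f}}$ (the integral transform along the graph of $f$) or more directly using $\reim{f}(F \star_X K_r^X) \simeq ?$ one wants to compare $\reim{f}F$ and $\reim{f}G$ after convolving with a ball of radius $Kr$ on $Y$. Concretely: the $K$-Lipschitz condition on $f$ means $f(B^X_r(x)) \subseteq B^Y_{Kr}(f(x))$, which at the level of sheaves gives a natural morphism relating $\reim{f}(K^X_r \star F)$ to $K^Y_{Kr} \star \reim{f}F$. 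First I would establish this comparison morphism carefully, using the projection formula and base change for $\reim{(\cdot)}$, together with the defining inclusion of balls coming from the Lipschitz bound. Then, given a pair of $r$-isomorphisms $(f, g)$ witnessing $\dist_X(F,G) \le r$, I would push everything forward by $\reim{f}$, insert the comparison morphisms, and check that the two required triangles (the ones in the definition of $c$-isomorphic sheaves, with $c = Kr$) still commute on $Y$. This yields $\dist_Y(\reim{f}F, \reim{f}G) \le K r$; taking the infimum over admissible $r$ gives the claim. A cleaner route, which I would actually follow if the earlier sections provide enough machinery, is to realize $\reim{f}$ as composition with the kernel $\cor_{\Gamma_f}$ and invoke associativity of proper kernel composition together with the behavior of $K_r$ under such composition — this reduces the whole argument to a statement about how the "ball kernel" $K^X_r$ maps to $K^Y_{Kr}$ under the correspondence, which is exactly the Lipschitz inequality.

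For the second part — the statement about $\roim{f}$ under the extra hypothesis that $X$, $Y$ are finite-dimensional vector spaces with Euclidean metrics — I would exploit the duality available in this linear setting. The non-proper pushforward $\roim{f}$ and the proper pushforward $\reim{f}$ are exchanged (up to shift) by Verdier duality: $\dual_Y \roim{f} \simeq \reim{f} \dual_X$ when $f$ is, say, nice enough (and a linear map between vector spaces certainly is, being smooth with $\epb{f}\omA[Y] \simeq \omA[X]$ up to the appropriate shift by $\dim X - \dim Y$). Actually $f$ here need only be $K$-Lipschitz, not linear, so one cannot use pure Verdier duality on the map itself; instead I would pass through the first part applied on the ambient vector spaces. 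More precisely, writing $\dist_Y(\roim{f}F, \roim{f}G)$, I would apply Lemma~\ref{lem:dualiso} (valid for $\R$-constructible sheaves, but one can reduce the general case or note that the statement is intended in that context) to rewrite it as $\dist_Y(\dual_Y \roim{f}F, \dual_Y \roim{f}G)$, use the duality interchange $\dual_Y \roim{f} \simeq \reim{f}\dual_X$ (which holds because in Euclidean space the relevant dualizing complexes are constant sheaves in a fixed degree), then apply the already-proved proper case to get the bound $K\,\dist_X(\dual_X F, \dual_X G)$, and finally apply Lemma~\ref{lem:dualiso} once more to return to $K\,\dist_X(F,G)$. The role of the Euclidean/linear hypothesis is precisely to make $\dual_Y \roim{f} \simeq \reim{f}\dual_X$ hold with no orientation or dimension obstruction beyond a harmless shift (shifts do not change the convolution distance), and to keep $\dual$ an involution.

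The main obstacle is the first part — constructing and verifying the comparison morphism $\reim{f}(K^X_r \star F) \to K^Y_{Kr} \star \reim{f}F$ and checking that it is compatible with the structure morphisms $\chi_{r,0}$ on both sides, so that the two commuting triangles survive pushforward. This is where the Lipschitz hypothesis is genuinely used and where base-change/projection-formula bookkeeping has to be done correctly; everything else (the infimum argument, the duality reduction in the second part) is formal given the lemmas already in the text. I would expect the cleanest writeup to set up a single commutative diagram over the correspondence $X \leftarrow X \times_Y Y \ni$ (graph of $f$) $\to Y$, with the ball kernels $K^X_r$ and $K^Y_{Kr}$ sitting on the two sides and the Lipschitz inclusion of balls providing the one non-formal arrow.
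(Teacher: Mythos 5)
The paper does not prove Theorem~\ref{thm:lipstability}: it is stated with a citation to \cite[Cor.~2.5.9]{PS20} and used as a black box, so there is no in-paper proof to compare against. I will therefore assess your plan as a reconstruction of the reference's argument.

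Your strategy for the first inequality (proper pushforward, general good metric spaces) is sound and is indeed the method underlying \cite{PS20}. Two small adjustments: the paper's statement has a typo (since $\reim{f}F$ lives on $Y$, the inequality should read $\dist_Y(\reim{f}F,\reim{f}G)\le K\,\dist_X(F,G)$, which you tacitly correct); and your opening formulation $K_r^X \star F$ borrows the vector-space convolution $\star$, whereas on a general good metric space the thickening kernel lives on $X\times X$ and one works with kernel composition against $\cor_{\Delta_r^X}$, $\Delta_r^X=\{d_X\le r\}$. The ``cleaner route'' you then describe — realize $\reim{f}$ as composition with $\cor_{\Gamma_f}$ and reduce the comparison of $\cor_{\Gamma_f}\conv K_r^X$ with $K_{Kr}^Y\conv\cor_{\Gamma_f}$ to the set-theoretic inclusion $(f\times f)(\Delta_r^X)\subset\Delta_{Kr}^Y$ given by the Lipschitz bound — is exactly right, and proper base change together with the projection formula supply the compatibility of this morphism with the structure maps $\chi_{r,0}$.

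For the second inequality (non-proper pushforward, Euclidean vector spaces), your duality reduction has a genuine gap that you flag but do not close. The interchange $\roim{f}F \simeq \dual_Y\reim{f}\dual_X F$ on which your chain rests requires biduality $\dual_X\dual_X F\simeq F$, which fails for a general $F\in\Derb(\cor_X)$ and is only available for cohomologically constructible sheaves — as is Lemma~\ref{lem:dualiso}, which you invoke. Since the theorem is asserted for all $F,G\in\Derb(\cor_X)$, your argument proves strictly less, and the parenthetical ``one can reduce the general case'' is not substantiated. In \cite{PS20} the Euclidean hypothesis plausibly enters not through biduality on $F$ but through the good properties of the Euclidean thickening kernel $\cor_{\Delta_r}$ itself — constructible, with compact convex fibers and controlled microsupport — so that the associativity criterion for non-proper kernel composition (Theorem~\ref{th:assocnp}, whose hypotheses sit on the kernel, not on $F$) applies, exactly as in the paper's Lemma~\ref{lem:assoboule}. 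With that associativity in hand the argument of the proper case can be rerun on the kernel side with no constructibility assumption on $F,G$. Your duality route is a legitimate shortcut on the constructible subcategory (which is indeed where this paper uses the theorem), but it should be presented as a restricted case, not as a proof of the statement as written.
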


\subsection{Constructible sheaves up to infinity}

We review in this section the notion of sheaves constructible up to infinity, which was recently introduced by Schapira in \cite{schapira2021constructible}. The geometric setting is the following. Let $(\V, \norm{\cdot})$ be a real  vector space of dimension $n$ endowed with a norm. We denote by $\Proj$ the projectivisation of $\V$. That is, we consider the $n$ dimensional projective space $\Proj^n(\V \oplus \R)$. We also write $\Proj^\ast$ for the projectivization of $\V^\ast$.  We have the open immersion: 
\begin{align*}
	j \colon \V \to \Proj, \quad x \mapsto [x,1]. 
\end{align*}
In particular, $j$ is an open embedding of real analytic manifolds, whose image is relatively compact.

\begin{proposition}[{\cite[Lemma 2.7]{schapira2021constructible}}]
  Let $F \in \Derb_{\rc }(\cor_\V)$. The following are equivalent.
  
  \begin{enumerate}[(i)]
      \item The micro-support $\SSi(F)$ is subanalytic in $T^\ast \Proj$,
      
      \item The micro-support $\SSi(F)$ is contained in a locally closed $\R^+$-conic subanalytic isotropic subset of $T^\ast \Proj$,
      
      \item $j_! F \in \Derb_{\rc }(\cor_\Proj)$,
      
      \item $\text{R} j_\ast F \in \Derb_{\rc }(\cor_\Proj)$.
  \end{enumerate}
\end{proposition}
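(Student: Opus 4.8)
The plan is to establish the chain of equivalences $(i) \Leftrightarrow (ii) \Leftrightarrow (iii) \Leftrightarrow (iv)$ by exploiting the geometry of the open embedding $j \colon \V \hookrightarrow \Proj$ together with the microlocal characterization of constructibility. The key external input is that for a real analytic manifold $X$, an object $G \in \Derb(\cor_X)$ with $\R$-constructible cohomology sheaves is $\R$-constructible (in the strong, finitely-presented sense) if and only if $\SSi(G)$ is contained in a closed conic subanalytic isotropic (equivalently Lagrangian) subset of $T^\ast X$; this is essentially \cite[Thm.~8.4.2]{KS90}. Since $\Proj$ is a compact real analytic manifold containing $\V$ as a relatively compact open subanalytic subset, subanalyticity of a subset of $T^\ast\Proj|_\V = T^\ast\V$ is a genuinely stronger condition than subanalyticity in $T^\ast\V$, and this is precisely what distinguishes \emph{constructible up to infinity} from merely $\R$-constructible.

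First I would prove $(i) \Leftrightarrow (ii)$. The implication $(ii) \Rightarrow (i)$ is immediate since a locally closed subanalytic subset of $T^\ast\Proj$ is in particular subanalytic there. For $(i) \Rightarrow (ii)$, one uses that $\SSi(F)$ is always closed, conic and co-isotropic, and that since $F$ is $\R$-constructible on $\V$ its micro-support is already contained in a closed conic subanalytic \emph{isotropic} subset $\Lambda$ of $T^\ast\V$ (by \cite[Thm.~8.4.2]{KS90}); the hypothesis that $\SSi(F)$ itself is subanalytic in $T^\ast\Proj$ is then what allows one to take the closure in $T^\ast\Proj$ and stay isotropic and subanalytic — here one invokes that the closure of a subanalytic isotropic set is again isotropic (a standard fact, e.g.\ via \cite{KS90}). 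The passage to \emph{locally closed} is a formality once one has a subanalytic isotropic set.

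Next I would handle $(i) \Leftrightarrow (iii)$ and $(i) \Leftrightarrow (iv)$, which are the heart of the statement. The strategy is to relate $\SSi(j_! F)$ and $\SSi(\roim{j}F)$ to $\SSi(F)$. Over the open set $\V \subset \Proj$, both $j_!F$ and $\roim{j}F$ restrict to $F$, so $\SSi(j_!F) \cap T^\ast\Proj|_\V = \SSi(F) = \SSi(\roim{j}F)\cap T^\ast\Proj|_\V$. The content is therefore entirely about what happens over the boundary divisor $H := \Proj \setminus \V$, a subanalytic hypersurface. Now $j_! F \in \Derb_\rc(\cor_\Proj)$ holds if and only if $j_!F$ has constructible cohomology \emph{and} $\SSi(j_!F)$ is contained in a closed conic subanalytic Lagrangian — but since $\Proj$ is compact and $j$ is an open subanalytic embedding, one shows that $\SSi(j_!F)$ is automatically contained in $\overline{\SSi(F)} \cup (\text{something supported over } H)$, and the decisive point is that this closure is subanalytic in $T^\ast\Proj$ exactly when $\SSi(F)$ is, by the following mechanism: $\SSi(j_!F)$ is subanalytic in $T^\ast\Proj$ iff $\SSi(F)$ extends to a subanalytic set there, because the boundary contribution is controlled by the conormal directions to $H$, which are subanalytic. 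For $(iii) \Leftrightarrow (i)$ I would cite the microlocal cut-off / propagation results of \cite[Ch.~VI]{KS90} applied across $H$; the analogous argument with $\dual_\Proj$ (noting $\dual_\Proj \circ j_! \simeq \roim{j}\circ \dual_\V$ and that duality preserves $\R$-constructibility and micro-support up to the antipodal map, cf.\ Lemma \ref{lem:dualiso} in spirit) then yields $(iii) \Leftrightarrow (iv)$ for free.

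The main obstacle I anticipate is the boundary analysis over $H = \Proj \setminus \V$: making precise that $\SSi(j_!F)$ (resp.\ $\SSi(\roim{j}F)$) is subanalytic in $T^\ast\Proj$ \emph{exactly} when $\SSi(F)$ is — i.e.\ that the extension across the divisor at infinity does not introduce or destroy subanalyticity — requires a careful local computation in coordinates near a boundary point, using the behavior of $j_!$ and $\roim{j}$ under the microlocal operations of \cite{KS90} (in particular estimating $\SSi$ of a direct image / extension by zero along an open embedding with subanalytic boundary). Once that local picture is in hand, the global statement follows by the conic, subanalytic and co-isotropic formal properties of micro-supports, and the equivalence with $\R$-constructibility of $j_!F$ and $\roim{j}F$ is then a direct application of \cite[Thm.~8.4.2]{KS90} on the compact manifold $\Proj$.
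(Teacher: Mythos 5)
The paper does not prove this proposition at all: it is imported verbatim from Schapira's \emph{Constructible sheaves and functions up to infinity} (cited as Lemma~2.7 there), so there is no proof in the paper against which to compare your attempt. I will therefore assess your sketch on its own merits.

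Your overall architecture is reasonable and several pieces are correct: $(ii)\Rightarrow(i)$ is immediate, $(i)\Rightarrow(ii)$ by taking the closure of the (already isotropic, by Kashiwara's theorem on $\V$) micro-support is correct, the ``easy'' implications $(iii)\Rightarrow(i)$ and $(iv)\Rightarrow(i)$ follow from $\SSi(F)=\SSi(j_!F)\cap T^\ast\Proj|_\V$ together with the fact that $T^\ast\Proj|_\V$ is an open subanalytic subset, and the reduction of $(iv)$ to $(iii)$ via $\dual_\Proj\circ j_!\simeq \roim{j}\circ\dual_\V$ (combined with the fact that $\dual_\V$ preserves condition $(i)$ up to the antipodal map) is a valid shortcut, provided one first establishes $(i)\Leftrightarrow(iii)$.

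The genuine gap is exactly where you flag the ``main obstacle'': the direction $(ii)\Rightarrow(iii)$. Your proposed mechanism --- controlling $\SSi(j_!F)$ over $H=\Proj\setminus\V$ by ``conormal directions to $H$'' and invoking ``microlocal cut-off / propagation results of [KS90, Ch.~VI] applied across $H$'' --- does not point to any actual theorem. The micro-support estimates for $j_!$ along an open embedding are not of that form, and the relevant Whitney-sum estimates (e.g.\ for $(\cdot)\otimes\cor_\V$) presuppose constructibility of a sheaf already defined on $\Proj$, which is circular here. You have not closed the loop, and the sketch as written would not compile into a proof.

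The missing idea is to bypass the boundary micro-support analysis entirely and argue through stratifications. From $(ii)$ one obtains a closed conic subanalytic isotropic (hence Lagrangian) subset $\Lambda\subset T^\ast\Proj$ with $\SSi(F)\subset\Lambda$. By the stratification theory for subanalytic Lagrangians ([KS90, \S8.3--8.4]) there is a subanalytic $\mu$-stratification $\Proj=\bigsqcup_\alpha\Proj_\alpha$ refining the partition $\{\V,H\}$ with $\Lambda\subset\bigcup_\alpha T^\ast_{\Proj_\alpha}\Proj$. Since $\SSi(F)$ lies over $\V$, it is contained in the union of conormals to the strata $\Proj_\alpha\subset\V$, so $F$ has locally constant cohomology on each such stratum; then $j_!F$ is locally constant on every stratum (it vanishes on those inside $H$), hence weakly $\R$-constructible on $\Proj$, and since the stalks of $j_!F$ are those of $F$ or zero they are finite-dimensional, giving $j_!F\in\Derb_\rc(\cor_\Proj)$. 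This is the argument your sketch needs in place of the boundary computation, and it is substantially simpler than the coordinate analysis near $H$ that you anticipate.
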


\begin{definition}
    If $F \in \Derb_{\rc }(\cor_\V)$ satisfies any (hence all) of the conditions above, we say that $F$ is constructible up to infinity. We denote by $\Derb_{\rc }(\cor_{\V_\infty})$ the full triangulated subcategory of $\Derb_{\rc }(\cor_\V)$ whose objects are sheaves constructible up to infinity.
\end{definition}

The following is a special case of \cite[Definition 2.5]{schapira2021constructible}.

\begin{definition}
Let $\V$ and $\mathbb{W}$ be two finite dimensional real vector spaces, and $\Proj$ and $\Proj'$ their respective projectivization. A map $f : \V \to \W$ is subanalytic up to infinity if its graph $\Gamma_f \subset \V \times \mathbb{W}$ is subanalytic in $\Proj \times \Proj'$. 
\end{definition}

\begin{Notation}
We write 
\begin{enumerate}[(i)]
\item $\mathcal{A}(\V_\infty)$ for the set of morphism of analytic manifolds from $\V$ to $\R$ which are subanalytic up to infinity.
\item  $\mathcal{SC}(\V_\infty)$ for the set of continuous maps from $\V$ to $\R$ which are subanalytic up to infinity.
\end{enumerate}
\end{Notation}

Let $\V$ and $\mathbb{W}$ be two finite dimensional real vector spaces, $\Proj$ and $\Proj'$ their respective projectivization and $f : \V \to \W$ an element of $\mathcal{SC}(\V_\infty)$.

\begin{proposition}\label{prop:opconstructinf}
    Let $F \in \Derb_{\rc}(\cor_{\V_\infty})$ and $G \in \Derb_{\rc}(\cor_{\mathbb{W}_\infty})$. Then, the sheaves $f^{-1} G, f^! G, \text{R} f_\ast F$, and $\text{R}f_! F$ are constructible up to infinity.
\end{proposition}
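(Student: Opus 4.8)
The plan is to reduce the statement to the case of the projectivizations via the open embeddings $j_\V \colon \V \hookrightarrow \Proj$ and $j_\W \colon \W \hookrightarrow \Proj'$, and then invoke the stability of constructibility under the six operations on subanalytic spaces. First I would extend $f \colon \V \to \W$ to a morphism between the projectivizations: since $f$ is subanalytic up to infinity, its graph $\Gamma_f$ is subanalytic in $\Proj \times \Proj'$, so its closure $\overline{\Gamma_f}$ is a subanalytic subset, and the two projections realize a subanalytic correspondence $\Proj \xleftarrow{q_1} \overline{\Gamma_f} \xrightarrow{q_2} \Proj'$ with $q_1$ proper (both $\Proj$, $\Proj'$ being compact). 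This correspondence lets one transport the operations on $f$ to compositions of direct and inverse images along subanalytic maps between \emph{compact} subanalytic spaces, for which the preservation of $\R$-constructibility is classical (\cite[\S 8.4]{KS90} together with the subanalyticity criteria recalled before the statement).

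Concretely, for $\roim{f} F$ and $\reim{f}F$: by definition, $F$ constructible up to infinity means $j_{\V !}F$ (equivalently $\roim{j_\V}F$) lies in $\Derb_\rc(\cor_\Proj)$. I would then write $\reim{f}F$ and $\roim{f}F$ in terms of the correspondence $\overline{\Gamma_f}$ — pulling back $j_{\V!}F$ to $\overline{\Gamma_f}$ along $q_1$, tensoring/using adjunction appropriately, and pushing forward along $q_2$ — so that $j_{\W}$-applied to the result is a composite of inverse and proper/non-proper direct images of an $\R$-constructible sheaf under subanalytic maps of compact subanalytic sets. Each such operation preserves $\Derb_\rc$ by \cite{KS90}, hence $j_{\W!}(\reim{f}F)$ and $\roim{j_\W}(\roim{f}F)$ are $\R$-constructible on $\Proj'$, which is exactly the condition that $\reim{f}F$ and $\roim{f}F$ are constructible up to infinity. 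The same scheme handles $\opb{f}G$ and $\epb{f}G$: here one pulls $j_{\W*}G$ (or $j_{\W!}G$) back along $q_2$ and pushes along $q_1$, again a composite of operations preserving constructibility on compact subanalytic spaces, and one checks the resulting sheaf on $\V$ is the restriction of an $\R$-constructible sheaf on $\Proj$.

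The main obstacle I expect is the bookkeeping needed to identify $\roim{f}F$, $\reim{f}F$, $\opb{f}G$, $\epb{f}G$ with the correspondence composites \emph{after applying $j$}: one must be careful that $\overline{\Gamma_f}$ may strictly contain $\Gamma_f$, so the auxiliary points at infinity in $\overline{\Gamma_f} \setminus \Gamma_f$ do not spoil the computation over the open part $\V$ (respectively $\W$); this is a base-change argument using that $\Gamma_f$ is open in $\overline{\Gamma_f}$ over $\V \times \W$ and that $q_1$ is an isomorphism over $\V$. Once that compatibility is in place, everything reduces to the stability of $\R$-constructibility — and of the subanalyticity-of-micro-support criterion in the Proposition preceding the statement — under $\roim{g}$, $\reim{g}$, $\opb{g}$, $\epb{g}$, $\otimes$ for subanalytic maps $g$ between subanalytic subsets of compact real analytic manifolds, which is standard. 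A cleaner alternative, if available in \cite{schapira2021constructible}, is to cite directly a functoriality statement there (the cited Definition 2.5 and its surrounding results) and merely note that $\mathcal{SC}(\V_\infty)$-maps are exactly the ones to which it applies; I would present the correspondence argument as the self-contained justification and defer to \cite{schapira2021constructible} for the general machinery.
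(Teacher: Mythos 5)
Your proposal is correct but much heavier than the paper's, which simply cites \cite[Lem.~1.1]{schapira2021constructible} and \cite[Cor.~2.1']{schapira2021constructible}: those statements already assert, in Schapira's framework, that the four operations preserve constructibility up to infinity along maps in $\mathcal{SC}(\V_\infty)$. This is precisely the ``cleaner alternative'' you mention at the very end, and it is what the paper takes. What you outline is, in effect, a reconstruction of the mechanism behind Schapira's result: replace $f$ by the proper subanalytic correspondence $\Proj \xleftarrow{\bar{q}_1} \overline{\Gamma_f} \xrightarrow{\bar{q}_2} \Proj'$, use that $\bar{q}_1^{-1}(\V)=\Gamma_f$ (which holds because $\Gamma_f$ is closed in $\V\times\Proj'$, by continuity of $f$) and that $\bar{q}_1$ restricts to an isomorphism $\Gamma_f\xrightarrow{\sim}\V$, then invoke stability of $\R$-constructibility under the six operations on compact subanalytic spaces. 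Your reduction is sound, but the ``tensoring/adjunction'' step must genuinely differ between the proper and non-proper cases: one has
$\reim{j_\W}\reim{f}F \simeq \reim{\bar{q}_2}\bigl(\cor_{\Gamma_f}\otimes\opb{\bar{q}_1}\reim{j_\V}F\bigr)$,
whereas
$\roim{j_\W}\roim{f}F \simeq \roim{\bar{q}_2}\,\rhom\bigl(\cor_{\Gamma_f},\opb{\bar{q}_1}\roim{j_\V}F\bigr)$,
and similarly for $\opb{f}G$ versus $\epb{f}G$. A quicker way to dispatch $\roim{f}$ and $\epb{f}$ once $\reim{f}$ and $\opb{f}$ are treated is to note that $\dual_\V$ preserves $\Derb_{\rc}(\cor_{\V_\infty})$ --- condition (ii) of the preceding Proposition is stable under the antipodal map on microsupports --- and then apply the Poincar\'e--Verdier identities $\roim{f}F \simeq \dual_\W\reim{f}\dual_\V F$ and $\epb{f}G \simeq \dual_\V\opb{f}\dual_\W G$ for constructible $F,G$. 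Either way your argument closes; the trade-off is a self-contained derivation versus the paper's single citation.
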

\begin{proof}
This follows immediately from \cite[Lem.~1.1]{schapira2021constructible}  and \cite[Cor.~2.1']{schapira2021constructible}
\end{proof}

\begin{corollary}[{\cite[Corollary 2.13]{schapira2021constructible}}]\label{cor:roimtoreim}
With the same notations, one has $\text{R}f_\ast F \simeq \dual_\mathbb{W} \text{R}f_! \dual_\V F$ and $f^! G \simeq \dual_\V f^{-1} \dual_\mathbb{W} G$.
\end{corollary}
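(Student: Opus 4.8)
The statement to prove is Corollary \ref{cor:roimtoreim}: with $f : \V \to \W$ in $\mathcal{SC}(\V_\infty)$, and appropriate constructibility hypotheses, $\text{R}f_\ast F \simeq \dual_\mathbb{W} \text{R}f_! \dual_\V F$ and $f^! G \simeq \dual_\V f^{-1} \dual_\mathbb{W} G$.

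The plan is to deduce these from the standard duality formulas in Kashiwara–Schapira (\cite{KS90}), which state that for a morphism of manifolds $f$ one has $\dual \circ \reim{f} \simeq \roim{f} \circ \dual$ and $\dual \circ \opb{f} \simeq \epb{f} \circ \dual$, \emph{provided} the relevant duality objects behave well — specifically, one needs to be careful because these natural isomorphisms in full generality require either properness or cohomological constructibility (or work only up to biduality). So the real content of the corollary is that constructibility up to infinity lets us apply the biduality isomorphism $\dual_\V \dual_\V F \simeq F$ freely, and that all the intermediate sheaves stay constructible (which is exactly Proposition \ref{prop:opconstructinf}).

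Here is how I would carry it out. First, for the second isomorphism: by \cite[Prop.~3.1.13]{KS90} (the adjunction/duality compatibility), for any $G \in \Derb(\cor_\W)$ there is a natural morphism $\opb{f}\dual_\W G \to \dual_\V \epb{f} G$, and this is an isomorphism when $G$ is cohomologically constructible. Since $G \in \Derb_{\rc}(\cor_{\W_\infty}) \subset \Derb_{\rc}(\cor_\W)$, it is cohomologically constructible (constructible sheaves on a manifold are cohomologically constructible, \cite[Prop.~3.4.3]{KS90}), so $\opb{f}\dual_\W G \simeq \dual_\V \epb{f} G$. Now apply $\dual_\V$ to both sides and use biduality $\dual_\V\dual_\V(\epb{f}G) \simeq \epb{f}G$ — valid because $\epb{f}G$ is constructible (indeed constructible up to infinity, by Proposition \ref{prop:opconstructinf}), hence cohomologically constructible, hence reflexive by \cite[Prop.~3.4.3]{KS90}. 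This yields $\epb{f}G \simeq \dual_\V \opb{f}\dual_\W G$, which is the claimed formula.

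For the first isomorphism, I would dualize the second one or argue directly. Directly: by \cite[Prop.~3.1.10]{KS90} there is a natural morphism $\reim{f}\dual_\V F \to \dual_\W \roim{f} F$... actually the cleaner route is to start from the known isomorphism $\dual_\W \reim{f} F \simeq \roim{f} \dual_\V F$, which holds for $F$ with, say, $f$ proper on the support, or in general as a morphism that becomes an isomorphism after applying it to sufficiently nice sheaves. Let me instead use: for $F$ cohomologically constructible and $f$ any morphism of real analytic manifolds (with $\reim{f}$, $\roim{f}$ preserving constructibility, which holds here up to infinity by Proposition \ref{prop:opconstructinf} combined with \cite[Prop.~3.4.4]{KS90} for the case at hand), the natural morphism $\roim{f}\dual_\V F \to \dual_\W \reim{f} F$ is an isomorphism — this is essentially \cite[Prop.~3.1.13]{KS90} again in its covariant form. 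Replacing $F$ by $\dual_\V F$ and applying biduality (legitimate since $\dual_\V F$ is constructible up to infinity by Proposition \ref{prop:opconstructinf}, hence reflexive) gives $\roim{f} F \simeq \roim{f}\dual_\V\dual_\V F \simeq \dual_\W \reim{f}\dual_\V F$, as desired. The naturality of all these isomorphisms ensures compatibility, but I won't belabor that.

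The main obstacle — and the only genuinely non-formal point — is checking that every sheaf to which we apply biduality is cohomologically constructible (equivalently, reflexive for $\dual$). This is precisely where "constructible up to infinity" is used: a priori $\reim{f}F$ or $\epb{f}G$ for a non-proper subanalytic-up-to-infinity map need not be $\R$-constructible on $\V$ (pathologies at infinity), but Proposition \ref{prop:opconstructinf} guarantees they are, hence \cite[Prop.~3.4.3]{KS90} applies and biduality holds. Everything else is a direct invocation of the duality formalism of \cite[\S 3.1]{KS90}. I would therefore organize the write-up as: (1) recall the two natural duality morphisms and their isomorphism criteria from \cite{KS90}; (2) invoke Proposition \ref{prop:opconstructinf} to place all intermediate objects in $\Derb_{\rc}(\cor_{(-)_\infty})$; (3) apply biduality and conclude.
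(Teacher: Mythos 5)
The paper does not prove this statement: the bracketed attribution in the theorem environment, \cite[Corollary 2.13]{schapira2021constructible}, is the whole ``proof'' --- it is imported verbatim from Schapira's paper on constructible sheaves up to infinity. So there is no internal argument to compare against; what follows is an assessment of your reconstruction on its own merits.

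Your reconstruction is correct in substance --- the two formulas do follow from Poincar\'e--Verdier adjunction plus biduality for cohomologically constructible objects --- but it is more circuitous than necessary, and the diagnosis of where constructibility ``up to infinity'' enters is slightly off. The streamlined route is the following. Two isomorphisms hold \emph{unconditionally}, for any sheaves and any continuous map, as pure consequences of the adjunctions $(\reim{f},\epb{f})$ and the projection formula: first, $\roim{f}\rhom(H,\epb{f}B)\simeq\rhom(\reim{f}H,B)$, which with $B=\omega_\W$ gives $\roim{f}\dual_\V H \simeq \dual_\W\reim{f}H$; second, $\epb{f}\rhom(A,B)\simeq\rhom(\opb{f}A,\epb{f}B)$, which with $B=\omega_\W$ gives $\epb{f}\dual_\W H\simeq\dual_\V\opb{f}H$. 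Substituting $H=\dual_\V F$ (resp.\ $H=\dual_\W G$) and using biduality $\dual\dual\simeq\id$ \emph{once, on $F$ alone} (resp.\ on $G$ alone) produces both claimed isomorphisms directly. Biduality for $F$ and $G$ needs only that they be cohomologically constructible, which follows from $\R$-constructibility on a real analytic manifold (\cite[Prop.~3.4.3]{KS90}) --- the ``up to infinity'' hypothesis is not required for this step. Your route instead starts from $\opb{f}\dual_\W G\simeq\dual_\V\epb{f}G$, which is not one of the natural adjunction isomorphisms but is itself derived by two applications of biduality; you then apply $\dual_\V$ and invoke biduality a third time, on $\epb{f}G$. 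This works (and Proposition~\ref{prop:opconstructinf} does cover $\epb{f}G$), but it obscures the structure: the only genuinely needed biduality is that of the input sheaf. Where ``constructibility up to infinity'' actually earns its keep is elsewhere: it ensures, via Proposition~\ref{prop:opconstructinf}, that $\roim{f}F$ and $\reim{f}F$ remain constructible for a non-proper subanalytic $f$ (which can fail for merely $\R$-constructible sheaves), so that the corollary is a statement \emph{internal} to $\Derb_\rc(\cor_{(-)_\infty})$ and can be composed with the rest of Schapira's calculus --- not to make the duality isomorphisms themselves hold. Finally, a small citation point: the unconditional isomorphism $\epb{f}\rhom(A,B)\simeq\rhom(\opb{f}A,\epb{f}B)$ is not what is usually labelled Prop.~3.1.13 in \cite{KS90}; it is the Yoneda/projection-formula consequence in \S3.1, and the first isomorphism is Prop.~3.1.10. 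Double-check the references before committing a write-up.
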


\section{\texorpdfstring{$\gamma$}{gamma}-sheaves} \label{sec:gammasheaves}

The interplay between sheaves on a real vector space and persistence theory necessitates the use of a topology on a vector space introduced by Kashiwara and Schapira \cite{KS90}, called the $\gamma$-\emph{topology}. In this section, we first recall the basic definitions associated to the $\gamma$-topology. There is a notion of interleaving distance between sheaves on the $\gamma$-topology, which the authors proved to coincide under some properness hypothesis with the convolution distance \cite[Corollary 5.9]{BP21}. Here, we strengthen this result by removing the properness assumption. We then recall some results specific to the dimension one case, where $\gamma$-sheaves have a graded-barcode \cite{BG18}. We end the section with the notion of fibered barcode which was introduced in \cite{Cer13} for persistence modules, that we translate in our $\gamma$-sheaf setting.

Let $\V$ be a finite dimensional real vector space. We write $a \colon x \mapsto -x$ for the antipodal map. If $A$ is a subset of $\V$, we write $A^a$ for the image of $A$ by the antipodal map.
A subset $\gamma$ of the vector space $\V$ is a cone if
\begin{enumerate}[(i)]
\item $0 \in \gamma$,
\item $\R_{>0} \cdot \gamma \subset \gamma$.
\end{enumerate}
A convex cone $\gamma$ is proper if $\gamma^a \cap \gamma =\lbrace 0 \rbrace$. The polar cone $\gamma^\circ$ of a cone $\gamma \subset \V$ is the cone of $\V^\ast$
\begin{equation*}
\gamma^\circ=\lbrace \xi \in \V^\ast \, \mid \, \forall v \in \gamma, \langle \xi, v \rangle \geq 0 \rbrace.
\end{equation*} 
From now on, we assume that $\gamma$ is a
\begin{equation}\label{hyp:cone}
\textnormal{\textit{closed proper convex cone with non-empty interior.}}
\end{equation}

We say that a subset $A$ of $\V$ is \textit{$\gamma$-invariant} if $A=A+ \gamma$. The set of $\gamma$-invariant open subset of $\V$ is a topology on $\V$ called the $\gamma$-topology. We will generically designate topology of this type by \textit{cone topology}. We denote by $\V_\gamma$ the vector space $\V$ endowed with the $\gamma$-topology. We write $\phi_\gamma \colon \V \to \V_{\gamma}$ for the continuous map whose underlying function is the identity.

We set, following \cite{KS18},
\begin{equation} \label{def:gammasheaf}
\Derb_{\gamma^{\circ,a}}(\cor_\V)=\lbrace F \in \Derb(\cor_\V) \, \vert \, \SSi(F) \subset \V \times \gamma^{\circ,a} \rbrace,
\end{equation}

We call an object of $\Derb_{\gamma^{\circ,a}}(\cor_\V)$ a \emph{$\gamma$-sheaf}. This terminology is motivated by the following result.

\begin{theorem}[{\cite[Thm. 1.5]{KS18}}]
Let $\gamma$ be a proper closed convex cone in $\V$. The functor $\roim{\phi_\gamma} \colon \Derb_{\gamma^{\circ,a}}(\cor_\V) \to \Derb(\cor_{\V_\gamma})$ is an equivalence of triangulated categories with quasi-inverse $\opb{\phi_\gamma}$.
\end{theorem}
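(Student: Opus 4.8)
The plan is to exhibit $(\opb{\phi_\gamma},\roim{\phi_\gamma})$ as an adjoint equivalence onto the prescribed subcategory, reducing everything to two microlocal inputs. Since $\phi_\gamma$ is continuous, I would start from the adjunction $\opb{\phi_\gamma}\dashv\roim{\phi_\gamma}$ between $\Derb(\cor_{\V_\gamma})$ and $\Derb(\cor_\V)$ (both functors triangulated), with unit $\eta\colon\id\to\roim{\phi_\gamma}\opb{\phi_\gamma}$ and counit $\varepsilon\colon\opb{\phi_\gamma}\roim{\phi_\gamma}\to\id$. On purely categorical grounds it then suffices to prove: \emph{(a)} $\opb{\phi_\gamma}$ is fully faithful, i.e.\ $\eta$ is an isomorphism on all of $\Derb(\cor_{\V_\gamma})$; and \emph{(b)} the essential image of $\opb{\phi_\gamma}$ is exactly $\Derb_{\gamma^{\circ,a}}(\cor_\V)$. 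Indeed, \emph{(a)} makes $\opb{\phi_\gamma}$ an equivalence onto its essential image with quasi-inverse the restriction of its right adjoint $\roim{\phi_\gamma}$, and together with \emph{(b)} this is the claimed statement read with the roles of the two functors exchanged. The fact that $\roim{\phi_\gamma}$ then lands in $\Derb$ on $\Derb_{\gamma^{\circ,a}}(\cor_\V)$ would follow afterwards, since for such $F$ one gets $\opb{\phi_\gamma}\roim{\phi_\gamma}F\simeq F$ bounded and $\opb{\phi_\gamma}$ is exact and conservative.

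For \emph{(a)}, the point is that the $\gamma$-open subsets of $\V$ form by definition a basis of $\V_\gamma$, and the set-theoretic preimage under $\phi_\gamma$ of such a $U$ is $U$ itself; hence $\roim{\phi_\gamma}\opb{\phi_\gamma}G$ evaluated on $U$ computes $\rsect(U;\opb{\phi_\gamma}G)$ for the usual topology, and what must be checked is that this agrees with $\rsect(U;G)$, i.e.\ that the $\gamma$-opens are cohomologically cofinal among the usual opens. This is the comparison of the $\gamma$-topology with the usual topology carried out in \cite[\S 3.5]{KS90}.

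For \emph{(b)}, the essential ingredient is the computation of the idempotent endofunctor $\opb{\phi_\gamma}\roim{\phi_\gamma}$ of $\Derb(\cor_\V)$ (contained in \cite{KS18}; see also \cite[\S 3.5]{KS90}): there is a functorial isomorphism $\opb{\phi_\gamma}\roim{\phi_\gamma}F\simeq \cor_{\gamma^a}\npsconv F$, under which $\varepsilon_F$ corresponds to the morphism $\cor_{\gamma^a}\npsconv F\to\cor_{\{0\}}\npsconv F\simeq F$ induced by the restriction $\cor_{\gamma^a}\to\cor_{\{0\}}$. The inclusion ``essential image $\subseteq\Derb_{\gamma^{\circ,a}}(\cor_\V)$'' would then follow from a microsupport estimate: $\SSi(\cor_{\gamma^a})\subseteq\V\times\gamma^{\circ,a}$ (the conormals of the closed convex cone $\gamma^a$ lie in its polar, which equals $\gamma^{\circ,a}$), and the standard microsupport estimates for convolution with the constant sheaf on a cone (\cite[\S 5.2]{KS90}) give $\SSi(\cor_{\gamma^a}\npsconv F)\subseteq\V\times\gamma^{\circ,a}$ for every $F$; by \emph{(a)} every object of the essential image has the form $\opb{\phi_\gamma}\roim{\phi_\gamma}H\simeq\cor_{\gamma^a}\npsconv H$, hence lies in $\Derb_{\gamma^{\circ,a}}(\cor_\V)$. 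For the reverse inclusion I would invoke the microlocal cut-off lemma (\cite[\S 5.2]{KS90}, used in the proof of \cite[Thm~1.5]{KS18}): if $\SSi(F)\subseteq\V\times\gamma^{\circ,a}$ then the natural morphism $\cor_{\gamma^a}\npsconv F\to F$ is an isomorphism, i.e.\ $\varepsilon_F$ is an isomorphism, so $F\simeq\opb{\phi_\gamma}(\roim{\phi_\gamma}F)$ belongs to the essential image. Since both functors are exact, the resulting equivalence is automatically triangulated.

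The hard part will be the microlocal cut-off lemma used for the reverse inclusion in \emph{(b)}: it is the single genuinely microlocal step, turning the global bound $\SSi(F)\subseteq\V\times\gamma^{\circ,a}$ into the concrete assertion that $F$ is ``constant along the $\gamma^a$-rays'' in the derived sense, equivalently reconstructible from its sections over $\gamma$-open sets. Everything else is the formalism of adjoint functors, the (standard) comparison of topologies in \cite[\S 3.5]{KS90}, and the routine microsupport computation for convolution with the constant sheaf on a cone.
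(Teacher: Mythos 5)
This theorem is cited in the paper from \cite[Thm.~1.5]{KS18} without proof, so there is no paper-internal argument to compare against; your outline reconstructs the standard proof of the cited result. Specifically, your four ingredients — the adjunction $\opb{\phi_\gamma}\dashv\roim{\phi_\gamma}$, the comparison of the $\gamma$-topology with the usual topology from \cite[\S 3.5]{KS90} (giving that the unit is an isomorphism, hence full faithfulness of $\opb{\phi_\gamma}$), the identification $\opb{\phi_\gamma}\roim{\phi_\gamma}\simeq\cor_{\gamma^a}\npsconv(-)$ together with the microsupport estimate $\SSi(\cor_{\gamma^a})\subset\V\times\gamma^{\circ,a}$ for the easy inclusion, and the microlocal cut-off recorded in the paper as Proposition~\ref{prop:gammaloc} (\cite[Prop.~3.9]{GS14}) for the reverse inclusion — are exactly the steps used in the cited references, and your argument is correct.
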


The canonical map $\cor_{\gamma^a} \to \cor_{\lbrace 0 \rbrace}$ induces a morphism
\begin{equation}\label{mor:gammacar}
F \npsconv \cor_{\gamma^a} \to F.
\end{equation}

\begin{proposition}[{\cite[Prop.~3.9]{GS14}}]\label{prop:gammaloc}
Let $F \in \Derb(\cor_\V)$. Then $F \in \Derb_{\gamma^{\circ,a}}(\cor_\V)$ if and only if the morphism \eqref{mor:gammacar} is an isomorphism.
\end{proposition}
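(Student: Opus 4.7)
The plan is to prove the two implications using microsupport calculus. The morphism \eqref{mor:gammacar} is obtained by applying $F \npsconv (-)$ to the canonical map $\cor_{\gamma^a} \to \cor_{\{0\}}$ and using $F \npsconv \cor_{\{0\}} \simeq F$. As a preliminary, since $\gamma^a$ is a closed convex proper cone with polar $(\gamma^a)^\circ = \gamma^{\circ,a}$, the standard computation of the microsupport of the constant sheaf on a closed convex cone yields
\[ \SSi(\cor_{\gamma^a}) \subset \gamma^a \times \gamma^{\circ,a}. \]

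For the implication $(\Rightarrow)$, assume $\SSi(F) \subset \V \times \gamma^{\circ,a}$. The change of variables $(y,z) \mapsto (y+z, z)$ on $\V \times \V$ transports the pushforward $\roim{s}(F \boxtimes \cor_{\gamma^a})$ into a pushforward along $p_1$ of a sheaf whose stalk at $(x,z)$ is $F_{x-z}$ for $z \in \gamma^a$. One then identifies the stalk of $F \npsconv \cor_{\gamma^a}$ at $x$ with a derived section of $F$ over $x + \gamma$, the map to $F_x$ being the natural restriction to $\{x\}$. The microsupport hypothesis, combined with the Kashiwara--Schapira propagation theorem \cite[Prop.~5.2.1]{KS90} applied along each direction $v \in \Int(\gamma)$, forces this restriction to be an isomorphism.

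For the implication $(\Leftarrow)$, assume $F \simeq F \npsconv \cor_{\gamma^a}$. The microsupport estimate for the derived pushforward under $s$, combined with the estimate for external tensor products, yields that every $(z, \zeta) \in \SSi(F \npsconv \cor_{\gamma^a})$ arises from some $(x, \zeta) \in \SSi(F)$ and $(y, \zeta) \in \SSi(\cor_{\gamma^a})$ with $x + y = z$. Since $\SSi(\cor_{\gamma^a}) \subset \gamma^a \times \gamma^{\circ,a}$, the second coordinate is forced into $\gamma^{\circ,a}$; combined with the assumed isomorphism this yields $\SSi(F) \subset \V \times \gamma^{\circ,a}$.

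The main obstacle is the non-properness of $s$ restricted to $\V \times \gamma^a$: the usual microsupport estimate \cite[Prop.~5.4.4]{KS90} is phrased for $\reim$ with a properness condition and does not directly apply to $\roim$. The cleanest workaround is to pass through the duality $\roim{s} \simeq \dual \reim{s} \dual$ on cohomologically constructible sheaves (cf.\ Corollary \ref{cor:roimtoreim}), reducing both directions to the proper case where the classical estimates apply verbatim. The properness of the cone $\gamma$ is crucial throughout to control the geometry at infinity.
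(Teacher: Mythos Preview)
The paper does not give its own proof of this proposition: it is quoted verbatim from \cite[Prop.~3.9]{GS14}. So there is nothing in the present paper to compare your argument against directly, and the question becomes whether your outline is a correct independent proof.

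There is a genuine gap. Your proposed workaround for the non-properness of $s$ is to invoke the duality relation $\roim{s}\simeq \dual_\V\,\reim{s}\,\dual_{\V\times\V}$, which you cite via Corollary~\ref{cor:roimtoreim}. But that corollary (and more generally the biduality $\dual\dual\simeq\id$ underlying it) is only available for constructible sheaves up to infinity, whereas Proposition~\ref{prop:gammaloc} is stated for an \emph{arbitrary} $F\in\Derb(\cor_\V)$. Your argument therefore establishes at best a constructible version of the statement, not the proposition as written. Moreover, even after passing to $\reim{s}$, the map $s$ is still not proper on $\Supp(F)\times\gamma^a$ in general, so the phrase ``reducing to the proper case where the classical estimates apply verbatim'' is not accurate: you would still need the $\reim{}$-estimate that does not require properness, which is a separate statement.

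Your sketch of the forward implication is reasonable in spirit (the microlocal Petrowsky/propagation theorem \cite[Prop.~5.2.1]{KS90} is indeed the right tool once the stalk is identified), but the identification of $(F\npsconv\cor_{\gamma^a})_x$ with $\rsect(x+\gamma;F)$ is itself a nontrivial step when $s$ is non-proper and deserves an argument rather than an assertion. The proof in \cite{GS14} avoids both issues by working directly with the microsupport estimate for the non-proper convolution in this conic situation (using that $\SSi(\cor_{\gamma^a})\subset\gamma^a\times\gamma^{\circ,a}$ together with the specific geometry of $s$), rather than trying to reduce to a proper or constructible setting.
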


The following lemma is closely related.

\begin{lemma}[{\cite[Prop. 3.5.4]{KS90}}]
The endofunctors $ \cor_{\gamma^a} \npsconv  (\cdot)$ of $\Derb(\cor_\V)$ factors through $\Derb_{\gamma^{\circ,a}}(\cor_\V)$ and defines a projector $\Derb(\cor_\V) \to \Derb_{\gamma^{\circ,a}}(\cor_\V)$.
\end{lemma}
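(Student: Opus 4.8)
The plan is to prove the two assertions separately but using essentially the same ingredients: the associativity of non-proper convolution with $\cor_{\gamma^a}$ and the characterization of $\gamma$-sheaves from Proposition \ref{prop:gammaloc}.

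\medskip

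First I would establish that for any $F \in \Derb(\cor_\V)$ the object $\cor_{\gamma^a} \npsconv F$ lies in $\Derb_{\gamma^{\circ,a}}(\cor_\V)$. By Proposition \ref{prop:gammaloc} this amounts to checking that the morphism $\bigl(\cor_{\gamma^a} \npsconv F\bigr) \npsconv \cor_{\gamma^a} \to \cor_{\gamma^a} \npsconv F$ of \eqref{mor:gammacar} is an isomorphism. Using Lemma \ref{lem:convtocomp} to rewrite $\npsconv$ in terms of the non-proper composition $\npconv$, this reduces to an associativity statement $\cor_{\gamma^a} \npconv (\cor_{\gamma^a} \npconv \opb{u}F) \simeq (\cor_{\gamma^a} \npconv \cor_{\gamma^a}) \npconv \opb{u}F$ together with the idempotency $\cor_{\gamma^a} \npsconv \cor_{\gamma^a} \simeq \cor_{\gamma^a}$. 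The associativity is exactly the content of Theorem \ref{th:assocnp}: one checks that $\cor_{\gamma^a}$ (more precisely its incarnation as a kernel on $\V\times\V$) is cohomologically constructible, that the relevant projection is proper on its support, and that the micro-support condition $\SSi(\cor_{\gamma^a})\cap(T^*_\V\V\times T^*\V)\subset T^*_{\V\times\V}(\V\times\V)$ holds — this last point follows from $\SSi(\cor_{\gamma^a})\subset \V\times\gamma^{\circ}$ and properness of $\gamma$. The idempotency $\cor_{\gamma^a}\npsconv\cor_{\gamma^a}\simeq\cor_{\gamma^a}$ is a standard computation ($\gamma^a+\gamma^a=\gamma^a$ since $\gamma^a$ is a cone), or can be cited directly from the cited \cite[Prop.~3.5.4]{KS90}.

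\medskip

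Next I would show the functor defines a projector, i.e. that $\cor_{\gamma^a}\npsconv(\cdot)$ restricted to $\Derb_{\gamma^{\circ,a}}(\cor_\V)$ is isomorphic to the identity. This is immediate from Proposition \ref{prop:gammaloc}: if $F \in \Derb_{\gamma^{\circ,a}}(\cor_\V)$ then the morphism \eqref{mor:gammacar} $F\npsconv\cor_{\gamma^a}\to F$ is an isomorphism, which says precisely $\cor_{\gamma^a}\npsconv F\simeq F$ (using commutativity of $\npsconv$, which follows from commutativity of the sum map $s$). Combined with the first step, the composite $\Derb(\cor_\V)\xrightarrow{\cor_{\gamma^a}\npsconv(\cdot)}\Derb_{\gamma^{\circ,a}}(\cor_\V)\hookrightarrow\Derb(\cor_\V)$ lands in the subcategory and is idempotent up to the natural isomorphism just described, hence is a projector in the required sense; the factorization through $\Derb_{\gamma^{\circ,a}}(\cor_\V)$ is the statement of the first step.

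\medskip

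The main obstacle I expect is the careful verification of the hypotheses of Theorem \ref{th:assocnp} for the kernel $\cor_{\gamma^a}$: one must present $\cor_{\gamma^a}$ as a kernel $K\in\Derb(\cor_{\V\times\V})$ (namely $\cor_{\Delta_\gamma}$ where $\Delta_\gamma=\{(x,y)\mid x-y\in\gamma^a\}$ or similar), compute its micro-support, and check cohomological constructibility and the properness of the first projection on its support — the latter failing in general but holding here because $\gamma$ is proper with nonempty interior, which is exactly why hypothesis \eqref{hyp:cone} is in force. Everything else is formal manipulation of the six operations. Since the statement is attributed to \cite[Prop.~3.5.4]{KS90}, an acceptable short proof is simply to reduce, via Proposition \ref{prop:gammaloc} and Lemma \ref{lem:convtocomp}, to the associativity and idempotency facts and cite the reference for the geometric core.
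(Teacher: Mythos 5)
The paper does not actually prove this lemma; it is stated with a direct citation to \cite[Prop.~3.5.4]{KS90}, whose argument runs through the $\gamma$-topology: one shows that $\roim{\phi_\gamma}\colon\Derb_{\gamma^{\circ,a}}(\cor_\V)\to\Derb(\cor_{\V_\gamma})$ is an equivalence with quasi-inverse $\opb{\phi_\gamma}$ and that $\roim{\phi_\gamma}\opb{\phi_\gamma}\simeq\id$, identifies $\cor_{\gamma^a}\npsconv(\cdot)$ with the composite $\opb{\phi_\gamma}\roim{\phi_\gamma}$, and reads off the projector property from these identities. Your proposal takes a different route, via Proposition \ref{prop:gammaloc}, associativity of non-proper convolution, and idempotency of $\cor_{\gamma^a}$; the second half of your argument (the projector property once the factorization is established) is fine, but the associativity step has a concrete gap.

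You reduce $\cor_{\gamma^a}\npsconv(\cor_{\gamma^a}\npsconv F)\simeq(\cor_{\gamma^a}\npsconv\cor_{\gamma^a})\npsconv F$ to an application of Theorem \ref{th:assocnp} with $K_1=\cor_\Theta$, where $\Theta=u^{-1}(\gamma^a)=\{(x,y)\mid x-y\in\gamma^a\}$, and assert that the hypothesis ``$q_1$ is proper on $\Supp(K_1)$'' is satisfied because $\gamma$ is a proper cone with non-empty interior. This conflates two unrelated uses of the word ``proper'': properness of the cone (meaning $\gamma\cap\gamma^a=\{0\}$) has no bearing on properness of the projection $q_1\colon\V\times\V\to\V$ restricted to $\Theta$. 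In fact $q_1^{-1}(x)\cap\Theta=\{x\}\times(x+\gamma)$, which is \emph{unbounded} precisely because $\gamma$ has non-empty interior, so the properness hypothesis of Theorem \ref{th:assocnp} fails for $\cor_\Theta$ placed as the leftmost kernel. This is exactly why, in Lemma \ref{lem:assoboule}, the paper applies Theorem \ref{th:assocnp} with $K_1=\cor_{\Delta_r}$ (whose fibers over the first projection are compact) and keeps $\cor_\Theta$ in the middle position. As written, your associativity step does not go through; to repair it one would need either to prove the associativity directly (e.g.\ by a stalkwise computation exploiting that $s$ has contractible fibers over $\gamma^a$), or to follow the adjunction argument of \cite{KS90}.
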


This functor is called the ``gammaification'' functor.\\

Let $v\in \V$. Recall that
\begin{equation*}
\tau_v \colon \V \to \V, \; x \mapsto x - v.
\end{equation*}
For $v,w \in \V$ such that $w + \gamma \subset v + \gamma$, we can use proposition \ref{prop:gammaloc} to construct a morphism of functors from $\Derb_{\gamma^{\circ ,a}}(\cor_{\V})$ to $\Derb_{\gamma^{\circ ,a}}(\cor_{\V})$

\begin{equation}\label{mor:bismoothmu}
\chi^\mu_{v,w} \colon \oim{\tau_v} \to \oim{\tau_w}.
\end{equation}

We refer the reader to \cite[\S 4.1.2]{BP21} for details.

\begin{definition}
Let $F$, $G \in \Derb_{\gamma^{\circ,a}}(\cor_\V)$, and $v \in \gamma^a$. We say that $F$ and $G$ are $v$-interleaved if there exists $f : \oim{\tau_v}F \to  G$ and $g: \oim{\tau_v}G \to F$ such that the following diagram commutes.

\begin{equation*}\xymatrix{
\oim{\tau_{2v}}F \ar[rrd]\ar@/^0.75cm/[rrr]^{\chi_{2v,0}(F)} \ar[r]^-{\sim}
& \oim{\tau_{v}} \oim{\tau_{v}} F \ar[r]^{\oim{\tau_v} f} 
& \oim{\tau_v} G  \ar[rd] \ar[r]^{g} 
& F \\
\oim{\tau_{2v}}G  \ar[urr]\ar@/_0.75cm/[rrr]_{\chi_{2v,0}(G)} \ar[r]^-{\sim}
& \oim{\tau_{v}}  \oim{\tau_{v}} G \ar[r]_-{\oim{\tau_v} g}
& \oim{\tau_v} F  \ar[ur] \ar[r]_{f\;\;\,} 
& G.
}
\end{equation*}
\end{definition}

\begin{definition}
The interleaving distance between $F$ and $G$ with respect to $v\in \gamma^a$ is 
\begin{equation*}
d_I^v(F,G) := \inf(\{r \geq 0 \mid F ~\text{and}~G~\text{are}~r \cdot v-\text{interleaved}\} \cup \{ \infty \}).
\end{equation*}
\end{definition}

The interleaving distance was introduced in \cite{Chazal2009}. We refer the reader to  \cite{Silva2018} for details.

\subsection{Distances comparison}

In this subsection, we compare the interleaving distance on $\Derb_{\gamma^{\circ,\,a}}(\cor_{\V})$ with the convolution distance on $\Derb_{\gamma^{\circ,\,a}}(\cor_{\V})$. We sharpen Proposition 5.8 and Corollary 5.9 of \cite{BP21} by removing the $\gamma$-properness assumption. The architecture of the proof is the same as in \cite{BP21}. The $\gamma$-properness hypothesis is removed thanks to Theorem \ref{th:assocnp}.

Here, $\V$ is endowed  with a closed proper convex cone $\gamma$ with non-empty interior. Let $v \in \Int(\gamma^a)$ and consider the set 
\begin{equation*}
B_v := (v+\gamma) \cap (-v+\gamma^a).
\end{equation*}
The set $B_v$ is a symmetric closed bounded convex subset of $\V$ such that $0 \in \Int B_v$. It follows that the gauge 
\begin{equation}\label{eq:normcool}
g_{B_v}(x)=\inf \lbrace \lambda > 0 \mid   x \in \lambda B_v \rbrace
\end{equation}
is a norm, the unit ball of which is $B_v$. We denote this norm by $\| \cdot \|_v$. We assume that $\V$ is equipped with this norm. Recall the map

\begin{equation*}
 u: \V \times \V \to \V , \quad (x,y) \mapsto x-y.
\end{equation*}
Consider 
\begin{equation*}
	\Theta=\{(x,y) \in \V \times \V \mid x-y \in \gamma^a \}=\opb{u}(\gamma^a).
\end{equation*}
and notice that
\begin{equation*}
	\Delta_r = \{(x,y) \in \V \times \V \mid  \|x-y \|_v \leq r\}=u^{-1}(B_r)
\end{equation*}
where $B_r$ is the closed ball of center $0$ and radius $r$ in $\V$ for the norm $\| \cdot \|_v$.

The following formulas follow from  Lemma \ref{lem:convtocomp}.
\begin{align}
    \cor_\Theta \npconv F \simeq \cor_{\gamma^a} \npsconv F. \label{eq:gamtheta}\\
	\cor_{\Delta_r} \npconv \cor_\Theta \simeq \cor_{\Delta_r + \Theta}. \label{eq:lintheta}
\end{align}

\begin{lemma}\label{lem:assoboule} Let $F \in \Derb(\cor_\V)$. Then
 \begin{equation*}
 (\cor_{\Delta_r} \npconv \cor_{\Theta}) \npconv F \simeq \cor_{\Delta_r} \npconv (\cor_{\Theta} \npconv F).
 \end{equation*}
\end{lemma}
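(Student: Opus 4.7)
The plan is to deduce the lemma from the associativity criterion of Theorem~\ref{th:assocnp}, applied with $X_1 = X_2 = X_3 = \V$ and $X_4 = \pt$, setting $K_1 = \cor_{\Delta_r} \in \Derb(\cor_{X_{12}})$, $K_2 = \cor_{\Theta} \in \Derb(\cor_{X_{23}})$, and $K_3 = F \in \Derb(\cor_{X_{34}}) \simeq \Derb(\cor_\V)$. Under this identification, the two sides of the desired isomorphism are exactly $K_1 \npconv[2] (K_2 \npconv[3] K_3)$ and $(K_1 \npconv[2] K_2) \npconv[3] K_3$, so the task reduces to verifying the three hypotheses of the theorem on $K_1 = \cor_{\Delta_r}$.

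First I would argue that $\cor_{\Delta_r}$ is cohomologically constructible. Writing $\Delta_r = \opb{u}(B_r)$ where $u\colon\V\times\V \to \V$, $(x,y) \mapsto x-y$ is a linear surjection and $B_r$ is the closed ball of radius $r$ for the norm $\|\cdot\|_v$, this is an inverse image of a cohomologically constructible sheaf under a smooth submersion, hence cohomologically constructible (cf.\ \cite[\S 3.4]{KS90}).

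Second, I would verify properness of the projection $q_1\colon X_{12}\to X_1$ restricted to $\Supp(\cor_{\Delta_r})=\Delta_r$. Since $B_r$ is symmetric about $0$, the fibre of $q_1\big|_{\Delta_r}$ over any $x \in \V$ equals $x + B_r$, a closed ball in a finite-dimensional normed space and therefore compact.

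The main step is the micro-support condition. The hard part here is a clean description of $\SSi(\cor_{\Delta_r})$; I would obtain one by applying the inverse image estimate for the micro-support under the submersion $u$ \cite[Prop.~5.4.5]{KS90}, together with the elementary computation that the transpose of $du$ is the map $\xi \mapsto (\xi,-\xi)$. This yields
\[
\SSi(\cor_{\Delta_r}) \subset \bigl\{\,\bigl((x,y);\,(\xi,-\xi)\bigr) \,\bigm|\, (x-y;\,\xi) \in \SSi(\cor_{B_r})\,\bigr\}.
\]
Every covector on the right-hand side has the symmetric form $(\xi,-\xi)$, so its first component vanishes if and only if $\xi = 0$, in which case the second component also vanishes. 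Hence $\SSi(\cor_{\Delta_r}) \cap (T^*_{X_1}X_1 \times T^* X_2)$ is contained in the zero section $T^*_{X_{12}}X_{12}$. The three hypotheses of Theorem~\ref{th:assocnp} being met, the lemma follows.
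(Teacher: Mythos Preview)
Your proof is correct and follows essentially the same approach as the paper: both verify the three hypotheses of Theorem~\ref{th:assocnp} for $K_1=\cor_{\Delta_r}$, using the identification $\cor_{\Delta_r}\simeq\opb{u}\cor_{B_r}$ together with \cite[Prop.~5.4.5]{KS90} to control the micro-support. You spell out the properness and cohomological constructibility a bit more explicitly than the paper (which simply declares them ``clearly satisfied''), but the argument is the same.
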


\begin{proof}
	 Let $\pi \colon T^\ast \V \to \V$ be the cotangent bundle of $\V$. We denote by $u_d$ the dual of the tangent morphism of $u$. It fits in the following commutative diagram
	 \begin{equation*}
	 \xymatrix{
	 T^\ast (\V \times \V) \ar[rd]^{\pi} & (\V \times \V) \times_{\V} \times T^\ast \V \ar[l]_-{u_d} \ar[r]^-{u_\pi} \ar[d]^{\pi}& T^\ast \V \ar[d]^-{\pi} \\
	 & \V \times \V \ar[r]^-{u} & \V. 
     }	
	 \end{equation*}
	 Since, we have the isomorphism $\cor_{\Delta_r} \simeq \opb{u} \cor_{B_r}$, it follows from \cite[Prop.~5.4.5]{KS90} that 
	 $\SSi(\cor_{\Delta_r})=u_d \, \opb{u_\pi}(\SSi(B_r))$. A direct computation shows that $u_d \, \opb{u_\pi}(T^\ast \V) \cap T^\ast_{\V}  \V \times T^\ast \V \subset 
	 T^\ast_{\V \times \V} (\V \times \V)$. Hence $\SSi(\cor_{\Delta_r})  \cap T^\ast_{\V}  \V \times T^\ast \V  \subset T^\ast_{\V \times \V} (\V \times \V)$. Since $\cor_{\Delta_r}$ is constructible and the properness assumption is clearly satisfied, we apply Theorem \ref{th:assocnp} and get the desired isomorphism.
\end{proof}
Finally, there is also the following isomorphism
\begin{equation}\label{eq:isoboule}
	\cor_{r \cdot v + \gamma^a} \simeq \cor_{B_r + \gamma^a}.
\end{equation}

We now state and prove the sharpen version of \cite[Prop.~5.8]{BP21}.

\begin{theorem}\label{thm:isomBP21}
	Let $v \in \Int(\gamma^a)$, $r \in \R_{\geq 0}$ and $F, G \in \Derb_{\gamma^{\circ,\,a}}(\cor_{\V})$. Then $F$ and $G$ are $r \cdot v$-interleaved if and only if they are $r$-isomorphic.
\end{theorem}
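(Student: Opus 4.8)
The plan is to reduce the interleaving condition on $\gamma$-sheaves to the $r$-isomorphism condition for the convolution distance by translating both into statements about the composition of kernels, exactly as in \cite{BP21}, but using Lemma \ref{lem:assoboule} and Theorem \ref{th:assocnp} to dispense with the $\gamma$-properness hypothesis. Concretely, I would first recall that for $F \in \Derb_{\gamma^{\circ,a}}(\cor_\V)$ one has $\oim{\tau_v}F \simeq \cor_{v+\gamma^a}\npsconv F$ when $v \in \gamma^a$ (via the gammaification/Proposition \ref{prop:gammaloc}), and that, using \eqref{eq:gamtheta}, the shift functor $\oim{\tau_v}$ restricted to $\gamma$-sheaves corresponds to $\cor_{v+\Theta}\npconv(-)$. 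Similarly, the convolution functor $K_r\star(-) = \cor_{B_r}\star(-)$ acting on $\gamma$-sheaves corresponds, after composing with $\cor_\Theta$, to $\cor_{\Delta_r}\npconv(-)$. The isomorphism \eqref{eq:isoboule}, $\cor_{r\cdot v + \gamma^a}\simeq \cor_{B_r+\gamma^a}$, is the key geometric fact that makes the two kinds of shift agree on $\gamma$-sheaves.

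Next I would make the translation of the commutative diagrams precise. Given an $r\cdot v$-interleaving $(f,g)$, one applies the functor $\cor_\Theta\npconv(-)$ (equivalently, passes to the $\gamma$-sheaf picture); using Lemma \ref{lem:assoboule} to reassociate $(\cor_{\Delta_r}\npconv\cor_\Theta)\npconv F \simeq \cor_{\Delta_r}\npconv(\cor_\Theta\npconv F)$ and the identifications above, the interleaving morphisms become morphisms $K_r\star F' \to G'$ and $K_r\star G'\to F'$ where $F' = \cor_\Theta\npconv F$, $G'=\cor_\Theta\npconv G$; but since $F,G$ are $\gamma$-sheaves, $\cor_\Theta\npconv F\simeq F$ up to the equivalence $\roim{\phi_\gamma}$, so no information is lost and we obtain genuine $r$-isomorphisms of $F$ and $G$ in $\Derb(\cor_\V)$ once one checks that the interleaving triangles map to the $r$-isomorphism triangles. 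The compatibility of the canonical morphisms $\chi^\mu_{2v,0}$ with $\chi_{2r,0}$ under this dictionary follows from \eqref{eq:isoboule} together with functoriality of the gammaification, and this is the routine-diagram-chase part. The converse direction is entirely symmetric: starting from an $r$-isomorphism and applying $\cor_{\gamma^a}\npsconv(-)$ gives, via the same reassociation and \eqref{eq:isoboule}, an $r\cdot v$-interleaving.

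The main obstacle, and the only place where real work beyond \cite{BP21} is needed, is justifying the reassociation of non-proper compositions: the identity $(\cor_{\Delta_r}\npconv\cor_\Theta)\npconv F \simeq \cor_{\Delta_r}\npconv(\cor_\Theta\npconv F)$ for \emph{arbitrary} $F$, which is precisely Lemma \ref{lem:assoboule} and rests on the micro-support estimate $\SSi(\cor_{\Delta_r})\cap (T^\ast_\V\V\times T^\ast\V)\subset T^\ast_{\V\times\V}(\V\times\V)$ verified there via $\cor_{\Delta_r}\simeq\opb{u}\cor_{B_r}$ and \cite[Prop.~5.4.5]{KS90}, feeding into Theorem \ref{th:assocnp}. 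In \cite{BP21} this reassociation was available only under $\gamma$-properness (which guarantees the relevant compositions are proper); here the point is that Theorem \ref{th:assocnp} trades properness for the cohomological constructibility of $\cor_{\Delta_r}$ plus the micro-support condition on the first factor, both of which hold unconditionally. Once Lemma \ref{lem:assoboule} is in hand, the rest of the proof is a faithful transcription of the argument of \cite[Prop.~5.8]{BP21}, replacing every invocation of $\gamma$-proper associativity by Lemma \ref{lem:assoboule} and the formulas \eqref{eq:gamtheta}, \eqref{eq:lintheta}, \eqref{eq:isoboule}.
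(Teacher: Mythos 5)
Your proposal is correct and follows essentially the same route as the paper: translate the interleaving morphisms into the kernel/convolution picture via $\oim{\tau_w}(\cdot)\simeq\cor_{w+\gamma^a}\npsconv(\cdot)$ (the paper invokes \cite[Lem.~4.3]{BG18} for this), then use the chain of isomorphisms built from \eqref{eq:isoboule}, Lemma \ref{lem:convtocomp}, \eqref{eq:lintheta}, \eqref{eq:gamtheta}, and crucially Lemma \ref{lem:assoboule} to identify $\cor_{rv+\gamma^a}\npsconv F$ with $K_r\star F$ on $\gamma$-sheaves, which turns the interleaving diagram into the $r$-isomorphism diagram and conversely. You correctly pinpointed that the single genuine improvement over \cite[Prop.~5.8]{BP21} is the unconditional reassociation from Lemma \ref{lem:assoboule} via Theorem \ref{th:assocnp}, which replaces the $\gamma$-properness hypothesis; one small slip is the phrase that $\cor_\Theta\npconv F\simeq F$ ``up to the equivalence $\roim{\phi_\gamma}$'' --- this holds on the nose in $\Derb_{\gamma^{\circ,a}}(\cor_\V)$ by Proposition \ref{prop:gammaloc}, no change of category required.
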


\begin{proof}
	Let $F, G \in \Derb_{\gamma^{\circ,\,a}}(\cor_{\V})$. Assume they are $r \cdot v$-interleaved. We set $w=r \cdot v$. Hence, we have the maps 
	\begin{align*}
	\alpha \colon \tau_{w \ast} F \to G && \beta \colon \tau_{w \ast} G \to  F
	\end{align*}
	such that the below diagrams commute
	\begin{align*}
	\xymatrix{\tau_{2w \ast} F \ar[r]^-{\oim{\tau_w} \alpha} \ar@/_2pc/[rr]_{{\chi^\mu_{0,2w}}(F)} & \oim{\tau_{w}} G \ar[r]^-{\oim{\tau_{w}} \beta} &  F 
		&&
		\tau_{2w \ast} G \ar[r]^-{\oim{\tau_w} \beta} \ar@/_2pc/[rr]_{{\chi^\mu_{2w,0}}(G)} & \oim{\tau_{w}} F \ar[r]^-{\oim{\tau_{w}} \alpha} &  G. 
		\\
	}
	\end{align*}
	Using \cite[Lem.~4.3]{BG18}, we obtain 
	\begin{align*}
	\xymatrix{\cor_{2w+\gamma^a} \npsconv F \ar[rr]^-{\cor_{2w+\gamma^a} \npsconv \alpha} \ar@/_2pc/[rrrr]_{{\chi_{2w,0}} \npsconv F} && \cor_{w+\gamma^a} \npsconv G \ar[rr]^-{\cor_{w+\gamma^a} \npsconv \beta} &&   F.
	}
	\end{align*}
Moreover for every $r \geq 0$, we have the following isomorphisms
\begin{align*}
	\cor_{r v + \gamma^a} \npsconv F &\simeq \cor_{B_r + \gamma^a} \npsconv F \quad \quad \textnormal{by Equation \eqref{eq:isoboule}}\\
	& \simeq \cor_{\Delta_r + \Theta} \npconv F \quad \quad \textnormal{by Lemma \ref{lem:convtocomp} (i)}\\
	&\simeq (\cor_{\Delta_r} \npconv \cor_{\Theta}) \npconv F \quad \quad \textnormal{by Equation \eqref{eq:lintheta}}\\
	&\simeq \cor_{\Delta_r} \npconv (\cor_{\Theta} \npconv F) \quad \quad \textnormal{by Lemma \ref{lem:assoboule}}\\
	& \simeq \cor_{\Delta_r} \npconv (\cor_{\gamma^a} \npsconv F) \quad \quad \textnormal{by Equation \eqref{eq:gamtheta}}\\
	&\simeq \cor_{\Delta_r} \npconv F \quad \quad \textnormal{by Proposition \ref{prop:gammaloc}}\\
	&\simeq \cor_{B_r} \npsconv F \quad \quad \textnormal{by Lemma \ref{lem:convtocomp} (i)}\\
	&\simeq \cor_{B_r} \star F \quad \quad \textnormal{(compacity of $B_r$)}.
\end{align*}
Hence, we obtain the commutative diagram

	\begin{align*}
	\xymatrix{\cor_{B_{2c}} \star F \ar[rr]^-{\cor_{B_c} \star \alpha} \ar@/_2pc/[rrrr]_{{\rho_{0,2c}} \star F} && \cor_{B_c} \star G \ar[rr]^-{\beta} &&   F
	}.
	\end{align*}
	Similarly we obtain the following commutative diagram
	\begin{align*}
	\xymatrix{\cor_{B_{2r}} \star G \ar[rr]^-{\cor_{B_{r}} \star \beta} \ar@/_2pc/[rrrr]_{{\rho_{0,2r}} \star G} && \cor_{B_r} \star F \ar[rr]^-{\alpha} &&   G
	}.
	\end{align*}Hence, $F$ and $G$ are $r$-isomorphic. 
	
	A similar argument proves that if $F$ and $G$ are $r$-isomorphic then they are $r\cdot v$-interleaved.
\end{proof}

\begin{corollary}\label{cor:gamconv}
	Let $v \in \Int(\gamma^a)$, $F, G \in \Derb_{\gamma^{\circ,\,a}}(\cor_{\V})$. Then
	\begin{equation*}
	\dist_\V^v(F,G)=d^v_{I^\mu}(F,G)
	\end{equation*}
	where $\dist_\V^v$ is the convolution distance associated with the norm $\|\cdot \|_v$ defined in  equation \eqref{eq:normcool}.
\end{corollary}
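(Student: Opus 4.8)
The plan is to obtain the equality directly from Theorem \ref{thm:isomBP21}, since all the substantive work has already been done there. Recall that, by definition, $\dist_\V^v(F,G)$ is the infimum of the set $R_{\mathrm{iso}} = \{r \geq 0 \mid F \text{ and } G \text{ are } r\text{-isomorphic}\}$ together with $\{\infty\}$, where the $r$-isomorphism condition is formulated using the sheaves $K_r = \cor_{B_r}$ associated with the norm $\|\cdot\|_v$ whose unit ball is $B_v$ --- this is exactly the norm fixed in this subsection, so that $B_r = r B_v$ and $\Delta_r = \opb{u}(B_r)$ as above. Similarly, $d^v_{I^\mu}(F,G)$ is the infimum of $R_{\mathrm{int}} = \{r \geq 0 \mid F \text{ and } G \text{ are } r\cdot v\text{-interleaved}\}$ together with $\{\infty\}$.

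The first step is to observe that Theorem \ref{thm:isomBP21} asserts, for each fixed $r \in \R_{\geq 0}$, that $F$ and $G$ are $r\cdot v$-interleaved if and only if they are $r$-isomorphic; hence $R_{\mathrm{iso}} = R_{\mathrm{int}}$ as subsets of $\R_{\geq 0}$. The second step is simply to pass to infima: since the two sets coincide (in particular one is empty exactly when the other is, in which case both distances equal $\infty$ by convention), one gets $\dist_\V^v(F,G) = \inf(R_{\mathrm{iso}} \cup \{\infty\}) = \inf(R_{\mathrm{int}} \cup \{\infty\}) = d^v_{I^\mu}(F,G)$.

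There is no genuine obstacle at this stage --- the content is entirely in Theorem \ref{thm:isomBP21}, which in turn rests on the non-proper associativity result (Theorem \ref{th:assocnp}), Lemma \ref{lem:assoboule}, and the chain of isomorphisms identifying $\cor_{rv + \gamma^a} \npsconv F$ with $\cor_{B_r} \star F$. The only point worth flagging is a bookkeeping one: one must check that the norm underlying the convolution distance in the statement is indeed $\|\cdot\|_v = g_{B_v}$ and not some other norm on $\V$. This is guaranteed because $\V$ was equipped with precisely this norm at the beginning of the subsection, and it is the same norm that enters the definition of $\Delta_r$ used throughout the proof of Theorem \ref{thm:isomBP21}; for this reason I would state the corollary, as is done here, with the convolution distance explicitly indexed by $v$.
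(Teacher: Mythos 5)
Your argument is correct and is exactly the implicit one in the paper: the corollary is stated without proof because it follows immediately by taking infima over the sets of $r$ for which the two (now equivalent, by Theorem \ref{thm:isomBP21}) conditions hold. Your bookkeeping remark about the norm being $\|\cdot\|_v$ is accurate and matches the paper's set-up in this subsection.
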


\begin{proposition}\label{P:cohomologyinterleaving}
Let $v \in \Int(\gamma^a)$, $F, G \in \Derb_{\gamma^{\circ,\,a}}(\cor_{\V})$ and $c\in \R_{\geq 0}$. Assume that $F$ and $G$ are $c\cdot v$-interleaved. Then for all $j \in \Z$, $\Hn^j(F)$ and $\Hn^j(G)$ are $c\cdot v$-interleaved. 
\end{proposition}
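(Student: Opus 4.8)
The plan is to show that applying the cohomology functor $\Hn^j(-)$ to the interleaving diagram for $F$ and $G$ produces an interleaving diagram for $\Hn^j(F)$ and $\Hn^j(G)$. The key point to check is that $\Hn^j$ commutes (in the appropriate sense) with the translation functors $\oim{\tau_v}$ and sends the structure morphisms $\chi^\mu_{2w,0}$ to the corresponding ones. First I would recall that $\tau_v$ is a homeomorphism of $\V$, so $\oim{\tau_v} = \opb{\tau_{-v}}$ is exact; hence it commutes with the cohomology functors, giving a natural isomorphism $\Hn^j(\oim{\tau_v} F) \simeq \oim{\tau_v} \Hn^j(F)$, and likewise the canonical iso $\oim{\tau_{2v}} \simeq \oim{\tau_v}\oim{\tau_v}$ is compatible with this. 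Since $\Hn^j$ is a functor $\Derb(\cor_\V) \to \Mod(\cor_\V)$ (viewing sheaves as complexes concentrated in degree $0$), it sends the morphisms $\alpha \colon \oim{\tau_w}F \to G$ and $\beta \colon \oim{\tau_w}G \to F$ (with $w = c\cdot v$) to morphisms $\Hn^j(\alpha) \colon \oim{\tau_w}\Hn^j(F) \to \Hn^j(G)$ and $\Hn^j(\beta) \colon \oim{\tau_w}\Hn^j(G) \to \Hn^j(F)$.

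Next I would verify that $\Hn^j(\chi^\mu_{2w,0}(F)) = \chi^\mu_{2w,0}(\Hn^j(F))$. By construction (see \cite[\S 4.1.2]{BP21}), the morphism $\chi^\mu_{v,w}$ for $w + \gamma \subset v + \gamma$ is obtained from the projector $\cor_{\gamma^a} \npsconv (-)$ and the canonical morphisms $\cor_{w + \gamma^a} \to \cor_{v + \gamma^a}$; applying the exact functor $\Hn^j$ and using that $\Hn^j$ commutes with $\oim{\tau_v}$, one gets exactly the structure morphism attached to $\Hn^j(F)$ (here one uses that $\Hn^j(F)$, being in degree $0$, still lies in $\Derb_{\gamma^{\circ,a}}(\cor_\V)$: indeed $\SSi(\Hn^j(F)) \subset \SSi(F) \subset \V \times \gamma^{\circ,a}$ by \cite[Prop.~5.1.1]{KS90} on the micro-support of the cohomology objects of a complex, so the gammaification interpretation applies). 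Then applying $\Hn^j$ to the two commuting triangles in the definition of $c\cdot v$-interleaving for $F,G$ yields, term by term, the two commuting triangles defining a $c\cdot v$-interleaving between $\Hn^j(F)$ and $\Hn^j(G)$, with interleaving morphisms $\Hn^j(\alpha)$ and $\Hn^j(\beta)$.

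The main obstacle, and the only genuinely non-formal step, is the identification $\Hn^j(\chi^\mu_{2w,0}(F)) = \chi^\mu_{2w,0}(\Hn^j(F))$: one must unwind the definition of $\chi^\mu$ from \cite{BP21} and check that the gammaification projector, which is defined via the non-proper convolution $\cor_{\gamma^a}\npsconv(-)$ (not an exact functor), interacts correctly with $\Hn^j$. The cleanest route is to observe that for a sheaf $H$ concentrated in degree $0$ with $\SSi(H)\subset \V\times\gamma^{\circ,a}$, Proposition \ref{prop:gammaloc} gives $H \simeq H \npsconv \cor_{\gamma^a}$ already at the level of the single sheaf, and the morphism $\chi^\mu_{2w,0}(H)$ is then literally the map $\oim{\tau_{2w}}H \to H$ induced by $\cor_{2w+\gamma^a}\to\cor_{\gamma^a}$ under this identification; since $\Hn^j(F)$ satisfies this hypothesis, and $\Hn^j$ applied to $\chi^\mu_{2w,0}(F)$ is induced by the same map of sheaves $\cor_{2w+\gamma^a}\to\cor_{\gamma^a}$, the two coincide. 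With this in hand the rest is a diagram chase, and the proposition follows.
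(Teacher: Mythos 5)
Your proposal is correct and follows the same route as the paper: the paper's proof is a two-sentence version of yours (``$\tau_{w\ast}$ is exact on $\Mod(\cor_\V)$, so apply $\Hn^j$ to the interleaving diagram''), and you usefully make explicit the two points the paper leaves implicit, namely that $\Hn^j(F)$ is again a $\gamma$-sheaf and that $\Hn^j$ is compatible with the structure morphisms $\chi^\mu_{2w,0}$. One small caveat: the claim $\SSi(\Hn^j F)\subset \SSi(F)$ does not follow directly from the distinguished-triangle estimate of \cite[Prop.~5.1.1]{KS90}, and I am not aware that it holds for arbitrary complexes; for $\gamma$-sheaves the cleanest justification is that $\opb{\phi_\gamma}$ is exact and $\Derb_{\gamma^{\circ,a}}(\cor_\V)$ is the essential image of $\opb{\phi_\gamma}\colon\Derb(\cor_{\V_\gamma})\to\Derb(\cor_\V)$, so $\Hn^j(F)\simeq\opb{\phi_\gamma}\Hn^j(\roim{\phi_\gamma}F)$ lies again in $\Derb_{\gamma^{\circ,a}}(\cor_\V)$.
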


\begin{proof}
For all $w \in \Int{\gamma^a}$, the functor $\tau_{w \ast} : \Mod(\cor_\V) \to \Mod(\cor_\V)$ is exact. Therefore, applying the functor $\Hn^j$ to a $c\cdot v $-interleaving diagram between $F$ and $G$ produces a  $c\cdot v $-interleaving diagram between $\Hn^j(F)$ and $\Hn^j(G)$.
\end{proof}

\begin{corollary}
For $v \in \Int{\gamma^a}$ and $F, G \in \Derb_{\gamma^{\circ,\,a}}(\cor_{\V})$, one has: \begin{enumerate}
    \item $\max_j d^v_{I^\mu}(\Hn^j(F),\Hn^j(G)) \leq  d^v_{I^\mu}(F,G)$,
    
    \item $\max_j \dist_\V^v(\Hn^j(F),\Hn^j(G)) \leq  \dist_\V^v(F,G)$.
\end{enumerate}
\end{corollary}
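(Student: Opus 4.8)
The plan is to read off both inequalities from what has just been proved: part $(1)$ is essentially a restatement of Proposition \ref{P:cohomologyinterleaving}, and part $(2)$ follows from part $(1)$ together with the identification of the interleaving and convolution distances in Corollary \ref{cor:gamconv}.

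For $(1)$, the first thing I would check is that $\Hn^j(F)$ and $\Hn^j(G)$ are again objects of $\Derb_{\gamma^{\circ,a}}(\cor_\V)$, so that $d^v_{I^\mu}(\Hn^j(F),\Hn^j(G))$ makes sense; this is in fact already used implicitly in the proof of Proposition \ref{P:cohomologyinterleaving}. It holds because $\Derb_{\gamma^{\circ,a}}(\cor_\V)$ is the essential image of $\opb{\phi_\gamma}\colon\Derb(\cor_{\V_\gamma})\to\Derb(\cor_\V)$, a functor which is exact and hence commutes with the cohomology functors $\Hn^j$: writing $F\simeq\opb{\phi_\gamma}\widetilde F$ with $\widetilde F\in\Derb(\cor_{\V_\gamma})$, one gets $\Hn^j(F)\simeq\opb{\phi_\gamma}\Hn^j(\widetilde F)$, which lies in that essential image. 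Granting this, I would take any $c>d^v_{I^\mu}(F,G)$, so that $F$ and $G$ are $c\cdot v$-interleaved; Proposition \ref{P:cohomologyinterleaving} then gives that $\Hn^j(F)$ and $\Hn^j(G)$ are $c\cdot v$-interleaved for every $j\in\Z$, whence $d^v_{I^\mu}(\Hn^j(F),\Hn^j(G))\le c$ for all $j$. Since $F$ and $G$ are bounded, only finitely many indices $j$ contribute a nonzero term, so $\max_j d^v_{I^\mu}(\Hn^j(F),\Hn^j(G))\le c$; letting $c$ decrease to $d^v_{I^\mu}(F,G)$ yields $(1)$, the case $d^v_{I^\mu}(F,G)=\infty$ being vacuous.

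For $(2)$, since $v\in\Int(\gamma^a)$ and $F$, $G$, $\Hn^j(F)$, $\Hn^j(G)$ all belong to $\Derb_{\gamma^{\circ,a}}(\cor_\V)$, Corollary \ref{cor:gamconv} gives $\dist_\V^v(F,G)=d^v_{I^\mu}(F,G)$ and $\dist_\V^v(\Hn^j(F),\Hn^j(G))=d^v_{I^\mu}(\Hn^j(F),\Hn^j(G))$ for every $j$. Substituting these equalities into $(1)$ gives $(2)$.

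I do not expect any real obstacle: the whole substance is contained in Proposition \ref{P:cohomologyinterleaving} and Corollary \ref{cor:gamconv}, and no microlocal or analytic input beyond what is already in the section is needed. The only points deserving a line of justification — the nearest thing to a difficulty — are the well-posedness matters: that $\Hn^j$ keeps us inside the subcategory of $\gamma$-sheaves (exactness of $\opb{\phi_\gamma}$), that ``$\max_j$'' is a genuine maximum rather than a supremum (boundedness of $F$ and $G$), and that the infinite-distance case causes no trouble.
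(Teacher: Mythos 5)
Your proof is correct and follows the same route as the paper: part (1) from Proposition \ref{P:cohomologyinterleaving} by an infimum argument, and part (2) from part (1) via Corollary \ref{cor:gamconv}. The extra care you take on well-posedness (that $\Hn^j$ preserves the $\gamma$-sheaf condition via exactness of $\opb{\phi_\gamma}$, that boundedness makes the maximum genuine, and the infinite-distance case) is correct and only adds detail the paper leaves implicit.
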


\begin{proof}
The first inequality is a direct consequence of Proposition \ref{P:cohomologyinterleaving}. The second one is a consequence of the first one, together with Corollary \ref{cor:gamconv}.
\end{proof}

\subsection{The dimension one case}

When $\V$ is a one-dimensional real vector space, the category $\Derb_{\rc}(\cor_\V)$ enjoys a structure theorem which ensures the existence of a graded-barcode for its objects, and allow to derive explicit computations for the convolution distance. This was studied in detail in \cite{KS18, BG18}. These results extends the key theorem of Crawley-Boevey \cite{CB14} to constructible sheaves on the real line. In this section, we recall the main results that will be useful in the following of the article.

\begin{theorem}[{\cite[Thm 1.17]{KS18}}]\label{T:Decomposition}

Let $F \in \Mod_{\rc}(\cor_\R)$, then there exists a unique locally finite multi-set of intervals of $\R$ noted $\mathbb{B}(F)$ such that $$F \simeq \bigoplus_{I \in \mathbb{B}(F)} \cor_{I}.$$ Moreover, this decomposition is unique up to isomorphism.

\end{theorem}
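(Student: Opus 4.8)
The plan is to prove the two assertions—existence/uniqueness of the interval decomposition—essentially by reducing to the structure theory of one-parameter persistence modules, i.e.\ to the theorem of Crawley-Boevey \cite{CB14}. First I would recall that an object $F \in \Mod_{\rc}(\cor_\R)$ is by definition a constructible sheaf, so there is a finite (locally finite) stratification $\R = \bigsqcup_k (a_k, a_{k+1}) \sqcup \{a_k\}$ of $\R$ by intervals on which the cohomology sheaf $\Hn^0(F)$ is locally constant with finite-dimensional stalks. The idea is to pass between the sheaf $F$ and the functor $\widehat{F}\colon (\R,\leq) \to \Mod(\cor)$, $t \mapsto \sect(\{x \geq t\}; F)$ (equivalently work with the $\gamma$-sheaf picture for $\gamma = \R_{\leq 0}$ as set up in Section~\ref{sec:gammasheaves}, specializing the equivalence $\roim{\phi_\gamma}$ to dimension one). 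Constructibility of $F$ translates into the ``pfd/tameness'' hypothesis on $\widehat{F}$, so Crawley-Boevey's theorem applies and yields a locally finite decomposition $\widehat{F} \simeq \bigoplus_{I} \cor_I$ into interval modules; transporting back along the equivalence of categories gives $F \simeq \bigoplus_{I \in \mathbb{B}(F)} \cor_I$ with $\mathbb{B}(F)$ locally finite.

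The key steps, in order, are: (1) use constructibility of $F$ to produce the stratification and deduce that the associated persistence module is pointwise finite-dimensional and ``tame'' (constant on the strata), so that the hypotheses of the Crawley-Boevey decomposition theorem are met; (2) invoke \cite{CB14} (or its sheaf-theoretic reformulation) to get a direct-sum decomposition into interval sheaves $\cor_I$; (3) check local finiteness of the multiset $\mathbb{B}(F)$—this follows because on any compact $K \subset \R$ only finitely many strata meet $K$ and each contributes boundedly many intervals, by finite-dimensionality of the stalks; (4) prove uniqueness: since $\cor_I$ for distinct intervals $I$ are pairwise non-isomorphic indecomposables with local endomorphism rings, the decomposition is unique by a Krull--Remak--Schmidt / Azumaya-type argument valid for locally finite direct sums of objects with local endomorphism rings, which is exactly the content invoked in \cite{KS18, CB14}.

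The main obstacle I expect is step (1) together with the bookkeeping needed to match the sheaf-theoretic constructibility condition with the exact hypothesis under which Crawley-Boevey's theorem holds, and in particular handling the four types of intervals (open, closed, half-open on either side) correctly—a constructible sheaf on $\R$ genuinely produces all four, unlike the classical ``sublevel set'' picture, so one must be careful that the dictionary between $F$ and $\widehat F$ (or between $F$ and its ``graded-barcode'') is set up to see closed and half-open endpoints. Since this is quoted verbatim as \cite[Thm 1.17]{KS18}, in the paper the honest move is simply to cite it; a self-contained proof would spell out the stratification argument and the Krull--Remak--Schmidt step as above, with uniqueness being the only part requiring a genuine (though standard) argument once the decomposition exists.
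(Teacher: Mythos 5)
The paper gives no proof here: Theorem~\ref{T:Decomposition} is a verbatim citation of \cite[Thm~1.17]{KS18}, so there is no ``paper's own proof'' to match against, and your closing observation that the honest move is simply to cite is exactly what the authors do. The question is therefore whether your sketch would actually furnish a proof, and in its present form it has a genuine gap precisely at the translation step you flagged.

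Your step~(1) proposes to pass from $F\in\Mod_{\rc}(\cor_\R)$ to a persistence module, either by using the equivalence $\roim{\phi_\gamma}\colon \Derb_{\gamma^{\circ,a}}(\cor_\V)\to\Derb(\cor_{\V_\gamma})$ of Section~\ref{sec:gammasheaves} or by the functor $\widehat F(t)=\sect(\{x\ge t\};F)$. Neither route is available for an arbitrary constructible sheaf on $\R$. The equivalence $\roim{\phi_\gamma}$ is only defined on the subcategory $\Derb_{\gamma^{\circ,a}}(\cor_\R)$ of $\gamma$-sheaves, and a general object of $\Mod_{\rc}(\cor_\R)$ is not a $\gamma$-sheaf: with $\gamma=(-\infty,0]$, the $\gamma$-sheaves are (sums of) $\cor_{[a,b)}$, while $\cor_{(a,b)}$, $\cor_{[a,b]}$, and $\cor_{(a,b]}$ all fail the microsupport condition $\SSi(F)\subset\R\times\gamma^{\circ,a}$. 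The alternative functor $\widehat F$ is not faithful on $\Mod_{\rc}(\cor_\R)$ either; for instance $\widehat{\cor_{(0,1)}}$ and $\widehat{\cor_{[0,1)}}$ cannot be told apart (sections of $\cor_{(0,1)}$ over $[t,\infty)$ vanish for all $t$), so one cannot hope to recover $F$, let alone its decomposition, from $\widehat F$ and Crawley-Boevey alone. You correctly note at the end that ``a constructible sheaf on $\R$ genuinely produces all four'' interval types and that the dictionary must ``be set up to see closed and half-open endpoints,'' but this is the very point where the plan breaks; naming the obstacle does not resolve it.

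The content of \cite[Thm~1.17]{KS18} is precisely that the decomposition result holds for the \emph{whole} category $\Mod_{\rc}(\cor_\R)$, and the argument there is a direct sheaf-theoretic one (stratify $\R$, show that indecomposable constructible sheaves are exactly the $\cor_I$, then apply a Krull--Remak--Schmidt argument for locally finite sums of objects with local endomorphism rings), not a reduction to the sublevel-set persistence module and \cite{CB14}. If you want to keep the reduction-to-persistence strategy, you would need a faithful dictionary that sees all four boundary behaviours (e.g.\ the level-set / Mayer--Vietoris system translation of \cite{BGO19}), which is a considerably more involved construction than the $\gamma$-sheaf equivalence, and should be stated explicitly as the bridge. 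Steps~(3) and~(4) of your sketch (local finiteness from finite-dimensional stalks plus a stratification, and uniqueness via Azumaya/Krull--Remak--Schmidt) are fine once a correct decomposition-producing step is in place.
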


Since for $I$ and $J$ some intervals of $\R$, one has $\text{Ext}^j(\cor_I,\cor_J) \simeq 0$ for all $j > 1$, Theorem \ref{T:Decomposition} has the following useful corollary. 

\begin{corollary}\label{C:structure}
Let $F\in \Derb_{\rc}(\cor_\R)$. Then there exists an isomorphism in $\Derb_{\rc}(\cor_\R)$: $$F \simeq \bigoplus_{j\in\Z} \Hn^j(F)[-j],$$
where $\Hn^j(F)$ is seen as a complex concentrated in degree $0$.
\end{corollary}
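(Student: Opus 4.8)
The plan is to deduce Corollary \ref{C:structure} from Theorem \ref{T:Decomposition} by a standard argument showing that a bounded complex splits into its shifted cohomology objects whenever all the relevant higher $\mathrm{Ext}$-groups vanish. First I would reduce to the case of a single constructible sheaf in degree $0$: by Theorem \ref{T:Decomposition}, each cohomology sheaf $\Hn^j(F)$ is a locally finite direct sum $\bigoplus_{I \in \mathbb{B}(\Hn^j(F))} \cor_I$ of constant sheaves on intervals. Then I would record the computation $\mathrm{Ext}^k_{\cor_\R}(\cor_I,\cor_J) = 0$ for all $k \geq 2$ and all intervals $I,J$ of $\R$ (this is the fact the excerpt cites just before the statement), and note that by additivity of $\mathrm{Ext}$ in each variable together with local finiteness, this yields $\mathrm{Ext}^k_{\cor_\R}(\Hn^i(F),\Hn^j(F)) = 0$ for $k \geq 2$; more relevantly, $\mathrm{Hom}_{\Derb_{\rc}(\cor_\R)}(\Hn^i(F)[-i],\Hn^j(F)[-j][k]) = \mathrm{Ext}^{k+i-j}_{\cor_\R}(\Hn^i(F),\Hn^j(F))$ vanishes whenever $k + i - j \geq 2$.

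Next I would run the usual inductive splitting via the truncation triangles. Writing $\tau_{\leq n}$ and $\tau_{\geq n}$ for the standard truncation functors on $\Derb_{\rc}(\cor_\R)$, there is for each $n$ a distinguished triangle
\begin{equation*}
\tau_{\leq n-1} F \longrightarrow \tau_{\leq n} F \longrightarrow \Hn^n(F)[-n] \xrightarrow{\ +1\ }
\end{equation*}
and I claim this triangle splits, i.e. the connecting morphism $\Hn^n(F)[-n] \to (\tau_{\leq n-1}F)[1]$ is zero. Indeed this connecting morphism lives in $\mathrm{Hom}(\Hn^n(F)[-n], (\tau_{\leq n-1}F)[1])$, and using the triangle defining $\tau_{\leq n-1}F$ (or by dévissage on its cohomology objects $\Hn^j(F)[-j]$ with $j \leq n-1$) this $\mathrm{Hom}$-group is built from the groups $\mathrm{Hom}(\Hn^n(F)[-n], \Hn^j(F)[-j][1]) = \mathrm{Ext}^{1+n-j}_{\cor_\R}(\Hn^n(F),\Hn^j(F))$ with $j \leq n-1$, hence with $1 + n - j \geq 2$, which all vanish by the previous paragraph. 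Therefore the triangle splits, giving $\tau_{\leq n}F \simeq \tau_{\leq n-1}F \oplus \Hn^n(F)[-n]$, and since $F$ is bounded an induction on the (finite) amplitude yields $F \simeq \bigoplus_{j \in \Z} \Hn^j(F)[-j]$.

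I expect the only real subtlety — the main obstacle — to be the bookkeeping needed to pass from the vanishing of $\mathrm{Ext}^{\geq 2}$ between individual interval sheaves to the vanishing of the relevant $\mathrm{Hom}$-group between the truncations $\tau_{\leq n-1}F$ and $\Hn^n(F)[-n][1]$. This requires knowing that $\mathrm{Hom}_{\Derb_{\rc}}$ commutes with the (possibly infinite) direct sums appearing in $\mathbb{B}(\Hn^j(F))$; local finiteness of these multi-sets (guaranteed by Theorem \ref{T:Decomposition}) ensures that on any relatively compact region only finitely many summands contribute, so the sheaf-Hom and Ext sheaves are locally finite and the global $\mathrm{Ext}$ computations reduce to finite sums, where additivity is immediate. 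Everything else — the construction of the truncation triangles, the long exact sequences relating $\mathrm{Hom}$ of a cone to $\mathrm{Hom}$ of its two sides, and the induction on amplitude — is formal and standard in a triangulated category, so I would present those briskly and spend the written detail on the $\mathrm{Ext}$-vanishing reduction.
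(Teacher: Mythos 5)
Your argument is correct and is precisely the argument the paper intends: the paper's proof consists of the single sentence preceding the statement, namely that $\mathrm{Ext}^k(\cor_I,\cor_J)=0$ for $k\geq 2$ forces the truncation triangles of $F$ to split, and your dévissage via $\tau_{\leq n-1}F \to \tau_{\leq n}F \to \Hn^n(F)[-n]$ is the standard way to unfold that sentence.

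One remark on the subtlety you single out at the end. The clean way to see that the $\mathrm{Ext}$-vanishing passes from individual intervals to the locally finite sums $\Hn^j(F)\simeq\bigoplus_{I\in\mathbb{B}(\Hn^j(F))}\cor_I$ is the observation (made explicit in the paper's discussion of $\textbf{Barcode}$, Definition \ref{D:CatBarcode}) that for a locally finite family of interval sheaves the coproduct and the product agree in $\Derb_{\rc}(\cor_\R)$. Consequently $\mathrm{Hom}$ (hence $\mathrm{Ext}^k$) out of such a sum and into such a sum is a product over pairs of intervals, and the vanishing of each factor gives the vanishing of the whole group. Your "only finitely many summands contribute near any compact" heuristic is the geometric reason this holds, but invoking the product/coproduct coincidence directly is tighter and avoids having to argue through $\rhom$ and $\rsect$ separately. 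With that substitution, the bookkeeping step you flagged as the main obstacle becomes immediate, and the rest of your proof is exactly what the paper has in mind.
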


\begin{definition}[{\cite[Definiton 2.13]{BG18}}]
\label{D:GradedBarcodes}
Let $F\in \Derb_{\rc}(\cor_\R)$, we define its \emph{graded-barcode} $\B(F)$ as the collection $(\mathbb{B}^j(F))_{j\in \Z}$ where $\B^j(F) := \B(\text{H}^j(F))$. Furthermore, to indicate that an interval $I\subset \R$ appears in degree $j\in \Z$ in the graded-barcode of $F$, we write $I^j \in \B(F)$. The element $I^j$ is called a graded-interval.
    
\end{definition}

We recall here the construction of the category $\textbf{Barcode}$ of \cite{BG18}, which is an explicit skeleton of $\Derb_{\rc}(\cor_\R)$. 
Let $\text{Inter}(\R)$ be the set of intervals of $\R$ and $p_1$,  $p_2$ be the projections on the first two coordinates of  $\text{Inter}(\R) \times \Z \times \Z_{\geq 0}$. Let $\mathbb{B}$ be a subset of $ \text{Inter}(\R) \times \Z \times \Z_{\geq 0}$. Then  $\mathbb{B}$ is said to be 
\begin{itemize}

\item \emph{locally finite} if $p_1(\mathbb{B})\cap K$ is finite for all compact subsets $K \subset \R$; 

\item \emph{bounded} if $p_2(\mathbb{B}) \subset \Z$ is bounded;

\item \emph{well-defined} if the fibers of the projection $(p_1,p_2)$ have cardinality at most $1$.
\end{itemize}
In a triple $(I,j,n) \in\mathbb{B}$, the first integer stands for the degree in which the interval $I$ is seen and the second non-negative integer $n$ stands for its multiplicity.
\begin{definition}[{\cite[Definition 6.11]{BG18}}]\label{D:CatBarcode}
The category $\textbf{Barcode}$ has as objects the elements of the set
\begin{align*}
\Ob(\textbf{Barcode}) &=\{\mathbb{B} \subset \text{Int}(\R) \times \Z\times \Z_{\geq 0}, \mathbb{B} ~\text{is bounded, locally} \\
&\hspace{0.7cm}\text{finite and well-defined} \}.
\end{align*}
\noindent For any $\mathbb{B}$ and $\mathbb{B}' \in \textbf{Barcode}$,  the set of their morphisms is
\begin{equation*}
 \Hom[\textbf{Barcode}](\mathbb{B},\mathbb{B}') =  \prod_{\substack{(I,j,n)\in \mathbb{B} \\ (I',j',n') \in \mathbb{B}'}} \Hom[\Derb_{\rc}(\cor_\R)] \left (\cor_I^n[-j] , \cor_{I'}^{n'}[-j'] \right ).
\end{equation*}
\end{definition}
\noindent We define the composition in \textbf{Barcode} so that the mapping : $$\iota : \Ob(\textbf{Barcode})\ni \mathbb{B}  \mapsto \bigoplus_{(I,j,n)\in \mathbb{B}} \cor_I^n[-j] \in \Ob(\Derb_{\rc}(\cor_\R))  $$ becomes a fully faithful functor : 
\begin{equation*}
\iota :  \textbf{Barcode} \longrightarrow \Derb_{\rc}(\cor_\R).
\end{equation*}
\noindent Note that this is possible only because the objects of $\textbf{Barcode}$ are locally finite (hence products and co-products coincide). Theorems \ref{T:Decomposition} and \ref{C:structure} assert that $\iota$ is essentially surjective, therefore is an equivalence. We also deduce from these theorems that $\textbf{Barcode}$ is a skeletal category: it satisfies for any $\mathbb{B}_1,\mathbb{B}_2 \in \textbf{Barcode}$, $$\mathbb{B}_1 \simeq \mathbb{B}_2 \text{~if and only if~} \mathbb{B}_1 = \mathbb{B}_2. $$

Since $\iota$ is an equivalence, let us denote by $\mathbb{B}$ a quasi-inverse of $\iota$. In \cite{BG18}, the authors define a matching distance between the objects of \textbf{Barcode} called the bottleneck distance, and denoted $d_B$. They prove the following isometry theorem, where $\R$ is endowed with the usual absolute value norm $| \cdot |$, and we denote by $\dist_\R$ the associated convolution distance on $\Derb(\cor_{\R})$.

\begin{theorem}[{\cite[Thm.~5.10]{BG18}}]
The functor $$\mathbb{B} : (\Derb_{\rc}(\cor_\R), \text{dist}_\R) \longrightarrow  (\textbf{Barcode}, d_B)$$is an isometric equivalence.
\end{theorem}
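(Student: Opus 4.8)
The plan is to prove the isometry theorem by combining the structural equivalence $\iota$ (which already identifies $\mathbf{Barcode}$ with $\Derb_{\rc}(\cor_\R)$ as categories) with a careful comparison of the two metrics on either side. Since $\mathbb{B}$ is defined as a quasi-inverse of $\iota$, it is automatically an equivalence of categories; the only content left is the isometry claim, namely that for all $F, G \in \Derb_{\rc}(\cor_\R)$ one has $d_B(\mathbb{B}(F), \mathbb{B}(G)) = \dist_\R(F, G)$. This is a \emph{derived} isometry theorem, so I would split it into two inequalities, both leveraging the direct-sum decomposition $F \simeq \bigoplus_{j} \Hn^j(F)[-j]$ of Corollary \ref{C:structure} together with the classical (non-derived) isometry theorem of Crawley-Boevey / Lesnick on the bottleneck versus interleaving distance for one-parameter persistence modules.

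First I would establish the stability inequality $d_B(\mathbb{B}(F), \mathbb{B}(G)) \leq \dist_\R(F, G)$. Given an $r$-isomorphism between $F$ and $G$, I would apply the cohomology functors $\Hn^j$ degreewise — using that $\cor_r \star (\cdot)$ is compatible with taking cohomology in the constructible setting on $\R$, or more precisely that the convolution $\cor_{[-r,r]} \star \cor_I$ of an interval sheaf is again an interval sheaf shifted appropriately — to extract, for each degree $j$, an $r$-interleaving between the persistence modules $\Hn^j(F)$ and $\Hn^j(G)$. The classical bottleneck stability theorem then gives $d_B(\B^j(F), \B^j(G)) \leq r$ for every $j$, and since the graded bottleneck distance $d_B$ on $\mathbf{Barcode}$ is (by its Definition in \cite{BG18}) the supremum over degrees of the degreewise bottleneck distances, we get $d_B(\mathbb{B}(F), \mathbb{B}(G)) \leq r$; taking the infimum over $r$ yields the inequality. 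The subtle point here is keeping track of the degree shift: convolution by $K_r = \cor_{B_r}$ does not shift degree, so $\Hn^j$ of an $r$-isomorphism diagram is literally an $r$-isomorphism diagram of the cohomology sheaves, which on $\R$ are sums of interval sheaves and hence correspond to persistence modules via the standard dictionary.

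Next I would establish the converse, the algebraic stability / interpolation inequality $\dist_\R(F, G) \leq d_B(\mathbb{B}(F), \mathbb{B}(G))$. Here I would use that a matching realizing the graded bottleneck distance up to $\epsilon$ decomposes, degree by degree, into a partial matching of the barcodes $\B^j(F)$ and $\B^j(G)$ with cost at most $r := d_B(\mathbb{B}(F),\mathbb{B}(G)) + \epsilon$. By the classical converse isometry theorem, each such degreewise matching yields an $r$-interleaving, hence an $r$-isomorphism, between $\Hn^j(F)$ and $\Hn^j(G)$, viewed as objects of $\Derb_{\rc}(\cor_\R)$ concentrated in degree $0$. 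Taking the direct sum over $j$ (after the degree shift $[-j]$, which commutes with $\star$ up to the same shift and does not affect the isomorphism data), and invoking Corollary \ref{C:structure}, I would assemble these into an $r$-isomorphism between $F$ and $G$, using that convolution and the $c$-isomorphism diagrams are additive in each variable. Letting $\epsilon \to 0$ gives the inequality.

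The main obstacle I anticipate is the bookkeeping at the interface between the sheaf-theoretic convolution distance and the purely combinatorial/algebraic one-parameter theory: one must be careful that an interval sheaf $\cor_I$ on $\R$ corresponds correctly to an interval persistence module under all four interval types (open, closed, half-open on each side), that the convolution $K_r \star \cor_I$ matches the shift operation on persistence modules, and that the $c$-isomorphism framework (with its commuting pentagon/hexagon diagrams) is genuinely equivalent to the interleaving framework in degree $0$ — this last equivalence is essentially the one-dimensional specialization of Theorem \ref{thm:isomBP21} or a direct check. Once this dictionary is pinned down, both inequalities reduce to the known one-parameter isometry theorem applied degreewise, and the additivity of all constructions over the direct-sum decomposition of Corollary \ref{C:structure} does the rest. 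I would also remark that local finiteness of the barcodes (built into the definition of $\mathbf{Barcode}$) is what guarantees that the degreewise matchings can be assembled without convergence issues, and boundedness of the degree set ensures the supremum over $j$ is attained over finitely many degrees.
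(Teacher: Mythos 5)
The paper does not prove this theorem: it is cited from \cite{BG18}. Your task would therefore be to supply a proof that [BG18] devotes a substantial part of a paper to, and your proposed strategy has a genuine gap that the paper itself explicitly flags.

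Your argument rests on the claim that both the convolution distance $\dist_\R$ and the bottleneck distance $d_B$ on $\mathbf{Barcode}$ decompose degreewise, so that the derived isometry reduces to the classical one-parameter isometry theorem applied in each cohomological degree. This is precisely what the paper warns against. Immediately after Corollary~\ref{C:gradedDistance} (which asserts this degreewise reduction \emph{for $\gamma$-sheaves}), the paper remarks: ``Note that Corollary~\ref{C:gradedDistance} is false if one only assumes $F,G \in \Derb_{\rc}(\cor_{\R})$.'' The remark following Definition~\ref{D:matching} makes the same point about $d_B$: degreewise matchings describe $d_B$ only under the $\gamma$-assumption, and ``it is nevertheless far from being true when removing the $\gamma$ assumption.'' So the degreewise strategy proves the $\gamma$-sheaf isometry theorem (Theorem~\ref{T:Isometry} / Corollary~\ref{C:gradedDistance} in the paper), but not the general statement you are asked to prove.

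The mechanism behind this failure is a factual error in your first step: you assert ``convolution by $K_r = \cor_{B_r}$ does not shift degree.'' This is false for general constructible sheaves on $\R$. Take $F = \cor_{(0,1)}$ and $r = 1$. Since $s$ is proper on the closure of $[-1,1]\times(0,1)$, one computes stalkwise: the fiber of $s$ over $t = \tfrac{1}{2}$ intersected with $[-1,1]\times(0,1)$ is an open interval, so
\begin{equation*}
	\Hn^1\bigl( (K_1 \star \cor_{(0,1)})_{1/2}\bigr) \simeq \Hn^1_c\bigl((-\tfrac{1}{2},\tfrac{1}{2});\cor\bigr) \simeq \cor \neq 0.
\end{equation*}
Thus $K_r \star (\cdot)$ is not exact on sheaves concentrated in degree $0$, and applying $\Hn^j$ to an $r$-isomorphism diagram does not produce an $r$-isomorphism diagram of the cohomology sheaves. (For $\gamma$-sheaves, where only half-open intervals $[a,b)$ occur, the convolution $K_r \star \cor_{[a,b)} \simeq \cor_{[a-r,b-r)}$ is again an interval sheaf in degree $0$, which is why the degreewise picture works there.) Correspondingly, the $d_B$ of \cite{BG18} on all of $\mathbf{Barcode}$ must allow certain matchings between graded-intervals in \emph{different} degrees — roughly, a short closed interval in degree $j$ can be matched against a short open interval in degree $j+1$, reflecting the degree shift visible above. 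This cross-degree bookkeeping is the real content of the derived isometry theorem, and it is exactly what a purely degreewise reduction misses. To repair the proof you would need to replicate the arguments of \cite{BG18} that handle all four interval types and these degree interactions, not just invoke the classical Crawley--Boevey/Lesnick theorem in each degree.
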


In this article, we will mostly be interested in $\gamma$-sheaves, since they are the sheaf theoretic analogue of persistence modules in a precise sense \cite{BP21}. Therefore, we unwrap the derived isometry theorem of \cite{BG18} in the simpler setting of $\gamma$-sheaves that will be useful for us in the following of the article. For the rest of this section, we set $\gamma = (-\infty, 0]$. Intervals appearing in the graded-barcodes of $\gamma$-sheaves are of the form $[a,b)$, with $a,b \in \R \cup \{\pm \infty\}$. Note that all results translate readily for the cone $\gamma' = [0, +\infty)$.

We recall that, given two (multi-)sets $X$ and $Y$, a partial matching between $X$ and $Y$ is the data $(\sigma, \mathcal{X}, \mathcal{Y})$ of two subsets $\mathcal{X} \subset X$ and $\mathcal{Y}\subset Y$, together with a bijection $\sigma : \mathcal{X} \to \mathcal{Y}$. In this situation, we use the notation $(\sigma, \mathcal{X}, \mathcal{Y}) :  X  \to Y$. 

\begin{definition}\label{D:matching}
    Let $F,G \in \Derb_{\rc,\gamma^{\circ,\,a}}(\cor_{\R})$, and $\varepsilon \geq 0$. An \emph{$\varepsilon$-matching} between $\B(F)$ and $\B(G)$ is the data of a collection of partial matchings $(\sigma^j, \mathcal{X}^j, \mathcal{Y}^j): \B^j(F) \to  \B^j(G)$, satisfying the following, for all $j \in \Z$:
    
    \begin{enumerate}
        \item for all $I \in \mathcal{X}^j$ such that $I = [a,b)$ with $a$ and $b$ in $\R \cup \{\pm \infty\}$, then $\sigma^j(I) = [a',b')$ with $a'$ and $b'$ in $\R \cup \{\pm \infty\}$, and\footnote{We set $|+\infty - +\infty| = |-\infty - (- \infty)| = 0$, and for all $x\in \R$, $|\pm \infty - x| = +\infty$.} $|a-a'| \leq \varepsilon $ and $|b-b'| \leq \varepsilon$, 
        
        \item for all $I =[a,b) \in \B^j(F) \backslash \mathcal{X}^j \cup \B^j(G) \backslash \mathcal{Y}^j $, then $|a-b| \leq 2 \varepsilon$.
    \end{enumerate}

\end{definition}

\begin{definition}
Let $F,G \in \Derb_{\rc,\gamma^{\circ,\,a}}(\cor_{\R})$, their bottleneck distance is defined by: $$d_B(F,G) = \inf \{\varepsilon\geq 0 \mid ~\mathbb{B}(F) ~ \text{and} ~ \mathbb{B}(G) ~\text{are $\varepsilon$-matched} \}. $$
\end{definition}

\begin{remark}
The matchings between graded barcodes of $\gamma$-sheaves are defined in the same way as between barcodes of persistence modules. Therefore, one can compute the bottleneck distance between barcodes of $\gamma$-sheaves using already existing software \cite{ tauzin2021giottotda,gudhi:urm}. It is nevertheless far from being true when removing the $\gamma$ assumption on $F$ and $G$.
\end{remark}

\begin{theorem}[{\cite[Thm 5.10]{BG18}}]\label{T:Isometry}
Let $F,G \in \Derb_{\rc , \gamma^{\circ,\,a}}(\cor_{\R})$, then the following are equivalent, for all $\varepsilon \geq 0$:

\begin{enumerate}[(i)]
    \item $F$ and $G$ are $\varepsilon$-isomorphic,
    \item there exists a $\varepsilon$-matching between $\B(F)$ and $\B(G)$,
    \item for all $j \in \Z$, $\Hn^j(F)$ and $\Hn^j(G)$ are $\varepsilon$-isormophic.
\end{enumerate}
\end{theorem}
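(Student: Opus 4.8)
The statement is Theorem \ref{T:Isometry}, and the natural approach is to deduce it from the already-recalled derived isometry theorem of \cite{BG18} (the functor $\mathbb{B}\colon(\Derb_{\rc}(\cor_\R),\dist_\R)\to(\textbf{Barcode},d_B)$ is an isometry) combined with the dimension-one structure theorem (Corollary \ref{C:structure}), reducing each implication to bookkeeping about how the grading interacts with matchings and with $\varepsilon$-isomorphisms. Concretely, the plan is to prove $(i)\Leftrightarrow(ii)$ directly from the isometry theorem, then $(i)\Leftrightarrow(iii)$ using Corollary \ref{C:structure} together with the shift-invariance of the convolution distance.

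For $(i)\Leftrightarrow(ii)$: recall that for $\gamma$-sheaves on $\R$ with $\gamma=(-\infty,0]$, all intervals in $\B(F)$ are of the form $[a,b)$, so Definition \ref{D:matching} of an $\varepsilon$-matching between graded barcodes is exactly the componentwise (in degree $j$) bottleneck matching. Since $F$ and $G$ are $\varepsilon$-isomorphic in $\Derb_{\rc,\gamma^{\circ,a}}(\cor_\R)$ iff $\dist_\R(F,G)\le\varepsilon$ after one checks the inf is attained in this setting (or, to avoid attainment issues, iff they are $\varepsilon'$-isomorphic for every $\varepsilon'>\varepsilon$), the derived isometry theorem of \cite{BG18} gives $\dist_\R(F,G)=d_B(\mathbb{B}(F),\mathbb{B}(G))$, and $d_B$ is precisely the infimum of $\varepsilon$ admitting an $\varepsilon$-matching in the sense of Definition \ref{D:matching}. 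I would be slightly careful here to state things as "$\le\varepsilon$" rather than "$=$" to sidestep whether the bottleneck infimum is achieved; the cleanest route is to observe that each of (i), (ii), (iii) is a closed condition in $\varepsilon$ (stable under taking limits $\varepsilon'\downarrow\varepsilon$), so it suffices to prove the equivalences for all $\varepsilon'>\varepsilon$ and then pass to the limit.

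For $(i)\Leftrightarrow(iii)$: the implication $(i)\Rightarrow(iii)$ is already available — it is the content of the second inequality in the Corollary following Proposition \ref{P:cohomologyinterleaving} (cohomology of an interleaving is an interleaving), or alternatively follows from $(i)\Leftrightarrow(ii)$ since an $\varepsilon$-matching of graded barcodes restricts in each degree $j$ to an $\varepsilon$-matching of $\B^j(F)$ with $\B^j(G)$, which by the one-parameter isometry theorem in degree $j$ says $\Hn^j(F)$ and $\Hn^j(G)$ are $\varepsilon$-isomorphic. For the converse $(iii)\Rightarrow(i)$, I would use Corollary \ref{C:structure}: $F\simeq\bigoplus_{j}\Hn^j(F)[-j]$ and likewise for $G$. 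Given $\varepsilon$-isomorphisms $\Hn^j(F)\simeq_\varepsilon\Hn^j(G)$ for each $j$, take the direct sum of the $r$-isomorphism pairs $(f_j,g_j)$ (for any fixed $r$ slightly larger than $\varepsilon$, with only finitely many nonzero since the barcodes are bounded) and twist by $[-j]$; the shift functor $[-j]$ commutes with convolution by $K_r$ (as $K_r\star(H[-j])\simeq(K_r\star H)[-j]$) and with the structure morphisms $\chi_{2r,0}$, so $\bigoplus_j(f_j[-j],g_j[-j])$ is an $r$-isomorphism pair between $F$ and $G$; letting $r\downarrow\varepsilon$ and using that $\varepsilon$-isomorphism is a closed condition finishes it. The main obstacle is purely the careful handling of the grading: making sure that the direct-sum-of-interleavings argument is compatible with the shifts $[-j]$ and with the convolution structure, and managing the distinction between "$\varepsilon$-isomorphic" and "$\dist_\R\le\varepsilon$" (infimum attained or not) — both are routine but need to be spelled out to make the three conditions genuinely equivalent rather than merely related by inequalities.
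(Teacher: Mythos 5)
The paper does not give its own proof of Theorem~\ref{T:Isometry}: it is presented purely as a citation of \cite[Thm~5.10]{BG18}. So there is no proof in the paper to compare yours against, and your task effectively becomes to check whether a self-contained derivation from the other cited ingredients is feasible.

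Your overall structure is sound, but there is one place where you are leaning on exactly the nontrivial content without flagging it. In your route to $(i)\Leftrightarrow(ii)$ you invoke the isometric equivalence $\mathbb{B}\colon(\Derb_{\rc}(\cor_\R),\dist_\R)\to(\textbf{Barcode},d_B)$, and then assert that ``$d_B$ is precisely the infimum of $\varepsilon$ admitting an $\varepsilon$-matching in the sense of Definition~\ref{D:matching}.'' This is not immediate: the bottleneck distance in the cited isometric equivalence is the one defined in \cite{BG18} for arbitrary constructible sheaves on $\R$, which allows matchings that move a bar across adjacent degrees. Definition~\ref{D:matching} is degree-preserving by design. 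The claim that these two quantities coincide for $\gamma$-sheaves is precisely what makes the $\gamma$-assumption indispensable --- it is the same phenomenon behind the remark after Corollary~\ref{C:gradedDistance} that that corollary fails without the $\gamma$ hypothesis. As written, your proof of $(i)\Leftrightarrow(ii)$ hides this step inside a ``recall that'' and so does not actually establish anything beyond what is being cited.

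The cleaner route --- which you actually sketch as an ``alternative'' for $(i)\Rightarrow(iii)$ --- is to make $(iii)$ the hub. For $(i)\Leftrightarrow(iii)$: one direction is Proposition~\ref{P:cohomologyinterleaving} (exactness of $\oim{\tau_w}$ plus Theorem~\ref{thm:isomBP21} to pass between $c$-isomorphisms and interleavings); the other is your finite-direct-sum-of-$\varepsilon$-isomorphism-pairs argument, using $\bigoplus_j\Hn^j(\cdot)[-j]$ from Corollary~\ref{C:structure} and the additivity of $K_r\star(-)$ and its commutation with $[-j]$. This direction is correct, and, as you could note, you do not even need to take $r>\varepsilon$ and pass to a limit: a finite direct sum of $\varepsilon$-isomorphism pairs is already an $\varepsilon$-isomorphism pair. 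For $(ii)\Leftrightarrow(iii)$: since Definition~\ref{D:matching} is degree-by-degree and each $\Hn^j$ lives in a single degree, this is just the one-parameter isometry theorem applied separately in each degree $j$, and there is no cross-degree subtlety to worry about. With this ordering, $(i)\Leftrightarrow(ii)$ drops out as a corollary rather than requiring the delicate identification of the two bottleneck distances, and your appeal to closedness (Theorem~\ref{thm:closedis}) is no longer needed for the core of the argument.

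One more small remark: Theorem~\ref{thm:closedis} is itself cited from \cite[Thm~6.3]{BG18}, which in the source appears as a consequence of Thm~5.10. Using it to re-derive Thm~5.10 is therefore a formally circular move if one is after an independent proof; since here everything is cited anyway, this is harmless, but it is worth being aware that the closedness is not a free ingredient.
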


\begin{corollary}\label{C:gradedDistance}
Let $F,G \in \Derb_{\rc , \gamma^{\circ,\,a}}(\cor_{\R})$, then 
\begin{align*}
\dist_\R (F,G) &= \max_{j\in \Z} \dist_\R(\Hn^j(F),\Hn^j(G)) \\
            &= \max_{j\in \Z} d_B (\mathbb{B}(\Hn^j(F)), \mathbb{B}(\Hn^j(G))).
    \end{align*}  
\end{corollary}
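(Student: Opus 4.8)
The plan is to deduce Corollary \ref{C:gradedDistance} directly from Theorem \ref{T:Isometry} together with the decomposition result of Corollary \ref{C:structure} and the definition of $d_B$. First I would establish the first equality $\dist_\R(F,G) = \max_{j\in\Z}\dist_\R(\Hn^j(F),\Hn^j(G))$. For the inequality $\leq$, suppose $\dist_\R(\Hn^j(F),\Hn^j(G)) \leq \varepsilon$ for all $j$; then by Theorem \ref{T:Isometry} (the equivalence $(iii)\Rightarrow(i)$, using that $\Hn^j(F)$ and $\Hn^j(G)$ are $\varepsilon$-isomorphic for every $j$) we get that $F$ and $G$ are $\varepsilon$-isomorphic, hence $\dist_\R(F,G)\leq\varepsilon$; taking the infimum over such $\varepsilon$ gives $\dist_\R(F,G) \leq \max_j \dist_\R(\Hn^j(F),\Hn^j(G))$. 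Conversely, for the inequality $\geq$, I would invoke the Corollary following Proposition \ref{P:cohomologyinterleaving}, which already records that $\max_j \dist_\V^v(\Hn^j(F),\Hn^j(G)) \leq \dist_\V^v(F,G)$, specialized to $\V = \R$ with the absolute value norm; alternatively one uses the implication $(i)\Rightarrow(iii)$ of Theorem \ref{T:Isometry}. Either way, combining the two inequalities yields the first equality.

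Next I would treat the second equality $\max_{j}\dist_\R(\Hn^j(F),\Hn^j(G)) = \max_j d_B(\B(\Hn^j(F)),\B(\Hn^j(G)))$. Here one works degree by degree: for fixed $j$, both $\Hn^j(F)$ and $\Hn^j(G)$ lie in $\Mod_{\rc,\gamma^{\circ,a}}(\cor_\R)$, viewed as complexes concentrated in degree $0$. Applying Theorem \ref{T:Isometry} to the pair $(\Hn^j(F),\Hn^j(G))$: the equivalence $(i)\Leftrightarrow(ii)$ says that $\Hn^j(F)$ and $\Hn^j(G)$ are $\varepsilon$-isomorphic if and only if there is an $\varepsilon$-matching between $\B(\Hn^j(F))$ and $\B(\Hn^j(G))$. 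Taking infima over $\varepsilon \geq 0$ on both sides gives $\dist_\R(\Hn^j(F),\Hn^j(G)) = d_B(\B(\Hn^j(F)),\B(\Hn^j(G)))$ for each $j$. Taking the maximum over $j$ then gives the second equality. One should be mildly careful that for a sheaf concentrated in degree $0$ its graded-barcode has a single nonzero component $\B^0$, so Definition \ref{D:matching} of an $\varepsilon$-matching between graded-barcodes collapses to a single ordinary $\varepsilon$-matching of multi-sets of intervals, matching the bottleneck distance $d_B$ of Definition right after; this is a bookkeeping remark rather than a real step.

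I do not expect any serious obstacle: the statement is essentially a repackaging of Theorem \ref{T:Isometry} by passing to infima. The one point requiring a line of care is checking that the quantities $\dist_\R(F,G)$, which are defined as infima over $\varepsilon$ of an $\varepsilon$-isomorphism condition, behave correctly under the equivalences of Theorem \ref{T:Isometry} — i.e. that ``$F,G$ are $\varepsilon$-isomorphic for all $\varepsilon > c$'' does descend to ``$\dist_\R(F,G) \leq c$'', which is immediate from the definition of the convolution distance as an infimum. A secondary subtlety is ensuring the maxima over $j \in \Z$ are attained (equivalently finite): this follows because $F, G \in \Derb_{\rc}(\cor_\R)$ are bounded complexes, so only finitely many $\Hn^j$ are nonzero, and for the finitely many nonzero degrees the relevant distances are the ones entering the max, while in the remaining degrees both cohomologies vanish and contribute $0$.
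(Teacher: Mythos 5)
Your proposal is correct and is the natural derivation the paper intends: the corollary is stated without proof as an immediate consequence of Theorem \ref{T:Isometry} (and of Theorem \ref{thm:closedis} for passing between $\leq\varepsilon$ and $\varepsilon$-isomorphic), exactly as you argue by taking infima degree by degree and using boundedness of $F,G$ to reduce to finitely many $j$. The bookkeeping remark about the graded-barcode collapsing to a single degree-$0$ component is also the right observation, and you correctly avoid any real subtlety.
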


\begin{remark}
    Note that Corollary \ref{C:gradedDistance} is false if one only assumes $F,G \in \Derb_{\rc}(\cor_{\R})$.
\end{remark}

One consequence of the derived isometry theorem is the following closedness property of the convolution distance, which we recall in full generality here.

\begin{theorem}[{\cite[Thm 6.3]{BG18}}] \label{thm:closedis} \ Let $F,G \in \Derb_{\rc}(\cor_\R)$, then the following are equivalent, for all $\varepsilon \geq 0$:

\begin{enumerate}[(i)]
    \item $\dist_\R(F,G) \leq \varepsilon$;
    \item $F$ and $G$ are $\varepsilon$-isomorphic.
\end{enumerate}
In particular, $\dist_\R(F,0) = 0$ iff $F \simeq 0$.
\end{theorem}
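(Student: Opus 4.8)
The statement to prove is Theorem~\ref{thm:closedis}: for $F, G \in \Derb_{\rc}(\cor_\R)$, the convolution distance $\dist_\R(F,G) \leq \varepsilon$ iff $F$ and $G$ are $\varepsilon$-isomorphic, and in particular $\dist_\R(F,0)=0$ iff $F \simeq 0$. The nontrivial direction is that $\dist_\R(F,G) \leq \varepsilon$ \emph{implies} the existence of an actual $\varepsilon$-isomorphism: by definition of the convolution distance as an infimum, we only know that $F$ and $G$ are $r$-isomorphic for every $r > \varepsilon$, and we must upgrade this to $r = \varepsilon$. So the heart of the matter is a closedness/compactness argument showing the infimum is attained.

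\medskip

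\textbf{Approach.} The plan is to reduce to the barcode side via the derived isometry theorem and exploit the combinatorial rigidity of $\varepsilon$-matchings. First I would recall that by Corollary~\ref{C:structure} every $F \in \Derb_{\rc}(\cor_\R)$ splits as $\bigoplus_j \Hn^j(F)[-j]$, so it suffices (arguing degreewise, as in Theorem~\ref{T:Isometry} and Corollary~\ref{C:gradedDistance}) to treat the case of a single module $F, G \in \Mod_{\rc}(\cor_\R)$, each carrying a locally finite multiset of interval barcodes $\B(F)$, $\B(G)$. By the isometry theorem of \cite{BG18}, $\dist_\R(F,G)$ equals the bottleneck distance $d_B(\B(F),\B(G))$. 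Then the claim becomes: the infimum defining $d_B$ is attained, i.e. if $\B(F)$ and $\B(G)$ are $r$-matched for all $r > \varepsilon$, they are $\varepsilon$-matched. The key steps in order: (1) reduce to the single-degree barcode statement; (2) fix an exhaustion of $\R$ by compact intervals $K_n$ and observe that only finitely many bars of $\B(F) \cup \B(G)$ have an endpoint in $K_n$; (3) for each $r_m = \varepsilon + 1/m$ choose an $r_m$-matching $\sigma_m$; (4) use local finiteness plus a diagonal/compactness argument — for each fixed pair of bars there are only finitely many possible ``partners'' within distance $\varepsilon + 1$, so after passing to a subsequence the matchings $\sigma_m$ stabilize on every bar — to extract a limiting partial matching $\sigma_\infty$; (5) check that $\sigma_\infty$ is an $\varepsilon$-matching: matched endpoints satisfy $|a - a'| \leq \lim (\varepsilon + 1/m) = \varepsilon$, and unmatched bars satisfy $|a - b| \leq 2\varepsilon$ by the same limiting inequality. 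Finally, feed the resulting $\varepsilon$-matching back through Theorem~\ref{T:Isometry} to obtain an honest $\varepsilon$-isomorphism $F \simeq_\varepsilon G$, and note the converse implication is immediate from the definition of $\dist_\R$ as an infimum. The last sentence ($\dist_\R(F,0)=0 \iff F \simeq 0$) follows by taking $\varepsilon = 0$, $G = 0$: a $0$-isomorphism is an isomorphism, and $\B(0) = \emptyset$.

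\medskip

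\textbf{Main obstacle.} The delicate point is step (4): making the extraction of a limiting matching rigorous in the presence of infinitely many bars, some possibly of infinite length or with endpoints at $\pm\infty$. One must be careful that ``stabilization'' happens bar-by-bar along a single subsequence — a straightforward diagonal argument over a countable enumeration of $\B(F) \cup \B(G)$ handles this, using that each bar has only finitely many candidate partners within distance $\varepsilon + 1$ by local finiteness. A secondary subtlety is bookkeeping the convention $|{+\infty} - {+\infty}| = 0$ and $|\pm\infty - x| = +\infty$ consistently: a bar with a finite endpoint can never be matched, in the limit, to one with an infinite endpoint (the distances $\varepsilon + 1/m$ stay bounded), so the types of matched bars are forced to agree, which keeps the limiting object well-defined. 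Once these finiteness arguments are in place, the verification that $\sigma_\infty$ satisfies the two conditions of Definition~\ref{D:matching} with parameter $\varepsilon$ is routine passage to the limit in non-strict inequalities.
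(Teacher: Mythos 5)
The paper does not prove this result — it is cited verbatim from~\cite[Thm 6.3]{BG18} — so there is no ``paper's own proof'' to compare against, but your argument contains a genuine gap that would need to be repaired before it works at the stated level of generality.

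Your step (1), the degree-wise reduction, is exactly what the paper warns against. You invoke Corollary~\ref{C:gradedDistance} to ``treat the case of a single module $F, G \in \Mod_{\rc}(\cor_\R)$'', but Corollary~\ref{C:gradedDistance} is stated only for $\gamma$-sheaves $F,G \in \Derb_{\rc,\gamma^{\circ,a}}(\cor_\R)$, and the paper immediately remarks that it is \emph{false} for general $F,G \in \Derb_{\rc}(\cor_\R)$. Likewise, the $\varepsilon$-matching of Definition~\ref{D:matching} that you work with in steps (4)–(5) is again the $\gamma$-sheaf notion. The statement you are asked to prove is for arbitrary constructible sheaves on $\R$, where the bottleneck distance $d_B$ of~\cite{BG18} involves a more intricate matching rule: intervals of different boundary types may be matched across adjacent cohomological degrees, and the optimal matching is genuinely not a product of degree-wise matchings. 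So the reduction you propose does not go through.

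The good news is that your core idea — reduce to barcodes via the isometry equivalence $\mathbb{B}: (\Derb_{\rc}(\cor_\R),\dist_\R) \to (\textbf{Barcode}, d_B)$, then prove the infimum defining $d_B$ is attained by a local-finiteness plus diagonal/compactness argument — is sound in spirit, and is indeed the kind of argument used in~\cite{BG18}. To make it correct you would have to (a) drop the degree-wise reduction, (b) work directly with the general notion of $\varepsilon$-matching from~\cite{BG18} (tracking which degree shifts are permitted for which interval types), and (c) run your bar-by-bar stabilization argument on graded intervals $I^j$ rather than on per-degree barcodes, taking care that a bar can only have finitely many candidate partners within radius $\varepsilon+1$ across all \emph{relevant} degrees, which still follows from local finiteness and boundedness of the grading. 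Your treatment of the $\pm\infty$ conventions and the final ``in particular'' clause is fine and would carry over unchanged.
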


\subsection{Fibered Barcode}

 \label{S:fiberedbarcode} One of the challenges of multi-parameter persistence is to provide a meaningful notion of distance between persistence modules which can be computed in a reasonable time complexity. Indeed, it has been shown that the usual interleaving distance between persistence modules is NP-hard to compute in the multi-parameter case \cite{bjerkevik2019computing}. To overcome this issue, the authors of \cite{Cer13} introduced the matching distance between multi-parameter persistence modules, which is by now popular in the TDA community, thanks to the software RIVET \cite{lesnick2015interactive}. We review this notion below, but formulate it in the language of $\gamma$-sheaves.

In this section, we assume that $\V = \R^n$ and consider the cone $\gamma = (-\infty, 0 ] ^n$. Let  $v = (1,...,1)^t \in \Int(\gamma^a)$ and $\|\cdot \|_v$ is defined as in equation (\ref{eq:normcool}). Then, 
\begin{equation*}
\|(x_1,...,x_n)\|_v = \max \{|x_1|, ... , |x_n|\}.
\end{equation*}

Let us define $\Lambda := \{h \in \Int(\gamma^a) , \|h\|_v = 1\}$. Given $h \in \Lambda$,  we denote by $\mathcal{L}_h$ the one dimensional subspace of $\R^n$ spanned by $h$. Recall that $\gamma$ induces on $\R^n$ the standard product order $\leq$ given by:
\begin{equation*}
x \leq y \iff x + \gamma \subseteq y + \gamma \iff x_i \leq y_i ~~~\text{for all }i.
\end{equation*}

The poset $(\mathcal{L}_h,\leq)$ is isomorphic to $(\R,\leq)$ via the isometry
\begin{equation}\label{isom:RL}
\iota_{\mathcal{L}_h} \colon \R \to \mathcal{L}_h, \; t \mapsto t \cdot h.
\end{equation}
Therefore, it has the least upper bound property.  Following \cite{lesnick2015interactive}, we define the push function $p_{\mathcal{L}_h} : \V \to \mathcal{L}_h$ by
\begin{equation*}
p_{\mathcal{L}_h}(x) = \inf \{y \in \mathcal{L}_h\mid x \leq y  \} = \inf \{y \in \mathcal{L}_h\mid x \in y + \gamma \}.
\end{equation*}

\begin{figure}
    \centering
    \includegraphics[width=0.5\textwidth]{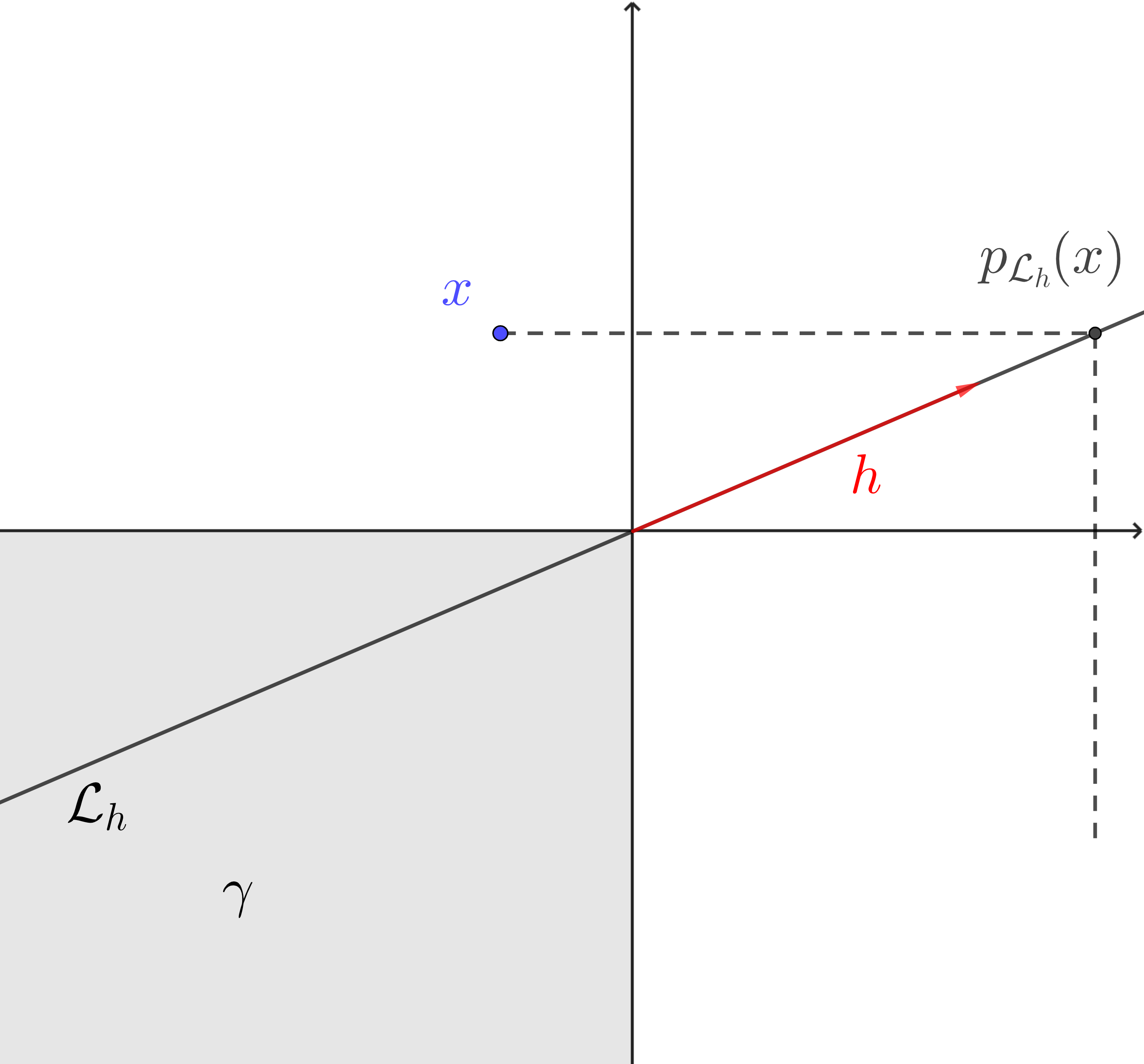}
    \caption{Illustration of the push function $p_{\mathcal{L}_h}$}
    \label{fig:push}
\end{figure}

One can prove that we have the following formula, for $x=( x_1,...,x_n) \in \V$ and $h = (h_1,...,h_n) \in \Lambda$:

\begin{equation}\label{eq:formulaP}
    p_{\mathcal{L}_h}(x) = \left ( \max_i \frac{x_i}{h_i} \right ) \cdot h.
\end{equation}

In particular, one has for $y \in \mathcal{L}_h$:

\begin{equation}\label{eq:fiberP}
    p_{\mathcal{L}_h}^{-1}(\{y\}) = y + \partial \gamma.
\end{equation}
\begin{lemma}
For $h = (h_1,...,h_n)\in \Lambda$, the map $p_{\mathcal{L}_h}$ is $\frac{1}{\min_i h_i}$-Lipschitz with respect to $\| \cdot \|_v$, and this coefficient is optimal. 
\end{lemma}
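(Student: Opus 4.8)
The plan is to use the explicit formula \eqref{eq:formulaP} for $p_{\mathcal{L}_h}$ together with the identification $\iota_{\mathcal{L}_h}$ of equation \eqref{isom:RL}, which is an isometry from $(\R,|\cdot|)$ to $(\mathcal{L}_h, \|\cdot\|_v)$ since $\|t\cdot h\|_v = |t| \, \|h\|_v = |t|$ for $h \in \Lambda$. Thus it suffices to estimate the Lipschitz constant of the scalar function $q_h := \iota_{\mathcal{L}_h}^{-1}\circ p_{\mathcal{L}_h}\colon \V \to \R$, which by \eqref{eq:formulaP} is simply $q_h(x) = \max_i (x_i/h_i)$.

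First I would prove the upper bound. Given $x, x' \in \V$, write $r = \|x - x'\|_v = \max_i |x_i - x_i'|$, so that $x_i' - r \leq x_i \leq x_i' + r$ for every $i$. Since each $h_i > 0$, dividing by $h_i$ gives $x_i/h_i \leq x_i'/h_i + r/h_i \leq x_i'/h_i + r/\min_j h_j$, and taking the maximum over $i$ yields $q_h(x) \leq q_h(x') + r/\min_j h_j$. By symmetry $|q_h(x) - q_h(x')| \leq \frac{1}{\min_j h_j}\|x-x'\|_v$, which is the claimed Lipschitz estimate once we transport back through the isometry $\iota_{\mathcal{L}_h}$.

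For optimality, let $i_0 = \argmin_i h_i$ and take $x = 0$ and $x' = t\, e_{i_0}$ for $t > 0$, where $e_{i_0}$ is the $i_0$-th standard basis vector. Then $\|x - x'\|_v = t$, while $q_h(x) = \max_i 0 = 0$ and $q_h(x') = \max_i (t\,\delta_{i,i_0}/h_i) = t/h_{i_0} = t/\min_j h_j$ (using $t > 0$ and $h_i > 0$, so all other entries $0/h_i = 0$ are dominated). Hence $\|p_{\mathcal{L}_h}(x) - p_{\mathcal{L}_h}(x')\|_v = |q_h(x)-q_h(x')| = t/\min_j h_j = \frac{1}{\min_j h_j}\|x-x'\|_v$, so the constant cannot be improved.

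There is essentially no obstacle here: everything reduces to the elementary fact that $x \mapsto \max_i(x_i/h_i)$ is a pointwise maximum of the $1/h_i$-Lipschitz linear functionals $x \mapsto x_i/h_i$ (with respect to $\|\cdot\|_v$), hence $\frac{1}{\min_i h_i}$-Lipschitz, with equality witnessed along a coordinate axis. The only point deserving a line of care is checking that $\iota_{\mathcal{L}_h}$ is genuinely an isometry for $\|\cdot\|_v$, which is immediate from $\|h\|_v = 1$, and that $p_{\mathcal{L}_h}$ is well-defined and given by \eqref{eq:formulaP}, which is recalled just above the statement.
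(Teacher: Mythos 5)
Your proof is correct and follows essentially the same route as the paper's: use the explicit formula $p_{\mathcal{L}_h}(x) = \bigl(\max_i x_i/h_i\bigr)\cdot h$, reduce via the isometry $\iota_{\mathcal{L}_h}$ to the scalar map $x \mapsto \max_i(x_i/h_i)$, bound the Lipschitz constant by $\frac{1}{\min_i h_i}$ using the fact that a max of $\frac{1}{\min_i h_i}$-Lipschitz functions is $\frac{1}{\min_i h_i}$-Lipschitz, and witness optimality along the $i_0$-th coordinate axis where $h_{i_0} = \min_i h_i$. The only cosmetic difference is that the paper uses the direct estimate $|\max_i a_i - \max_i b_i| \leq \max_i|a_i - b_i|$ while you derive the same bound from $x_i' - r \leq x_i \leq x_i' + r$; both are correct and equivalent.
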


\begin{proof}
Let $x, x^\prime \in \V$, since $h_i >0$ for all $i$, one has:

$$p_{\mathcal{L}_h}(x) = \max_i \frac{x_i}{h_i} \cdot h \in \mathcal{L}_h. $$

Therefore, 

\begin{align*}
   \|p_{\mathcal{L}_h}(x) - p_{\mathcal{L}_h}(x^\prime) \|_v &=  \mid \max_i \frac{x_i}{h_i} - \max_i \frac{x^\prime_i}{h_i} \mid \\
   &\leq \max_i |\frac{x_i}{h_i} - \frac{x^\prime_i}{h_i} \mid  \\ 
   &\leq \frac{1}{\min_i h_i} \| x - x^\prime \|_v.
\end{align*}

The above inequality becomes an equality when $i_0\in \{1,...n\}$ is such that $h_{i_0} = \min_i h_i$, $x$ is the $i_0$-th element of the canonical basis of $\R^n$, and $x^\prime = 0$.
\end{proof}

\begin{definition}[\cite{Cer13}] Let $F \in \Derb_{\rc, \gamma^{\circ, a}}(\cor_\V)$, its fibered barcode is the collection of graded-barcodes $(\mathbb{B}(\opb{i_{\mathcal{L}_h}} \opb{\tau_{c}}F) )_{(h,c) \in \Lambda \times \V}$.

\end{definition}

\begin{definition}[\cite{Cer13}] Let $F,G \in \Derb_{\rc, \gamma^{\circ, a}}(\cor_\V)$, their matching distance is the possibly infinite quantity:

\begin{equation*}
d_m (F,G) = \sup_{(h,c) \in \Lambda \times \V} \left (\min_i h_i \right ) \cdot  d_B (\mathbb{B}(i_{\mathcal{L}_h}^{-1} \opb{\tau_{c}} F), \mathbb{B}(i_{\mathcal{L}_h}^{-1} \opb{\tau_{c}} G)).
\end{equation*}
\end{definition}

Recall that we denote by $\dist_\V^v$ the convolution distance associated with the norm $\|\cdot\|_v$ on $\Derb(\cor_\V)$. Then one has the following stability result:

\begin{proposition}[\cite{Cer13, Landi2018}]
    Let $F,G \in \Derb_{\rc, \gamma^{\circ, a}}(\cor_\V)$. Then,
    \begin{equation*}
    d_m(F,G) \leq \dist_\V^v(F,G)
    \end{equation*}
\end{proposition}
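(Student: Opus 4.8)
The strategy is to express each line-wise bottleneck distance as a convolution distance on $\R$ obtained by a pushforward of the ambient sheaves, and then invoke the Lipschitz stability of pushforward (Theorem~\ref{thm:lipstability}). First I would fix $h = (h_1,\dots,h_n) \in \Lambda$ and $c \in \V$, and set $m_h := \min_i h_i$. Using the isometry $\iota_{\mathcal{L}_h} \colon (\R,|\cdot|) \to (\mathcal{L}_h, \|\cdot\|_v)$ of equation~\eqref{isom:RL}, I would identify $i_{\mathcal{L}_h}^{-1}\opb{\tau_c}F$ with a sheaf on $\R$, so that by Theorem~\ref{T:Isometry} (or Corollary~\ref{C:gradedDistance}) one has $d_B(\mathbb{B}(i_{\mathcal{L}_h}^{-1}\opb{\tau_c}F), \mathbb{B}(i_{\mathcal{L}_h}^{-1}\opb{\tau_c}G)) = \dist_\R(i_{\mathcal{L}_h}^{-1}\opb{\tau_c}F, i_{\mathcal{L}_h}^{-1}\opb{\tau_c}G)$, transported along $\iota_{\mathcal{L}_h}$.

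Next I would relate the restriction to $\mathcal{L}_h$ to a pushforward by the push function $p_{\mathcal{L}_h}$. The key geometric identity is equation~\eqref{eq:fiberP}: the fiber of $p_{\mathcal{L}_h}$ over $y \in \mathcal{L}_h$ is $y + \partial\gamma$. For a $\gamma$-sheaf $F$ the restriction along the boundary $\partial\gamma$ (equivalently along the antidiagonal-type slice) recovers the same information as the restriction to the line, because a $\gamma$-sheaf is "constant along $\gamma$"; concretely, for $F \in \Derb_{\rc,\gamma^{\circ,a}}(\cor_\V)$ one has $\roim{p_{\mathcal{L}_h}}\opb{\tau_c}F \simeq (\iota_{\mathcal{L}_h})_* \bigl(i_{\mathcal{L}_h}^{-1}\opb{\tau_c}F\bigr)$ up to the identification of $\mathcal{L}_h$ with $\R$. (This is exactly the sheaf-theoretic content of the RIVET "push" construction; I would prove it by using Proposition~\ref{prop:gammaloc} to write $F \simeq \cor_{\gamma^a}\npsconv F'$ and computing the pushforward of such a convolution, or by a direct adjunction/base-change argument along the fibers $y+\partial\gamma$.) Once this is established, $d_B$ of the fibered pieces equals $\dist_{\mathcal{L}_h}\bigl(\roim{p_{\mathcal{L}_h}}\opb{\tau_c}F, \roim{p_{\mathcal{L}_h}}\opb{\tau_c}G\bigr)$, where $\mathcal{L}_h$ carries the metric induced by $\|\cdot\|_v$.

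Then I would apply Theorem~\ref{thm:lipstability} to the map $p_{\mathcal{L}_h}\colon (\V,\|\cdot\|_v) \to (\mathcal{L}_h, \|\cdot\|_v)$, which by the preceding Lemma is $\tfrac{1}{m_h}$-Lipschitz: this gives
\begin{equation*}
\dist_{\mathcal{L}_h}\bigl(\roim{p_{\mathcal{L}_h}}\opb{\tau_c}F, \roim{p_{\mathcal{L}_h}}\opb{\tau_c}G\bigr) \leq \frac{1}{m_h}\,\dist_\V^v\bigl(\opb{\tau_c}F, \opb{\tau_c}G\bigr) = \frac{1}{m_h}\,\dist_\V^v(F,G),
\end{equation*}
where the last equality is translation-invariance, Lemma~\ref{lem:transaffine}(i) (the inverse image $\opb{\tau_c}$ here agrees with the exact pushforward $\oim{\tau_{-c}}$). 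Multiplying by $m_h$ and taking the supremum over $(h,c) \in \Lambda \times \V$ yields $d_m(F,G) \leq \dist_\V^v(F,G)$.

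\textbf{Main obstacle.} The routine parts are the translation-invariance and the Lipschitz estimate; the real work is the identification $\roim{p_{\mathcal{L}_h}}\opb{\tau_c}F \simeq (\iota_{\mathcal{L}_h})_*\,i_{\mathcal{L}_h}^{-1}\opb{\tau_c}F$ for $\gamma$-sheaves, i.e. showing that pushing forward along the push map loses no information beyond what restricting to the line already does. One must check that $\roim{p_{\mathcal{L}_h}}$ and $\reim{p_{\mathcal{L}_h}}$ agree here (so that Theorem~\ref{thm:lipstability}, whose second inequality is for $\roim{}$, applies with the Lipschitz constant), which should follow from $p_{\mathcal{L}_h}$ being proper on supports of $\gamma$-sheaves after gammaification, or from constructibility-up-to-infinity via Corollary~\ref{cor:roimtoreim}; I would isolate this as a lemma. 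If one prefers to avoid this identification entirely, an alternative is to cite the stability of the matching distance directly from \cite{Landi2018} for persistence modules and transport it through the isometric dictionary between $\gamma$-sheaves and persistence modules recalled in Section~\ref{sec:gammasheaves}, but the sheaf-theoretic proof above is more self-contained.
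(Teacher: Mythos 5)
The paper itself does not prove this Proposition; it is cited from \cite{Cer13, Landi2018} and imported from the persistence-module literature. Your proposal is a sheaf-theoretic reconstruction of that argument, and its overall architecture is reasonable: indeed, the ``main obstacle'' you isolate, namely the isomorphism $\roim{p_{\mathcal{L}_h}}\opb{\tau_c}F \simeq (\iota_{\mathcal{L}_h})_* \,i_{\mathcal{L}_h}^{-1}\opb{\tau_c}F$ for $\gamma$-sheaves, is precisely what the paper establishes later as Proposition~\ref{P:directinverse} and its Corollary in Section~5, by passing through the $\gamma$-topology; so that step can be cited rather than reproved.

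The genuine gap is in the Lipschitz step. You invoke the $\roim{}$-inequality of Theorem~\ref{thm:lipstability}, but as stated that inequality only holds when the two metrics are euclidean, whereas $\|\cdot\|_v$ is the max-norm. Your first suggested fix---show $\roim{p_{\mathcal{L}_h}} \simeq \reim{p_{\mathcal{L}_h}}$ so that the unconditional $\reim{}$-inequality applies---cannot work: by equation~\eqref{eq:fiberP} the fibers of $p_{\mathcal{L}_h}$ are translates $y+\partial\gamma$, which are unbounded, so for a $\gamma$-sheaf as simple as $\cor_{x_0+\gamma^a}$ one has $\reim{p_{\mathcal{L}_h}}\cor_{x_0+\gamma^a}\simeq 0$ by Lemma~\ref{lem:cohomubdconv} applied fiberwise, while $\roim{p_{\mathcal{L}_h}}\cor_{x_0+\gamma^a}$ is nonzero; these pushforwards really disagree on the sheaves of interest. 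Your second suggested fix (duality via Corollary~\ref{cor:roimtoreim} combined with Proposition~\ref{prop:dualdis}(i), the $\reim{}$-Lipschitz inequality, and Lemma~\ref{lem:dualiso}) does close the gap, but Corollary~\ref{cor:roimtoreim} requires constructibility up to infinity, which is strictly stronger than the hypothesis $F,G\in\Derb_{\rc,\gamma^{\circ,a}}(\cor_\V)$ in the statement; you would have to either add that hypothesis or replace that Corollary by an ad hoc argument. The clean repair, which avoids the $\roim{}$/$\reim{}$/euclidean issue altogether, is to stay at the level of interleavings: if $F$ and $G$ are $\varepsilon v$-interleaved (Corollary~\ref{cor:gamconv}), then since $\varepsilon v \leq (\varepsilon/\min_i h_i)\, h$ in the $\gamma$-order, composing the interleaving morphisms with the structure maps of the $\gamma$-sheaves and restricting to $\mathcal{L}_h$ produces an $(\varepsilon/\min_i h_i)\cdot h$-interleaving of the restrictions; this is essentially the argument of the cited references transported to the $\gamma$-sheaf setting.
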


\noindent Therefore, the matching distance is stable with respect to the convolution distance. Under some finiteness assumptions on $F,G$. The matching distance can be computed in polynomial time (in the number of generators and relations of a finite free presentation of $F$ and $G$ seen as persistence modules). In particular, the software RIVET \cite{lesnick2015interactive} allows for an efficient computation of the matching distance when $\V$ is of dimension $2$, and gives an interactive visualization of the fibered barcode.

\section{Linear dimensionality reduction}

In this section, we study direct images of sheaves on a finite dimensional vector space by linear forms. We examine in detail the case of $\gamma$-sheaves and sublevel sets persistence modules. We obtain several vanishing results and a formula allowing to compute explicitly the pushforward by a linear form of a sublevel sets multi-parameter persistence module.

Let $(\V, \norm{\cdot})$ be a real  vector space of dimension $n$ endowed with a norm. Recall that we denote by $\Proj$ the projectivisation of $\V$. That is, we consider the $n$ dimensional projective space $\Proj^n(\V \oplus \R)$. We also write $\Proj^\ast$ for the projectivization of $\V^\ast$.  We have the open immersion 
\begin{align}\label{map:openim}
	j \colon \V \to \Proj, \quad x \mapsto [x,1].
\end{align}

\subsection{Linear direct image of sheaves}

We need the following straightforward generalization of the stability inequality. This is a version with support of the usual stability inequality, as proved in \cite{KS18}.

\begin{lemma}\label{lem:stabsupport}
	Let $X$ be a good topological space and $(\V, \norm{\cdot})$ be a real finite dimensional normed vector space. Let $F \in \Derb(\cor_X)$, $S$ be a closed subset containing $\Supp(F)$, and $f_1, f_2 : X \to \V$ be continuous maps. Then,
	\begin{align*}
	    \dist_\V(\reim{f_1}F,\reim{f_2}F) \leq \norm{f_1|_S-f_2|_S}_\infty && \dist_\V(\roim{f_1}F,\roim{f_2}F) \leq \norm{f_1|_S-f_2|_S}_\infty.
	\end{align*}
\end{lemma}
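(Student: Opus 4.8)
The plan is to reduce this ``stability with support'' statement to the already-available stability inequality (Theorem~\ref{thm:stability}) by modifying the maps $f_1, f_2$ outside the closed set $S$ so that the modified maps agree exactly where $F$ has no support, and then argue that replacing $f_i$ by the modified map does not change the relevant pushforwards. Concretely, the quantity $\norm{f_1|_S - f_2|_S}_\infty = \sup_{x \in S} \norm{f_1(x) - f_2(x)}$ is what we want as an upper bound, but the naive application of Theorem~\ref{thm:stability} gives the (generally larger) bound $\norm{f_1 - f_2}_\infty$, so the task is genuinely about localizing to $S$.

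First I would recall that $\reim{f_i} F$ and $\roim{f_i} F$ only depend on the restriction of $f_i$ to any closed set containing $\Supp(F)$. More precisely, if $g_1, g_2 : X \to \V$ are continuous with $g_1|_S = g_2|_S$ where $S \supseteq \Supp(F)$ is closed, then $\reim{g_1} F \simeq \reim{g_2} F$ and $\roim{g_1} F \simeq \roim{g_2} F$; this is because $F \simeq \cor_S \otimes F$ (as $\Supp(F) \subseteq S$ and $S$ is closed, one has the adjunction/support identity $F \simeq i_{S*} i_S^{-1} F$ where $i_S : S \hookrightarrow X$ is the closed inclusion), and pushforward factors through the closed embedding $i_S$, along which $g_1$ and $g_2$ induce the same map. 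So the strategy is: build a continuous map $\tilde f_2 : X \to \V$ with $\tilde f_2|_S = f_2|_S$ and $\tilde f_2|_{X \setminus S'} = f_1|_{X \setminus S'}$ for a suitable neighborhood, in such a way that $\norm{f_1 - \tilde f_2}_\infty = \norm{f_1|_S - f_2|_S}_\infty$.

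The construction of $\tilde f_2$ is the one genuine technical point, and it is where I would be most careful. The clean way is to work with the ``distance to the difference'': set $d(x) = f_1(x) - f_2(x) \in \V$, so $\norm{d|_S}_\infty =: c$ is the target bound (assume $c < \infty$, else there is nothing to prove). We want to replace $d$ by a continuous $\V$-valued function $\tilde d$ with $\tilde d|_S = d|_S$, $\tilde d$ supported near $S$, and $\norm{\tilde d}_\infty \le c$; then $\tilde f_2 := f_1 - \tilde d$ works. The radial retraction $r_c : \V \to B_c$ onto the closed ball of radius $c$ (identity on $B_c$, and $v \mapsto c\, v / \norm{v}$ outside) is continuous, is the identity on $S$ since $\norm{d(x)} \le c$ there, and satisfies $\norm{r_c \circ d}_\infty \le c$. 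Hence simply take $\tilde d = r_c \circ d$, i.e. $\tilde f_2 = f_1 - r_c\circ(f_1 - f_2)$: this is continuous, agrees with $f_2$ on $S$ (because on $S$, $\norm{f_1 - f_2} \le c$ so $r_c$ acts as the identity), and satisfies $\norm{f_1 - \tilde f_2}_\infty = \norm{r_c \circ (f_1-f_2)}_\infty \le c = \norm{f_1|_S - f_2|_S}_\infty$.

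Then I finish as follows. By the support-only-dependence remark, $\reim{\tilde f_2} F \simeq \reim{f_2} F$ and $\roim{\tilde f_2} F \simeq \roim{f_2} F$. Applying Theorem~\ref{thm:stability} to the pair $f_1, \tilde f_2$ gives
\begin{align*}
\dist_\V(\reim{f_1} F, \reim{f_2} F) = \dist_\V(\reim{f_1} F, \reim{\tilde f_2} F) &\leq \norm{f_1 - \tilde f_2}_\infty \leq \norm{f_1|_S - f_2|_S}_\infty,
\end{align*}
and identically with $\roim{(\cdot)}$ in place of $\reim{(\cdot)}$. The main obstacle, such as it is, is justifying cleanly that pushforward depends only on $f_i|_S$ when $S \supseteq \Supp(F)$ is closed — this needs the identity $F \simeq i_{S*}i_S^{-1}F$ and functoriality of $\RR(\cdot)_{!}$ and $\RR(\cdot)_{*}$ along the closed embedding $i_S$ composed with $f_j|_S$; everything else (continuity of the radial retraction, the norm bound) is routine. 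One should also note the degenerate case $\norm{f_1|_S - f_2|_S}_\infty = \infty$, where the inequality holds trivially, and the case $S = \emptyset$ forcing $F \simeq 0$, where both sides vanish.
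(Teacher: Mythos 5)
Your proof is correct, but it takes a somewhat longer route than the paper's. The paper's argument is: from $\Supp(F)\subseteq S$ closed, one has $F \simeq \eim{i_S}\opb{i_S}F$, whence $\reim{f_j}F \simeq \reim{(f_j|_S)}\opb{i_S}F$; one then applies Theorem~\ref{thm:stability} directly on the good space $S$ with the sheaf $\opb{i_S}F$ and the restricted maps $f_1|_S, f_2|_S$, which immediately yields the bound $\norm{f_1|_S - f_2|_S}_\infty$. You instead keep the base space $X$ and construct an auxiliary map $\tilde f_2 = f_1 - r_c\circ(f_1-f_2)$ (via a radial retraction onto $B_c$) that agrees with $f_2$ on $S$ yet satisfies $\norm{f_1-\tilde f_2}_\infty \le c$, then apply Theorem~\ref{thm:stability} to the pair $(f_1,\tilde f_2)$ on $X$. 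Both arguments hinge on the same identity $F \simeq \eim{i_S}\opb{i_S}F$ (equivalently, that $\reim{f}F$ and $\roim{f}F$ depend only on $f|_S$), but once you have that identity, the extension-and-retraction construction is an unnecessary detour: you could have skipped $\tilde f_2$ entirely and applied stability on $S$ as the paper does. Your route does have the minor virtue of not requiring one to check that $S$ is itself a good topological space before invoking Theorem~\ref{thm:stability}, but the cost is the construction of $\tilde f_2$, which the paper avoids.
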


\begin{proof}
We only prove the first inequality, the proof of the second one being similar.
Remark that $F \simeq \eim{i_S} \opb{i_S}F$. Hence,
\begin{equation*}
\dist_\V(\reim{f_1}F,\reim{f_2}F)=\dist_\V(\reim{f_1}\eim{i_S} \opb{i_S}F,\reim{f_2}\eim{i_S} \opb{i_S}F).
\end{equation*}
It follows from the stability theorem that
\begin{equation*}
	\dist_\V(\reim{f_1}\eim{i_S} \opb{i_S}F,\reim{f_2}\eim{i_S} \opb{i_S}F) \leq \norm{f_1|_S-f_2|_S}_\infty.
\end{equation*}
Thus,
\begin{equation*}
	\dist_\V(\reim{f_1}F,\reim{f_2}F) \leq \norm{f_1|_S-f_2|_S}_\infty.
\end{equation*}
\end{proof}

Let $S$ be a subset of $\V$. We set $\norm{S}=\sup_{x \in S} \norm{x}$. We endow $\V^\ast$ with the operator norm $\opnorm{u} := \sup_{\|x\| = 1} \|u(x)\|$.

\begin{lemma}\label{lem:continuitylin_projbar}
Let $F \in  \Derb_{\comp}(\cor_{\V})$, $u,v \in \V^\ast$ and set $S=\Supp(F)$. Then,
\begin{equation*}
    \dist_\R(\reim{u}F,\reim{v}F) \leq  \norm{S} \opnorm{u-v}.
\end{equation*}
\end{lemma}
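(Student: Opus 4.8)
The plan is to reduce this to the already-proved stability Lemma~\ref{lem:stabsupport} by estimating, pointwise on the support $S$ of $F$, how far apart the two linear forms $u$ and $v$ send each point. Concretely, both $u$ and $v$ are continuous maps $\V \to \R$, and $F$ has support $S$, so Lemma~\ref{lem:stabsupport} gives
\begin{equation*}
\dist_\R(\reim{u}F,\reim{v}F) \leq \norm{u|_S - v|_S}_\infty = \sup_{x \in S} |u(x) - v(x)| = \sup_{x \in S} |(u-v)(x)|.
\end{equation*}

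The remaining step is the elementary normed-space estimate $\sup_{x \in S}|(u-v)(x)| \leq \norm{S}\,\opnorm{u-v}$. First I would note that for any $x \in \V$ with $x \neq 0$, writing $x = \norm{x}\cdot(x/\norm{x})$ and using linearity of $u-v$, one has $|(u-v)(x)| = \norm{x}\,|(u-v)(x/\norm{x})| \leq \norm{x}\,\opnorm{u-v}$ since $x/\norm{x}$ is a unit vector and $\opnorm{u-v} = \sup_{\norm{y}=1}|(u-v)(y)|$; the inequality also holds trivially at $x=0$. Taking the supremum over $x \in S$ and recalling the definition $\norm{S} = \sup_{x\in S}\norm{x}$ yields $\sup_{x\in S}|(u-v)(x)| \leq \norm{S}\,\opnorm{u-v}$. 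Combining the two displays gives the claim.

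There is essentially no obstacle here: the only point requiring a word of care is that $\norm{S}$ is finite, which is exactly where the hypothesis $F \in \Derb_{\comp}(\cor_\V)$ is used — a compact (hence bounded) support guarantees $\norm{S} < \infty$, so the right-hand side is a genuine real number and Lemma~\ref{lem:stabsupport} applies with the closed set $S = \Supp(F)$ (compact, hence closed). I would write this up in three or four lines, invoking Lemma~\ref{lem:stabsupport} and then the operator-norm homogeneity estimate, without further ado.
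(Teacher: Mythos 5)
Your proof is correct and follows the same route as the paper: invoke Lemma~\ref{lem:stabsupport} with $S=\Supp(F)$, then bound $\sup_{x\in S}|(u-v)(x)|$ by $\norm{S}\,\opnorm{u-v}$ using the definition of the operator norm. Your remarks about finiteness of $\norm{S}$ and closedness of $S$ (both from compact support) are the right things to note and are left implicit in the paper.
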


\begin{proof}
It follows from Lemma \ref{lem:stabsupport} that
\begin{align*}
    \dist_\R(\reim{u}F,\reim{v}F) & \leq \norm{u|_S-v|_S}_\infty 
\end{align*}
and for every $x \in S$
\begin{align*}
    |u(x)-v(x)| & \leq \opnorm{u-v} \norm{x}_\infty\\
                               & \leq  \opnorm{u-v} \norm{S} .
\end{align*}
Thus, $\norm{\tilde{u}|_S-\tilde{v}|_S}_\infty \leq \opnorm{u-v} \norm{S}$.
\end{proof}

Recall the following integral transform
\begin{equation*}
	\Phi_{\cor_A}\colon \Derb(\mathbb{P}^n) \to \Derb(\mathbb{P}^{\ast n}), \; F \mapsto \cor_A \circ F
\end{equation*}
where the set $A$ is defined by equation \eqref{eq:projdualkernel}.

We wish to extract information regarding $F$ from the data of pushforwards of $F$ by linear forms. For that purpose, we will use projective duality as it allows to construct a sheaf on $\Proj^\ast$, the projectivization of $\V^\ast$, via $\Phi_{\cor_A}$, the stalks of which are the $(\reim{u}F)_t$ where $u$ is a linear form  and $t$ an element of $\R$.
\begin{lemma}\label{lem:stalkprojduality}
Let $F \in \Derb(\cor_\V)$ and let $[u;t] \in \Proj^\ast$. Then 
\begin{align*}
\Phi_{\cor_A}(\reim{j}F)_{[u;t]}& \simeq (\reim{u}F)_t.
\end{align*}
\end{lemma}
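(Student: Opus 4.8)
The statement identifies the stalk at $[u;t]\in\Proj^\ast$ of the sheaf $\Phi_{\cor_A}(\reim{j}F)$ on $\Proj^\ast$ with the stalk at $t\in\R$ of $\reim{u}F$. The plan is to unwind the definition of the kernel composition $\cor_A\circ(\reim{j}F)$ and then compute the stalk using proper base change, since all the pushforwards involved are with proper support.

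First I would recall that $\Phi_{\cor_A}(G) = \reim{q_2}(\cor_A\otimes \opb{q_1}G)$, where $q_1\colon \Proj^n\times\Proj^{\ast n}\to \Proj^n$ and $q_2\colon \Proj^n\times\Proj^{\ast n}\to\Proj^{\ast n}$ are the two projections, and $A=\{([x],[u;s])\mid \langle x,u\rangle + s = 0\}$ under the chart $j$ — more precisely, $A$ is the incidence variety in $\Proj^n(\V\oplus\R)\times \Proj^{\ast n}$ and its intersection with the image of $\V\times(\V^\ast\times\R)$ is $\{(x,(u,s))\mid \langle u,x\rangle + s = 0\}$, i.e. the graph over $\V$ of the map sending $x$ to the hyperplane $\{s = -u(x)\}$. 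Then, to compute the stalk at $[u;t]$, I would use that for a point inclusion $i_{[u;t]}\colon \{[u;t]\}\hookrightarrow\Proj^{\ast n}$ one has, by proper base change for $\reim{q_2}$,
\begin{equation*}
\opb{i_{[u;t]}}\reim{q_2}(\cor_A\otimes\opb{q_1}\reim{j}F) \simeq \rsect_c\bigl(\opb{q_2}([u;t]); (\cor_A\otimes\opb{q_1}\reim{j}F)|_{\opb{q_2}([u;t])}\bigr).
\end{equation*}
The fiber $\opb{q_2}([u;t])$ is $\Proj^n\times\{[u;t]\}\cong\Proj^n$, on which $\cor_A$ restricts to $\cor_{H}$ where $H = A\cap(\Proj^n\times\{[u;t]\})$ is the projective hyperplane $\{[x;s]\mid u(x) + ts = 0\}$, and $\opb{q_1}\reim{j}F$ restricts to $\reim{j}F$. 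So the right-hand side becomes $\rsect_c(\Proj^n;\cor_H\otimes\reim{j}F) = \rsect_c(H;\reim{j}F|_H)$.

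Next I would observe that $\reim{j}F$ is supported on the relatively compact image $j(\V)\subset\Proj^n$, so $\rsect_c(H;\reim{j}F|_H) \simeq \rsect_c(H\cap j(\V);F|_{H\cap j(\V)})$ after identifying $j(\V)$ with $\V$. Under this identification $H\cap j(\V) = \{x\in\V\mid u(x) = t\} = \opb{u}(\{t\})$ — this is exactly the point where the affine equation $u(x)+ts=0$ with $s=1$ is used. Therefore the stalk equals $\rsect_c(\opb{u}(\{t\}); F|_{\opb{u}(\{t\})})$. On the other hand, again by proper base change applied to the point inclusion $\{t\}\hookrightarrow\R$ and the map $u\colon\V\to\R$, we have $(\reim{u}F)_t \simeq \rsect_c(\opb{u}(\{t\}); F|_{\opb{u}(\{t\})})$. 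Matching the two computations gives the claimed isomorphism.

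The main obstacle I anticipate is purely bookkeeping: carefully tracking the incidence variety $A$ through the chart $j$ and the projectivization $\Proj^n(\V\oplus\R)$, and making sure the restriction of $\cor_A$ to the fiber $\opb{q_2}([u;t])$ is the constant sheaf on the correct affine hyperplane $\{u(x)=t\}$ rather than, say, its closure or a shifted version. One must also be slightly careful that the projective duality kernel in Example \ref{ex:projduality} uses $\V$ versus $\V\oplus\R$ consistently — here the ambient projective space for $F$ is $\Proj = \Proj^n(\V\oplus\R)$ while the dual space parametrizes linear forms-plus-constants, i.e. affine functions on $\V$, which is precisely what pairs $u$ with a level $t$. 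Once the geometry of $A$ in the affine chart is pinned down, both sides reduce to $\rsect_c$ of $F$ over the affine fiber $\opb{u}(\{t\})$ via proper base change, and the isomorphism is immediate.
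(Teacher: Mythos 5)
Your proposal is correct and follows essentially the same route as the paper: the stalk at $[u;t]$ is computed by proper base change as $\rsect_c$ over the fiber $\Proj\times\{[u;t]\}$, the incidence hyperplane $B_{[u;t]}$ is intersected with the affine chart $j(\V)$ to give the affine fiber of $u$, and this is matched against the base-change computation of $(\reim{u}F)_t$. The only cosmetic difference is how you pass from $\Proj$ back to the chart $\V$ — you invoke the support of $\reim{j}F$, while the paper uses the projection formula $\rsect_c(\Proj;\cor_{B_{[u;t]}}\otimes\reim{j}F)\simeq\rsect_c(\V;\opb{j}\cor_{B_{[u;t]}}\otimes F)$ — but these amount to the same reduction.
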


\begin{proof}
Let $[u;t] \in \Proj^\ast$ and set 
\begin{equation*}
	B_{[u;t]}= \{ [x,y] \in \Proj \mid \, u(x)=-ty \}.
\end{equation*}
There is the following commutative diagram
\begin{equation*}
\xymatrix{
\V \ar[r]^-{j} \ar[d]_-{\id_\V \times [u;t]} & \Proj \ar[d]^{\id_\Proj \times [u;t]}
\\
\V \times \Proj^\ast \ar[r]^-{j \times \id} \ar[d]_-{q_1}& \Proj \times \Proj^\ast \ar[d]^{p_1}\\
\V \ar[r]^-{j}& \Proj,
}
\end{equation*}

\noindent with $q_1$ and $p_1$ the first projections, and $[u;t] : \V \to \Proj^\ast$ the constant map with value $[u;t]$.
\begin{align*}
\Phi_{\cor_A}(\reim{j}F)_{[u;t]}& \simeq \rsect_c(\Proj \times [u;t]; (\cor_A \otimes \opb{p_1}\reim{j}F)|_{\Proj \times [u;t]})\\
& \simeq \rsect_c(\Proj \times [u;t]; (\cor_{B_{[u;t]}\times [u;t]} \otimes \opb{(p_1  (\id_\Proj \times [u;t]))}\reim{j}F)\\
& \simeq \rsect_c(\Proj; \cor_{B_{[u;t]}} \otimes \reim{j}F)\\
& \simeq \rsect_c(\Proj ; \reim{j }(\opb{j }\cor_{B_{[u;t]}} \otimes F))\\
& \simeq \rsect_c(\Proj ; \reim{j}(\cor_{\opb{u}(t)} \otimes F))\\
& \simeq \rsect_c(\V; \cor_{\opb{u}(t)} \otimes F)\\
&\simeq \rsect_c(\opb{u}(t);F|_{\opb{u}(t)})\\
&\simeq (\reim{u}F)_t
\end{align*}
\end{proof}

\begin{proposition}\label{prop:vanishproj}
		Let $F \in \Derb(\cor_\V)$ such that for all $u \in \V^\ast$, $\reim{u}F \simeq 0$. Then, $F \simeq 0$.
	\end{proposition}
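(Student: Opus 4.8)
The plan is to reduce the vanishing of $F$ on $\V$ to the vanishing of a sheaf on the projective space $\Proj$ and then to invoke the projective duality equivalence from Example \ref{ex:projduality}. First I would pass from $F \in \Derb(\cor_\V)$ to $\reim{j}F \in \Derb(\cor_\Proj)$, where $j \colon \V \to \Proj$ is the open immersion of \eqref{map:openim}; since $j$ is an open embedding, $\opb{j}\reim{j}F \simeq F$, so it suffices to prove $\reim{j}F \simeq 0$ on the image of $j$, hence to understand $\Supp(\reim{j}F)$. The key input is Lemma \ref{lem:stalkprojduality}: for every $[u;t] \in \Proj^\ast$ one has $\Phi_{\cor_A}(\reim{j}F)_{[u;t]} \simeq (\reim{u}F)_t$. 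By hypothesis $\reim{u}F \simeq 0$ for all $u \in \V^\ast$, and the remaining points of $\Proj^\ast$ — those of the form $[u;t]$ with $u = 0$, together with more careful bookkeeping of the chart at infinity — must also be shown to give vanishing stalks; the honest statement is that $\reim{u}F \simeq 0$ for all $u$ forces all stalks $\Phi_{\cor_A}(\reim{j}F)_{[u;t]}$ to vanish, whence $\Phi_{\cor_A}(\reim{j}F) \simeq 0$ in $\Derb(\cor_{\Proj^\ast})$.

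Next I would feed this into projective duality. The integral transform $\Phi_{\cor_A}$ restricts to an equivalence $\widetilde{\Phi}_{\cor_A}\colon \Derb(\mathbb{P}^n, \dot{T}^\ast \mathbb{P}^n ) \to \Derb(\mathbb{P}^{\ast n},\dot{T}^\ast \mathbb{P}^{\ast n})$ (Example \ref{ex:projduality}). Since $\Phi_{\cor_A}(\reim{j}F) \simeq 0$, its image in the localized category $\Derb(\mathbb{P}^{\ast n},\dot{T}^\ast \mathbb{P}^{\ast n})$ is zero, so by the equivalence the image of $\reim{j}F$ in $\Derb(\mathbb{P}^n, \dot{T}^\ast \mathbb{P}^n)$ is zero as well; equivalently, $\SSi(\reim{j}F) \subset 0_{\Proj}$, i.e. $\SSi(\reim{j}F) \subset T^\ast_{\Proj}\Proj$. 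A sheaf with micro-support in the zero section is locally constant, so $\reim{j}F$ is locally constant on $\Proj$. Restricting along $j$, $F \simeq \opb{j}\reim{j}F$ is then locally constant on $\V$, hence constant: $F \simeq \cor_\V^{\,\oplus k}$ (in the derived sense, a direct sum of shifts of constant sheaves). Finally, $\reim{u}F$ for a nonzero $u$ applied to a nonzero constant sheaf on $\V \cong \R^n$ is computed by $\reim{u}\cor_\V \simeq \resect_c$ along the affine fibers $\opb{u}(t) \cong \R^{n-1}$, which is acyclic when $n \geq 2$ and is $\cor_{\R}[-1]\,$-ish in lower degree considerations — in any case non-trivial in a controlled way — so the hypothesis $\reim{u}F \simeq 0$ forces $k = 0$, i.e. $F \simeq 0$. (When $n=1$ one treats the statement directly: $\reim{u}F \simeq F$ up to the identification $u\colon \V \xrightarrow{\sim}\R$.)

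The main obstacle I anticipate is the careful handling of the points of $\Proj^\ast$ lying "at infinity" and the degenerate locus $u = 0$: Lemma \ref{lem:stalkprojduality} is stated for $[u;t]\in\Proj^\ast$, but I must make sure the hypothesis "$\reim{u}F \simeq 0$ for all $u\in\V^\ast$" genuinely kills \emph{every} stalk of $\Phi_{\cor_A}(\reim{j}F)$, including those not directly of the form covered by the lemma, and that no stalk information is lost when passing to the localized category $\Derb(\mathbb{P}^{\ast n},\dot{T}^\ast \mathbb{P}^{\ast n})$ — one needs that $\Phi_{\cor_A}(\reim{j}F)$ vanishing on a dense open with the right micro-support behaviour suffices. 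A clean way around this is to argue entirely at the level of micro-supports: show directly that the hypothesis implies $\SSi(F) \cap \opb{\pi}(\V) \subset 0_\V$ using the fact that a nonzero covector $(x_0;\xi_0)\in\SSi(F)$ would, via the geometry of $A$ and \cite[Thm~7.2.1]{KS90}, produce a nonzero stalk of some $\reim{u}F$, contradicting the hypothesis; then conclude $F$ locally constant, hence (being bounded and with the reduction above) zero. I expect the write-up to hinge on this micro-local bookkeeping rather than on any hard new estimate.
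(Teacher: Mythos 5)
Your proposal follows the paper's main route: use Lemma~\ref{lem:stalkprojduality} to get $\Phi_{\cor_A}(\reim{j}F) \simeq 0$, then the projective duality equivalence $\widetilde{\Phi}_{\cor_A}$ to conclude that $\reim{j}F$ vanishes in the localized category, so $\SSi(\reim{j}F)$ sits in the zero section and $\reim{j}F$ is a local system on $\Proj$. Where you diverge is the final step. The paper stays on $\Proj$: for any point at infinity $[x;0]$ one has $\opb{j}([x;0]) = \emptyset$, so the stalk $(\reim{j}F)_{[x;0]}$ vanishes; a local system on a connected manifold with a vanishing stalk is zero, hence $\reim{j}F \simeq 0$ and so $F \simeq 0$. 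You instead restrict to $\V$, note $F$ is constant (a sum of shifts of $\cor_\V$ since $\V$ is contractible), and argue that $\reim{u}$ of a nonzero constant sheaf is nonzero. Both endings work; the paper's is shorter and avoids the cohomology computation, and in fact your write-up of that computation is off: $\rsect_c(\R^{n-1};\cor) \simeq \cor[-(n-1)]$ is \emph{not} acyclic for $n\geq 2$ — rather $\reim{u}\cor_\V \simeq \cor_\R[1-n]$ (up to orientation), which is nonzero, and that nonvanishing is what you actually need. You catch yourself mid-sentence and say ``in any case non-trivial,'' which is the right conclusion, but the intermediate claim is wrong and should be corrected.

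One other point: your concern about the stalks of $\Phi_{\cor_A}(\reim{j}F)$ at the ``degenerate'' or ``at infinity'' points of $\Proj^\ast$ is unfounded. Lemma~\ref{lem:stalkprojduality} is proved for every $[u;t] \in \Proj^\ast$, including $[u;0]$ (giving $(\reim{u}F)_0$) and $[0;1]$ (giving $(\reim{0}F)_t$, a skyscraper $\rsect_c(\V;F)$ at $0$). The hypothesis ``$\reim{u}F \simeq 0$ for all $u\in\V^\ast$'' includes $u=0$, so every stalk in the lemma's range is covered and $\Phi_{\cor_A}(\reim{j}F) \simeq 0$ without any extra bookkeeping. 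The purely micro-local alternative you sketch at the end is plausible but not worked out; the route you actually take is sound once the computation error is fixed.
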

	
\begin{proof}
Let $[u;t] \in \Proj^\ast$. Then, it follows from Lemma \ref{lem:stalkprojduality} that
\begin{align*}
\Phi_{\cor_A}(\reim{j}F)_{[u;t]}& \simeq (\reim{u}F)_t.
\end{align*}
Hence, $\Phi_{\cor_A}(\reim{j}F) \simeq 0$ and 
\begin{equation*}
\widetilde{\Phi}_{\cor_A}\colon \Derb(\Proj, \dot{T}^\ast \Proj ) \to \Derb(\Proj^\ast,\dot{T}^\ast \Proj^\ast)
\end{equation*}
is an equivalence of categories. This implies that $\reim{j}F \simeq 0$ in $\Derb(\Proj, \dot{T}^\ast \Proj)$. Thus $\SSi(F) \subset T^\ast_\V \V$. It follows from \cite[Prop.~8.4.1]{KS90} that $\reim{j}F$ is a local system on $\Proj$. Now, let $[x,0] \in \Proj$. Then 
\begin{equation*}
(\reim{j}F)_{[x;0]} \simeq \rsect_c(\opb{j}([x;0]),F |_{\opb{j}([x;0])}) \; \textnormal{and} \; \opb{j}([x;0])=\emptyset.
\end{equation*}
Thus, $  (\reim{j}F)_{[x;0]} \simeq 0$. This implies that $\reim{j}F \simeq 0$, which leads to $F \simeq 0$.
\end{proof}

\subsection{Linear direct image of \texorpdfstring{$\gamma$}{gamma}-sheaves}

In this subsection, we study direct images and direct images with proper supports of $\gamma$-sheaves by linear forms. 

\begin{proposition}\label{prop:oimgamma}
	Let $F \in \Derb_{\gamma^{\circ,a}}(\cor_\V)$ and $ u \in \V^\ast$.
	\begin{enumerate}[(i)]
		\item If $u \in \gamma^{\circ,a}$ (resp. $\gamma^{\circ}$), then $\reim{u} F \in \Derb_{\lambda^{\circ,a}}(\cor_\V)$ with $\lambda=\R_-$ (resp. $\lambda=\R_+$). 
		\item If $u \notin \gamma^\circ \cup \gamma^{\circ,a}$, then $\reim{u} F$ is a constant sheaf. If furthermore $F$ has a compact support, then $\reim{u} F \simeq 0$. 
	\end{enumerate}
	The same results $(i)$ and $(ii)$ hold for $\reim{u} F$ replaced with $\roim{u} F$.
\end{proposition}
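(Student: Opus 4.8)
The plan is to analyze the microsupport of $\reim{u}F$ and $\roim{u}F$ using the functorial bounds on microsupport under direct images, specifically \cite[Prop.~5.4.4 and 5.4.5]{KS90}. Recall that $u \colon \V \to \R$ is a submersion (being a nonzero linear form; the case $u = 0$ being trivial since then both $\gamma^\circ$ and $\gamma^{\circ,a}$ contain $u$ and the statement is vacuous or immediate). For a linear form $u$, the cotangent lift identifies $T^\ast\R$ along the fibers, and the estimate on $\SSi(\roim{u}F)$ (valid when $u$ is proper on $\Supp(F)$, which holds automatically when $F$ has compact support, and in general one reduces to the constructible/conic situation or uses that $u$ is proper on $\gamma$-invariant supports after $\gamma$-gammaification — but more cleanly, for the non-proper case one uses the bound $\SSi(\reim{u}F) \subset \{(t;\xi) \mid \exists (x;\spb{u}_x\xi) \in \SSi(F)\}$ which always holds) tells us: a covector $(t;\xi) \in T^\ast\R$ lies in $\SSi(\reim{u}F)$ only if there is $x$ with $u(x) = t$ and $(x; \xi \cdot u) \in \SSi(F) \subset \V \times \gamma^{\circ,a}$, where $\xi \cdot u \in \V^\ast$ denotes the covector $v \mapsto \xi \, u(v)$.

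First I would treat case (i): suppose $u \in \gamma^{\circ,a}$, i.e. $-u \in \gamma^\circ$. If $(t;\xi) \in \SSi(\reim{u}F)$ with $\xi \neq 0$, then $\xi \cdot u \in \gamma^{\circ,a}$, so $-\xi \cdot u \in \gamma^\circ$. Since $\gamma^\circ$ is a cone and $-u \in \gamma^\circ$, for $\xi > 0$ the covector $-\xi u = \xi(-u) \in \gamma^\circ$ automatically, giving no constraint, whereas for $\xi < 0$ we would need $\xi(-u) = (-\xi) u \in \gamma^\circ$, i.e. $u \in \gamma^\circ$; combined with $-u \in \gamma^\circ$ and properness of $\gamma$ this forces $u = 0$ on all of $\gamma$, hence $u = 0$ since $\gamma$ has nonempty interior — contradiction with $\xi \neq 0$ unless $u=0$. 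Wait — I must be careful about sign conventions; the correct reading is that $\xi \neq 0$ forces $\xi \cdot u \in \gamma^{\circ,a} = -\gamma^\circ$, and for this to hold for $\xi$ of a given sign while $u \in -\gamma^\circ$ one gets: $\xi > 0$ is compatible, $\xi < 0$ forces $u \in \gamma^\circ \cap \gamma^{\circ,a}$. So $\SSi(\reim{u}F) \subset \R \times \R_{\geq 0}$ when $u \in \gamma^{\circ,a}$, which by definition \eqref{def:gammasheaf} means $\reim{u}F \in \Derb_{\lambda^{\circ,a}}(\cor_\R)$ with $\lambda^{\circ,a} = \R_{\geq 0}$, i.e. $\lambda = \R_{\leq 0} = \R_-$. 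The case $u \in \gamma^\circ$ is symmetric via the antipodal map $a$ (or by applying the previous case to $-u$), yielding $\lambda = \R_+$.

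Next, case (ii): if $u \notin \gamma^\circ \cup \gamma^{\circ,a}$, I claim $\SSi(\reim{u}F) \subset 0_\R$. Indeed, if $(t;\xi) \in \SSi(\reim{u}F)$ with $\xi \neq 0$, then $\xi \cdot u \in \gamma^{\circ,a}$. If $\xi > 0$ this says $u \in \gamma^{\circ,a}$; if $\xi < 0$ this says $-u \in \gamma^{\circ,a}$, i.e. $u \in \gamma^\circ$. Either way contradicts $u \notin \gamma^\circ \cup \gamma^{\circ,a}$. Hence $\SSi(\reim{u}F) \subset 0_\R = T^\ast_\R\R$, so $\reim{u}F$ is a local system on $\R$ (by \cite[Prop.~8.4.1]{KS90} applied on $\R$, which is connected and simply connected), hence a constant sheaf (an object of $\Derb(\cor_\R)$ with constant cohomology sheaves). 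If moreover $\Supp(F)$ is compact, then $\reim{u}F$ has compact support (its support is contained in $u(\Supp F)$, which is compact), and a constant sheaf on $\R$ with compact support must be zero — one can see this by taking a stalk at a point outside the support, or by $\rsect_c(\R; -)$ of a nonzero constant complex being nonzero hence unbounded-support; cleaner: a locally constant sheaf on the connected space $\R$ with nonempty support has support all of $\R$, contradicting compactness, so it is $0$.

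The main obstacle will be handling $\roim{u}F$ in the \emph{non-proper} case, where the naive microsupport estimate \cite[Prop.~5.4.5]{KS90} requires $u$ proper on $\Supp(F)$. The cleanest route is via duality: by Corollary \ref{cor:roimtoreim} (for $\gamma$-sheaves constructible up to infinity) one has $\roim{u}F \simeq \dual_\R \reim{u} \dual_\V F$, and $\dual_\V F$ has microsupport $\SSi(\dual_\V F) = \SSi(F)^a \subset \V \times \gamma^{\circ}$ — so one must track that dualization sends a $\gamma$-sheaf to a $\gamma^a$-sheaf and correspondingly swaps $\R_-$ and $\R_+$, which matches because $\dual_\R$ on $\Derb(\cor_\R)$ also acts by the antipodal map on microsupport, and the two swaps cancel. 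Alternatively, and perhaps more robustly since we do not want to assume constructibility here, one invokes the general microsupport bound for $\roim{u}$ valid whenever $u$ is proper on $\Supp(F)$ and otherwise notes that for a $\gamma$-sheaf the relevant estimate still goes through after replacing $F$ by $\cor_{\gamma^a} \npsconv F$, or simply one observes that a nonzero linear form $u$ restricted to a $\gamma^{\circ,a}$-microsupported sheaf's support behaves well enough; in the write-up I would state the microsupport computation for $\reim{u}$ carefully and then deduce the $\roim{u}$ statement from Corollary \ref{cor:roimtoreim} together with Lemma \ref{lem:dualiso}-style bookkeeping, being explicit that $(\gamma^\circ)^a = \gamma^{\circ,a}$ and that $\dual$ reverses the sign of covectors. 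I expect the sign/convention bookkeeping between $\gamma^\circ$, $\gamma^{\circ,a}$, $\R_+$, $\R_-$ and the direction of the microsupport cotangent lift $\spb{u}$ to be the only genuinely error-prone part; the geometric content is just "a linear form either points into the dual cone, into its antipode, or transversely, and direct image reads this off the microsupport."
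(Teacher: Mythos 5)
Your microsupport sign analysis is essentially the paper's argument, and the case bookkeeping in (i) and (ii) (including the constant‑sheaf and compact‑support step) is correct and matches the paper. The genuine gap is in the ingredient you take for granted: the claim that the bound $\SSi(\reim{u}F) \subset \{(t;\xi)\mid \exists\, (x;\spb{u}_x\xi)\in \SSi(F)\}$ ``always holds'' is \emph{false} without a properness hypothesis. A standard counterexample: $u(x,y)=x$ on $\R^2$ and $G=\cor_Z$ with $Z=\{(x,\tan x)\mid -\tfrac{\pi}{2}<x<\tfrac{\pi}{2}\}$; then $\reim{u}G\simeq \cor_{(-\pi/2,\pi/2)}$ has nonzero microsupport at $\pm\tfrac{\pi}{2}$, while no covector of $\SSi(G)$ has vanishing $dy$-component, so $u_\pi u_d^{-1}\SSi(G)$ lies in the zero section. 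The point is precisely that, for $\gamma$-sheaves, the constraint $\SSi(F)\subset \V\times\gamma^{\circ,a}$ prevents this escape to infinity, and this is what the paper's reference, \cite[Cor.~1.17]{GS14}, furnishes: a pushforward microsupport estimate valid for both $\reim{u}$ and $\roim{u}$ without a properness assumption, tailored to the cone-structured setting. You cannot substitute the generic \cite[Prop.~5.4.4]{KS90} statement here, and your proposed fix for $\roim{u}$ via Corollary \ref{cor:roimtoreim} does not apply either, since that corollary requires constructibility up to infinity, which is not assumed in the proposition (it is stated for arbitrary $F\in \Derb_{\gamma^{\circ,a}}(\cor_\V)$). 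The suggestion of replacing $F$ by $\cor_{\gamma^a}\npsconv F$ also does not help: for a $\gamma$-sheaf this is $F$ itself. In short, the geometric content of your argument is right, but the single non-routine step—why the microsupport estimate applies at all in this non-proper situation—is exactly what requires the cited corollary, and that is where your write-up is unjustified.
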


\begin{proof}
\noindent (i) Let $F \in \Derb_{\gamma^{\circ,a}}(\cor_\V)$, $u \in \V^\ast$ and $\pi_{\R^\ast} \colon \R \times \R^\ast \to \R^\ast$. Then \cite[Corollary 1.17]{GS14} asserts that
\begin{equation}\label{eq:microprojlin}
    \SSi(\reim{u}F) \subset \opb{\pi_{\R^\ast}} \opb{(u^t)}(\gamma^{\circ,a}).
\end{equation}
Moreover,
\begin{equation}\label{eq:microestim}
\opb{(u^t)}(\gamma^{\circ,a})=\{\xi \in \R^\ast \mid \xi(1) u \in \gamma^{\circ,a} \}.   
\end{equation}
Assume that $u \in \gamma^{\circ, a} \setminus \{0 \}$ and let $\xi \in \opb{(u^t)}(\gamma^{\circ,a})$. Since the interior of $\gamma$ is non-empty, there exists $x_0 \in \gamma$ such that $u(x_0)<0$ and $\xi(1)u(x_0) \leq 0$. Thus $\xi(1) \geq 0$ which is equivalent to $\xi \in \lambda^{\circ, a}$ with $\lambda=\R_{-}$. Equation \eqref{eq:microprojlin} becomes
\begin{equation*}
    \SSi(\reim{u}F) \subset \R \times \lambda^{\circ, a}
\end{equation*}
which proves the claim. The case where $u \in \gamma^\circ$ is treated in a similar way.

\noindent (ii) Since $u \notin \gamma^\circ \cup \gamma^{\circ,a}$, there exist $x,y \in \gamma$ such that $u(x)<0$ and $u(y)>0$. Hence, $\opb{\pi_\R} \opb{(u^t)}(\gamma^{\circ,a})=\R \times \{0\}$. It follows from \cite[Prop.~8.4.1]{KS90} that $\reim{u}F$ (resp. $\roim{u}F)$ is a constant sheaf (possibly equal to zero) as $\V$ is contractible. If the support of $F$ is compact, then the support of $\reim{u}F$ is different from $\R$ and thus $\reim{u} F \simeq 0$.
\end{proof}

\begin{corollary}
Let $F \in \Derb_{\gamma^{\circ,a},\comp}(\cor_\V)$. Then $\rsect(\V;F) \simeq 0$.
\end{corollary}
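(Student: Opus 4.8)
The plan is to deduce this corollary directly from Proposition \ref{prop:oimgamma} applied to the zero linear form, combined with a standard adjunction computing global sections as a proper pushforward to a point. First I would observe that since $F \in \Derb_{\gamma^{\circ,a},\comp}(\cor_\V)$ has compact support, the functor $\reim{u}$ agrees with $\roim{u}$ on $F$, and more importantly that $\rsect(\V;F) \simeq \rsect_c(\V;F) \simeq \reim{a_\V} F$ where $a_\V \colon \V \to \pt$ is the constant map.

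The main step is then to factor $a_\V$ through a linear form in a way that lets Proposition \ref{prop:oimgamma}(ii) apply. Concretely, since $\gamma$ is a proper closed convex cone with non-empty interior, the polar cone $\gamma^\circ$ is also a proper closed convex cone with non-empty interior; in particular $\gamma^\circ \cup \gamma^{\circ,a} \neq \V^\ast$, so we may pick a linear form $u \in \V^\ast$ with $u \notin \gamma^\circ \cup \gamma^{\circ,a}$. By Proposition \ref{prop:oimgamma}(ii), since $F$ has compact support, $\reim{u} F \simeq 0$. Now factoring $a_\V = a_\R \circ u$, where $a_\R \colon \R \to \pt$, and using the functoriality of proper pushforward $\reim{(g\circ f)} \simeq \reim{g}\reim{f}$, we get
\begin{equation*}
\rsect(\V;F) \simeq \reim{a_\V} F \simeq \reim{a_\R} \reim{u} F \simeq \reim{a_\R} 0 \simeq 0.
\end{equation*}

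I expect the only subtle point to be the verification that $\gamma^\circ \cup \gamma^{\circ,a}$ is a proper subset of $\V^\ast$, i.e. that there exists a linear form neither in $\gamma^\circ$ nor in $-\gamma^\circ$. This follows because $\gamma$ has non-empty interior, so $\gamma^\circ$ is a proper cone (it contains no line), hence $\gamma^\circ \cup \gamma^{\circ,a}$ is a union of two proper cones and cannot cover the whole space when $\dim \V \geq 1$ (the case $\dim\V = 0$ being trivial since then $F \simeq 0$ already). Everything else is a routine application of base change / composition of direct images and the identification $\rsect(\V; F) \simeq \rsect_c(\V; F)$ valid for compactly supported $F$ on the good space $\V$.
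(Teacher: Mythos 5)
Your argument reproduces the paper's proof in substance: pick $u \in \V^\ast \setminus (\gamma^\circ \cup \gamma^{\circ,a})$, apply Proposition \ref{prop:oimgamma}(ii) to get $\reim{u}F \simeq 0$ using the compact support of $F$, and conclude by composing with $a_\R \colon \R \to \pt$ and the identification $\rsect(\V;F) \simeq \reim{a_\V}F \simeq \reim{a_\R}\reim{u}F$ (the paper writes this with $\roim{}$, which agrees with $\reim{}$ here because $\Supp(F)$ is compact).

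The one inaccuracy is in your side-check of the existence of such a $u$. You claim that a union of two proper cones cannot cover $\V^\ast$ when $\dim\V \geq 1$, but in dimension one $\gamma^\circ$ is a closed half-line, so $\gamma^\circ \cup \gamma^{\circ,a} = \V^\ast$ and no such $u$ exists. The correct threshold is $\dim\V \geq 2$: there, $\gamma^\circ$ being proper with non-empty interior means it is not a closed half-space, so $\gamma^\circ \cup \gamma^{\circ,a}$ is a proper subset of $\V^\ast$. (The conclusion of the corollary is still true in dimension one, since a compactly supported $\gamma$-sheaf on $\R$ decomposes into shifts of sheaves $\cor_{[a,b)}$, each with vanishing global sections, but Proposition \ref{prop:oimgamma}(ii) does not furnish it directly.) The paper simply asserts the existence of $u$ without comment, so you were right to want to verify it; the threshold just needs raising by one.
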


\begin{proof}

Let $u \notin \gamma^\circ \cup \gamma^{\circ,a}$. Then, by Proposition \ref{prop:oimgamma}, $\roim{u}F \simeq 0$ and
\begin{equation*}
    \rsect(\V;F) \simeq \roim{a_\R}\roim{u} F \simeq 0
\end{equation*}
where $a_\R$ is the constant map $\V \to \{ \pt \}$.
\end{proof}

\begin{lemma}\label{lem:polarpropre}

Let $K \subset \V$ be a compact set and $u \in \Int(\gamma^\circ)$. Then $u|_{K + \gamma}$ is proper.
\end{lemma}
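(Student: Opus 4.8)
The plan is to show that the preimage under $u|_{K+\gamma}$ of any compact subset of $\R$ is compact. Since $K+\gamma$ is a closed subset of $\V$ (as $K$ is compact and $\gamma$ is closed — $K+\gamma$ is the image of $K\times\gamma$ under the continuous proper-on-the-relevant-part addition, and more simply, a sum of a compact and a closed set in a finite-dimensional vector space is closed), it suffices to show that for every $r>0$ the set $(u|_{K+\gamma})^{-1}([-r,r]) = \{z \in K+\gamma \mid |u(z)|\le r\}$ is bounded; a closed bounded subset of the finite-dimensional space $\V$ is compact.

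First I would write an arbitrary element of $K+\gamma$ as $z = k + w$ with $k\in K$, $w\in\gamma$, and estimate $u(z) = u(k) + u(w)$. Since $K$ is compact and $u$ is linear (hence continuous), $|u(k)| \le M := \norm{u}\cdot\norm{K}$ is bounded. So $|u(z)|\le r$ forces $u(w) \le r + M =: r'$. The key point is then to use the hypothesis $u\in\Int(\gamma^\circ)$ to turn the single scalar bound $u(w)\le r'$ into a bound on $\norm{w}$. Because $u$ lies in the interior of the polar cone $\gamma^\circ$, there exists $\varepsilon>0$ such that the ball $B(u,\varepsilon)$ in $\V^\ast$ is contained in $\gamma^\circ$; equivalently, $u(w) \ge \varepsilon\,\norm{w}$ for all $w\in\gamma$ (taking, for a given nonzero $w\in\gamma$, the functional $u - \varepsilon\,\ell$ where $\ell\in\V^\ast$ has operator norm $1$ and satisfies $\ell(w) = \norm{w}$: this functional is in $\gamma^\circ$, so $u(w) - \varepsilon\norm{w} \ge 0$). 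Combining, $\varepsilon\,\norm{w} \le u(w) \le r'$, so $\norm{w}\le r'/\varepsilon$, whence $\norm{z} \le \norm{K} + r'/\varepsilon$. This shows $(u|_{K+\gamma})^{-1}([-r,r])$ is bounded, and being closed in $\V$ it is compact; therefore $u|_{K+\gamma}$ is proper.

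The main obstacle — really the only nontrivial step — is the inequality $u(w) \ge \varepsilon\,\norm{w}$ on $\gamma$ coming from $u\in\Int(\gamma^\circ)$. I would prove it by the functional-perturbation argument above, or alternatively by compactness: the set $\gamma\cap\{\norm{w}=1\}$ is compact, $u$ is strictly positive on it (since $u\in\Int(\gamma^\circ)$ means $u(w)>0$ for all $w\in\gamma\setminus\{0\}$, using that $\gamma$ is a proper cone with nonempty interior so that the interior of the polar consists of functionals strictly positive on $\gamma\setminus\{0\}$), hence attains a positive minimum $\varepsilon$ there, and the inequality extends to all of $\gamma$ by homogeneity. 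Everything else is the standard fact that closed-plus-compact is closed and Heine–Borel in finite dimensions.
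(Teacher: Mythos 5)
Your proof is correct, but it takes a genuinely different and somewhat more elementary route than the paper's. The paper first reduces to the case $K\subset\gamma$ and studies $\Sigma=u^{-1}([a,b])\cap\gamma$; it observes that $u\in\Int(\gamma^\circ)$ forces $\ker(u)\cap\gamma=\{0\}$, decomposes $\V=\R v\oplus\ker(u)$ for a suitable $v\in\Int(\gamma)$ with $u(v)=1$, and then argues by the recession cone of a closed convex set: if $\Sigma$ were unbounded it would contain a ray, whose direction would have to lie in $\ker(u)\cap\gamma=\{0\}$, a contradiction. You instead prove the quantitative coercivity estimate $u(w)\ge\varepsilon\norm{w}$ for all $w\in\gamma$, obtained from the fact that a small ball around $u$ in $\V^\ast$ sits inside $\gamma^\circ$ (or, equivalently, by minimizing $u$ on the compact set $\gamma\cap\{\norm{w}=1\}$), and use it to bound $\norm{w}$ directly when $|u(k+w)|\le r$. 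Both arguments hinge on the same underlying fact, that $u$ is strictly positive on $\gamma\setminus\{0\}$ precisely because it lies in the \emph{interior} of the polar cone; yours turns this into a uniform lower bound and so gives an explicit estimate on the size of the preimage, while the paper's is a softer qualitative argument via recession cones. Your version avoids the recession-cone machinery and the WLOG reduction, at the modest cost of one extra normed-space lemma; either is a perfectly good proof, and yours has the small advantage of being quantitative.
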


\begin{proof}
Since the interior of $\gamma$ is nonempty, $K$ is compact and $u$ is linear, we can assume without loss of generality that $K \subset \gamma$. Therefore, it is sufficient to prove that for any two real numbers $a<b$, the set $\Sigma := u^{-1}([a,b]) \cap \gamma$ is compact. Since $u \in \Int(\gamma^\circ)$, it follows that for every $x \in \gamma \setminus \{0\}$, $u(x)>0$, which shows that $\ker(u) \cap \gamma = \{0\}$. We set $v \in \Int(\gamma) $ such that $u(v) = 1$. Consequently, one has the direct sum decomposition $\V =  \R v \oplus \ker(u)$, and one has : \begin{equation*}
   \Sigma  = \{t \cdot v + h \mid t \in [a,b], h \in \ker(u)\} \cap \gamma.
\end{equation*}

Since $\gamma$ is closed and convex, so is $\Sigma$. Therefore, $\Sigma$ is unbounded if and only if there exists $x \in \Sigma $ and $y \in \V \setminus \{0\}$ such that for all $t \geq 0$, $x + t\cdot y \in \Sigma$. Let assume that there exists such $x$ and $y$. Then, for all $t\geq 0$, $u(x + t\cdot y) \in [a,b]$, which implies that $y \in \ker(u)$. Moreover, for all $t> 0$, 

\begin{equation*} \frac{1}{t} \cdot \left ( x + t\cdot y \right ) = \frac{x}{t} + y \in \gamma. \end{equation*} 

Since $\gamma$ is closed, we obtain making $t$ going to $+\infty$ that $y \in \gamma \cap \ker(u) = \{0\}.$ This is absurd by hypothesis, so $\Sigma$ is bounded. Since $\V$ is finite dimensional, it is compact.

\end{proof}

\begin{lemma}\label{lem:oimcone}
	Let $u \in \Int(\gamma^\circ)$ (resp. $\Int(\gamma^{\circ, a})$). Then 
	$\roim{u} \cor_\gamma \simeq \cor_{\R_+}$ (resp. $\roim{u} \cor_{\gamma} \simeq \cor_{\R_-}$).
\end{lemma}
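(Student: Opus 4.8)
The plan is to reduce the statement to a computation of the restrictions of $\roim{u}\cor_\gamma$ to the open intervals of $\R$, which form a basis of its topology. First I would dispose of the ``resp.'' assertion by symmetry: if $u\in\Int(\gamma^{\circ,a})$ then $-u\in\Int(\gamma^\circ)$ (since $\gamma^{\circ,a}=-\gamma^\circ$), and writing $u=a_\R\circ(-u)$ with $a_\R\colon\R\to\R$, $t\mapsto-t$, one gets $\roim{u}\cor_\gamma\simeq\roim{a_\R}\roim{(-u)}\cor_\gamma$; since $a_\R$ is a homeomorphism with $a_\R([0,+\infty))=(-\infty,0]$, the functor $\roim{a_\R}$ sends $\cor_{\R_+}$ to $\cor_{\R_-}$, so it suffices to treat the case $u\in\Int(\gamma^\circ)$ and prove $\roim{u}\cor_\gamma\simeq\cor_{\R_+}$, where $\R_+=[0,+\infty)$.

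So assume $u\in\Int(\gamma^\circ)$. For every open $I\subseteq\R$, compatibility of direct images with restriction to open subsets together with the fact that $\cor_\gamma$ is the direct image of the constant sheaf along the closed embedding $\gamma\hookrightarrow\V$ give $\rsect(I;\roim{u}\cor_\gamma)\simeq\rsect(u^{-1}(I);\cor_\gamma)\simeq\rsect(X_I;\cor_{X_I})$, where $X_I:=\gamma\cap u^{-1}(I)$; this is an open subset of the convex set $\gamma$, hence locally contractible and paracompact, so $\rsect(X_I;\cor_{X_I})$ computes its cohomology with coefficients in $\cor$. Here is the crux: since $u\in\Int(\gamma^\circ)$ one has $u(x)>0$ for every $x\in\gamma\setminus\{0\}$ — exactly the fact used in the proof of Lemma \ref{lem:polarpropre} — hence $u(\gamma)=[0,+\infty)$ and $X_I=\gamma\cap u^{-1}(I\cap[0,+\infty))$. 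If $\sup I\le 0$ then $X_I=\emptyset$ and $\rsect(I;\roim{u}\cor_\gamma)\simeq 0$. Otherwise $I\cap(0,+\infty)\neq\emptyset$, and I claim $X_I$ is star-shaped: choose $t_0\in I\cap(0,+\infty)$ and any $c_0\in\gamma$ with $u(c_0)=t_0$ (possible since $u(\gamma)=[0,+\infty)$); for $x\in X_I$ and $s\in[0,1]$ the point $(1-s)c_0+sx$ lies in $\gamma$ by convexity, and its image under $u$ lies between $t_0\in I$ and $u(x)\in I$, hence in the interval $I$. Thus $X_I$ is contractible and $\rsect(I;\roim{u}\cor_\gamma)\simeq\cor$, concentrated in degree $0$.

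It remains to assemble this. The stalks of $\roim{u}\cor_\gamma$ are filtered colimits of the groups just computed over shrinking intervals, hence concentrated in degree $0$, so $\roim{u}\cor_\gamma$ is an ordinary sheaf $\cG$ whose restriction to the basis of open intervals sends $I$ to $\cor$ if $\sup I>0$ and to $0$ otherwise, with restriction map $\cG(I)\to\cG(I')$ (for open intervals $I'\subseteq I$) equal to the map $\Hn^0(X_I;\cor)\to\Hn^0(X_{I'};\cor)$ induced by the inclusion — an isomorphism when $X_{I'}\neq\emptyset$, both spaces being nonempty and connected, and the zero map otherwise. Since $\cor_{\R_+}$ has exactly this description on the basis of open intervals, identifying the two presheaves through the canonical isomorphisms $\Hn^0(\text{connected};\cor)\cong\cor$ yields $\roim{u}\cor_\gamma\simeq\cor_{\R_+}$. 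I expect the only mildly delicate point to be this last bookkeeping of the restriction maps; alternatively one can first invoke Lemma \ref{lem:polarpropre} with $K=\{0\}$ to replace $\roim{u}$ by $\reim{u}$ and then read the stalks off as $\rsect_c$ of the compact contractible fibers of $u|_\gamma$, but the star-shapedness computation is needed in either route and is the only real content.
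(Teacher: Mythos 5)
Your proof is correct, and it takes a genuinely different route from the paper's. The paper first builds an explicit comparison morphism $\alpha\colon\cor_{\R_+}\to\roim{u}\cor_\gamma$ (adjoint to $\cor_{\opb{u}(\R_+)}\to\cor_\gamma$), then invokes Lemma~\ref{lem:polarpropre} to conclude that $u$ is proper on $\gamma$, so that $\roim{u}\cor_\gamma\simeq\reim{u}\cor_\gamma$ and the stalk at $x$ is $\rsect_c$ of the fiber $\opb{u}(x)\cap\gamma$, which is either empty or compact convex; the isomorphism is then checked stalkwise. You instead compute $\rsect(I;\roim{u}\cor_\gamma)\simeq\rsect(X_I;\cor_{X_I})$ directly over a base of open intervals, establish contractibility of the open sets $X_I=\gamma\cap\opb{u}(I)$ via a star-shapedness argument, and reconstruct the sheaf from its sections and restriction maps on this base. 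Both proofs hinge on the same geometric fact (that $u>0$ on $\gamma\setminus\{0\}$), but your route avoids properness and proper base change at the cost of having to verify the assembly of the sheaf by hand — the step you correctly flag as the only delicate bookkeeping — whereas the paper's construction of $\alpha$ sidesteps that entirely and reduces everything to a pointwise check. The reduction of the ``resp.''\ case to the $\Int(\gamma^\circ)$ case via $a_\R\circ(-u)$ is a clean way to handle it; the paper leaves this symmetry implicit.

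One small remark: when you replace properness by contractibility, you implicitly use that $\rsect(X_I;\cor_{X_I})$ agrees with singular cohomology and that the filtered colimit of the degree-zero-concentrated complexes $\rsect(I;\cdot)$ over shrinking $I\ni t$ gives the stalk and shows $\roim{u}\cor_\gamma$ is concentrated in degree $0$; both points are standard and you invoke them correctly, but it is worth noting that the paper's use of $\rsect_c$ of compact convex sets gives the stalk in one step without the intermediate contractibility discussion.
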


\begin{proof}
Since $u \in \gamma^\circ$, it follows that $\gamma \subset \opb{u}(\R_+)$ and these two sets are closed. 
Hence, there is a canonical map $\opb{u} \cor_{\R_+}\simeq\cor_{\opb{u}(\R_+)} \to \cor_\gamma$. By adjunction, this provide a map $\alpha \colon \cor_{\R_+} \to \roim{u} \cor_\gamma$. We compute the stalks of this map. Since $u \in \Int(\gamma^\circ)$, it is proper on $\gamma$. It follows that for $x \in \R$ $(\roim{u} \cor_\gamma)_x \simeq \rsect_c(\opb{u}(x),\cor_{\gamma \cap \opb{u}(x)})$.
	\begin{itemize}
		\item If $x \notin \R_+$, then $\opb{u}(x) \cap \gamma = \emptyset$. Thus $\rsect_c(\opb{u}(x),\cor_{\gamma \cap \opb{u}(x)}) \simeq 0$ and $\alpha_x$ is an isomorphism.
		\item If $x \in \R_+$, then $\opb{u}(x) \cap \gamma$ is non-empty and it is a compact convex subset of $\opb{u}(x)$. This implies that $\rsect_c(\opb{u}(x),\cor_{\gamma \cap \opb{u}(x)})\simeq \cor$ and $\alpha_x$ is again an isomorphism.
	\end{itemize}
\end{proof}

 We recall the notion of characteristic cone following \cite{Klee55, Stoker40}. Let $C$ be a closed convex subset of $\V$ and let $p \in C$. Consider the set \[ C_p=\{x \in \V \mid \forall \lambda \geq 0, \lambda (x-p)+p \in C \}.\] Hence $C_p$ is the union of half-lines contained in $C$ emanating from $p$. The set $C_p$ is an affine convex cone and if $q \in C$ then $C_p=C_q + (p-q)$. The cone $C_p$ is called a characteristic cone of $C$.

\begin{lemma}\label{lem:cohomubdconv} Let $\V$ be a finite dimensional real vector space and let $C$ be a closed convex set such that a characteristic cone of $C$ is not an affine subspace  of $\V$. Then
\begin{equation*}
    \rsect_c(\V,\cor_C) \simeq 0.
\end{equation*}
\end{lemma}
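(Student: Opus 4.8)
The plan is to exploit a recession direction of $C$ along which $C$ can be ``pushed off itself'', so as to exhibit $C$ as an increasing union of open subsets on which the compactly supported cohomology already vanishes.

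First I would unwind the hypothesis. Fix $p \in C$; since the characteristic cone $C_p$ is not an affine subspace, the cone $C_p - p$ is not a linear subspace, so there is $y \in \V$ with $p + \R_{\geq 0}\, y \subseteq C$ but $p - \R_{\geq 0}\, y \not\subseteq C$. As $C_q - q = C_p - p$ for every $q \in C$, one has $c + \R_{\geq 0}\, y \subseteq C$ for all $c \in C$; in particular $C + t y \subseteq C$ for all $t \geq 0$. Moreover $\bigcap_{t \geq 0}(C + t y) = \emptyset$: a point $x$ of this intersection would satisfy $x - \R_{\geq 0}\, y \subseteq C$, hence $-y \in C_x - x = C_p - p$, contradicting $p - \R_{\geq 0}\, y \not\subseteq C$. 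Setting $C_t := C + t y$ and $U_t := C \setminus C_t$ for $t \geq 0$, the $C_t$ are closed convex subsets, the $U_t$ are open subsets of $C$ that increase with $t$, and $\bigcup_{t \geq 0} U_t = C$.

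The key step is to show $\rsect_c(U_t; \cor_{U_t}) \simeq 0$ for every $t \geq 0$. Applying $\rsect_c(C; -)$ to the excision triangle $\cor_{U_t} \to \cor_C \to \cor_{C_t} \xrightarrow{\,+1\,}$ in $\Derb(\cor_C)$ and using $\rsect_c(C; \cor_{C_t}) \simeq \rsect_c(C_t; \cor_{C_t})$ yields a triangle
\begin{equation*}
\rsect_c(U_t; \cor_{U_t}) \to \rsect_c(C; \cor_C) \xrightarrow{\ \rho_t\ } \rsect_c(C_t; \cor_{C_t}) \xrightarrow{\,+1\,},
\end{equation*}
with $\rho_t$ the restriction morphism, so it is enough to prove that $\rho_t$ is an isomorphism. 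I would write $\iota_t \colon C_t \hookrightarrow C$ for the closed inclusion and $\theta_t \colon C \xrightarrow{\ \sim\ } C_t$, $x \mapsto x + t y$, for the translation homeomorphism, so that $\iota_t \circ \theta_t \colon C \to C$ is the map $x \mapsto x + t y$, well defined because $y$ is a recession direction. The map $h \colon C \times [0,1] \to C$, $(x,s) \mapsto x + s t y$, takes values in $C$ (as $s t \geq 0$) and is a homotopy from $\id_C$ to $\iota_t \circ \theta_t$; it is proper, the preimage of a compact $K \subseteq C$ lying in the compact set $(K - [0,t]\, y) \times [0,1]$. By proper-homotopy invariance of cohomology with compact supports, $\iota_t \circ \theta_t$ and $\id_C$ induce the same endomorphism of $\rsect_c(C; \cor_C)$, i.e. $\theta_t^\ast \circ \rho_t = \id$; since $\theta_t^\ast$ is an isomorphism, so is $\rho_t$, and the triangle gives $\rsect_c(U_t; \cor_{U_t}) \simeq 0$.

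Finally I would pass to the limit. Along the cofinal sequence $(U_n)_{n \in \N}$ there is an isomorphism of sheaves $\cor_C \simeq \varinjlim_{n} \cor_{U_n}$ on $\V$ (both sides have stalk $\cor$ exactly at the points of $C$, compatibly with the canonical morphisms). Since $\V$ is a good space, $\rsect_c(\V; -)$ commutes with filtered colimits, hence
\begin{equation*}
\rsect_c(\V; \cor_C) \simeq \varinjlim_{n} \rsect_c(\V; \cor_{U_n}) \simeq \varinjlim_{n} \rsect_c(U_n; \cor_{U_n}) \simeq 0 .
\end{equation*}
I expect the main obstacle to be the middle step, namely showing that $\rho_t$ is an isomorphism: this is precisely where the geometry of the recession direction $y$ is used, through the properness of the pushing homotopy $h$ — one must check both that $h$ stays in $C$ (using $y \in C_p - p$) and that it is proper (using that $s$ ranges over the compact interval $[0,1]$ and $t$ is fixed). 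Everything else is formal: the excision triangle, proper-homotopy invariance of $\rsect_c$, and its commutation with filtered colimits.
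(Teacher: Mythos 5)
Your proof is correct, but it takes a genuinely different route from the paper's. The paper first treats the one-dimensional model case $C=[0,+\infty)\subset\R$ directly (via a stereographic projection identifying $\R_+$ with $[0,1)$ and a short exact triangle $\cor_{[0,1)}\to\cor_{[0,1]}\to\cor_{\{1\}}$), and then handles the general case by invoking Klee's theorem that a closed convex set whose characteristic cone is not an affine subspace is homeomorphic to $[0,1]^d\times[0,+\infty)$, so that K\"unneth reduces everything to the one-dimensional computation. You instead work intrinsically: you extract from the hypothesis a recession vector $y$ with $C+ty\subseteq C$ and $\bigcap_{t\ge 0}(C+ty)=\emptyset$, use the excision triangle for $U_t=C\setminus(C+ty)\subset C$, prove that the restriction $\rho_t\colon\rsect_c(C;\cor_C)\to\rsect_c(C+ty;\cor_{C+ty})$ is an isomorphism by proper-homotopy invariance of $\rsect_c$ (the proper homotopy $h(x,s)=x+sty$), and pass to the filtered colimit $\cor_C\simeq\varinjlim_n\cor_{U_n}$, using that $\rsect_c$ commutes with filtered colimits of sheaves on a locally compact space. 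The trade-off is clear: the paper offloads the geometric content onto Klee's nontrivial classification theorem and then uses only soft K\"unneth, while your argument is self-contained on the convex-geometry side but leans on two standard pieces of sheaf-theoretic machinery (proper-homotopy invariance of $\rsect_c$, e.g.\ via the unit $\cor_X\xrightarrow{\sim}\roim{p}\opb{p}\cor_X$ for $p\colon X\times[0,1]\to X$, and the colimit-exactness of $\Gamma_c$). Both yield the result; yours is arguably more robust since it does not require knowing the topological type of $C$, only a single recession direction.

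Two small points worth making explicit if you were to write this up: (a) the identity $\theta_t^\ast\circ\rho_t=(\iota_t\circ\theta_t)^\ast$ uses the contravariant functoriality of proper pullback and the identification of $\rho_t$ with the pullback along the closed (hence proper) inclusion $\iota_t$, which should be spelled out; and (b) in the last step one should note that each $U_n$ is locally closed in $\V$ (closed $\cap$ open), so $\rsect_c(\V;\cor_{U_n})\simeq\rsect_c(U_n;\cor_{U_n})$, and that the transition maps $\cor_{U_n}\to\cor_{U_{n+1}}$ are the canonical morphisms for the open inclusions $U_n\hookrightarrow U_{n+1}$, compatible with the colimit identification $\varinjlim_n\cor_{U_n}\simeq\cor_C$ checked on stalks.
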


\begin{proof}

We first establish the lemma when $\V=\R$ and $C=[0,+\infty[$. Using a stereographic projection from the north pole N of $\mathbb{S}^1 \subset \R^2$, we notice that $\R_+$ is homeomorphic to a half-circle minus the north pole that is to $[0,1[$. We consider the following exact triangle

\begin{equation*}
    \rsect_c([0,1];\cor_{[0,1[}) \to \rsect_c([0,1]; \cor_{[0,1]}) \to \rsect_c([0,1]; \cor_{\{1\}}) \stackrel{+1}{\to}.
\end{equation*}

Since, the morphism $\rsect_c([0,1]; \cor_{[0,1]}) \to \rsect_c([0,1]; \cor_{\{1\}})$ is an isomorphism, it follows that  $\rsect_c([0,1];\cor_{[0,1[}) \simeq 0$. Thus $\rsect_c(\R,\cor_{[0,+\infty[}) \simeq 0$.

Now, assume that $\V$ is a real vector space of dimension $n$ and $C$ is closed convex set of $\V$ such that a characteristic cone of $C$ is not an affine subspace  of $\V$. By  \cite[\S 5]{Klee55}, it follows that $C$ is homeomorphic to $[0,1]^d \times [0,+\infty[$ for some $0 \leq d \leq n-1$. Hence,
\begin{align*}
    \rsect_c(\V,\cor_C) & \simeq \rsect_c(C,\cor_C)\\
                        & \simeq \rsect_c([0,1]^d \times [0,+\infty[,\cor_{[0,1]^d \times [0,+\infty[})\\
                        & \simeq \rsect_c([0,1]^d,\cor_{[0,1]^d}) \otimes \rsect_c( [0,+\infty[,\cor_{[0,+\infty[})\\
                        & \simeq 0.
\end{align*}
\end{proof}

\begin{lemma}\label{lem:affsubcone}
Let $\V$ be a vector space. A closed proper cone $\gamma$ of $\V$ does not contain any non-trivial affine subspace of $\V$.
\end{lemma}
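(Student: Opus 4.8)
The plan is to argue by contradiction. Suppose $\gamma$ is a closed proper cone of $\V$ that contains a non-trivial affine subspace $A$; I would like to extract from this a non-zero vector $v$ with the property that both $v$ and $-v$ lie in $\gamma$, which directly contradicts properness ($\gamma^a \cap \gamma = \{0\}$).

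\begin{proof}
Assume for contradiction that $\gamma$ contains a non-trivial affine subspace $A$ of $\V$. Pick a point $p \in A$ and a vector $w \neq 0$ such that the line $p + \R w$ is contained in $A$, hence in $\gamma$. Since $\gamma$ is a cone, $0 \in \gamma$ and $\R_{>0} \cdot \gamma \subset \gamma$; because $\gamma$ is also closed, $\R_{\geq 0} \cdot \gamma \subset \gamma$. For every $t \in \R$ the point $p + tw$ lies in $\gamma$, so for every $\lambda > 0$ the vector $\lambda(p + tw) = \lambda p + \lambda t w$ lies in $\gamma$ as well. Fixing $t$ and letting $\lambda \to 0^+$, and using that $\gamma$ is closed, we would only recover $0$; instead, fix $\lambda = 1/|t|$ for $t \neq 0$ and let $t \to +\infty$: the vectors $p/t + w$ lie in $\gamma$ and converge to $w$, so $w \in \gamma$ by closedness. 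Applying the same argument with $t \to -\infty$ (equivalently, using the points $p + tw$ for $t < 0$ and dividing by $|t|$) shows that the vectors $p/t - w$ lie in $\gamma$ and converge to $-w$, hence $-w \in \gamma$. Therefore $w \in \gamma^a \cap \gamma$ with $w \neq 0$, contradicting the assumption that $\gamma$ is proper. Hence $\gamma$ contains no non-trivial affine subspace of $\V$.
\end{proof}

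The only subtlety here — and the step I would be most careful about — is the passage to the limit: one must use that $\gamma$ is \emph{closed} in an essential way, since the conclusion is false for non-closed cones (e.g.\ an open half-space together with the origin). The rescaling $p + tw \mapsto (p + tw)/|t|$ is exactly what converts the affine line through a possibly non-zero base point $p$ into the linear line $\R w$ in the limit, and both signs of $t$ are needed to capture both $w$ and $-w$.
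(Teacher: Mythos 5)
Your proof is correct and is essentially the paper's argument: both take a line $p + \R w$ inside the affine subspace, rescale its points by $1/|t|$ (the paper uses $1/n$ with $t = \pm n$), and use closedness of the cone to conclude $w, -w \in \gamma$, contradicting properness. The only blemish is a harmless sign slip in the negative-$t$ case, where $(p+tw)/|t| = -p/t - w$ rather than $p/t - w$; either expression tends to $-w$, so the conclusion stands.
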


\begin{proof}
Let $E$ be an affine subspace of $\V$ contained in $\gamma$ and let $p \in E$ and $v \in \V$, an element of the vector space associated with $E$. The set $L=\{x \in \V \mid x=p+ \lambda v, \lambda \in \R \}$ is an affine subspace of $E$. Since $\gamma$ is a cone, for every $n \in \N^\ast$, the points $q_n=\frac{1}{n} (p+ n v)$ and $r_n=\frac{1}{n} (p- n v)$ belongs to $\gamma$. Since $\gamma$ is closed, the limits $\lim_{n \to \infty} p_n = v$ and $\lim_{n \to \infty} r_n = -v$ belongs again to $\gamma$ which is proper. Hence, $v=0$ and $L=\{p\}$. This implies that $E$ is trivial.
\end{proof}

\begin{proposition}\label{prop:boundvanish}
Let $u \notin \Int(\gamma^\circ) \cup \Int(\gamma^{\circ, \, a})$. Then $\reim{u} \cor_\gamma \simeq 0$.
\end{proposition}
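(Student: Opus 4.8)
The plan is to prove that every (derived) stalk of $\reim{u}\cor_\gamma$ vanishes, which forces $\reim{u}\cor_\gamma\simeq 0$ since a complex in $\Derb(\cor_\R)$ is zero iff all its stalks are. By proper base change, for each $t\in\R$ one has $(\reim{u}\cor_\gamma)_t\simeq\rsect_c(\opb{u}(t);\cor_{\gamma\cap\opb{u}(t)})$, so the problem reduces to showing this complex is zero for every $t$. Before doing so I would reduce to the case $u\in\partial\gamma^\circ$: writing $\sigma\colon\R\to\R$, $s\mapsto -s$, functoriality of $\reim{(-)}$ gives $\reim{(-u)}\cor_\gamma\simeq\reim{\sigma}\reim{u}\cor_\gamma$, and since $\sigma$ is a homeomorphism and $-u\in\partial\gamma^\circ$ exactly when $u\in\partial\gamma^{\circ,a}$, the statement for $u\in\partial\gamma^{\circ,a}$ follows from the one for $u\in\partial\gamma^\circ$.

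So assume $u\in\partial\gamma^\circ$. The geometric heart of the argument is the claim that $\gamma\cap\ker u$ is a closed convex cone which is \emph{not} an affine subspace of $\V$. To see that $\gamma\cap\ker u\neq\{0\}$, note that $\gamma^\circ$ is closed and $u\in\gamma^\circ\setminus\Int(\gamma^\circ)$, so I can pick $u_n\to u$ with $u_n\notin\gamma^\circ$; this yields vectors $x_n\in\gamma$ with $\norm{x_n}=1$ and $u_n(x_n)<0$, and by compactness of $\{x\in\gamma\mid\norm{x}=1\}$ I may assume $x_n\to x_0$ with $x_0\in\gamma$, $\norm{x_0}=1$. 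Then $u(x_0)=\lim_n u_n(x_n)\leq 0$, while $u\in\gamma^\circ$ forces $u(x_0)\geq 0$, so $0\neq x_0\in\gamma\cap\ker u$. Since $\gamma\cap\ker u\subseteq\gamma$ and $\gamma$ is a proper closed cone, Lemma~\ref{lem:affsubcone} prevents $\gamma\cap\ker u$ from containing a nontrivial affine subspace; being itself a cone containing $x_0\neq 0$, it is therefore not an affine subspace.

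Now fix $t\in\R$ and compute the stalk. If $\gamma\cap\opb{u}(t)=\varnothing$ — in particular whenever $t<0$, since $u\geq 0$ on $\gamma$ — the complex $\rsect_c(\opb{u}(t);\cor_{\gamma\cap\opb{u}(t)})$ is zero. Otherwise $C:=\gamma\cap\opb{u}(t)$ is a nonempty closed convex subset of the affine subspace $\opb{u}(t)$, and its characteristic (recession) cone is the intersection of the recession cone of $\gamma$, which is $\gamma$, with that of $\opb{u}(t)$, which is $\ker u$; hence it equals $\gamma\cap\ker u$, which by the previous paragraph is not an affine subspace. Lemma~\ref{lem:cohomubdconv}, applied inside $\opb{u}(t)$ (translated onto $\ker u$), then gives $\rsect_c(\opb{u}(t);\cor_C)\simeq 0$. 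Since all stalks vanish, $\reim{u}\cor_\gamma\simeq 0$.

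The step I expect to be the main obstacle is the last one: identifying the characteristic cone of the possibly unbounded slice $\gamma\cap\opb{u}(t)$ as $\gamma\cap\ker u$ and confirming it is not an affine subspace. This is precisely where the boundary hypothesis does its work — for $u\in\Int(\gamma^\circ)$ the slices $\gamma\cap\opb{u}(t)$ are compact by Lemma~\ref{lem:polarpropre}, and then the conclusion is false (compare $\roim{u}\cor_\gamma\simeq\cor_{\R_+}$ in Lemma~\ref{lem:oimcone}).
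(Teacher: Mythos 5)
Your proof is correct and follows essentially the same route as the paper's: reduce to vanishing of stalks via proper base change, observe that the nonempty slices $C = \gamma \cap u^{-1}(t)$ are closed convex sets whose characteristic cone is not an affine subspace because of $\gamma \cap \ker u \neq \{0\}$ together with Lemma~\ref{lem:affsubcone}, and invoke Lemma~\ref{lem:cohomubdconv}. The only real differences are cosmetic: you first reduce to $u \in \partial\gamma^\circ$ via the antipodal map (the paper just works with $\gamma\cap\ker u$ directly, which handles both boundary components at once), you spell out the compactness argument for $\gamma\cap\ker u \neq \{0\}$ (the paper asserts it without proof), and you identify the recession cone of $C$ exactly as $\gamma\cap\ker u$ using the standard intersection formula, whereas the paper only exhibits a ray $x_0+h\subset C_{x_0}$ and bounds $C_z \subset \gamma + z$ — both arguments deliver the same conclusion that the characteristic cone is nontrivial yet contains no line.
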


\begin{proof}
Let $y \in \R$. Then
\begin{align*}
    (\reim{u} \cor_\gamma)_y &\simeq \rsect_c(\opb{u}(y); \cor_{\gamma \cap \opb{u}(y)}).
\end{align*}
We set $C=\gamma \cap \opb{u}(y)$. Then, either $C=\emptyset$ and $\rsect_c(\opb{u}(y); \cor_{C})\simeq 0$ or $C \neq \emptyset$ and it is a closed convex set. Let $x_0 \in C$. Since $u \in \partial \gamma^\circ \cup \partial \gamma^{\circ, \, a}$, it follows that $(\ker(u) \cap \gamma) \setminus (0) \neq \emptyset$. As $\gamma$ is a cone this implies that $(\ker(u) \cap \gamma)$ contains a half-line $h$. Since $x_0 \in \gamma$ and $\gamma$ is a convex cone then $x_0+h \subset \gamma$. Moreover, $h \subset \ker(u)$, thus $u(x_0+h)=\{y\}$. Hence, $x_0+h$ is contained in $C$ which implies that the characteristic cone $C_{x_0}$ contains at least a ray and is not reduced to a point. Let $z \in C$. We have the following inclusion of characteristic cone $C_z \subset \gamma_z= \gamma+z$. Moreover, it follows from Lemma \ref{lem:affsubcone} that $\gamma$ does not contain any non-trivial affine subspace of $\V$. Hence $C_z$ is not an affine subspace of $\V$. Then, applying Lemma \ref{lem:cohomubdconv}, we get that $(\reim{u} \cor_\gamma)_y \simeq \rsect_c(\V, \cor_C) \simeq 0$.
\end{proof}

\subsection{Sublevel sets persistence}

In \cite{KS18}, Masaki Kashiwara and Pierre Schapira provide a sheaf-theoretic construction of sublevel sets multi-parameter persistence. The aim of this section is to prove that the sheaf encoding the sublevel sets multi-parameter persistence of a pair $(S,f)$ where $S$ is a good compact topological space and $f \colon S \to \V$ is a continuous map, is $\gamma$-compactly generated. 

\begin{definition}\label{df:gammacompact}
A sheaf $F \in \Derb(\cor_\V)$ is $\gamma$-compactly generated if there exists $G \in \Derb_\comp(\cor_\V)$ such that $F \simeq G \npsconv \cor_{\gamma^a}$. We denote by $\Derb_{\gcg}(\cor_\V)$ the full subcategory of $\Derb(\cor_\V)$ spanned by $\gamma$-compactly generated sheaves.
\end{definition}

\begin{remark}
Any compactly supported $\gamma$-sheaves is $\gamma$-compactly generated. Also, with the same notations, one has $F \simeq G \star \cor_{\gamma^a}$.
\end{remark}

We now recall Kashiwara-Schapira's construction of the sublevel sets multi-parameter persistence module associated to a pair $(S,f)$ where $S$ is a good topological space and $f \colon S \to \V$ is a continuous map.  We denote by $\Gamma_f$ the graph of $f$ and $\gamma$ is a cone satisfying hypothesis \eqref{hyp:cone}. We set 
\begin{align*}
	\Gamma_f^\gamma&=\{(x,y) \in S \times \V \mid f(x)-y \in \gamma\}\\
	&=\Gamma_f + \gamma^a.
\end{align*}
We write $p \colon S \times \V \to \V, \; (x,v) \mapsto v$ for the projection onto $\V$ and $s \colon \V  \times \V \to \V, (v,w) \mapsto v+w$. We notice that $s \circ (p \times \id_{\V})=p \circ (\id_S \times s)$.

\begin{definition}
The sublevel sets persistent sheaf of the pair $(S,f)$ is defined by :  \[\PH(f) := \roim{p}\cor_{\Gamma_f^\gamma} \in \Der (\cor_\V) .\] 
We use the notation  $\PH^i(f) := \Hn^i(\roim{p}\cor_{\Gamma_f^\gamma})$, for $i \in \Z$.
\end{definition}

\begin{remark} Let $M$ be a real analytic manifold (for instance $M=\R^n$) and let $S$ be a good topological space. We assume that $\gamma$ is subanalytic and that we have the data of $i_S \colon S \to M$ a closed immersion whose image is a  subanalytic subset of $M$ and that $f \colon S \to \V$ is continuous and subanalytic in $M$ i.e. the graph  $(i_S \times \id_\V)(\Gamma_f)$ is subanalytic in $M \times \V$. Following \cite{KS18}, if we also assume that
\begin{equation}\label{hyp:contruct_peristence}
	\textnormal{For each $K \subset \V$ compact, the set $\{x \in S \mid f(x) \in K+ \gamma\}$ is compact.}
\end{equation}
Then the sheaf $\roim{p}\cor_{\Gamma_f^\gamma}$ is constructible. Indeed, writing $p_M \colon M \times \V \to \V$ for the projection we have
\begin{align*}
	\roim{p}\cor_{\Gamma_f^\gamma} \simeq \roim{p_M} (\roim{i_S}\cor_{\Gamma_f^\gamma}) \simeq \roim{p_M}\cor_{i_S(\Gamma_f^\gamma)}
\end{align*}
and the result follows immediately from \cite[Thm.~1.11]{KS18}. Furthermore, if $M$ is compact, then $\roim{p}\cor_{\Gamma_f^\gamma}$ is constructible up to infinity.
\end{remark}

Let $f \colon \V \to S$ be a continuous map. Remark that $\Gamma_f \times \gamma^a\subset \opb{(\id_S \times s)} (\Gamma_f^\gamma)$. This provides a canonical map 
\begin{equation*}
\phi:\opb{(\id_S \times s)} \cor_{\Gamma_f^\gamma} \to \cor_{\Gamma_f \times \gamma^a}. 
\end{equation*}
Precomposing the map $\roim{(\id_S \times s)} \phi$ with the morphism 
\begin{equation*}
\cor_{\Gamma_f^\gamma} \to \roim{(\id_S \times s)}\opb{(\id_S \times s)}\cor_{\Gamma_f^\gamma} 
\end{equation*}
induced by the unit of the adjunction $(\opb{(\id_S \times s)},\roim{(\id_S \times s)})$ leads to the map
	\begin{equation}\label{map:isoepisum}
		\cor_{\Gamma_f^\gamma} \to \roim{(\id_S \times s)}\opb{(\id_S \times s)}\cor_{\Gamma_f^\gamma} \stackrel{\roim{(\id_S \times s)}\phi}{\longrightarrow} \roim{(\id_S \times s)} \cor_{\Gamma_f \times \gamma^a}.
	\end{equation}
\begin{lemma}
The morphism \eqref{map:isoepisum} is an isomorphism.
\end{lemma}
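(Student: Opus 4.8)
The plan is to show that the morphism \eqref{map:isoepisum} is an isomorphism by computing $\roim{(\id_S\times s)}\cor_{\Gamma_f\times\gamma^a}$ geometrically and then checking, on sections over opens, that \eqref{map:isoepisum} realizes that computation. First I would record the only facts about $(S,f,\gamma)$ that are needed: $\Gamma_f$ is closed in $S\times\V$ (graph of a continuous map into the Hausdorff space $\V$), $\gamma^a$ is closed, hence $\Gamma_f\times\gamma^a$ is closed in $S\times\V\times\V$; and $\Gamma_f^\gamma=\{(x,y)\mid f(x)-y\in\gamma\}$ is closed in $S\times\V$, being the preimage of the closed cone $\gamma$ under the continuous map $(x,y)\mapsto f(x)-y$. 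Write $q:=\id_S\times s$, and let $j_1\colon\Gamma_f\times\gamma^a\hookrightarrow S\times\V\times\V$ and $j_2\colon\Gamma_f^\gamma\hookrightarrow S\times\V$ denote the corresponding closed embeddings.

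The key observation is that $q$ restricts to a homeomorphism $\bar q\colon\Gamma_f\times\gamma^a\xrightarrow{\ \sim\ }\Gamma_f^\gamma$, given by $(x,f(x),w)\mapsto(x,f(x)+w)$: it is a continuous bijection onto $\Gamma_f^\gamma$ (surjectivity because $(x,y)\in\Gamma_f^\gamma$ iff $y-f(x)\in\gamma^a$) with inverse $(x,y)\mapsto(x,f(x),y-f(x))$, which is continuous since $f$ is. Thus $q\circ j_1=j_2\circ\bar q$, and, $j_1$ being a closed embedding so that $\cor_{\Gamma_f\times\gamma^a}\simeq\roim{j_1}\cor$, we obtain
\[
\roim{q}\,\cor_{\Gamma_f\times\gamma^a}\;\simeq\;\roim{q}\,\roim{j_1}\cor\;\simeq\;\roim{(j_2\circ\bar q)}\cor\;\simeq\;\roim{j_2}\,\roim{\bar q}\,\cor\;\simeq\;\roim{j_2}\cor\;\simeq\;\cor_{\Gamma_f^\gamma},
\]
using that $\roim{\bar q}$ sends the constant sheaf to the constant sheaf because $\bar q$ is a homeomorphism.

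It remains to identify \eqref{map:isoepisum} with this isomorphism. I would do this on sections: for an open $U\subset S\times\V$, using $\rsect(U;\cor_{\Gamma_f^\gamma})\simeq\rsect(U\cap\Gamma_f^\gamma;\cor)$ and $\rsect(U;\roim{q}\cor_{\Gamma_f\times\gamma^a})\simeq\rsect\bigl(q^{-1}(U)\cap(\Gamma_f\times\gamma^a);\cor\bigr)$, one unwinds the unit $\cor_{\Gamma_f^\gamma}\to\roim{q}\opb{q}\cor_{\Gamma_f^\gamma}$ and the restriction morphism $\phi$ to see that \eqref{map:isoepisum} induces the pullback along the restriction $q^{-1}(U)\cap(\Gamma_f\times\gamma^a)\to U\cap\Gamma_f^\gamma$ of $q$, which is a homeomorphism by the previous step; hence \eqref{map:isoepisum} is an isomorphism, naturally in $U$. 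The only genuinely careful point is this last unwinding — matching the canonical morphism to the geometric isomorphism; the preceding chain of isomorphisms is formal, and it is worth noting that no properness or constructibility hypothesis on $(S,f)$ enters, only the continuity of $f$ and the closedness of $\gamma$.
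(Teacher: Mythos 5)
Your proof is correct and rests on the same key geometric observation as the paper's: the restriction of $\id_S\times s$ to $\Gamma_f\times\gamma^a$ is a (proper) homeomorphism onto the closed subset $\Gamma_f^\gamma$. The paper verifies the morphism is an isomorphism by computing stalks via proper base change (fibers are singletons or empty, and the induced map $\cor\to\cor$ is nonzero), while you check it on derived sections over opens and identify \eqref{map:isoepisum} as pullback along that homeomorphism; these are two equivalent packagings of the same local check, with yours being arguably more transparent in explaining \emph{why} the resulting stalk map is an isomorphism rather than merely asserting nonvanishing.
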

\begin{proof}
We show that the morphism \eqref{map:isoepisum} is an isomorphism by checking it at the level of the stalks. The map $\id_S \times s$ is proper on $\Gamma_f \times \gamma^a$ and induces a bijection $\id_S \times s \colon \Gamma_f \times \gamma^a \to \Gamma_f^\gamma$. Let $(x,y) \in S \times \V$. We notice that 
	\begin{equation*}
	\left(\roim{(\id_S \times s)} \cor_{\Gamma_f \times \gamma^a} \right)_{(x,y)}\simeq \rsect \left (\opb{\left (\id_S \times s \right)} \left (x,y \right );\cor_{\Gamma_f \times \gamma^a \cap \opb{(\id_S \times s)} (x,y)}\right ).
	\end{equation*}

First, if $(x,y) \notin \Gamma_f^\gamma$, then $\Gamma_f \times \gamma^a \cap \opb{(\id_S \times s)} (x,y)=\emptyset$ and the stalk at $(x,y)$ of the morphism \eqref{map:isoepisum} is an isomorphism. Second, if $(x,y) \in \Gamma_f^\gamma$, then $\Gamma_f \times \gamma^a \cap \opb{(\id_S \times s)} (x,y)=(x,f(x),y-f(x))$. Thus, \[\rsect(\opb{(\id_S \times s)} (x,y),\cor_{\Gamma_f \times \gamma^a \cap \opb{(\id_S \times s)} (x,y)})\simeq \cor.\] Morphism \eqref{map:isoepisum} induces a non-zero map from $\cor$ to $\cor$ which is an isomorphism.
\end{proof}
\begin{proposition} \label{prop:bassublevel}
	Assume that $f$ is proper. Then
	\begin{equation*}
		\roim{p}\cor_{\Gamma_f^\gamma} \simeq (\roim{f} \cor_S) \npsconv \cor_{\gamma^a}.
	\end{equation*}
\end{proposition}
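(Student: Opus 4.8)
The plan is to relate the two sides via the factorization $p = \bar{p} \circ (i_S \times \id_\V)$ and the projection formula, reducing everything to the previous lemma that identifies $\cor_{\Gamma_f^\gamma}$ with a pushforward of $\cor_{\Gamma_f \times \gamma^a}$. First I would unwind the definitions: $\roim{p} \cor_{\Gamma_f^\gamma}$, using the isomorphism \eqref{map:isoepisum}, becomes $\roim{p} \roim{(\id_S \times s)} \cor_{\Gamma_f \times \gamma^a}$. The key identity $s \circ (p \times \id_\V) = p \circ (\id_S \times s)$ recorded just before the definition of $\PH(f)$ lets me rewrite this composite pushforward as $\roim{s} \roim{(p \times \id_\V)} \cor_{\Gamma_f \times \gamma^a}$. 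Since $\Gamma_f \times \gamma^a = \Gamma_f \times \gamma^a$ is a product and $p \times \id_\V$ acts as $p$ on the first factor and the identity on the second, $\roim{(p \times \id_\V)} \cor_{\Gamma_f \times \gamma^a} \simeq (\roim{p_S} \cor_{\Gamma_f}) \boxtimes \cor_{\gamma^a}$, where $p_S \colon S \times \V \to \V$ is the projection; and $\roim{p_S} \cor_{\Gamma_f} \simeq \roim{f} \cor_S$ because $\Gamma_f$ is the graph of $f$, so that $p_S|_{\Gamma_f}$ is a homeomorphism onto its image intertwining $p_S$ with $f$ — more precisely $\cor_{\Gamma_f} \simeq \roim{(\id_S, f)} \cor_S$ and $p_S \circ (\id_S, f) = f$. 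Then $\roim{s}((\roim{f}\cor_S) \boxtimes \cor_{\gamma^a})$ is by definition $(\roim{f}\cor_S) \npsconv \cor_{\gamma^a}$.

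The main point requiring care — and the step I expect to be the chief obstacle — is the commutation of $\roim{}$ with the base change / Künneth step and, more importantly, the properness hypothesis: $\roim{p} \roim{(\id_S \times s)} \simeq \roim{(p \circ (\id_S \times s))}$ is automatic (pushforward is functorial), but re-expressing via $s \circ (p \times \id_\V)$ and then splitting off the $\npsconv$ requires that all the maps involved interact well, and the hypothesis that $f$ is proper is exactly what is needed to ensure that the various pushforwards along $S$-directions behave correctly and that no non-compact phenomena spoil the identification. Concretely, properness of $f$ guarantees $\roim{f} = \reim{f}$, so $\roim{p_S}\cor_{\Gamma_f}$ is constructible-friendly and the external-tensor decomposition $\roim{(p \times \id_\V)}(A \boxtimes B) \simeq \roim{p_S}A \boxtimes B$ holds (this is a standard Künneth-type formula for $\roim{}$ which holds under properness on one factor, cf. \cite{KS90}).

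Once these identifications are assembled, the chain of isomorphisms reads
\begin{align*}
\roim{p}\cor_{\Gamma_f^\gamma} &\simeq \roim{p}\roim{(\id_S \times s)} \cor_{\Gamma_f \times \gamma^a} \\
&\simeq \roim{s}\roim{(p \times \id_\V)} \cor_{\Gamma_f \times \gamma^a} \\
&\simeq \roim{s}\bigl( (\roim{p_S}\cor_{\Gamma_f}) \boxtimes \cor_{\gamma^a} \bigr) \\
&\simeq \roim{s}\bigl( (\roim{f}\cor_S) \boxtimes \cor_{\gamma^a} \bigr) \\
&\simeq (\roim{f}\cor_S) \npsconv \cor_{\gamma^a},
\end{align*}
which is the claim. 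I would be careful to state each isomorphism is natural and to cite the isomorphism \eqref{map:isoepisum} for the first step, the set-theoretic identity $s \circ (p \times \id_\V) = p \circ (\id_S \times s)$ for the second, the Künneth formula for $\roim{}$ together with properness of $f$ for the third, the graph identification $\roim{f}\cor_S \simeq \roim{p_S}\cor_{\Gamma_f}$ for the fourth, and the definition of $\npsconv$ for the last.
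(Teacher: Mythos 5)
Your proposal is correct and follows the same chain of isomorphisms as the paper's proof: apply the isomorphism \eqref{map:isoepisum}, commute the pushforwards via $s \circ (p \times \id_\V) = p \circ (\id_S \times s)$, apply the K\"unneth formula (justified by properness of $f$), and identify $\roim{p}\cor_{\Gamma_f}$ with $\roim{f}\cor_S$. The paper is slightly more explicit at the K\"unneth step, converting $\roim{(p\times\id_\V)}$ to $\reim{(p\times\id_\V)}$ by properness, applying K\"unneth for $\reim{}$, and converting back to $\roim{}$, but this is precisely the mechanism you describe.
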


\begin{proof}

Applying the functor $\roim{p}$ to the isomorphism \eqref{map:isoepisum}, we get
	\begin{align*}
		\roim{p} \cor_{\Gamma_f^\gamma} & \simeq \roim{p} \roim{(\id_S \times s)} \cor_{\Gamma_f \times \gamma^a}.
	\end{align*}
Moreover,	
	\begin{align*}	
		\roim{p} \roim{(\id_S \times s)} \cor_{\Gamma_f \times \gamma^a} & \simeq \roim{s} \roim{(p \times \id_\V)} \cor_{\Gamma_f \times \gamma^a}\\
		&\simeq \roim{s} \reim{(p \times \id_\V)} \cor_{\Gamma_f \times \gamma^a} \quad \textnormal{(Properness of $f$)}\\
		&\simeq \roim{s}(\reim{p} \cor_{\Gamma_f} \boxtimes \cor_{\gamma^a})\quad \textnormal{(Künneth isomorphism)}\\
		&\simeq \roim{s}(\roim{p} \cor_{\Gamma_f} \boxtimes \cor_{\gamma^a})\quad \textnormal{(Properness of $f$)}\\
		&\simeq \roim{s}(\roim{f}\cor_{S} \boxtimes \cor_{\gamma^a}).
	\end{align*}
\end{proof}

\begin{corollary}
Let $S$ be a good compact topological space and $f \colon S \rightarrow \V$ be a continuous map. The sheaf $\PH(f)$ is $\gamma$-compactly generated.
\end{corollary}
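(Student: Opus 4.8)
\emph{Proof proposal.} The plan is to reduce immediately to Proposition \ref{prop:bassublevel}. First I would note that, since $S$ is compact and $\V$ is Hausdorff, the continuous map $f \colon S \to \V$ is proper: it is closed, and its fibers, being closed subsets of the compact space $S$, are compact. Applying Proposition \ref{prop:bassublevel} therefore yields a functorial isomorphism
\[
\PH(f) \;=\; \roim{p}\cor_{\Gamma_f^\gamma} \;\simeq\; (\roim{f}\cor_S) \npsconv \cor_{\gamma^a}.
\]
In view of Definition \ref{df:gammacompact}, it then suffices to check that $G := \roim{f}\cor_S$ belongs to $\Derb_\comp(\cor_\V)$.

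Next I would verify the two conditions defining $\Derb_\comp(\cor_\V)$ for $G$. Since $f$ is proper one has $\roim{f}\cor_S \simeq \reim{f}\cor_S$, hence $\Supp(G) \subset f(\Supp(\cor_S)) = f(S)$, which is compact because $S$ is. For boundedness, I would compute the stalks via proper base change: for $x \in \V$, writing $i_x \colon f^{-1}(x) \hookrightarrow S$ for the inclusion, $G_x \simeq \rsect_c(f^{-1}(x); \opb{i_x}\cor_S) \simeq \rsect(f^{-1}(x);\cor)$, the last isomorphism because $f^{-1}(x)$ is compact. Since $S$ is good it has finite flabby dimension $d$, and each fiber $f^{-1}(x)$, being a closed subspace of $S$, has flabby (hence cohomological) dimension at most $d$; therefore $\Hn^k(G_x) = 0$ for every $k \notin [0,d]$ and every $x \in \V$. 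Consequently $\Hn^k(G) = 0$ for $|k| > d$, so $G \in \Derb(\cor_\V)$, and together with the support statement $G \in \Derb_\comp(\cor_\V)$.

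Finally, with $G = \roim{f}\cor_S \in \Derb_\comp(\cor_\V)$ and $\PH(f) \simeq G \npsconv \cor_{\gamma^a}$, Definition \ref{df:gammacompact} shows that $\PH(f)$ is $\gamma$-compactly generated, as wanted. I expect the only mildly delicate point to be the \emph{uniform} boundedness of the fiberwise cohomology $\rsect(f^{-1}(x);\cor)$; this is precisely what the ``good'' hypothesis (finite flabby dimension) on $S$ is there to guarantee, and everything else is formal once the properness of $f$ is in hand.
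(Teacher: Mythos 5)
Your proof is correct and takes the same route the paper intends: since $S$ is compact, $f$ is proper, Proposition \ref{prop:bassublevel} gives $\PH(f)\simeq(\roim{f}\cor_S)\npsconv\cor_{\gamma^a}$, and $\roim{f}\cor_S$ is bounded with support in the compact set $f(S)$, so Definition \ref{df:gammacompact} applies directly. The extra care you take over boundedness via fiberwise cohomology and finite flabby dimension is valid but more detail than the paper spells out.
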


\subsection{Properties of \texorpdfstring{$\gamma$}{gamma}-compactly generated sheaves}

In this subsection, we study the properties of $\gamma$-compactly generated sheaves and deduce from them results for sublevel sets multi-parameter persistent sheaves.

\begin{lemma}\label{lem:comsublevel} Let $u \in \Int(\gamma^\circ)$.
\begin{enumerate}[(i)]
\item If $F \in \Derb_{\comp}(\cor_{\V})$. Then, $\roim{u}(F \npsconv  \cor_{\gamma^a}) \simeq (\roim{u} F) \npsconv  \cor_{\R_-}$.
\item Let $S$ be a good compact topological space, $f \colon S \to \V$ be a continuous map. Then, $\roim{u}\roim{p}\cor_{\Gamma_f^\gamma} \simeq (\roim{(uf)} \cor_S) \npsconv  \cor_{\R_-}$. In other words, 
\begin{equation*}
\roim{u}\PH(f)\simeq \PH(u \circ f).
\end{equation*}
\end{enumerate}
\end{lemma}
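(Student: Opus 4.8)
The plan is to prove $(i)$ first and then deduce $(ii)$ from it, since $\PH(f) = \roim{p}\cor_{\Gamma_f^\gamma}$ is $\gamma$-compactly generated by the preceding corollary, so $(ii)$ is essentially the statement $(i)$ applied to $G = \roim{f}\cor_S$ together with the identity $\roim{u}\roim{f}\cor_S \simeq \roim{(u\circ f)}\cor_S$ (functoriality of pushforward). One subtlety: $\roim{f}\cor_S$ need not have compact support even when $S$ is compact — but $S$ compact does give $\reim{f}\cor_S \simeq \roim{f}\cor_S$ with compact support, so Proposition~\ref{prop:bassublevel} applies (note $f$ need not be proper here, but $S$ compact makes every continuous $f$ proper onto its image, hence $p$ is handled as in that proposition). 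So the real content is $(i)$.

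For $(i)$, I would unwind both sides as kernel convolutions and reduce to commuting $\roim{u}$ past a convolution with $\cor_{\gamma^a}$. Write $F \npsconv \cor_{\gamma^a} = \roim{s}(F \boxtimes \cor_{\gamma^a})$ where $s \colon \V \times \V \to \V$ is addition. Then $\roim{u}(F \npsconv \cor_{\gamma^a}) = \roim{u}\roim{s}(F\boxtimes\cor_{\gamma^a}) = \roim{(u\circ s)}(F \boxtimes \cor_{\gamma^a})$. Now $u \circ s (x,y) = u(x) + u(y)$ factors as $\V \times \V \xrightarrow{u\times u} \R\times\R \xrightarrow{+} \R$. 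Using the projection formula / base change for the cartesian-type square relating $u \times u$ to $u \times \id$ and $\id \times u$, and the Künneth isomorphism, I want to get
\begin{equation*}
\roim{(u\circ s)}(F\boxtimes\cor_{\gamma^a}) \simeq \roim{(+)}\bigl((\roim{u}F) \boxtimes (\roim{u}\cor_{\gamma^a})\bigr) = (\roim{u}F) \npsconv (\roim{u}\cor_{\gamma^a}).
\end{equation*}
Here is where the hypotheses enter: to commute $\roim{u}$ across the external product I need a properness-type condition, and this is exactly why $F$ is assumed compactly supported. Then the final input is Lemma~\ref{lem:oimcone}: since $u \in \Int(\gamma^\circ)$, one has $\roim{u}\cor_{\gamma^a} \simeq \cor_{\R_-}$ (the $\gamma^a$-version of that lemma, since $-\gamma^a$-positivity of $u$ corresponds to $u \in \Int((\gamma^a)^\circ) = \Int(\gamma^{\circ,a})$... let me be careful: $u\in\Int(\gamma^\circ)$ means $u$ is positive on $\gamma\setminus 0$, hence negative on $\gamma^a\setminus 0$, which is the hypothesis $u \in \Int(\gamma^{\circ,a})$ of the second half of Lemma~\ref{lem:oimcone}, giving $\roim{u}\cor_\gamma \simeq \cor_{\R_-}$; applying it to $\gamma^a$ in place of $\gamma$ gives $\roim{u}\cor_{\gamma^a}\simeq\cor_{\R_-}$). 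Substituting yields $(\roim{u}F) \npsconv \cor_{\R_-}$, which is the claim.

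The main obstacle will be justifying the commutation $\roim{u}\roim{s}(F\boxtimes\cor_{\gamma^a}) \simeq \roim{(+)}\bigl((\roim{u}F)\boxtimes(\roim{u}\cor_{\gamma^a})\bigr)$ cleanly, because $u$ is not proper globally and $\roim{}$ does not commute with $\otimes$ or $\boxtimes$ in general. The honest way is probably to phrase everything in the kernel-composition language of Section~2 and invoke Theorem~\ref{th:assocnp} (non-proper associativity of $\npconv$), exactly as was done in the proof of Theorem~\ref{thm:isomBP21}: realize $F \npsconv \cor_{\gamma^a}$ via $\opb{w}$ of a linear map and push $\roim{u}$ through using that $\cor_{\gamma^a}$ (equivalently the diagonal-type kernel $\cor_\Theta$) is cohomologically constructible and the relevant projection is proper on its support. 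Alternatively — and this may be the slickest route given $F$ compactly supported — use Proposition~\ref{prop:bassublevel} backwards: $F \npsconv \cor_{\gamma^a}$ is (up to replacing $F$ by a space mapping to $\V$) of the form $\roim{p}\cor_{\Gamma^\gamma}$, and then $\roim{u}\roim{p} = \roim{(\text{projection})}$ on a graph involving $u\circ(\text{stuff})$, reducing $(ii)$ and $(i)$ simultaneously to a single base-change computation plus Lemma~\ref{lem:oimcone}. Either way, the compact-support hypothesis on $F$ (resp. compactness of $S$) is precisely what makes the non-proper pushforward behave, and checking that carefully is the crux; the rest is formal manipulation of the six operations and one application of Lemma~\ref{lem:oimcone}.
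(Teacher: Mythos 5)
Your overall plan matches the paper's proof: for (i), rewrite $\roim{u}(F\npsconv\cor_{\gamma^a})=\roim{(u\circ s)}(F\boxtimes\cor_{\gamma^a})$, factor $u\circ s = s\circ(u\times u)$, apply a K\"unneth-type factorization to get $(\roim{u}F)\boxtimes(\roim{u}\cor_{\gamma^a})$, and finish with Lemma~\ref{lem:oimcone}; for (ii), feed $F=\roim{f}\cor_S$ (compactly supported since $S$ is compact) into (i) via Proposition~\ref{prop:bassublevel} and use $\roim{u}\roim{f}\simeq\roim{(u\circ f)}$. The one place where you hedge is exactly the step the paper handles most economically: you worry that commuting $\roim{}$ across the external product needs either the non-proper associativity Theorem~\ref{th:assocnp} or a detour through Proposition~\ref{prop:bassublevel} in reverse, but neither is needed. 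The paper's resolution is simply that $u\in\Int(\gamma^\circ)$ together with Lemma~\ref{lem:polarpropre} makes $u$ proper on $\gamma^a$, hence $u\times u$ proper on $\Supp(F)\times\gamma^a=\Supp(F\boxtimes\cor_{\gamma^a})$ because $\Supp(F)$ is compact; therefore $\roim{(u\times u)}$ agrees with $\reim{(u\times u)}$ on this sheaf, and the ordinary K\"unneth isomorphism for $\reim{}$ gives $(\roim{u}F)\boxtimes(\roim{u}\cor_{\gamma^a})$ directly. This is precisely your ``properness-type condition from compact support'' intuition, made precise without the heavier machinery of Theorem~\ref{th:assocnp}; otherwise your argument is the paper's argument.
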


\begin{proof} 
\noindent (i)  We have
\begin{align*}
    \roim{u}(F \npsconv  \cor_{\gamma^a}) &\simeq \roim{u} \roim{s} (F \boxtimes \cor_{\gamma^a})\\
    & \simeq \roim{s} (\roim{(u \times u)} (F \boxtimes \cor_{\gamma^a})).
\end{align*}
Since $u \in \Int(\gamma^\circ)$, it follows from Lemma \ref{lem:polarpropre} that $u$ is proper on $\gamma^a$. As $\Supp(F)$ is compact, this implies that $u \times u$ is proper on $\Supp(F) \times \gamma^a$. Hence, applying Künneth formula
\begin{align*}
\roim{s} (\roim{(u \times u)} F \boxtimes \cor_{\gamma^a})& \simeq \roim{s} (\roim{u}F \boxtimes \roim{u}\cor_{\gamma^a})\\
& \simeq \roim{s} (\roim{u}F \boxtimes \cor_{\R_-}) \quad \textnormal{(Lemma \ref{lem:oimcone})}\\
&\simeq (\roim{u} F) \npsconv  \cor_{\R_-}.
\end{align*}

\noindent (ii) Since $S$ is compact, the support of $\roim{f}\cor_S$ is compact. Hence, applying (i) with $F:=\roim{f}\cor_S$, we have
\begin{align*}
    \roim{u}((\roim{f} \cor_S) \npsconv  \cor_{\gamma^a}) &\simeq (\roim{(uf)} \cor_S) \npsconv  \cor_{\R_-}.
\end{align*}
\end{proof}

\begin{remark}
We emphasize that the pushforward by a linear form $u$ of the module $\PH(f)$ is in general different of the module $\PH(u\circ f)$. Let $X$ and $Y$ be good topological spaces and consider two functions $f \colon X \to \V$ and $g \colon Y \to \V$. Then, the quantity $\sup_{ \opnorm{u} \leq 1} \dist_\R(\roim{u}\PH(f)$,$ \roim{u} \PH(g))$ is a lower bound of the interleaving distance $\dist_\V(\PH(f),\PH(g))$ whereas this is in general not the case for $\dist_\R(\PH(u\circ f),\PH(u \circ g))$ as shown by the following proposition.
\end{remark}
Consider $(\R^2, \norm{\cdot}_\infty)$ equipped with the usual cone $\gamma = \R_{\leq 0}^2$.
Let $X = \{(-1,1) \cdot t \mid t \in [0,1]\}$ and for $s \geq 2$, $Y_s = \text{Conv}( X \cup (0,s))$, where $\text{Conv}$ stands for the the convex hull of a set. Let $f : X \to \R^2$ and $g_s : Y_s \to \R^2$ be the inclusions. We also set $p : \R^2 \to \R$ defined by $p(x,y) = \frac{y-x}{2}$, which is a $1$-Lipschitz linear form $(\R^2, \|\cdot\|_\infty) \to (\R,|\cdot|)$.

\begin{proposition}\label{p:cexpositivity}
The following holds:

\[\dist_{\R^2}(\PH(f), \PH(g_s)) = 0 ~~~\text{and}~~~\dist_{\R}(\PH(p\circ f), \PH(p\circ g_s)) = \left |\frac{s}{2} - 1 \right|. \]

In particular, one has:

\[\dist_{\R}(\PH(p\circ f), \PH(p\circ g_s)) \xrightarrow[s \to \infty]{} \infty \quad \textnormal{while} \quad \forall s \geq 2, \dist_{\R^2}(\PH(f), \PH(g_s)) = 0\]
\end{proposition}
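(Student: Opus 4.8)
The plan is to compute the two sides separately. For the first equality, $\dist_{\R^2}(\PH(f),\PH(g_s)) = 0$, the natural route is to identify $\PH(f)$ and $\PH(g_s)$ up to isomorphism. Since $X$ and $Y_s$ are compact and the maps $f,g_s$ are the inclusions, Proposition~\ref{prop:bassublevel} (together with properness of closed inclusions into $\R^2$) gives $\PH(f) \simeq (\roim{f}\cor_X) \npsconv \cor_{\gamma^a} \simeq \cor_X \npsconv \cor_{\gamma^a} = \cor_{X+\gamma^a}$, and likewise $\PH(g_s) \simeq \cor_{Y_s + \gamma^a}$. Now $X \subset Y_s$, so $X + \gamma^a \subseteq Y_s + \gamma^a$; the key geometric observation is that $Y_s + \gamma^a = X + \gamma^a$, because the extra points of $Y_s$ relative to $X$ (the segment from $X$ up to $(0,s)$) all lie in the direction $+\gamma^a = \R_{\geq 0}^2$ relative to points of $X$ — concretely, $(0,s) = (-1,1) + (1, s-1) \in (-1,1) + \gamma^a$, and by convexity the whole of $Y_s$ is contained in $X + \gamma^a$. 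Hence $\PH(f) \simeq \PH(g_s)$ and their convolution distance is $0$.

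For the second equality I would use Lemma~\ref{lem:comsublevel}(ii): since $p$ is linear and $p \in \Int(\gamma^\circ)$ — one checks $p(x,y) = \tfrac{y-x}{2} > 0$ for all $(x,y) \in \gamma \setminus \{0\} = \R_{\leq 0}^2 \setminus \{0\}$ — we get $\PH(p\circ f) \simeq \roim{p}\PH(f)$ and $\PH(p \circ g_s) \simeq \roim{p}\PH(g_s)$, but more usefully the lemma tells us these are just the one-parameter sublevel sets persistence sheaves of the real-valued functions $p\circ f$ and $p\circ g_s$. So I would instead directly compute $\PH(p\circ f)$ and $\PH(p\circ g_s)$ as $\gamma'$-sheaves on $\R$ with $\gamma' = (-\infty,0]$. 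On $X$, parametrized by $t \in [0,1] \mapsto (-t,t)$, we have $p(-t,t) = t$, so $p\circ f$ has image $[0,1]$ and is the sublevel persistence of a function with a single connected sublevel set appearing at value $0$; thus $\PH^0(p\circ f) \simeq \cor_{[0,\infty)}$ and higher cohomology vanishes. On $Y_s$, the minimum of $p$ is still $0$ (attained along $X$, since adding the vertex $(0,s)$ with $p$-value $s/2 \geq 1$ and taking convex hulls cannot lower the minimum), $Y_s$ is contractible, and sublevel sets are connected and contractible for all values $\geq 0$; hence again $\PH^0(p\circ g_s) \simeq \cor_{[0,\infty)}$. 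The barcodes $\B(\PH(p\circ f))$ and $\B(\PH(p\circ g_s))$ therefore each consist of a single interval $[0,\infty)$, which would give distance $0$ — so I must be more careful: the relevant sheaf in degree $0$ is $\roim{(p\circ f)}\cor_X \npsconv \cor_{\R_-}$, and the point is that $\roim{(p\circ f)}\cor_X = \cor_{[0,1]}$ while $\roim{(p\circ g_s)}\cor_{Y_s} = \cor_{[0,s/2]}$ (the images of $p\circ f$ and $p\circ g_s$ are $[0,1]$ and $[0,s/2]$ respectively, both being contractible hence with constant-sheaf pushforward). Convolving with $\cor_{\R_-}$ yields $\cor_{[0,1]} \npsconv \cor_{\R_-} \simeq \cor_{(-\infty,1]}$-type behaviour; more precisely $\PH(p\circ f) \simeq \cor_{[0,+\infty)} \oplus \cor_{[1,+\infty)}[-1]$ coming from the exact triangle $\cor_{[0,1]} \npsconv \cor_{\R_-}$, i.e. a bar $[0,\infty)$ in degree $0$ and $[1,\infty)$ in degree $1$, whereas $\PH(p\circ g_s)$ has bar $[0,\infty)$ in degree $0$ and $[s/2,\infty)$ in degree $1$. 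Applying Corollary~\ref{C:gradedDistance} and the bottleneck computation, the distance is $\max(0, |s/2 - 1|) = |s/2 - 1|$.

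The main obstacle I anticipate is getting the one-parameter barcodes of $\PH(p\circ f)$ and $\PH(p\circ g_s)$ exactly right, including the degree-$1$ part: one must carefully track the sheaf $\roim{(p\circ f)}\cor_X$ and the effect of $\npsconv \cor_{\R_-}$, since the naive "connectedness of sublevel sets" reasoning only captures $\PH^0$ and would miss the contribution responsible for the nonzero distance. Concretely the segment $X$ maps under $p$ homeomorphically onto $[0,1]$, so $\roim{(p\circ f)}\cor_X \simeq \cor_{[0,1]}$, and the triangle $\cor_{\{1\}}[-1] \to \cor_{[0,1)} \to \cor_{[0,1]}$ combined with convolution by $\cor_{\R_-}$ produces the graded-barcode $\{[0,+\infty)^0, [1,+\infty)^1\}$; similarly for $g_s$ with $1$ replaced by $s/2$. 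Once these barcodes are in hand, the bottleneck distance of Definition~\ref{D:matching} between $\{[0,\infty)^0,[1,\infty)^1\}$ and $\{[0,\infty)^0,[s/2,\infty)^1\}$ is immediately $|s/2-1|$ (match the degree-$0$ bars to each other at cost $0$, match the two degree-$1$ bars, whose left endpoints differ by $|1 - s/2|$), and Corollary~\ref{C:gradedDistance} promotes this to the convolution distance. The final limit statement is then immediate since $|s/2-1| \to \infty$ as $s \to \infty$ while the left quantity stays identically $0$.
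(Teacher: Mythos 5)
Your treatment of the first equality, $\dist_{\R^2}(\PH(f),\PH(g_s))=0$, is correct and matches the paper's: $\PH(f)\simeq\cor_{X+\gamma^a}$, $\PH(g_s)\simeq\cor_{Y_s+\gamma^a}$, and $X+\gamma^a=Y_s+\gamma^a$. The second part, however, rests on a false premise. You assert $p\in\Int(\gamma^\circ)$ and ``check'' that $p(x,y)=(y-x)/2>0$ on $\gamma\setminus\{0\}$; but $(0,-1)\in\gamma=\R_{\leq 0}^2$ and $p(0,-1)=-\tfrac{1}{2}<0$, and as a covector $p=(-\tfrac{1}{2},\tfrac{1}{2})$ lies in neither $\gamma^\circ=\R_{\leq 0}^2$ nor $\gamma^{\circ,a}=\R_{\geq 0}^2$. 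This is the whole point of the proposition (see the remark immediately preceding it): it exhibits the failure of $\roim{u}\PH(f)\simeq\PH(u\circ f)$ from Lemma~\ref{lem:comsublevel} once the positivity hypothesis $u\in\Int(\gamma^\circ)$ is dropped. Had $p$ actually lain in $\Int(\gamma^\circ)$, Lemma~\ref{lem:comsublevel}(ii) together with Proposition~\ref{prop:multi_to_single} would give $\dist_\R(\PH(p\circ f),\PH(p\circ g_s))\leq\opnorm{p}\,\dist_{\R^2}(\PH(f),\PH(g_s))=0$, flatly contradicting the conclusion you are trying to prove.

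Your explicit sheaf computation is also off. You correctly note that $\cor_{[0,1]}\npsconv\cor_{\R_-}$ has ``$\cor_{(-\infty,1]}$-type behaviour,'' but then assert ``more precisely'' that $\PH(p\circ f)\simeq\cor_{[0,\infty)}\oplus\cor_{[1,\infty)}[-1]$. These disagree: the sum map is proper on $[0,1]\times\R_-$, so $\cor_{[0,1]}\npsconv\cor_{\R_-}\simeq\cor_{[0,1]}\star\cor_{\R_-}$ is a single interval sheaf concentrated in degree $0$, with no degree-$1$ contribution, and the paper likewise identifies $\PH(p\circ f)$ and $\PH(p\circ g_s)$ as single bars. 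Your final value $|s/2-1|$ is recovered only by accident: the spurious bar $[0,\infty)^0$ you introduce appears identically in both graded barcodes and is matched at zero cost, so it does not change the bottleneck distance.
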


\begin{proof}

For $s\geq 0$, since $Y_s$ is compact, $g_s$ is proper and one has \begin{align*}
    \PH(g_s) \simeq (\roim{g_s} \cor_{Y_s} ) \npsconv \cor_{\gamma^a} \simeq \cor_{Y_s} \npsconv \cor_{\gamma^a}. 
\end{align*}

Moreover, the sum map is proper on $Y_s \times \gamma^a$, so that $\cor_{Y_s} \npsconv \cor_{\gamma^a} \simeq \cor_{Y_s}  \star \cor_{\gamma^a}$. Similarly, one proves that $\PH(f) \simeq \cor_X \star \cor_{\gamma^a}$. Since $X$ and $Y_s$ are both compact and convex, by \cite[Ex II.20]{KS90}, one has $\cor_{Y_s} \star \cor_{\gamma^a} \simeq \cor_{Y_s + \gamma^a} \simeq \cor_{X + \gamma^a} \simeq  \cor_{X} \star \cor_{\gamma^a}$. Therefore, we have proved that $\PH(f) \simeq \PH(g_s)$, which in particular implies that $\dist_{\R^2}(\PH(f), \PH(g_s)) = 0$.

Similarly as above, one has $\PH(p\circ f) \simeq (\roim{(p\circ f)} \cor_X) \star \cor_{\R^+} \simeq \cor_{[1, \infty)}$ and $\PH(p\circ g_s) \simeq \cor_{[\frac{s}{2}, \infty)}$. By the derived isometry theorem \cite[Theorem 5.10]{BG18}, one concludes that $\dist_{\R}(\PH(p\circ f), \PH(p\circ g_s)) = \left |\frac{s}{2} - 1 \right|$.

\end{proof}

\begin{figure}
    \centering
    \includegraphics[width = 0.45\textwidth]{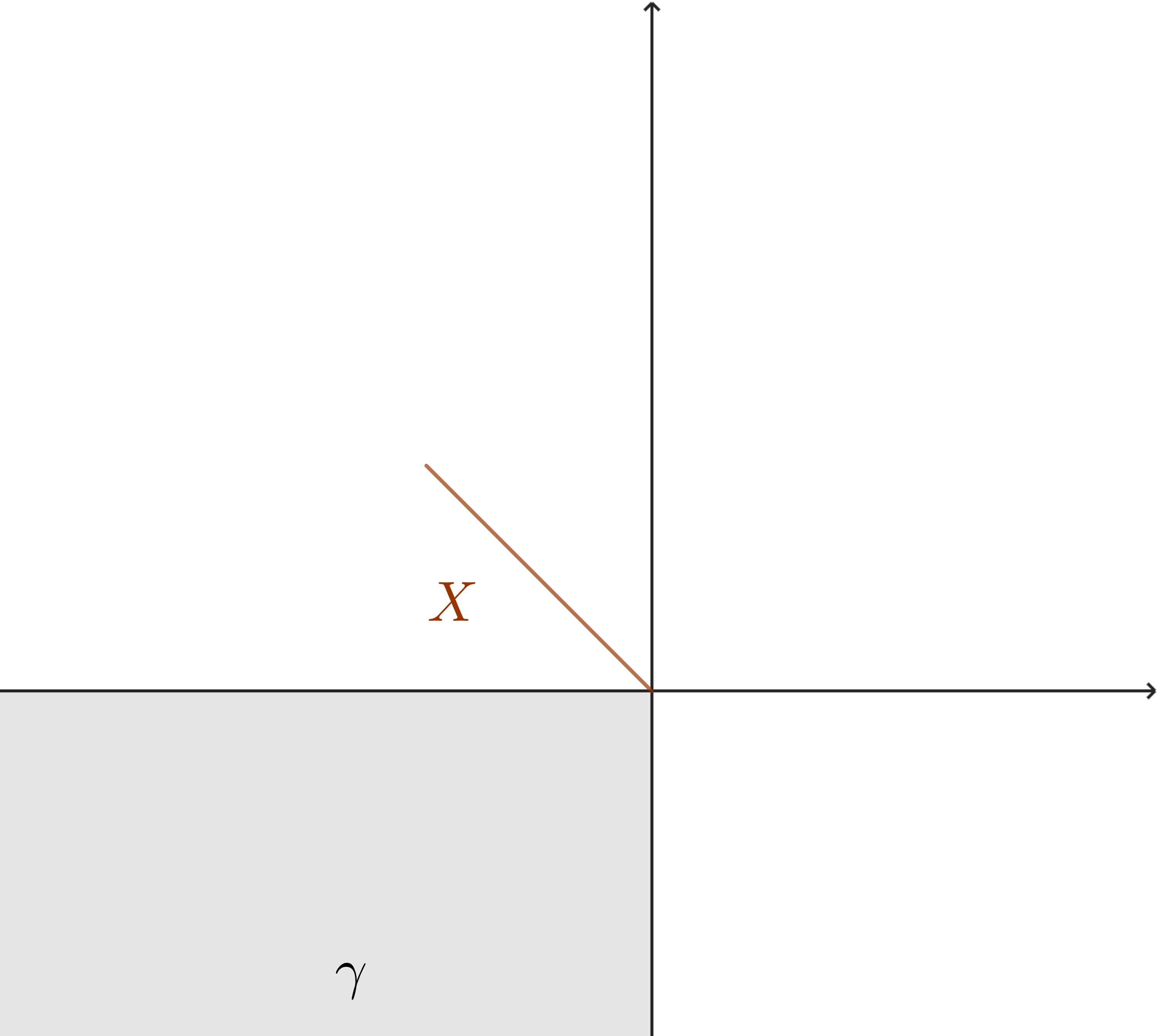}
    \includegraphics[width = 0.45\textwidth]{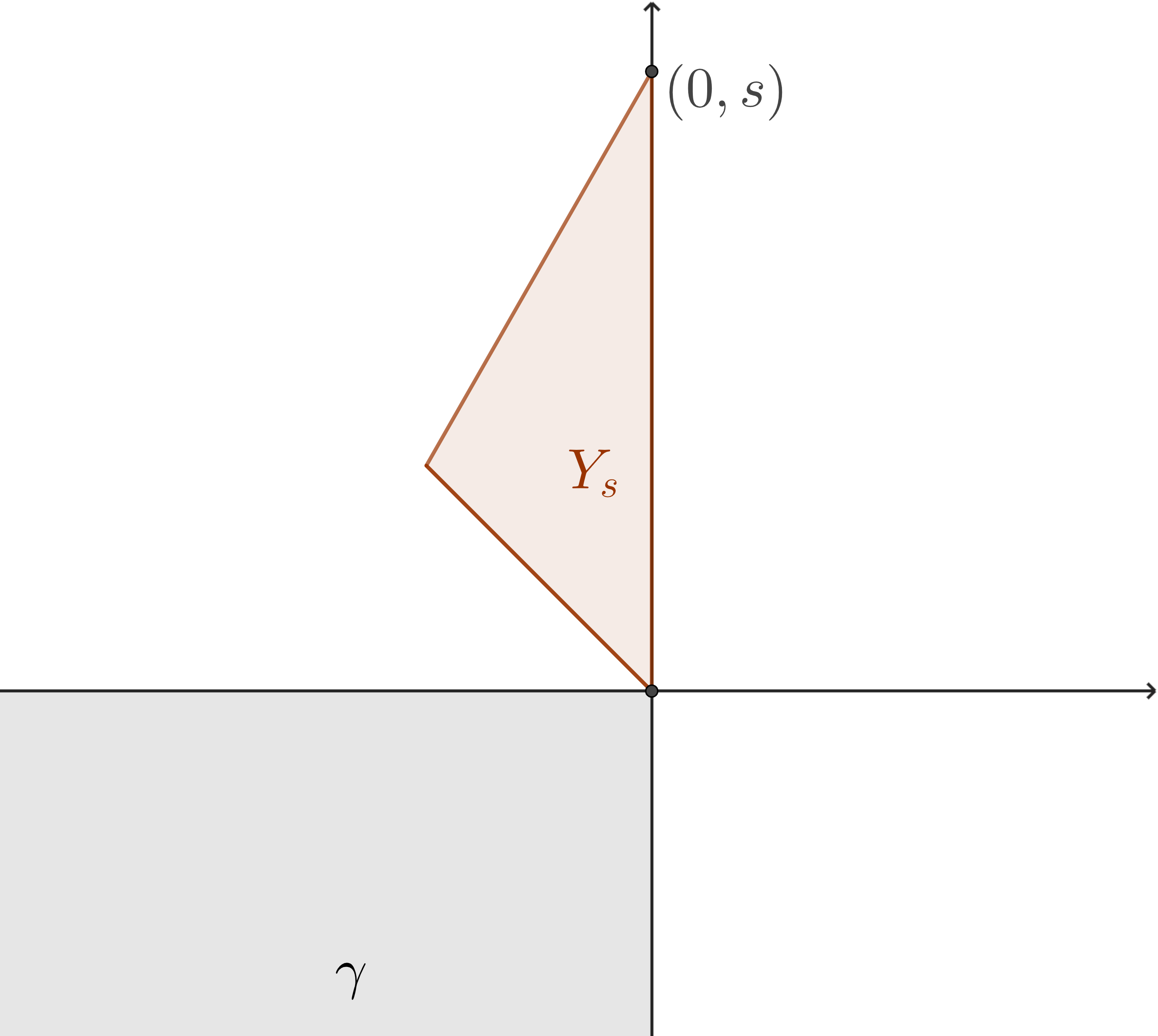}
    \caption{The sets $X$ (left) and $Y_s$ (right)}
\end{figure}

\begin{lemma}\label{lem:gammacomvanishing}
Let $u \notin \Int(\gamma^\circ) \cup \Int(\gamma^{\circ, a})$ and $F$ a $\gamma$-compactly generated sheaf. Then $\reim{u}F \simeq 0$
\end{lemma}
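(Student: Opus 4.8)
The plan is to reduce the statement to Proposition~\ref{prop:boundvanish}, which already provides $\reim{u}\cor_\gamma\simeq 0$ for $u$ on the boundary of $\gamma^\circ$ or of $\gamma^{\circ,a}$. By Definition~\ref{df:gammacompact} we may write $F\simeq G\npsconv\cor_{\gamma^a}$ with $G\in\Derb_\comp(\cor_\V)$, and by the remark following that definition $F\simeq G\star\cor_{\gamma^a}=\reim{s}(G\boxtimes\cor_{\gamma^a})$; concretely, the sum map $s$ is proper on $\Supp(G)\times\gamma^a$, because for a compact $L\subset\V$ the conditions $x\in\Supp(G)$, $y\in\gamma^a$ and $x+y\in L$ force $y\in L-\Supp(G)$, which is compact, so that $\npsconv$ and $\star$ agree here.

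Next I would commute $\reim{u}$ past the convolution. Since $u$ is linear, $u\circ s=\sigma\circ(u\times u)$ where $\sigma\colon\R\times\R\to\R$ is the addition map. Using that proper direct image is compatible with compositions and with exterior products (Künneth), I obtain
\begin{align*}
\reim{u}F&\simeq\reim{u}\reim{s}(G\boxtimes\cor_{\gamma^a})\simeq\reim{(u\circ s)}(G\boxtimes\cor_{\gamma^a})\\
&\simeq\reim{\sigma}\reim{(u\times u)}(G\boxtimes\cor_{\gamma^a})\\
&\simeq\reim{\sigma}\bigl((\reim{u}G)\boxtimes(\reim{u}\cor_{\gamma^a})\bigr),
\end{align*}
so it suffices to prove that $\reim{u}\cor_{\gamma^a}\simeq 0$.

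For this, write $a\colon\V\to\V$, $x\mapsto -x$, for the antipodal map. It is a homeomorphism, hence $\cor_{\gamma^a}\simeq\reim{a}\cor_\gamma$, so $\reim{u}\cor_{\gamma^a}\simeq\reim{(u\circ a)}\cor_\gamma=\reim{(-u)}\cor_\gamma$. Since $\gamma^{\circ,a}=-\gamma^\circ$, multiplication by $-1$ interchanges $\gamma^\circ$ and $\gamma^{\circ,a}$, hence also their boundaries, so $-u\in\partial\gamma^\circ\cup\partial\gamma^{\circ,a}$; Proposition~\ref{prop:boundvanish} then yields $\reim{(-u)}\cor_\gamma\simeq 0$. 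Substituting back gives $\reim{u}F\simeq\reim{\sigma}\bigl((\reim{u}G)\boxtimes 0\bigr)\simeq 0$.

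I do not anticipate a genuine obstacle here: the only points needing care are the bookkeeping between $\reim{}$ and $\roim{}$ (settled by the properness of $s$ on $\Supp(G)\times\gamma^a$, which also legitimizes replacing $\npsconv$ by $\star$) and checking that the boundary hypothesis on $u$ is stable under the antipodal map, so that Proposition~\ref{prop:boundvanish} applies verbatim.
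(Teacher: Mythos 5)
Your proof is correct and follows essentially the same route as the paper's: expand $F$ as a compactly supported sheaf convolved with $\cor_{\gamma^a}$, commute $\reim{u}$ past the convolution using Künneth, and invoke Proposition~\ref{prop:boundvanish} to kill $\reim{u}\cor_{\gamma^a}$. The only cosmetic difference is that the paper applies Proposition~\ref{prop:boundvanish} directly to the cone $\gamma^a$ (noting $(\gamma^a)^\circ=\gamma^{\circ,a}$, so the boundary set is unchanged), whereas you reduce to $\cor_\gamma$ via the antipodal map; both give the same vanishing.
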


\begin{proof}
Since $F$ is $\gamma$-compactly generated there exist $F^\prime \in  \Derb_{\comp}(\cor_\V)$ such that $F\simeq F^\prime \star \cor_{\gamma^a}$. Then $\reim{u} F \simeq \reim{u} F^\prime \star \reim{u} \cor_{\gamma^a} \simeq 0$ where the last isomorphism follows from Proposition \ref{prop:boundvanish}. 
\end{proof}

\begin{corollary}
Let $F$ be a $\gamma$-compactly generated sheaf. Then $\rsect_c(\V,F) \simeq 0$.
\end{corollary}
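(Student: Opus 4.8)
The plan is to reduce the statement to Lemma \ref{lem:gammacomvanishing}, exactly as the earlier corollary asserting $\rsect(\V;F) \simeq 0$ for compactly supported $\gamma$-sheaves was deduced from Proposition \ref{prop:oimgamma}(ii). The key observation is that compactly supported sections can be computed as an iterated proper direct image: if $a_\V \colon \V \to \{\pt\}$ and $a_\R \colon \R \to \{\pt\}$ denote the constant maps and $u \in \V^\ast$ is any linear form, then $a_\V = a_\R \circ u$, so that $\rsect_c(\V,F) \simeq \reim{a_\R}\reim{u}F$ for every $F \in \Derb(\cor_\V)$ (this factorization needs no properness hypothesis, since it only involves the proper direct image functors). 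Hence it suffices to produce a single linear form $u$ for which $\reim{u}F$ vanishes, and Lemma \ref{lem:gammacomvanishing} furnishes one as soon as $u$ lies on the boundary of $\gamma^\circ$ or of $\gamma^{\circ, a}$.

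First I would check that $\partial \gamma^\circ \cup \partial \gamma^{\circ, a}$ is non-empty. Since $\gamma$ has non-empty interior, $\gamma \neq \{0\}$, hence $\gamma^\circ \neq \V^\ast$; since $\gamma$ is proper, $\gamma \neq \V$, hence $\gamma^\circ \neq \{0\}$ (otherwise $\gamma = \gamma^{\circ\circ} = \V$). Thus $\gamma^\circ$ is a non-empty proper closed subset of the connected space $\V^\ast$, so its topological boundary is non-empty; pick any $u \in \partial \gamma^\circ$.

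Then I would conclude: by Lemma \ref{lem:gammacomvanishing}, $\reim{u}F \simeq 0$, and therefore
\[
\rsect_c(\V,F) \simeq \reim{a_\R}\reim{u}F \simeq \reim{a_\R} 0 \simeq 0.
\]
There is essentially no obstacle here; the only mild points are the non-emptiness of $\partial \gamma^\circ$ settled above and the elementary factorization $\reim{a_\V} \simeq \reim{a_\R}\,\reim{u}$, after which the statement follows directly from the vanishing lemma for $\gamma$-compactly generated sheaves.
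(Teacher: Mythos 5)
Your proof is correct and follows exactly the route the paper intends: pick a boundary form $u \in \partial\gamma^\circ \cup \partial\gamma^{\circ,a}$, apply Lemma \ref{lem:gammacomvanishing} to kill $\reim{u}F$, and conclude by the factorization $\reim{a_\V} \simeq \reim{a_\R}\reim{u}$, mirroring the paper's proof of the earlier corollary $\rsect(\V;F)\simeq 0$. The only mild quibble is that the non-emptiness of $\partial\gamma^\circ$ can be seen even more directly (e.g.\ $0\in\gamma^\circ$ but $0\notin\Int\gamma^\circ$ since $\gamma^\circ$ is a proper cone), but your connectedness argument is also fine, and the argument goes through for any choice of $u\in\partial\gamma^\circ$ including $u=0$.
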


\begin{proposition}\label{prop:gammavanishproj}
		Let $F \in \Derb(\cor_\V)$. Assume that $F$ is $\gamma$-compactly generated and that for all $u \in \Int(\gamma^\circ)$, $\reim{u}F \simeq 0$. Then $F \simeq 0$.
\end{proposition}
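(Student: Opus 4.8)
The plan is to combine the projective-duality vanishing result Proposition~\ref{prop:vanishproj} with the structural description of $\gamma$-compactly generated sheaves and the behaviour of pushforwards of $\cor_{\gamma^a}$ along linear forms established earlier in this subsection. Write $F \simeq F' \star \cor_{\gamma^a}$ with $F' \in \Derb_{\comp}(\cor_\V)$. By Proposition~\ref{prop:vanishproj} it suffices to show that $\reim{u} F \simeq 0$ for \emph{every} $u \in \V^\ast$, not just for $u \in \Int(\gamma^\circ)$; so the work is to upgrade the hypothesis on the open cone $\Int(\gamma^\circ)$ to all of $\V^\ast$, splitting according to where $u$ sits relative to $\gamma^\circ$ and $\gamma^{\circ,a}$.

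First I would treat the easy regions. If $u \notin \gamma^\circ \cup \gamma^{\circ,a}$, then since $F$ is $\gamma$-compactly generated it has the form $F' \star \cor_{\gamma^a}$ with $F'$ compactly supported, and one checks (as in the proof of Proposition~\ref{prop:oimgamma}(ii), using that $\cor_{\gamma^a}$ is a direct image of a compact from a projective setting—or more simply that $\Supp F'$ is compact while $\Supp(\cor_{\gamma^a})+\Supp F'$ has a characteristic cone equal to $\gamma^a$, not an affine subspace) that $\reim{u}F \simeq 0$; alternatively invoke Lemma~\ref{lem:cohomubdconv} fibrewise. If $u \in \partial\gamma^\circ \cup \partial\gamma^{\circ,a}$, then $\reim{u}F \simeq 0$ is exactly Lemma~\ref{lem:gammacomvanishing}. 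The remaining cases are $u \in \Int(\gamma^\circ)$, which is the hypothesis, and $u \in \Int(\gamma^{\circ,a})$, together with the origin $u=0$ (where $\reim{0}F \simeq \reim{a_\V}F \otimes \cor$, and $\rsect_c(\V;F)\simeq 0$ by the corollary following Lemma~\ref{lem:gammacomvanishing}).

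The one genuinely new case is $u \in \Int(\gamma^{\circ,a})$, equivalently $-u \in \Int(\gamma^\circ)$. Here the plan is to relate $\reim{u}F$ to $\reiim{(-u)}$ of the Verdier dual, or more directly: writing $F \simeq F' \star \cor_{\gamma^a}$, we have $\reim{u}F \simeq \reim{u}F' \star \reim{u}\cor_{\gamma^a}$, and since $u \in \Int(\gamma^{\circ,a})$ Lemma~\ref{lem:oimcone} (applied with proper support, using Lemma~\ref{lem:polarpropre} to see $u$ is proper on $\gamma^a$, so $\reim{u}=\roim{u}$ on $\cor_{\gamma^a}$) gives $\reim{u}\cor_{\gamma^a} \simeq \cor_{\R_+}$. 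Thus $\reim{u}F \simeq (\reim{u}F') \star \cor_{\R_+}$, a $\R_+$-upset-type sheaf on $\R$. To conclude $\reim{u}F \simeq 0$ from the hypothesis, I would dualize: by Corollary~\ref{cor:roimtoreim} and the fact that $\dual_\V(F' \star \cor_{\gamma^a})$ is $(-\gamma)$-shaped, or by observing that $\reim{u}F$ being both a bounded-below $\cor_{\R_+}$-convolution and having all $\reim{v}(-)$ vanish for $v$ in the \emph{opposite} open cone forces it to be a constant sheaf with compact support, hence zero. The cleanest route is: apply the already-proven implication to $\dual_\V F$ (which is again $\gamma'$-compactly generated for $\gamma' = \gamma^a$, up to a shift) and use $\roim{u}\dual_\V F \simeq \dual_\R \reim{u} F$ together with $\dist_\R$-closedness (Theorem~\ref{thm:closedis}) to transfer vanishing.

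The main obstacle I anticipate is exactly this last step: making precise that the hypothesis ``$\reim{u}F\simeq 0$ for all $u\in\Int(\gamma^\circ)$'' propagates to $u\in\Int(\gamma^{\circ,a})$. The symmetry is not literal—$F$ is a $\gamma$-sheaf (up to compact support), not a $\gamma^a$-sheaf—so one cannot just apply the hypothesis to $-u$. The resolution should come from duality: $\dual_\V F$ is, up to a degree shift, $(\gamma^a)$-compactly generated, $\Int((\gamma^a)^\circ) = \Int(\gamma^{\circ,a})$, and $\dual_\R(\reim{u}F) \simeq \roim{u}(\dual_\V F)$ by Corollary~\ref{cor:roimtoreim}; so vanishing of $\reim{u}F$ for $u\in\Int(\gamma^\circ)$ is equivalent, after dualizing, to vanishing of $\roim{u}(\dual_\V F)$ for $u\in\Int(\gamma^\circ)$, and one needs the analogue of the present proposition with $\roim{}$ in place of $\reim{}$—which is why I expect the final paper to either prove a $\roim{}$-variant in parallel or to route everything through $\reim{}$ on both $F$ and $\dual_\V F$ and invoke $\Derb_{\rc}$-closedness of $\dist_\R$ to kill the residual constant sheaf.
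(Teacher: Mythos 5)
Your overall strategy (reduce to showing $\reim{u}F\simeq 0$ for \emph{all} $u\in\V^\ast$ and conclude by Proposition~\ref{prop:vanishproj}, handling each region of $\V^\ast$ separately) is exactly the paper's, and your treatment of the boundary case via Lemma~\ref{lem:gammacomvanishing} and your use of $\rsect_c(\V;F)\simeq 0$ are both on target. However, you misdiagnosed where the difficulty lies, and this leads to a genuine gap.

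The case $u\in\Int(\gamma^{\circ,a})$, which you flag as ``the one genuinely new case,'' is in fact trivial: if $u\in\Int(\gamma^{\circ,a})$ then $-u\in\Int(\gamma^\circ)$, and writing $a\colon\R\to\R$, $x\mapsto -x$, one has $u=a\circ(-u)$, hence $\reim{u}F\simeq\reim{a}\reim{(-u)}F\simeq 0$ since $\reim{a}$ is an equivalence and $\reim{(-u)}F\simeq 0$ by hypothesis. Your worry that ``the symmetry is not literal --- $F$ is a $\gamma$-sheaf, not a $\gamma^a$-sheaf, so one cannot just apply the hypothesis to $-u$'' is misplaced: the hypothesis is a vanishing statement about $F$ itself, and composing the pushforward with a diffeomorphism of $\R$ preserves vanishing regardless of the microlocal shape of $F$. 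Consequently the Verdier-duality detour you propose is unnecessary, and as written it also does not close: applying the (not-yet-proven) proposition to $\dual_\V F$ with cone $\gamma^a$ would require knowing $\reim{v}\dual_\V F\simeq 0$ for $v\in\Int(\gamma^{\circ,a})$, i.e.\ $\roim{v}F\simeq 0$, which is not what the hypothesis gives you --- so the dualization either begs the question or needs exactly the sign-flip observation you were trying to avoid.

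For the region $u\notin\gamma^\circ\cup\gamma^{\circ,a}$, your sketch is salvageable but hand-wavy; the ``invoke Lemma~\ref{lem:cohomubdconv} fibrewise'' alternative does not apply as stated, since that lemma concerns $\cor_C$ for $C$ closed convex and not the restriction of an arbitrary sheaf to a fibre. The paper's argument is cleaner: by Proposition~\ref{prop:oimgamma}(ii), $\reim{u}F\simeq M_\R$ is constant, and then $M[-1]\simeq\rsect_c(\R;M_\R)\simeq\rsect_c(\V;F)\simeq\rsect_c(\R;\reim{v}F)\simeq 0$ for any $v\in\Int(\gamma^\circ)$, forcing $M\simeq 0$ without any further geometry.
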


\begin{proof}
By hypothesis, for every $u \in \Int(\gamma^\circ) \cup \Int(\gamma^{\circ, \, a}) $, $\reim{u}F \simeq 0.$, Moreover, for every $u \notin  \Int(\gamma^\circ) \cup \Int(\gamma^{\circ, a})$, it follows from Lemma \ref{lem:gammacomvanishing}  that $\reim{u}F \simeq 0$. 
Thus, for every $u \in \mathbb{V}^\ast$, $\reim{u} F \simeq 0$. It follows from Corollary \ref{prop:vanishproj} that $F \simeq 0$.
\end{proof}

\begin{lemma}\label{ineq:Lipschitzgamma}
Let $F \in  \Derb_{\comp}(\cor_{\V})$, $u,v \in \Int(\gamma^{a,\circ})$ (resp. $\Int(\gamma^{\circ})$) and set $S=\Supp(F)$. Then,
\begin{equation*}
    \dist_\R(\reim{u}(F \star \cor_{\gamma^a}),\reim{v}(F \star \cor_{\gamma^a})) \leq  \norm{S} \opnorm{u-v}.
\end{equation*}
\end{lemma}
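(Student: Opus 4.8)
The strategy is to ``peel off'' the cone factor $\cor_{\gamma^a}$ and reduce the estimate on $\R$ to the stability inequality of Lemma~\ref{lem:continuitylin_projbar} for the compactly supported sheaf $F$ alone. The first point is that proper direct image commutes with convolution: writing $s\colon\V\times\V\to\V$ and $s_\R\colon\R\times\R\to\R$ for the two addition maps, one has $u\circ s=s_\R\circ(u\times u)$, so the Künneth formula for $\reim{(\cdot)}$ (no properness needed) gives $\reim{u}(F\star\cor_{\gamma^a})\simeq(\reim{u}F)\star(\reim{u}\cor_{\gamma^a})$, and similarly with $v$ in place of $u$. This is exactly the manipulation already used in the proof of Lemma~\ref{lem:gammacomvanishing}.

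Next I would identify $\reim{u}\cor_{\gamma^a}$. Since $\gamma^a$ is again a closed proper convex cone with non-empty interior, with $(\gamma^a)^\circ=\gamma^{\circ,a}=\gamma^{a,\circ}$ and $(\gamma^a)^{\circ,a}=\gamma^\circ$, Lemma~\ref{lem:polarpropre} applied to $\gamma^a$ shows that $u$ and $v$ are proper on $\gamma^a$ in both cases of the statement, hence $\reim{u}\cor_{\gamma^a}\simeq\roim{u}\cor_{\gamma^a}$; applying Lemma~\ref{lem:oimcone} to $\gamma^a$ then yields $\reim{u}\cor_{\gamma^a}\simeq\cor_{\R_+}$ in the first case ($u\in\Int(\gamma^{a,\circ})$) and $\reim{u}\cor_{\gamma^a}\simeq\cor_{\R_-}$ in the second ($u\in\Int(\gamma^\circ)$), and the very same sheaf $L\in\Derb(\cor_\R)$ is obtained from $v$. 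Combining with the previous paragraph,
\[
\reim{u}(F\star\cor_{\gamma^a})\simeq(\reim{u}F)\star L,\qquad\reim{v}(F\star\cor_{\gamma^a})\simeq(\reim{v}F)\star L .
\]

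It then remains to assemble. By Proposition~\ref{prop:dualdis}(ii) on $\R$, applied with the second pair of sheaves equal to $(L,L)$ so that $a_2=0$, and then by Lemma~\ref{lem:continuitylin_projbar} applied to $F\in\Derb_\comp(\cor_\V)$ with $S=\Supp(F)$,
\[
\dist_\R\bigl(\reim{u}(F\star\cor_{\gamma^a}),\reim{v}(F\star\cor_{\gamma^a})\bigr)=\dist_\R\bigl((\reim{u}F)\star L,(\reim{v}F)\star L\bigr)\leq\dist_\R(\reim{u}F,\reim{v}F)\leq\norm{S}\,\opnorm{u-v},
\]
which is exactly the claimed bound.

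The computation is a formal combination of the Künneth isomorphism, Proposition~\ref{prop:dualdis}(ii) and Lemma~\ref{lem:continuitylin_projbar}, so I do not expect a genuine difficulty. The one point deserving care is the middle step: checking that Lemmas~\ref{lem:polarpropre} and~\ref{lem:oimcone} transfer verbatim from $\gamma$ to the opposite cone $\gamma^a$, so that $\reim{u}\cor_{\gamma^a}$ is a half-line sheaf not depending on the choice of $u$ inside the prescribed open cone --- this independence is precisely what allows the cone factor to be cancelled when comparing the two pushforwards.
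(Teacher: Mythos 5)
Your proof is correct and takes essentially the same route as the paper's: rewrite $\reim{u}(F\star\cor_{\gamma^a})$ as $(\reim{u}F)\star L$ for a fixed half-line sheaf $L$ independent of $u$, drop the $\star L$ via Proposition~\ref{prop:dualdis}(ii), and conclude with Lemma~\ref{lem:continuitylin_projbar}. The only difference is cosmetic: the paper obtains the first isomorphism by citing Lemma~\ref{lem:comsublevel} (whose statement covers $u\in\Int(\gamma^\circ)$), whereas you re-derive it directly from the Künneth formula together with Lemmas~\ref{lem:polarpropre} and~\ref{lem:oimcone} applied to $\gamma^a$, which has the minor advantage of making both alternatives of the ``resp.'' in the statement explicit.
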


\begin{proof}
It follows from Lemma \ref{lem:comsublevel} that
\begin{equation*}
  \dist_\R(\reim{u}(F \star \cor_{\gamma^a}),\reim{v}(F \star \cor_{\gamma^a}))= \dist_\R(\reim{u}F \star \cor_{\R_+},\reim{v}F \star \cor_{\R_+}) 
\end{equation*}
Using Proposition \ref{prop:dualdis} (ii) followed by Lemma \ref{lem:continuitylin_projbar}, we obtain
\begin{align*}
\dist_\R(\reim{u}F \star \cor_{\R_+},\reim{v}F \star \cor_{\R_+}) & \leq \dist_\R(\reim{u}F,\reim{v}F)\\
& \leq \norm{S} \opnorm{u-v}
\end{align*}
which complete the proof.
\end{proof}

\begin{proposition}\label{prop:multi_to_single}
Let $F,G \in  \Derb_{\comp}(\cor_{\V})$ and $p \in \gamma^\circ \cup \gamma^{\circ, a}$. Then,
\begin{equation*}
\dist_\R(\reim{p}F \star \cor_\lambda, \reim{p}G \star \cor_\lambda) \leq \opnorm{p} \, \dist_\V(F \star \cor_{\gamma^a}, G \star \cor_{\gamma^a})
\end{equation*}
where $\opnorm{\cdot}$ is the operator norm associated with the norm $\norm{\cdot}$ on $\V$ and $\lambda=[0, +\infty [$ if $p \in \gamma^{\circ,a}$ and $\lambda=[-\infty,0 [$ if $p \in \gamma^{\circ}$.
\end{proposition}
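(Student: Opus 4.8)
The plan is to reduce this multi-parameter-to-one-parameter stability estimate to the Lipschitz stability of pushforward established in Theorem \ref{thm:lipstability}, combined with the computation of the pushforward of $\gamma$-compactly generated sheaves from Lemma \ref{lem:comsublevel}. First I would treat the case $p \in \Int(\gamma^{\circ,a})$ (the case $p \in \Int(\gamma^{\circ})$ being symmetric, with $\lambda = [-\infty,0[$), and handle the boundary case $p \in \partial\gamma^\circ \cup \partial\gamma^{\circ,a}$ separately at the end, where both sides vanish.

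For the interior case, the key observation is that by Lemma \ref{lem:comsublevel}(i) one has $\reim{p}(F \star \cor_{\gamma^a}) \simeq (\reim{p}F) \star \cor_{\lambda}$ (using that $F \star \cor_{\gamma^a} \simeq F \npsconv \cor_{\gamma^a}$ since $F$ is compactly supported, and similarly rewriting the non-proper convolution with $\cor_{\R_-}$ as a proper one). Hence the left-hand side is exactly $\dist_\R(\reim{p}(F\star\cor_{\gamma^a}), \reim{p}(G\star\cor_{\gamma^a}))$. Now $p \colon (\V,\norm{\cdot}) \to (\R,|\cdot|)$ is a linear map with operator norm $\opnorm{p}$, hence $\opnorm{p}$-Lipschitz; assuming the norms involved are Euclidean (as required in the second half of Theorem \ref{thm:lipstability}), we may apply the $\roim{}$ half of that theorem — but here we need the $\reim{}$ version, which holds for Lipschitz maps without the Euclidean restriction — to obtain
\begin{equation*}
\dist_\R(\reim{p}(F\star\cor_{\gamma^a}), \reim{p}(G\star\cor_{\gamma^a})) \leq \opnorm{p}\,\dist_\V(F\star\cor_{\gamma^a}, G\star\cor_{\gamma^a}),
\end{equation*}
which is the desired inequality. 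Stringing these two steps together gives the result in the interior case.

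For the boundary case $p \in \partial\gamma^\circ \cup \partial\gamma^{\circ,a}$, I would invoke Lemma \ref{lem:gammacomvanishing}: since $F\star\cor_{\gamma^a}$ and $G\star\cor_{\gamma^a}$ are $\gamma$-compactly generated, $\reim{p}(F\star\cor_{\gamma^a}) \simeq 0 \simeq \reim{p}(G\star\cor_{\gamma^a})$, so both the pushforwards vanish and the left-hand side is $0$, while the right-hand side is nonnegative; the inequality is trivial. (One should check that in this degenerate situation $\reim{p}F \star \cor_\lambda$ still makes sense and equals $0$; this follows since $\reim{p}(F\star\cor_{\gamma^a}) \simeq (\reim{p}F)\star(\reim{p}\cor_{\gamma^a})$ by the Künneth/projection formula and $\reim{p}\cor_{\gamma^a} \simeq 0$ by Proposition \ref{prop:boundvanish}.)

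The main obstacle I anticipate is bookkeeping around which version of the Lipschitz stability theorem applies: Theorem \ref{thm:lipstability} states the $\reim{}$ estimate for general good metric spaces but the $\roim{}$ estimate only for Euclidean vector spaces. Since the statement here is phrased with $\reim{p}$ throughout, the $\reim{}$ version suffices and the Euclidean hypothesis is not needed — but one must be careful that Lemma \ref{lem:comsublevel} was stated for $\roim{u}$, so the genuinely delicate point is verifying that the identity $\reim{p}(F\star\cor_{\gamma^a}) \simeq (\reim{p}F)\star\cor_{\lambda}$ holds with proper pushforwards and proper convolutions, which requires the properness of $p$ on $\gamma^a$ (Lemma \ref{lem:polarpropre}) together with the compactness of $\Supp(F)$ to rerun the Künneth argument in the proof of Lemma \ref{lem:comsublevel}(i) with $\reim{}$ in place of $\roim{}$. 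Once that identification is in place, the rest is a one-line application of Theorem \ref{thm:lipstability}.
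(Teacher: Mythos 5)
Your interior case is sound and is essentially the paper's step (i): identify $\reim{p}F \star \cor_\lambda$ with $\reim{p}(F\star\cor_{\gamma^a})$ via (a $\reim{}$-version of) Lemma \ref{lem:comsublevel}(i), then invoke the $\reim{}$-part of Theorem \ref{thm:lipstability}. You correctly flag the need for properness of $p$ on $\Supp(F)+\gamma^a$ and compactness of $\Supp(F)$ to make the $\reim{}/\roim{}$ identification legitimate.

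The boundary case, however, has a genuine gap. You argue that for $p \in \partial\gamma^\circ \cup \partial\gamma^{\circ,a}$ both sides vanish, justified by $\reim{p}(F\star\cor_{\gamma^a}) \simeq (\reim{p}F)\star(\reim{p}\cor_{\gamma^a}) \simeq 0$. But that computation bounds the wrong object. On the boundary the identification $\reim{p}F \star \cor_\lambda \simeq \reim{p}(F\star\cor_{\gamma^a})$ \emph{fails} — indeed the right-hand sheaf is $0$ precisely because $\reim{p}\cor_{\gamma^a}\simeq 0$ by Proposition \ref{prop:boundvanish}, whereas $\cor_\lambda$ is a fixed nonzero half-line sheaf unaffected by $p$. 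The quantity appearing in the statement, $\reim{p}F \star \cor_\lambda$, is the gammaification of $\reim{p}F$ and is generically nonzero: take $F = \cor_{\{0\}}$, $G = \cor_{\{v_0\}}$, and $p = e_1^\ast$ on the boundary; then $\reim{p}F\star\cor_\lambda \simeq \cor_\lambda \ne 0$, $\reim{p}G\star\cor_\lambda \simeq \cor_{p(v_0)+\lambda}$, and the left-hand side equals $|p(v_0)|$, which is nonzero. So the inequality is not trivially $0 \leq$ something, and the boundary case must be proved. The paper handles it by a continuity/limiting argument: pick $p_n \in \Int(\gamma^\circ)$ with $p_n \to p$, write the triangle inequality
\begin{equation*}
\dist_\R(\reim{p}F \star \cor_\lambda, \reim{p}G \star \cor_\lambda) \leq \dist_\R(\reim{p}F \star \cor_\lambda, \reim{p_n}F \star \cor_\lambda) + \dist_\R(\reim{p_n}F \star \cor_\lambda, \reim{p_n}G \star \cor_\lambda) + \dist_\R(\reim{p_n}G \star \cor_\lambda, \reim{p}G \star \cor_\lambda),
\end{equation*}
bound the middle term by step (i), bound the two outer terms by $\norm{\Supp(F)}\opnorm{p_n - p}$ and $\norm{\Supp(G)}\opnorm{p_n-p}$ via Proposition \ref{prop:dualdis}(ii) and Lemma \ref{lem:continuitylin_projbar}, and let $n\to\infty$. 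You would need to replace your boundary argument with something of this kind.
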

\begin{proof}
We proceed in two steps.\\
\noindent (i) If $p \in \Int(\gamma^{\circ,a}) \cup \Int(\gamma^{\circ})$, we have
\begin{align*}
 \dist_\R(\reim{p}F \star \cor_\lambda, \reim{p}G \star \cor_\lambda)
 &=\dist_\R(\reim{p}(F \star \cor_{\gamma^a}), \reim{p}(G \star \cor_{\gamma^a})) \quad \textnormal{Lem. \ref{lem:comsublevel}} \\
 & \leq \opnorm{p} \, \dist_\V(F \star \cor_{\gamma^a}, G \star \cor_{\gamma^a}) \quad \textnormal{Thm.  \ref{thm:lipstability}}
\end{align*}

\noindent (ii) We now assume that $p \in \partial \gamma^\circ \cup \partial \gamma^{\circ, a}$. Without loss of generality, we can further assume that $p \in \partial \gamma^\circ$. Thus there exists a sequence $(p_n)_{n \in \N}$ of elements of $\Int(\gamma^{\circ})$ such that $\lim_{n \to \infty}p_n=p$. Then
\begin{align*}
\dist_\R(\reim{p}F \star \cor_\lambda, \reim{p}G \star \cor_\lambda) & \leq \dist_\R(\reim{p}F \star \cor_\lambda, \reim{p_n}F \star \cor_\lambda)\\ 
& \quad + \dist_\R(\reim{p_n}F \star \cor_\lambda, \reim{p_n}G \star \cor_\lambda)\\
& \quad + \dist_\R(\reim{p_n}G \star \cor_\lambda, \reim{p}G \star \cor_\lambda).
\end{align*}
 Moreover, it follows from the first step that
\begin{align*}
 \dist_\R(\reim{p_n}F \star \cor_\lambda, \reim{p_n}G \star \cor_\lambda) & \leq \opnorm{p_n} \, \dist_\V(F \star \cor_{\gamma^a}, G \star \cor_{\gamma^a}) 
\end{align*}
and
\begin{align*}
\dist_\R(\reim{p}F \star \cor_\lambda, \reim{p_n}F \star \cor_\lambda) & \leq \dist_\R(\reim{p}F, \reim{p_n}F)\\
& \leq \norm{\Supp(F)} \opnorm{p_n-p}.
\end{align*}
This implies that
\begin{align*}
  \dist_\R(\reim{p}F \star \cor_\lambda, \reim{p}G \star \cor_\lambda) &\leq \opnorm{p_n} \, \dist_\R(F \star \cor_{\gamma^a}, G \star \cor_{\gamma^a})\\ 
  & \quad + \norm{\Supp(F)} \opnorm{p_n-p} \\
  & \quad+\norm{\Supp(G)} \opnorm{p_n-p}
\end{align*}

Taking the limit when $n \to \infty$, we get
\begin{align*}
  \dist_\R(\reim{p}F \star \cor_\lambda, \reim{p}G \star \cor_\lambda) &\leq \opnorm{p} \, \dist_\R(F \star \cor_{\gamma^a}, G \star \cor_{\gamma^a}).
\end{align*}
\end{proof}

\begin{corollary}\label{cor:sensitivitymultipers}
Let $X$ and $Y$ be compact good topological spaces and $f \colon X \to \R^n$, $x \mapsto (f_1(x),\ldots,f_n(x))$ and $g \colon Y \to \R^n$, $y \mapsto (g_1(y),\ldots,g_n(y))$ be continuous maps. Consider the cone $\gamma=]-\infty, 0 ]^n$. Then for every $1 \leq i \leq n$
\begin{equation*}
    \dist_\R(\PH(f_i),\PH(g_i)) \leq \dist_{\R^n}(\PH(f),\PH(g)).
\end{equation*}
\end{corollary}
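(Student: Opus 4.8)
The plan is to reduce the $n$-parameter statement to the one-dimensional projection result already proved, namely Proposition~\ref{prop:multi_to_single}. First I would note that since $X$ and $Y$ are compact, the maps $f$ and $g$ are proper, so Proposition~\ref{prop:bassublevel} gives $\PH(f) \simeq (\roim{f}\cor_X) \npsconv \cor_{\gamma^a}$ and similarly for $g$; writing $F := \roim{f}\cor_X$ and $G := \roim{g}\cor_Y$, these are objects of $\Derb_{\comp}(\cor_{\R^n})$ because $X$ and $Y$ are compact. Moreover, since the sum map is proper on the relevant supports, $\npsconv$ may be replaced by $\star$, so $\PH(f) \simeq F \star \cor_{\gamma^a}$ and $\PH(g) \simeq G \star \cor_{\gamma^a}$.

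Next, fix $1 \leq i \leq n$ and take $p := e_i^\ast \colon \R^n \to \R$, the $i$-th coordinate linear form $(x_1,\ldots,x_n) \mapsto x_i$. For the cone $\gamma = ]-\infty,0]^n$ one has $\gamma^\circ = [0,+\infty)^n$ and $\gamma^{\circ,a} = ]-\infty,0]^n$, and $p$ lies on the boundary $\partial\gamma^{\circ,a}$ (equivalently $-p \in \partial\gamma^\circ$), so $p \in \gamma^\circ \cup \gamma^{\circ,a}$ and Proposition~\ref{prop:multi_to_single} applies with $\lambda = [0,+\infty)$. It yields
\begin{equation*}
\dist_\R(\reim{p}F \star \cor_\lambda, \reim{p}G \star \cor_\lambda) \leq \opnorm{p} \, \dist_{\R^n}(F \star \cor_{\gamma^a}, G \star \cor_{\gamma^a}),
\end{equation*}
and with the norm $\norm{\cdot}_\infty$ on $\R^n$ and $|\cdot|$ on $\R$ one has $\opnorm{p} = 1$. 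The right-hand side is exactly $\dist_{\R^n}(\PH(f),\PH(g))$ by the identifications above.

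It remains to identify the left-hand side with $\dist_\R(\PH(f_i),\PH(g_i))$. Here $f_i = p \circ f$ and $g_i = p \circ g$, and since $p \circ f$ and $p \circ g$ are real-valued continuous maps on the compact spaces $X$ and $Y$, Proposition~\ref{prop:bassublevel} (with $\gamma$ replaced by $]-\infty,0] \subset \R$, whose antipodal is $[0,+\infty) = \lambda$) gives $\PH(f_i) \simeq (\roim{(p\circ f)}\cor_X) \npsconv \cor_\lambda$. Now the projection formula (or proper base change applied to $p$, proper on $\Supp(F)$ since $\Supp(F)$ is compact) gives $\roim{p}F = \roim{p}\roim{f}\cor_X \simeq \roim{(p\circ f)}\cor_X$, and the compactness of supports again lets us replace $\npsconv$ by $\star$. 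Hence $\reim{p}F \star \cor_\lambda \simeq \roim{p}F \star \cor_\lambda \simeq \PH(f_i)$, and likewise for $g_i$. Combining the three displays gives the claimed inequality.

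The main obstacle I anticipate is bookkeeping about cones rather than anything deep: one must be careful that $p$ sits on the boundary of the polar cone (so that the delicate boundary case of Proposition~\ref{prop:multi_to_single}, proved by the limiting argument there, is genuinely what is being invoked), and that the one-dimensional cone used to define $\PH(f_i)$ is oriented consistently with $\lambda$. One should also double-check that $\reim{p}F \simeq \roim{p}F$ here — which holds because $\Supp(F)$ is compact, so $p$ is proper on it — since Proposition~\ref{prop:multi_to_single} is phrased with $\reim{p}$ while $\PH$ is defined with $\roim{}$. None of these steps requires new microlocal input; they are all consequences of results already established in the excerpt.
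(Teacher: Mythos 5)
Your proof is correct and follows the same strategy as the paper's: rewrite $\PH(f)\simeq(\roim{f}\cor_X)\star\cor_{\gamma^a}$ via Proposition~\ref{prop:bassublevel}, apply Proposition~\ref{prop:multi_to_single} with $p=e_i^\ast$, and then identify $\reim{p}F\star\cor_\lambda$ with $\PH(f_i)$ by composing direct images and invoking Proposition~\ref{prop:bassublevel} again in dimension one. You are in fact a bit more careful than the printed proof, which cites Lemma~\ref{lem:comsublevel} even though $e_i^\ast$ sits on the boundary rather than the interior of $\gamma^{\circ,a}$; your detour through $\roim{p}\roim{f}\simeq\roim{(p\circ f)}$ avoids that issue cleanly. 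One slip to correct: with $\gamma=(-\infty,0]^n$ and the paper's convention $\gamma^\circ=\{\xi\mid\langle\xi,v\rangle\geq 0\text{ for all }v\in\gamma\}$, one gets $\gamma^\circ=(-\infty,0]^n$ and $\gamma^{\circ,a}=[0,+\infty)^n$, the opposite of what you wrote; your subsequent assertions that $e_i^\ast\in\partial\gamma^{\circ,a}$ and that $\lambda=[0,+\infty)$ are nevertheless the correct ones, so the error does not propagate into the argument.
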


\begin{proof}
Let $p_i \colon \R^n \to \R$ be the projection on the  $i$-th coordinate. Notice that $p_i \in \partial \gamma$ and $\opnorm{p_i}_\infty=1$. Hence, applying Lemma \ref{lem:comsublevel} and Proposition \ref{prop:multi_to_single}, we get 
\begin{align*}
    \dist_\R(\PH(f_i),\PH(g_i)) = \dist_\R(\PH(p_i\circ f),\PH(p_i \circ g))\leq \dist_\V(\PH(f),\PH(g)).
\end{align*}
\end{proof}
\begin{remark}
The above corollary shows that passing from persistence to multi-persistence tend to increase the distance between persistence modules. A possible interpretation is that multipersistence has a better sensitivity than persistence but it cannot be claimed that it is more robust to outliers. 
\end{remark}

\section{Projected barcodes}
In this section, we introduce the notion of projected barcodes. The projected barcodes of a multi-parameter persistence module $F$ on $\V$ (that is a $\gamma$-sheaf) is the family of barcodes obtained by considering the direct images of $F$ by various maps from $\V$ to $\R$. While the fibered barcode is obtained by pulling back a multi-parameter module on an affine line, we propose instead to study the pushforwards of a $\gamma$-sheaf onto $\R$. We start by providing an example showing that two non isomorphic multi-parameter modules that have the same fibered barcodes can have different projected barcodes. We then formally introduce the notion of $\fF$-projected barcodes and study its properties.

\subsection{Motivations and example}
\label{s:cexamplefibered}

The fibered barcode has been successfully used in a variety of machine learning tasks as a summary of multi-parameter persistence modules \cite{CarriereBlumberg}. Nevertheless,  it is easy to build examples of $\gamma$-sheaves (hence persistence modules) with the same fibered barcodes (hence at matching distance zero) though they are not isomorphic and are at a strictly positive convolution/interleaving distance. Let us describe below one of this well-known situation.

 In this example, we consider $\V = \R^2$, endowed with $\gamma = (-\infty, 0]^2$. We set $v = (1,1)$, therefore $\| (x,y) \|_v = \max(|x| , |y|) = \|(x,y)\|_\infty$. We keep the notation $\Lambda := \{h \in \Int(\gamma^a) , \|h\|_v = 1\}$. We let $a >2$ and  define the two sheaves $F$ and $G$ in $\Derb_{\rc, \gamma^{\circ,a}}(\cor_{\V})$ as follows: $F = \cor_{[1,a) \times [0,a)} \oplus \cor_{[0,a) \times [1,a)} $ and $G = \cor_A \oplus \cor_{[1,a) \times [1,a)} $, where 

\begin{equation*}
A = [0,a) \times [0,a) \setminus \left ([0,1) \times [0,1)\right ).
\end{equation*}
The sheaf $\cor_A$ fits into the following short exact sequence
\begin{equation*}
    0 \to \cor_{[0,1)\times [0,1)} \to \cor_{[0,a) \times [0,a)} \to \cor_A \to 0.
\end{equation*}
\begin{center}
	\begin{figure}
		\centering
		\includegraphics[width=\textwidth]{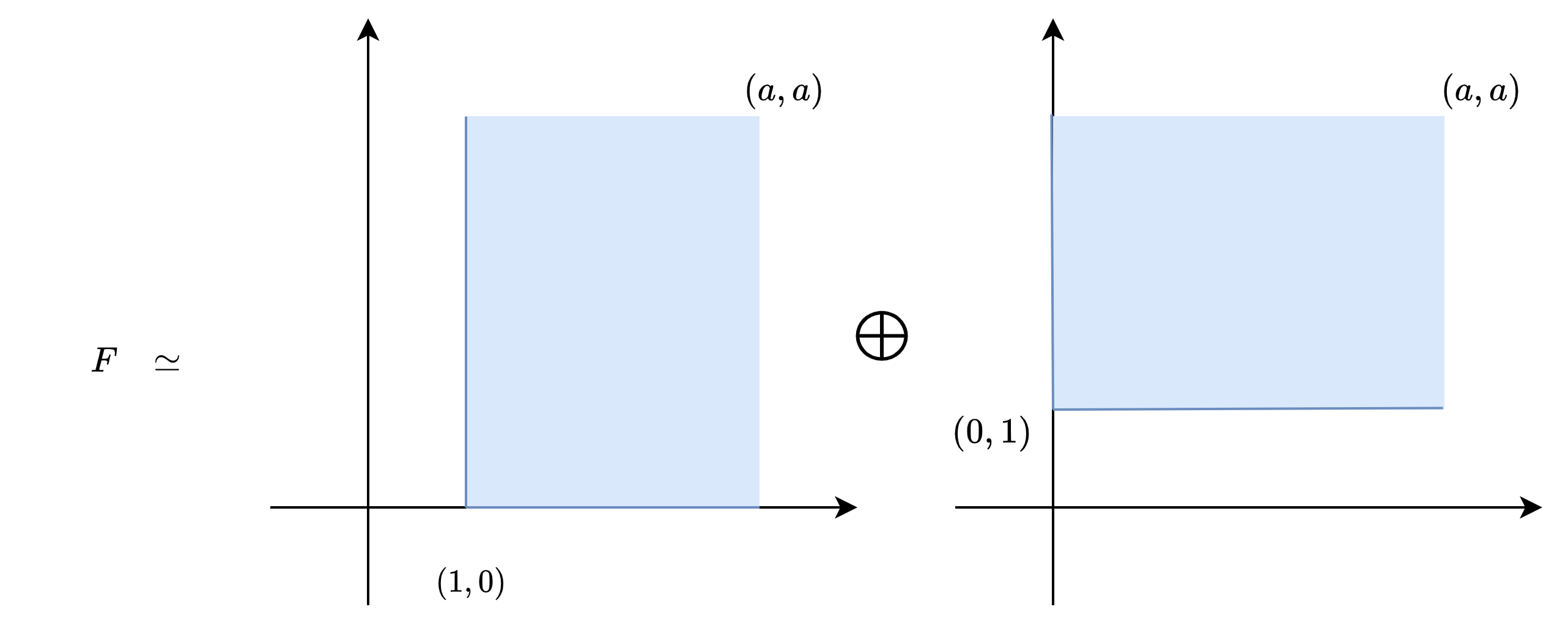}
		\includegraphics[width=\textwidth]{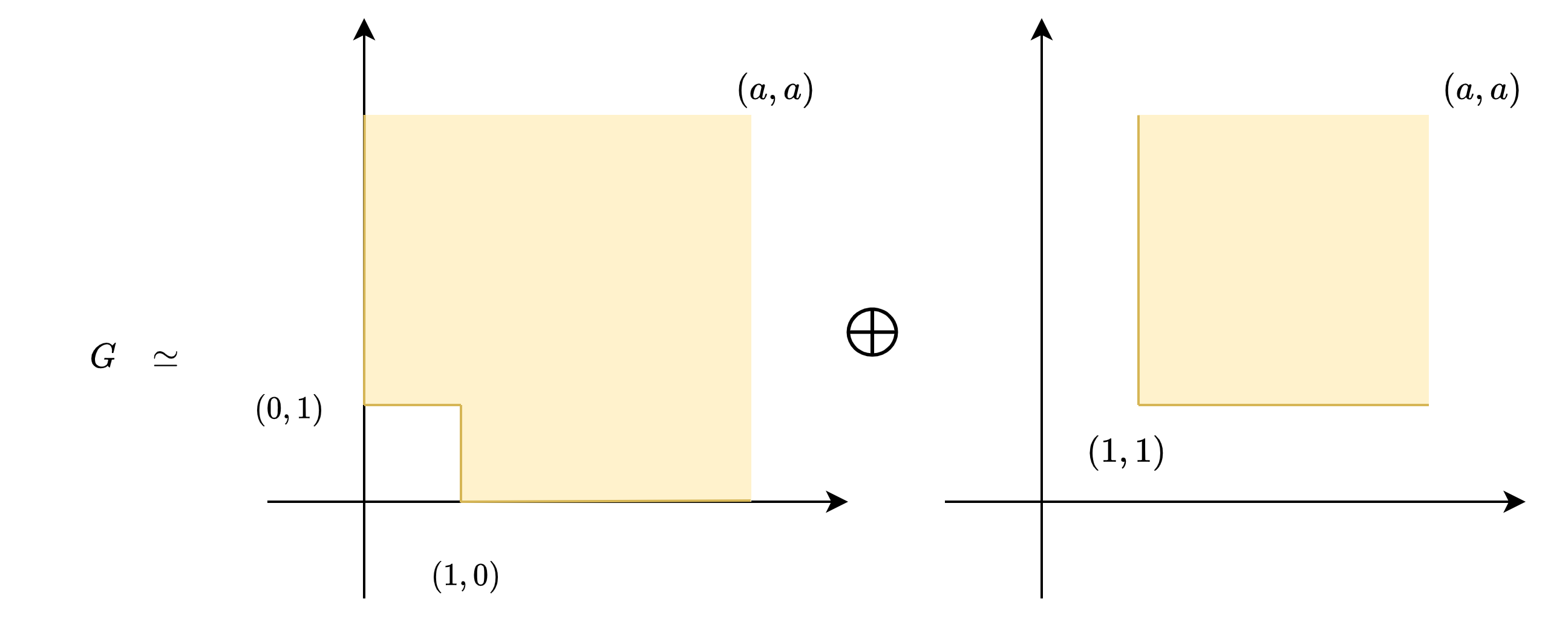}
		\caption{The sheaves $F$ and $G$ in $\Derb_{\rc}(\cor_{\R^2})$.}
		\label{fig:cefibered}
	\end{figure} 
\end{center}

Some classical computations (see for instance \cite[Example 2.1]{vipond2020local}) give $\opb{i_{\mathcal{L}_h}} \opb{\tau_{c}}F \simeq \opb{i_{\mathcal{L}_h}} \opb{\tau_{c}}G$ for all $(h,c) \in \Lambda \times \V$. Therefore, one cannot distinguish $F$ and $G$ by restrictions to one dimensional affine subspaces. However, one has $\dist_\V(F,G) = 1$.

We now show that it is possible to produce barcodes out of $F$ and $G$ which are different. Our idea is to study the barcode decomposition obtained after applying the direct image functor of a map from $\V$ to $\R$, rather than restrictions, which correspond to the inverse image functor of injections from $\R$ to $\V$.  Let  $p \colon \V \to \R$ be defined by the formula $p(x,y) = \frac{(x+y)}{2}$. Then, we have
\begin{equation*}
\roim{p} F \simeq \left ( \cor_{[\frac{1}{2}, a )    } \right )^2, ~~~ \roim{p} G \simeq  \cor_{[\frac{1}{2}, a )} \oplus \cor_{[\frac{1}{2}, 1 ) }  \oplus \cor_{[1, a)}.
\end{equation*}
According to Definition \ref{D:matching}, the partial matching between $\B(\roim{p} F)$ and $\B(\roim{p} G)$ given by
\begin{equation*}
\left [ \frac{1}{2}, a  \right) \leftrightarrow \left [\frac{1}{2}, a  \right),~~\left [\frac{1}{2}, a \right ) \leftrightarrow \left [1, a \right ),~~\emptyset \leftrightarrow \left [\frac{1}{2}, 1 \right )
\end{equation*}
is a $\frac{1}{2}$-matching, and a simple computations shows that for every $\frac{1}{2}>\varepsilon>0$, there is no $\varepsilon$-matching between $\B(\roim{p} F)$ and  $\B(\roim{p} G)$. Using Theorem \ref{T:Isometry}, one deduces that

\begin{equation*}
\dist_\R (\roim{p} F, \roim{p} G ) = d_B(\mathbb{B}(\roim{p} F),\mathbb{B}(\roim{p} G) ) = \frac{1}{2}.
\end{equation*}

Therefore, it is possible to distinguish $F$ from $G$ by using barcodes obtained from pushforwards, rather than pullbacks.

\subsection{\texorpdfstring{$\fF$}{F}-Projected barcodes}

\subsubsection{Generalities}
Recall  that $\mathcal{SC}(\V_\infty)$ denotes the set of continuous maps from $\V$ to $\R$ which are subanalytic up to infinity and let $\fF$ be a subset of $\mathcal{SC}(\V_\infty)$. We regard $\fF$ as a discrete category.

\begin{definition}\label{def:projectedbarcode}
The $\fF$-projected barcodes is the functor
\begin{align}\label{map:projbarcode}
    \projbar^\fF \colon \fF \times \Derb_{\rc}(\cor_{\V_\infty}) \to \Derb_{\rc}(\cor_{\R_\infty}), \quad (u,F) \mapsto \reim{u} F.
\end{align}
and the non-proper projected barcodes is the functor
\begin{align}\label{map:npprojbarcode}
    \npprojbar^\fF \colon \fF \times \Derb_{\rc}(\cor_{\V_\infty}) \to \Derb_{\rc}(\cor_{\R_\infty}), \quad (u,F) \mapsto \roim{u} F.
\end{align}

\noindent When the context is clear, we will omit $\fF$ from the notation.
\end{definition}
We will often study restriction of the (non-proper) projected barcodes to specific subcategories of $\Derb_{\rc}(\cor_{\V_\infty})$. For instance, the category of compactly supported constructible sheaves or the category of $\Derb_{\rc, \gamma^{\circ,a}}(\cor_{\V_\infty})$.
\begin{Notation}
Let $F \in \Derb_{\rc}(\cor_{\V_\infty})$. We set $\projbar_F^\fF:=\projbar^\fF(\cdot,F)$ and $\npprojbar_F^\fF:=\npprojbar^\fF(\cdot,F)$.
\end{Notation}

\begin{remark}
The proper and the non-proper projected barcodes are related via the following formula which follows from Corollary \ref{cor:roimtoreim}.
Let $F \in \Derb_{\rc}(\cor_{\V_\infty})$, Then,
\begin{equation*}
    \npprojbar_F^\fF=\dual_\R(\projbar^\fF_{\dual_\V(F)}).
\end{equation*}
\end{remark}

We now provide a few examples of projected barcodes of interest.

\subsubsection{Linear projected barcodes}
 Let $(\V,\norm{\cdot})$ be a real finite dimensional vector space and $\V^\ast$ its dual endowed with the operator norm $\opnorm{\cdot}$.

 We set $\mathbb{S}^\ast=\{u \in \V^\ast \mid \, \opnorm{u}=1\}$. The linear projected barcodes is the functor
 \begin{align*}\label{map:projbarcode}
    \projbar^{\mathbb{S}^\ast} \colon \mathbb{S}^\ast \times \Derb_{\rc}(\cor_{\V_\infty}) \to \Derb_{\rc}(\cor_{\R_\infty}), \quad (u,F) \mapsto \reim{u} F.
\end{align*}

\begin{proposition}\label{P:nullitytest} The linear projected barcodes has the following properties.
\begin{enumerate}[(i)]
\item Let $F \in \Derb_\rc(\cor_{\V_\infty})$. If $\projbar^{\mathbb{S}^\ast}_F \simeq 0$, then $F \simeq 0$,
\item The map $\projbar^{\mathbb{S}^\ast} \colon \mathbb{S}^\ast \times \Derb_{\rc,\comp}(\cor_{\V_\infty}) \to \Derb_{\rc}(\cor_{\R_\infty})$, $(u,F)\mapsto \projbar_F^{\mathbb{S}^\ast}(u)$ is continuous.
\item Let $u \in \mathbb{S}^\ast$. The map $\Derb_{\rc,\comp}(\cor_{\V_\infty}) \to \Derb_{\rc}(\cor_{\R_\infty})$, $F\mapsto \projbar_F^{\mathbb{S}^\ast}(u)$ is Lipschitz.
\end{enumerate}
\end{proposition}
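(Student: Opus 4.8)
The three assertions will be handled in turn, each reducing to a result established earlier in the paper.

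\textit{Part (i).} The hypothesis $\projbar^{\mathbb{S}^\ast}_F \simeq 0$ means that $\reim{u}F \simeq 0$ in $\Derb_{\rc}(\cor_{\R_\infty})$ for every $u \in \mathbb{S}^\ast$, i.e.\ every $u\in\V^\ast$ of operator norm $1$. Since the pushforward with proper support is compatible with scaling of the target — more precisely $\reim{(\lambda u)}F \simeq \oim{h_\lambda}\reim{u}F$ for $\lambda>0$, and $\reim{0}F$ is a sheaf on a point — vanishing for all unit $u$ forces $\reim{u}F\simeq 0$ for \emph{all} $u\in\V^\ast$. Now I simply invoke Proposition \ref{prop:vanishproj}: a sheaf $F\in\Derb(\cor_\V)$ all of whose pushforwards with compact support by linear forms vanish is itself zero. (The proof of that proposition already uses projective duality via $\Phi_{\cor_A}$; here I only need to cite it.) Hence $F\simeq 0$.

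\textit{Part (ii) and (iii).} For (iii), fix $u\in\mathbb{S}^\ast$. Given $F,G\in\Derb_{\rc,\comp}(\cor_{\V_\infty})$, Theorem \ref{thm:lipstability} applied to the $1$-Lipschitz map $u\colon(\V,\norm{\cdot})\to(\R,|\cdot|)$ gives $\dist_\R(\reim{u}F,\reim{u}G)\le \dist_\V(F,G)$, so $F\mapsto \projbar^{\mathbb{S}^\ast}_F(u)$ is $1$-Lipschitz. For (ii) I need joint continuity in $(u,F)$: fix $(u_0,F_0)$ and estimate $\dist_\R(\reim{u}F,\reim{u_0}F_0)$ by the triangle inequality through the intermediate point $\reim{u}F_0$, writing
\begin{equation*}
\dist_\R(\reim{u}F,\reim{u_0}F_0) \le \dist_\R(\reim{u}F,\reim{u}F_0) + \dist_\R(\reim{u}F_0,\reim{u_0}F_0).
\end{equation*}
The first term is $\le \dist_\V(F,F_0)$ by part (iii). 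For the second term, let $S$ be a closed subset of $\V$ with $\Supp(F_0)\subseteq S$; since $F_0$ is compactly supported we may take $S$ compact, and Lemma \ref{lem:continuitylin_projbar} yields $\dist_\R(\reim{u}F_0,\reim{u_0}F_0)\le \norm{S}\,\opnorm{u-u_0}$. Hence both terms go to $0$ as $(u,F)\to(u_0,F_0)$ in $\mathbb{S}^\ast\times(\Derb_{\rc,\comp}(\cor_{\V_\infty}),\dist_\V)$, which is the desired continuity.

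\textit{Main obstacle.} The only genuinely delicate point is the reduction in part (i) from ``all unit linear forms'' to ``all linear forms'', i.e.\ checking that $\reim{u}F\simeq 0$ behaves well under rescaling $u\mapsto \lambda u$ and at $u=0$; once that is in hand, everything rests on Proposition \ref{prop:vanishproj}, Theorem \ref{thm:lipstability}, and Lemma \ref{lem:continuitylin_projbar}. One should also make sure the constructibility-up-to-infinity bookkeeping is consistent — but this is guaranteed by Proposition \ref{prop:opconstructinf}, so the targets of all the functors above indeed lie in $\Derb_{\rc}(\cor_{\R_\infty})$.
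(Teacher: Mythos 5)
Your proof is correct and follows essentially the same route as the paper: part (i) reduces to Proposition \ref{prop:vanishproj}, while parts (ii) and (iii) combine Theorem \ref{thm:lipstability} with Lemma \ref{lem:continuitylin_projbar} via a triangle-inequality estimate; the paper's intermediate point is $\reim{v}F$ (change $u$ first, then $F$) whereas yours is $\reim{u}F_0$ (change $F$ first, then $u$), a trivial variant. The one place where you actually add content is the scaling argument in (i) — the paper simply cites Proposition \ref{prop:vanishproj}, which as stated requires $\reim{u}F \simeq 0$ for \emph{all} $u \in \V^\ast$ including $u=0$, so your bridge from $\mathbb{S}^\ast$ to $\V^\ast\setminus\{0\}$ via $\reim{(\lambda u)}F \simeq \oim{h_\lambda}\reim{u}F$ and the final check at $u=0$ is a gap worth filling (note one small slip: $\reim{0}F$ is a skyscraper sheaf on $\R$ supported at $\{0\}$ with stalk $\rsect_c(\V;F)$, not ``a sheaf on a point''; its vanishing follows from $\rsect_c(\V;F)\simeq\rsect_c(\R;\reim{u}F)\simeq 0$ for any unit $u$).
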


\begin{proof}
\noindent (i) follows from Proposition \ref{prop:vanishproj}.\\
\noindent (ii) Let us show that $\projbar$ is continuous. Let $(u,F) \in \mathbb{S}^\ast \times \Derb_{\rc,\comp}(\cor_{\V_\infty})$ and let $\varepsilon > 0$. For every $v \in \mathbb{S}^\ast$ such that $\opnorm{u-v}< \varepsilon / (2 \norm{\Supp(F)})$ and $G \in \Derb_{\rc,\comp}(\cor_{\V_\infty}) $ such that $\dist_\V(F,G) \leq \varepsilon/2$, we have
\begin{align*}
    \dist_\R(\projbar^{\mathbb{S}^\ast}_F(u),\projbar^{\mathbb{S}^\ast}_G(v)) &\leq \dist_\R(\projbar^{\mathbb{S}^\ast}_F(u),\projbar^{\mathbb{S}^\ast}_F(v)) + \dist_\R(\projbar^{\mathbb{S}^\ast}_F(v),\projbar^{\mathbb{S}^\ast}_G(v))\\
    &\leq  \norm{\Supp(F)} \opnorm{u-v} + \opnorm{v} \dist_\V(F,G)\\
    & \leq  \norm{\Supp(F)} \opnorm{u-v} + \dist_\V(F,G) \leq \varepsilon
\end{align*}
\noindent (iii) This follows from the above inequality by taking $u=v$.
\end{proof}

\begin{remark}
The first point of the above proposition expresses that the linear projected barcodes can be used as a nullity test of persistence modules, where the space of test parameters is $\mathbb{S}^*$, thus compact. This is a fundamental difference with the fibered barcode, whose parameter test space is \emph{not} compact.
\end{remark}

\subsubsection{$\gamma$-Linear projected barcode} We introduce a notion of projected barcode tailored for $\gamma$-sheaves, that is, for persistence modules. For sublevel sets persistence modules, we explain in example \ref{ex:softwarecomputation} how it can be computed with standard one-parameter persistence software.

Since this version of the projected barcode is aimed at $\gamma$-sheaves, it is natural, in view of Proposition \ref{prop:oimgamma} and Lemma \ref{lem:polarpropre}, to set $\fF = \Int(\gamma^{\circ, \,a}) \cap \mathbb{S}^\ast$ where $\mathbb{S}^\ast$ is the unit sphere of $\V^\ast$ for the norm $\opnorm{\cdot}$ . This version of the projected barcode is called $\gamma$-linear projected barcode. For the sake of brevity, we set $Q_\gamma=\Int(\gamma^{\circ,a }) \cap \mathbb{S}^\ast$.

\begin{definition}
The $\gamma$-linear projected barcode is the functor
\begin{align*}
    \projbar^\gamma \colon Q_\gamma \times \Derb_{\rc, \gamma^{\circ, a}}(\cor_{\V_\infty}) \to \Derb_{\rc, \lambda^{\circ,a}}(\cor_{\R_\infty}), \quad (u,F) \mapsto \reim{u} F.
\end{align*}
where $\lambda=\R_-$.
\end{definition}

\begin{remark}
We focus our attention on the properties of the $\gamma$-projected barcode for $\gamma$-sheaves which are $\gamma$-compactly generated. If $F$ is $\gamma$-compactly generated,  then for $u \in \Int(\gamma^{\circ,a})$, one has $\reim{u}F \simeq \roim{u} F$. This is why we only discuss the $\gamma$-projected barcode and omit the study of the non-proper $\gamma$-linear projected barcode.
\end{remark}

\begin{proposition} \label{P:gammalinearnullity} The $\gamma$-linear projected barcode has the following properties
\begin{enumerate}[(i)]
    \item Let $F \in \Derb_{\rc, \gamma^{\circ, a}}(\cor_{\V_\infty})$ and assume that $F$ is $\gamma$-compactly generated. If $\projbar^\gamma_F \simeq 0$, then $F \simeq 0$.
    \item  Let $(u,F) \in Q_\gamma \times \Derb_{\rc, \gamma^{\circ, a}}(\cor_{\V_\infty})$ and assume that $F$ is $\gamma$-compactly generated. The map $\projbar^{\gamma} \colon Q_\gamma \times \Derb_{\rc}(\cor_{\V_\infty}) \to \Derb_{\rc}(\cor_{\R_\infty})$ is continuous in $(u,F)$.
\end{enumerate}
\end{proposition}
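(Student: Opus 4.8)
The plan is to handle (i) and (ii) in turn, in each case reducing to facts already established for $\gamma$-compactly generated sheaves and for the linear projected barcode.

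For (i), the strategy is to promote the vanishing of $\reim u F$ from the ``spherical'' parameter set $Q_\gamma$ to the whole open cone $\Int(\gamma^{\circ,a})$, and then to invoke Proposition \ref{prop:gammavanishproj}. The only non-citation step is a homogeneity argument: given $u \in \Int(\gamma^{\circ,a})\setminus\{0\}$, set $\bar u := u/\opnorm u \in Q_\gamma$ and factor $u = h_{\opnorm u}\circ \bar u$ with $h_\lambda\colon \R\to\R$, $t\mapsto\lambda t$ (both $\bar u$ and $h_{\opnorm u}$ being linear, hence subanalytic up to infinity). By functoriality of proper direct image, $\reim u F \simeq \reim{h_{\opnorm u}}\reim{\bar u}F$, and $\reim{\bar u}F \simeq 0$ since $\bar u \in Q_\gamma$ and $\projbar^\gamma_F \simeq 0$ by hypothesis; hence $\reim u F \simeq 0$ for every $u \in \Int(\gamma^{\circ,a})$. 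As $F$ is $\gamma$-compactly generated, the hypotheses of Proposition \ref{prop:gammavanishproj} are now met — that proposition uses only $\gamma$-compact generation together with the vanishing of $\reim v F$ for $v$ in the interior of the polar cone, and its proof is insensitive to the exchange $\gamma \leftrightarrow \gamma^a$ — so $F \simeq 0$.

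For (ii), the argument runs parallel to the proof of Proposition \ref{P:nullitytest}(ii). Fix $(u,F)$ with $F$ $\gamma$-compactly generated, choose $G \in \Derb_\comp(\cor_\V)$ with $F \simeq G \star \cor_{\gamma^a}$, and set $S := \Supp(G)$. Let $\varepsilon > 0$. For $v \in Q_\gamma$ with $\opnorm{u-v} < \varepsilon/(2\norm S)$ and any $\gamma$-compactly generated $H \simeq G'\star\cor_{\gamma^a}$ with $\dist_\V(F,H) \le \varepsilon/2$, the triangle inequality gives
\begin{equation*}
\dist_\R(\projbar^\gamma_F(u),\projbar^\gamma_H(v)) \le \dist_\R(\reim u F,\reim v F) + \dist_\R(\reim v F,\reim v H).
\end{equation*}
By Lemma \ref{ineq:Lipschitzgamma} (applied to $G$, noting $u,v \in \Int(\gamma^{\circ,a})$) the first term is at most $\norm S\,\opnorm{u-v}$. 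For the second term, Lemma \ref{lem:comsublevel} identifies $\reim v F \simeq (\reim v G)\star\cor_\lambda$ and $\reim v H \simeq (\reim v G')\star\cor_\lambda$ for the appropriate half-line $\lambda$ given by Proposition \ref{prop:oimgamma} — using in addition that $\reim v \simeq \roim v$ on $\gamma$-compactly generated sheaves for $v \in \Int(\gamma^{\circ,a})$, and that the convolutions involved are proper — so Proposition \ref{prop:multi_to_single} bounds it by $\opnorm v\,\dist_\V(F,H) = \dist_\V(F,H)$, since $\opnorm v = 1$. Altogether
\begin{equation*}
\dist_\R(\projbar^\gamma_F(u),\projbar^\gamma_H(v)) \le \norm S\,\opnorm{u-v} + \dist_\V(F,H) < \varepsilon,
\end{equation*}
which is the asserted continuity at $(u,F)$; taking $v=u$ records moreover that $F \mapsto \projbar^\gamma_F(u)$ is $1$-Lipschitz.

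I expect no deep obstacle: the proof is an assembly of Proposition \ref{prop:gammavanishproj}, Lemma \ref{ineq:Lipschitzgamma}, Lemma \ref{lem:comsublevel} and Proposition \ref{prop:multi_to_single}. The two points that require care are: in (i), ensuring the homogeneity step really reaches the \emph{entire} open cone $\Int(\gamma^{\circ,a})$, since Proposition \ref{prop:gammavanishproj} needs vanishing on an open set of linear forms rather than merely on the sphere; and in (ii), the bookkeeping of the sign conventions ($\gamma$ versus $\gamma^a$, $\gamma^\circ$ versus $\gamma^{\circ,a}$, and the corresponding $\cor_{\R_+}$ versus $\cor_{\R_-}$) together with the verification that $\reim v$ and $\roim v$ coincide on the $\gamma$-compactly generated sheaves at hand, so that the lemmas phrased with $\roim{}$ transfer to $\reim{}$.
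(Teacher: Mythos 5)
Your proof of (i) is essentially the paper's intended argument: the paper simply asserts that the statement is a direct consequence of Proposition \ref{prop:gammavanishproj}, and you fill in the two implicit reduction steps (homogeneity to pass from the sphere $Q_\gamma$ to the whole open cone, and the antipodal identification $u\leftrightarrow -u$ — equivalently, post-composition with the homeomorphism $t\mapsto -t$ — to pass from $\Int(\gamma^{\circ,a})$ to $\Int(\gamma^\circ)$). This is correct and a useful expansion of what the paper leaves unsaid.

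For (ii), the overall structure (triangle inequality, first term via Lemma \ref{ineq:Lipschitzgamma}) matches the paper, but your treatment of the second term introduces a genuine gap. You restrict the comparison sheaf $H$ to be $\gamma$-compactly generated because Proposition \ref{prop:multi_to_single} requires both sheaves involved to be of the form $(\cdot)\star\cor_{\gamma^a}$ with compactly supported generator. The statement, however, asserts continuity of the map on $Q_\gamma\times\Derb_{\rc}(\cor_{\V_\infty})$ at the point $(u,F)$, so the nearby sheaf $G$ must be allowed to range over all of $\Derb_{\rc}(\cor_{\V_\infty})$, not just $\gamma$-compactly generated ones; your argument proves only sequential/metric continuity along the $\gamma$-compactly generated subcategory. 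The paper instead bounds the second term by $\dist_\R(\reim v F,\reim v G)\leq \opnorm v\,\dist_\V(F,G)=\dist_\V(F,G)$ directly from the Lipschitz stability theorem (Theorem \ref{thm:lipstability}, applied to the $\opnorm v$-Lipschitz map $v$), which requires no hypothesis on $G$ at all and is also shorter. Replacing your invocation of Lemma \ref{lem:comsublevel} and Proposition \ref{prop:multi_to_single} by this one application of Theorem \ref{thm:lipstability} closes the gap and recovers the paper's proof; the rest of your computation, including the choice of $\delta=\varepsilon/(2\norm S)$ and $\dist_\V(F,G)\leq\varepsilon/2$, then goes through unchanged.
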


\begin{proof}
\noindent (i) This is a direct consequence of Proposition \ref{prop:gammavanishproj}.\\
\noindent (ii) Let $(u,F) \in Q_\gamma  \times \Derb_{\rc,\gcg}(\cor_{\V_\infty})$ and let $\varepsilon > 0$. Since $F$ is $\gamma$-compactly generated there exists a sheaf $F^\prime$ with compact support such that $F \simeq F^\prime \star \cor_{\gamma^a}$. For every $v \in Q_\gamma$ such that $\opnorm{u-v}< \varepsilon / (2 \norm{\Supp(F^\prime)})$ and $G \in \Derb_{\rc}(\cor_{\V_\infty}) $ such that $\dist_\V(F,G) \leq \varepsilon/2$, we have
\begin{align*}
    \dist_\R(\projbar^{\gamma}_{F}(u),\projbar^{\gamma}_G(v)) &\leq \dist_\R(\projbar^{\gamma}_{F}(u),\projbar^{\gamma}_{F}(v)) + \dist_\R(\projbar^{\gamma}_F(v),\projbar^{\gamma}_G(v))\\
    & \leq  \dist_\R(\projbar^{\gamma}_{F^\prime}(u),\projbar^{\gamma}_{F^\prime}(v)) + \opnorm{v} \dist_\V(F,G)\\
    &\leq  \norm{\Supp(F^\prime)} \opnorm{u-v} + \opnorm{v} \dist_\V(F,G)\\
    & \leq  \norm{\Supp(F^\prime)} \opnorm{u-v} + \dist_\V(F,G) \leq \varepsilon
\end{align*}
\end{proof}

\begin{example}\label{ex:softwarecomputation}
    Here, we specialize the situation to the case where $\V=\R^2$ endowed with the norm $\norm{(x,y)}_\infty=\max(|x|,|y|)$ and $\gamma=(-\infty,0]^2$. Then $(\V^\ast,\opnorm{\cdot}_\infty)$ is isometric to  $(\R^2, \norm{\cdot}_1)$ where $\norm{(a,b)}_1=|a|+|b|$. It follows that $Q_\gamma=\{ (a,b) \in \R_{>0} \times \R_{>0} \, ; a+b=1 \}$. If for instance $S$ is a compact real analytic manifold and $f \colon S \to \R^2$ is a subanalytic map with $f=(f_1,f_2)$. Then by Lemma \ref{lem:comsublevel}, the projected barcode $u \mapsto \projbar^\gamma_{\PH(f)}(u)$ is the collection of barcode associated to the family of sublevel sets persistence modules $\left( \PH(a \, f_1+(1-a) \, f_2) \right)_{a \in (0,1)}$.
\end{example}

\subsection{Fibered versus Projected Barcodes}
In this subsection, we prove that the fibered barcode is a special instance of projected barcode.

We get back to the setting of Section \ref{S:fiberedbarcode}, with $\V = \R^n$, $\gamma = (-\infty, 0]^n$ and $v = (1,...,1)$ and denote by $j_{\mathcal{L}_h}$ the inverse of morphism \eqref{isom:RL}. Note that for $h\in \Lambda$, $\lambda := \mathcal{L}_h \cap \gamma$ is a cone of the one-dimensional real vector space $\mathcal{L}_h$ satisfying hypothesis (\ref{hyp:cone}) as a cone of $\mathcal{L}_h$. We denote, $\mathcal{L}_{h, \lambda}$ the topological space $\mathcal{L}_h$ endowed with the $\lambda$-topology. We also consider $\R$ with the cone topology associated with the cone $\lambda_0=(-\infty; 0]$. In particular, $\iota_{\mathcal{L}_h}(\lambda_0)=\lambda$.  

\begin{proposition}
    Let $h\in \Lambda$, the following holds: 
    
    \begin{enumerate}
        \item $p_{\mathcal{L}_h} \circ i_{\mathcal{L}_h} = \iota_{\mathcal{L}_h}$ as maps of sets,
        
        \item $p_{\mathcal{L}_h}$, $i_{\mathcal{L}_h}$ and $j_{\mathcal{L}_h}$ are continuous for both the norm and the cone topologies.
    \end{enumerate}
\end{proposition}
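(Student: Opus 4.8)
The plan is to unwind the definition of the cone topologies and verify everything by hand; beyond set-theoretic bookkeeping, the only input needed is that $h \in \Int(\gamma^a)$ has strictly positive coordinates, together with the formula \eqref{eq:formulaP} for the push function. I would first record three elementary facts. \textbf{(a)} $p_{\mathcal{L}_h}$ restricts to the identity on $\mathcal{L}_h$: for $y \in \mathcal{L}_h$ one has $\inf\{z \in \mathcal{L}_h \mid y \le z\} = y$ (equivalently, plug $x = th$ into \eqref{eq:formulaP}, getting $\bigl(\max_i (th_i/h_i)\bigr) h = th$). \textbf{(b)} $p_{\mathcal{L}_h}$ is order-preserving: if $x \le x'$ then $\{z \in \mathcal{L}_h \mid x' \le z\} \subseteq \{z \in \mathcal{L}_h \mid x \le z\}$, so the infima satisfy $p_{\mathcal{L}_h}(x) \le p_{\mathcal{L}_h}(x')$ (or read this off \eqref{eq:formulaP}, using $h_i > 0$). \textbf{(c)} Since the $h_i$ are positive, the parametrisation $\iota_{\mathcal{L}_h}\colon \R \to \mathcal{L}_h$, $t \mapsto th$, and hence also $i_{\mathcal{L}_h}\colon \R \to \V$, $t \mapsto th$ (the composite of $\iota_{\mathcal{L}_h}$ with the inclusion $\mathcal{L}_h\hookrightarrow\V$), preserve and reflect the order, and $s \le 0$ implies $sh \in \gamma$.

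With (a), the equality $p_{\mathcal{L}_h} \circ i_{\mathcal{L}_h} = \iota_{\mathcal{L}_h}$ is immediate: $p_{\mathcal{L}_h}(i_{\mathcal{L}_h}(t)) = p_{\mathcal{L}_h}(th) = th = \iota_{\mathcal{L}_h}(t)$, since $th \in \mathcal{L}_h$. For continuity with respect to the norm topologies, $i_{\mathcal{L}_h}$ and $j_{\mathcal{L}_h} = \iota_{\mathcal{L}_h}^{-1}$ are linear maps between finite-dimensional normed spaces, hence continuous, while $p_{\mathcal{L}_h}$ is continuous by \eqref{eq:formulaP} (a finite maximum of linear forms followed by a scaling) — or simply because it is Lipschitz by the lemma proved just above.

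For the cone topologies, I recall that a set is open for a cone topology exactly when it is open for the usual topology \emph{and} invariant under adding the cone. Since continuity for the usual topologies has just been established, for each of the three maps it only remains to check that the preimage of a set invariant under the target cone is invariant under the source cone. \textbf{(i)} $i_{\mathcal{L}_h}\colon \R \to \V$ with the $\lambda_0$- and $\gamma$-topologies: if $U = U + \gamma$ and $t \in i_{\mathcal{L}_h}^{-1}(U)$, then for $s \le 0$ we get $i_{\mathcal{L}_h}(t+s) = th + sh \in U + \gamma = U$ by (c), so $i_{\mathcal{L}_h}^{-1}(U) + \lambda_0 \subseteq i_{\mathcal{L}_h}^{-1}(U)$. \textbf{(ii)} $p_{\mathcal{L}_h}\colon \V \to \mathcal{L}_h$ with the $\gamma$- and $\lambda$-topologies: if $V = V + \lambda$, $x \in p_{\mathcal{L}_h}^{-1}(V)$ and $c \in \gamma$, then by (b) $p_{\mathcal{L}_h}(x+c) \le p_{\mathcal{L}_h}(x)$, and the difference $p_{\mathcal{L}_h}(x+c) - p_{\mathcal{L}_h}(x)$ lies in $\mathcal{L}_h$ and is $\le 0$, hence lies in $\mathcal{L}_h \cap \gamma = \lambda$; therefore $p_{\mathcal{L}_h}(x+c) \in V + \lambda = V$, i.e. $p_{\mathcal{L}_h}^{-1}(V) + \gamma \subseteq p_{\mathcal{L}_h}^{-1}(V)$. \textbf{(iii)} $j_{\mathcal{L}_h}\colon \mathcal{L}_h \to \R$ with the $\lambda$- and $\lambda_0$-topologies: for $V = V + \lambda_0$ one has $j_{\mathcal{L}_h}^{-1}(V) = \iota_{\mathcal{L}_h}(V)$, and by linearity of $\iota_{\mathcal{L}_h}$ and the identity $\iota_{\mathcal{L}_h}(\lambda_0) = \lambda$ recalled in the text, $\iota_{\mathcal{L}_h}(V) + \lambda = \iota_{\mathcal{L}_h}(V + \lambda_0) = \iota_{\mathcal{L}_h}(V)$.

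I do not expect a genuine obstacle: the proposition is a direct unwinding of the definitions. The one place deserving a moment's care is step (ii), where one must observe that the increment $p_{\mathcal{L}_h}(x+c) - p_{\mathcal{L}_h}(x)$ lands in $\lambda = \mathcal{L}_h \cap \gamma$ and not merely in $\gamma$ — which is precisely where monotonicity of the push function combines with both values lying on the line $\mathcal{L}_h$.
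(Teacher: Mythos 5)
Your proof is correct, and it differs mildly but genuinely from the paper's. The paper handles the cone-topology continuity of $p_{\mathcal{L}_h}$ by computing preimages of basic $\lambda$-opens: it shows $p_{\mathcal{L}_h}^{-1}(x + \Int\lambda) = x + \Int\gamma$ and then invokes [BP21, Lemma~2.1] to write an arbitrary $\lambda$-open $U$ as $\bigcup_{x\in U}(x+\Int\lambda)$, so that $p_{\mathcal{L}_h}^{-1}(U)$ is a union of $\gamma$-opens; it treats $i_{\mathcal{L}_h}$ and $j_{\mathcal{L}_h}$ as implicitly obvious. You instead use the characterisation of cone-open as ``usual-open and cone-invariant'' and verify cone-invariance of preimages directly by monotonicity of the push function and positivity of the coordinates of $h$. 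Your route is more self-contained (no appeal to the basis lemma from [BP21]) and more complete (it explicitly treats all three maps), at the cost of being slightly longer; the paper's route is terser but leaves $i_{\mathcal{L}_h}$, $j_{\mathcal{L}_h}$ to the reader. One small observation: your step (ii) is the nontrivial one, and you correctly flag the key point — that $p_{\mathcal{L}_h}(x+c) - p_{\mathcal{L}_h}(x)$ lies not just in $\gamma$ but in $\lambda = \mathcal{L}_h\cap\gamma$, which is exactly what makes $V + \lambda = V$ applicable.
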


\begin{proof}
    \noindent (i) is clear.\\
    \noindent (ii) We have already noticed that $p_{\mathcal{L}_h}$ is Lipschitz, therefore continuous for the norm topology. Observe that for $x \in \mathcal{L}_h$, $p^{-1}_{\mathcal{L}_h}(x + \Int(\lambda)) = x + \Int(\gamma)$. Moreover, for any $\lambda$-open subset $U\subset {\mathcal{L}_h}$, $U= \bigcup_{x\in U} x + \text{Int}(\lambda)$ by \cite[Lemma 2.1]{BP21}. Hence, $p^{-1}_{\mathcal{L}_h}(U) = \bigcup_{x\in U} x + \text{Int}(\gamma)$. This proves the statement.  
\end{proof}

Let us precise some notations. When the context is clear, we drop the $h$ of the maps $p_{\mathcal{L}_h}$, $j_{\mathcal{L}_h}$  and $i_{\mathcal{L}_h}$.  From now on,  $p_{\mathcal{L}}$, $j_{\mathcal{L}}$ and $i_{\mathcal{L}}$ refer to the morphisms of topological spaces equipped with the norm topologies, whereas $p_{\mathcal{L}}^\gamma$, $j_{\mathcal{L}_h}^\gamma$ and $i_{\mathcal{L}}^\gamma$ refers to the morphisms of topological spaces equipped with the cone topologies.

We now construct an  isomorphism of functors between $\opb{{i^{\gamma}_{\mathcal{L}}}}$ and  $\oim{(j^\gamma_{\mathcal{L}} \, p_{\mathcal{L}}^\gamma)}$.

\begin{proposition}
   There is an isomorphism of (non derived) functors $\oim{(j^\gamma_{\mathcal{L}} \, p_{\mathcal{L}}^\gamma)} \simeq (i^{\gamma}_{\mathcal{L}})^{-1}$ in $\textnormal{Fun}(\Mod(\cor_{\V_\gamma}), \Mod(\cor_{\R_{\lambda_0}}))$.

\end{proposition}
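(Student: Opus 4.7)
The strategy is to reduce the statement to a more natural isomorphism and then verify it on a basis. Since $j_{\mathcal{L}}^\gamma \colon \mathcal{L}_\lambda \to \R_{\lambda_0}$ is a homeomorphism, the functor $\oim{(j_{\mathcal{L}}^\gamma)}$ is an equivalence of categories, with inverse $\opb{(j_{\mathcal{L}}^\gamma)} = \oim{\iota_{\mathcal{L}}}$. Because $p_{\mathcal{L}} \circ i_{\mathcal{L}} = \id_{\mathcal{L}}$ on the set level (the preceding proposition), it suffices to prove the natural isomorphism
\[
\oim{p_{\mathcal{L}}^\gamma} \simeq \opb{(i_{\mathcal{L}}^\gamma)} \quad \text{as functors } \Mod(\cor_{\V_\gamma}) \to \Mod(\cor_{\mathcal{L}_\lambda}),
\]
from which the stated isomorphism follows upon applying $\oim{(j_{\mathcal{L}}^\gamma)}$.

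The key geometric input is a description of the preimage of basic $\lambda$-open sets under $p_{\mathcal{L}}$. Using formula \eqref{eq:formulaP}, which gives $p_{\mathcal{L}}(x) = (\max_i x_i/h_i)\cdot h$, a direct computation yields
\[
\opb{p_{\mathcal{L}}}(s h + \Int(\lambda)) = sh + \Int(\gamma), \qquad s \in \R,
\]
so that $\oim{p_{\mathcal{L}}^\gamma} F (sh + \Int(\lambda)) = F(sh + \Int(\gamma))$. To construct the candidate natural transformation, I would apply $\opb{(i_{\mathcal{L}}^\gamma)}$ to the counit $\opb{(p_{\mathcal{L}}^\gamma)}\oim{p_{\mathcal{L}}^\gamma} \to \id$ of the adjunction $(\opb{(p_{\mathcal{L}}^\gamma)},\oim{p_{\mathcal{L}}^\gamma})$. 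Since $\opb{(i_{\mathcal{L}}^\gamma)} \circ \opb{(p_{\mathcal{L}}^\gamma)} \simeq \opb{((p_{\mathcal{L}} \circ i_{\mathcal{L}})^\gamma)} = \id$, this produces a canonical natural transformation $\alpha \colon \oim{p_{\mathcal{L}}^\gamma} \to \opb{(i_{\mathcal{L}}^\gamma)}$.

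It remains to check that $\alpha$ is an isomorphism, which amounts to checking it on a basis of the $\lambda$-topology, namely $\{sh + \Int(\lambda)\}_{s \in \R} \cup \{\mathcal{L}\}$. By definition, $\opb{(i_{\mathcal{L}}^\gamma)} F (sh + \Int(\lambda))$ is the colimit $\varinjlim_U F(U)$ over $\gamma$-open neighborhoods $U \subset \V$ of $sh + \Int(\lambda)$. The technical heart of the proof is the following minimality claim: $sh + \Int(\gamma)$ is the smallest $\gamma$-open neighborhood of $sh + \Int(\lambda)$. Given any such $U$, since $(s+s')h \in sh + \Int(\lambda) \subset U$ for all $s' < 0$, the characterization of $\gamma$-open sets as unions of translates $x + \Int(\gamma)$ (see \cite[Lem.~2.1]{BP21}) yields $U \supset \bigcup_{s'<0} \left( (s+s')h + \Int(\gamma) \right)$, and this union is readily seen to coincide with $sh + \Int(\gamma)$. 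Consequently the directed system admits $sh + \Int(\gamma)$ as terminal object, the colimit stabilizes to $F(sh + \Int(\gamma))$, and $\alpha$ is an isomorphism on the basis, hence everywhere; naturality with respect to restrictions along $s' < s$ is immediate from functoriality.

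The main obstacle is establishing this minimality argument for the $\gamma$-open neighborhood; everything else is formal manipulation with adjunctions and direct computation with the explicit formula for $p_{\mathcal{L}}$.
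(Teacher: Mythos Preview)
Your argument is essentially the paper's: both identify $sh+\Int(\gamma)$ as the minimal $\gamma$-open neighborhood of (the image of) $sh+\Int(\lambda)$, so the colimit defining the presheaf inverse image collapses to $F(sh+\Int(\gamma))$, which is precisely the pushforward section. The paper packages this as an isomorphism of presheaves $(i_{\mathcal{L}}^\gamma)^\dag F\simeq \oim{(j_{\mathcal{L}}^\gamma p_{\mathcal{L}}^\gamma)}F$ and then observes the left side is therefore already a sheaf; you instead build a sheaf morphism via the adjunction counit and check it on basic opens, which amounts to the same computation.

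Two small corrections. First, the preceding proposition gives $p_{\mathcal{L}}\circ i_{\mathcal{L}}=\iota_{\mathcal{L}}$, not $\id_{\mathcal{L}}$; consequently your reduced statement ``$\oim{p_{\mathcal{L}}^\gamma}\simeq\opb{(i_{\mathcal{L}}^\gamma)}$'' compares functors with different targets ($\Mod(\cor_{\mathcal{L}_\lambda})$ versus $\Mod(\cor_{\R_{\lambda_0}})$). The fix is trivial: either keep $j_{\mathcal{L}}^\gamma$ in the composite throughout, or replace $i_{\mathcal{L}}$ by the inclusion $\mathcal{L}\hookrightarrow\V$. Second, when you compute $\opb{(i_{\mathcal{L}}^\gamma)}F(sh+\Int(\lambda))$ as the colimit, you are computing sections of the \emph{presheaf} $(i_{\mathcal{L}}^\gamma)^\dag F$, not a priori of its sheafification; you should remark (as the paper does) that since this presheaf agrees on every open with the sheaf $\oim{(j_{\mathcal{L}}^\gamma p_{\mathcal{L}}^\gamma)}F$, it is already a sheaf, so no sheafification correction is needed.
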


\begin{proof}
     Let $F \in \Mod(\cor_{\V_\gamma})$ and $U \subset \R$ be a non empty $\lambda_0$-open set. Since $\R$ is one dimensional, there exists $t\in \R\cup \{+\infty\}$ such that $U = t + \Int(\lambda_0)$. Hence, $\oim{(j_{\mathcal{L}}^\gamma \, p_{\mathcal{L}}^\gamma)} F (U) = F((p_\mathcal{L}^{\gamma})^{-1} \opb{(j_{\mathcal{L}}^\gamma)}(U)) = F(t \cdot h + \Int( \gamma))$. By definition, $(i^{\gamma}_\mathcal{L})^{-1}F$ is the sheaffification of the presheaf $(i_\mathcal{L}^{\gamma})^{\dag} F: V \mapsto \varinjlim_{V \subset W } F(W)$. For every $\lambda_0$-open $U \subset \R$, the colimit $\varinjlim_{U \subset W } F(W)$ is canonically isomorphic to $F(t\cdot h + \Int(\gamma))$. This leads to an isomorphism of pre-sheaves $ (i_\mathcal{L}^{\gamma})^{\dag} F \to \oim{(j^\gamma_{\mathcal{L}} \, p_{\mathcal{L}}^\gamma)} F$. Since $\oim{(j^\gamma_{\mathcal{L}} \, p_{\mathcal{L}}^\gamma)} F$ is a sheaf, we conclude that so is $(i_\mathcal{L}^{\gamma})^{\dag} F$ and $(i_\mathcal{L}^{\gamma})^{\dag} F \simeq \oim{(j^\gamma_{\mathcal{L}} \, p_{\mathcal{L}}^\gamma)} F \simeq (i^{\gamma}_\mathcal{L})^{-1}F$. Moreover, this isomorphism is functorial in $F$ by functoriality of colimits.
\end{proof}

 We denote by $\phi : \V \to \V_\gamma$, $\phi^\prime : \mathcal{L}_h \to \mathcal{L}_{h,\lambda}$ and $\phi^{\prime \prime} : \R \to \R_{\lambda_0}$ the morphisms of topological spaces induced by the identity maps. We have the following relation $\phi^\prime \, p_{\mathcal{L}} =p^\gamma_{\mathcal{L}} \phi$. For every $G \in \Der[+](\cor_\V)$, we have
\begin{align*}
    \Hom(\opb{\phi}G,\opb{\phi}G)& \to \Hom(\opb{\phi}\opb{{p^\gamma_{\mathcal{L}}}} \roim{p^\gamma_{\mathcal{L}}}G,\opb{\phi}G)\\
        &\simeq \Hom( \opb{p_{\mathcal{L}}}\opb{{\phi^\prime}}\roim{p^\gamma_{\mathcal{L}}}G,\opb{\phi}G)\\
        &\simeq \Hom( \opb{{\phi^\prime}}\roim{p^\gamma_{\mathcal{L}}}G, \roim{p_{\mathcal{L}}}\opb{\phi}G).
\end{align*}
 The image of the identity map by the above sequence of morphisms provides a map
 \begin{equation}\label{map:invplgamma}
     \phi^{' -1} \roim{p^\gamma_{\mathcal{L}}} G \to \roim{p_{\mathcal{L}}} \phi^{-1}G.
 \end{equation}
 
 \begin{lemma}
 Let $G \in \Der[+](\cor_{\V_\gamma})$, the morphism \eqref{map:invplgamma} is an isomorphism.
 \end{lemma}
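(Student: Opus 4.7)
\emph{Proof plan.} The morphism \eqref{map:invplgamma} is the base change morphism attached to the commutative square in which the horizontal maps are the identities on underlying sets. My plan is to check that it induces isomorphisms on stalks at every $y = t_0\, h \in \mathcal{L}$.

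Since $\phi'^{-1}$ is exact and the $\lambda$-open subsets of $\mathcal{L}$ containing $y$ are precisely the sets $(-\infty, a)\cdot h$ for $a > t_0$, the identity $(p^\gamma_{\mathcal{L}})^{-1}((-\infty, a)h) = a\,h + \Int(\gamma)$ established in the paragraphs preceding the lemma yields
\[(\phi'^{-1}\roim{p^\gamma_{\mathcal{L}}}G)_y \simeq \varinjlim_{a > t_0} R\Gamma\bigl(a\,h + \Int(\gamma), G\bigr).\]
On the other hand, the norm-open balls $V_\varepsilon := (t_0-\varepsilon, t_0+\varepsilon)\cdot h$ form a fundamental system of neighborhoods of $y$ in $\mathcal{L}$ whose $p_{\mathcal{L}}$-preimages are
\[A_\varepsilon := p_{\mathcal{L}}^{-1}(V_\varepsilon) = \bigl((t_0+\varepsilon)h + \Int(\gamma)\bigr) \setminus \bigl((t_0-\varepsilon)h + \gamma\bigr),\]
and so
\[(\roim{p_{\mathcal{L}}}\phi^{-1}G)_y \simeq \varinjlim_{\varepsilon > 0} R\Gamma(A_\varepsilon, \phi^{-1}G).\]
Since each $a\,h + \Int(\gamma)$ is $\gamma$-open, $R\Gamma(a\,h + \Int(\gamma), G) \simeq R\Gamma(a\,h + \Int(\gamma), \phi^{-1}G)$, and at the level of stalks the base change map is the one induced by the open inclusions $A_\varepsilon \hookrightarrow (t_0+\varepsilon) h + \Int(\gamma)$.

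The two stalks then fit into the excision distinguished triangle
\[R\Gamma_{Z_\varepsilon}((t_0+\varepsilon)h + \Int(\gamma), \phi^{-1}G) \to R\Gamma((t_0+\varepsilon)h + \Int(\gamma), \phi^{-1}G) \to R\Gamma(A_\varepsilon, \phi^{-1}G) \xrightarrow{+1}\]
with $Z_\varepsilon := (t_0-\varepsilon)h + \gamma$. Taking the colimit as $\varepsilon \to 0^+$, the middle term becomes the LHS stalk (via the substitution $a = t_0 + \varepsilon$) and the right-hand term becomes the RHS stalk, so the lemma reduces to the vanishing
\[\varinjlim_{\varepsilon>0} R\Gamma_{Z_\varepsilon}\bigl((t_0+\varepsilon)h + \Int(\gamma), \phi^{-1}G\bigr) \simeq 0.\]

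The main obstacle is establishing this vanishing. Because the outward conormal to $\partial Z_\varepsilon$ lies in $\gamma^{\circ, a}$, exactly where $\SSi(\phi^{-1}G)$ is confined, Kashiwara--Schapira's non-characteristic deformation lemma cannot be applied directly; the argument should instead exploit the convolution identity $\phi^{-1}G \simeq \phi^{-1}G\npsconv \cor_{\gamma^a}$ of Proposition \ref{prop:gammaloc}, which encodes invariance under translations by $\gamma^a$, to collapse the local cohomology along the nested family $(Z_\varepsilon)_\varepsilon$. A cleaner alternative bypassing the direct excision is to first promote the preceding proposition to $\Der[+]$ (the isomorphism $\oim{(j^\gamma_{\mathcal{L}}p^\gamma_{\mathcal{L}})} \simeq (i^\gamma_{\mathcal{L}})^{-1}$ is between exact functors, so it extends termwise), and invert the homeomorphism $j^\gamma_{\mathcal{L}}$ to obtain $\roim{p^\gamma_{\mathcal{L}}}G \simeq \iota^{-1}G$, where $\iota: \mathcal{L}_\lambda \hookrightarrow \V_\gamma$ denotes the inclusion in cone topologies; combined with the equality of continuous maps $\iota\circ \phi' = \phi\circ i_{\mathcal{L}}$, this identifies the LHS of \eqref{map:invplgamma} with $i_{\mathcal{L}}^{-1}\phi^{-1}G$, reducing the lemma to the separate assertion $\roim{p_{\mathcal{L}}}F \simeq i_{\mathcal{L}}^{-1}F$ for every $\gamma$-sheaf $F$, which still requires the same microlocal input.
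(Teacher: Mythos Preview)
Your reduction to the vanishing of $\varinjlim_{\varepsilon>0} \rsect_{Z_\varepsilon}\bigl((t_0+\varepsilon)h + \Int(\gamma), \phi^{-1}G\bigr)$ is a reasonable reformulation, but you explicitly leave it as an unresolved obstacle, and neither of your two suggested routes closes the gap: invoking $\phi^{-1}G \simeq \phi^{-1}G \npsconv \cor_{\gamma^a}$ is a restatement of the $\gamma$-sheaf condition rather than an argument, and the recasting via $j^\gamma_{\mathcal{L}}$, as you yourself note, circles back to the same missing input. As written, the proposal is a plan whose central step is absent.

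The paper's proof sidesteps this difficulty entirely and uses no microlocal estimate. Rather than computing stalks, it checks that \eqref{map:invplgamma} induces isomorphisms on $\rsect(U;-)$ for bounded intervals $U = (a,b)\cdot h$, working with an injective resolution $\mathcal{I}$ of $G$. The preimage $p_{\mathcal{L}}^{-1}(U) = \Int(b\,h+\gamma)\setminus(a\,h+\gamma)$ is written explicitly as a finite union $\bigcup_{j=1}^n R_j$ of \emph{convex} opens with the property that every multi-intersection $R_I$ satisfies $R_I + \gamma = p_{\mathcal{L}}^{-1}(U) + \gamma$. The key external input is the acyclicity statement for $\gamma$-sheaves on convex opens extracted from the proof of \cite[Prop.~3.5.3]{KS18}: for convex open $W$ one has $\Gamma(W; \phi^{-1}\mathcal{I}) \simeq \Gamma(W+\gamma; \mathcal{I})$. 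Since every $R_I$ yields the \emph{same} right-hand side, all restriction maps in the \v{C}ech diagram are isomorphisms, the inverse limit collapses, and one obtains $\Gamma(p_{\mathcal{L}}^{-1}(U); \phi^{-1}\mathcal{I}) \simeq \Gamma(p_{\mathcal{L}}^{-1}(U)+\gamma; \mathcal{I})$ directly---no local-cohomology vanishing is needed. This is exactly the step your excision triangle was trying to reach, but obtained by a cover-by-convex-sets argument instead.
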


\begin{proof}
 To prove that the morphism \eqref{map:invplgamma} is an isomorphism, it is sufficient to prove that it induces an isomorphism $\rsect(U; \phi^{' -1} \roim{p^\gamma_{\mathcal{L}}} G) \stackrel{\sim}{\longrightarrow} \rsect(U; \roim{p_{\mathcal{L}}} \phi^{-1}G)$, for every open subset $U \subset \mathcal{L}_h$ of the form $U = \{t \cdot h \mid t \in (a,b) \}$, for $a < b$ in $\R$. Let $\mathcal{I}$ be a bounded from below complex of injective (in particular flabby) sheaves on $\V_\gamma$, which is quasi-isomorphic to $G$.\\
 It follows from equation \eqref{eq:fiberP} that
\begin{equation*}
p_{\mathcal{L}}^{-1}(U) = U + \partial \gamma =  \Int(b \cdot h +\gamma) \setminus (a \cdot h +\gamma).
\end{equation*}
In the canonical basis of $\R^n$, $b \cdot h=(b_1,\ldots,b_n)$ and $a\cdot h=(a_1,\ldots,a_n)$. Then 
\begin{align*}
b \cdot h+\Int(\gamma)=\bigcap_{i=1}^n \{x_i < b_i\}\\
a \cdot h+\gamma=\bigcap_{i=1}^n \{x_i \leq a_i\}
\end{align*}

Hence,
\begin{align*}
    \Int(b \cdot h +\gamma) \setminus (a \cdot h +\gamma) & =  \left( \bigcap_{i=1}^n \{x_i < b_i\} \right) \cap \left( \bigcup_{i=1}^n \{x_i > a_i\} \right)\\
    &= \bigcup_{j=1}^n \left[\left( \bigcap_{i=1}^n \{x_i < b_i\} \right) \cap \{x_j > a_j\} \right]
\end{align*}

For $1 \leq j \leq n$, we set $R_j=\left(\bigcap_{i=1}^n \{x_i < b_i\} \right) \cap \{x_j > a_j\}$. The $R_j$ are convex open subsets of $\V$. For $I \subset \{1,...,n\}$, we define $R_I = \cap_{i \in I} R_i$. In particular, $R_{\{1,\ldots,n\}}=\bigcap_{j=1}^n R_j =  \prod_{i=1}^n (a_i,b_i)$. For all $I \subset \{1,...,n\}$, $R_I$ is a convex open subset, satisfying $R_I + \gamma = p_{\mathcal{L}}^{-1}(U) + \gamma$, see figure \ref{fig:R}.

\begin{figure}
    \centering
    \includegraphics[width=0.5\textwidth]{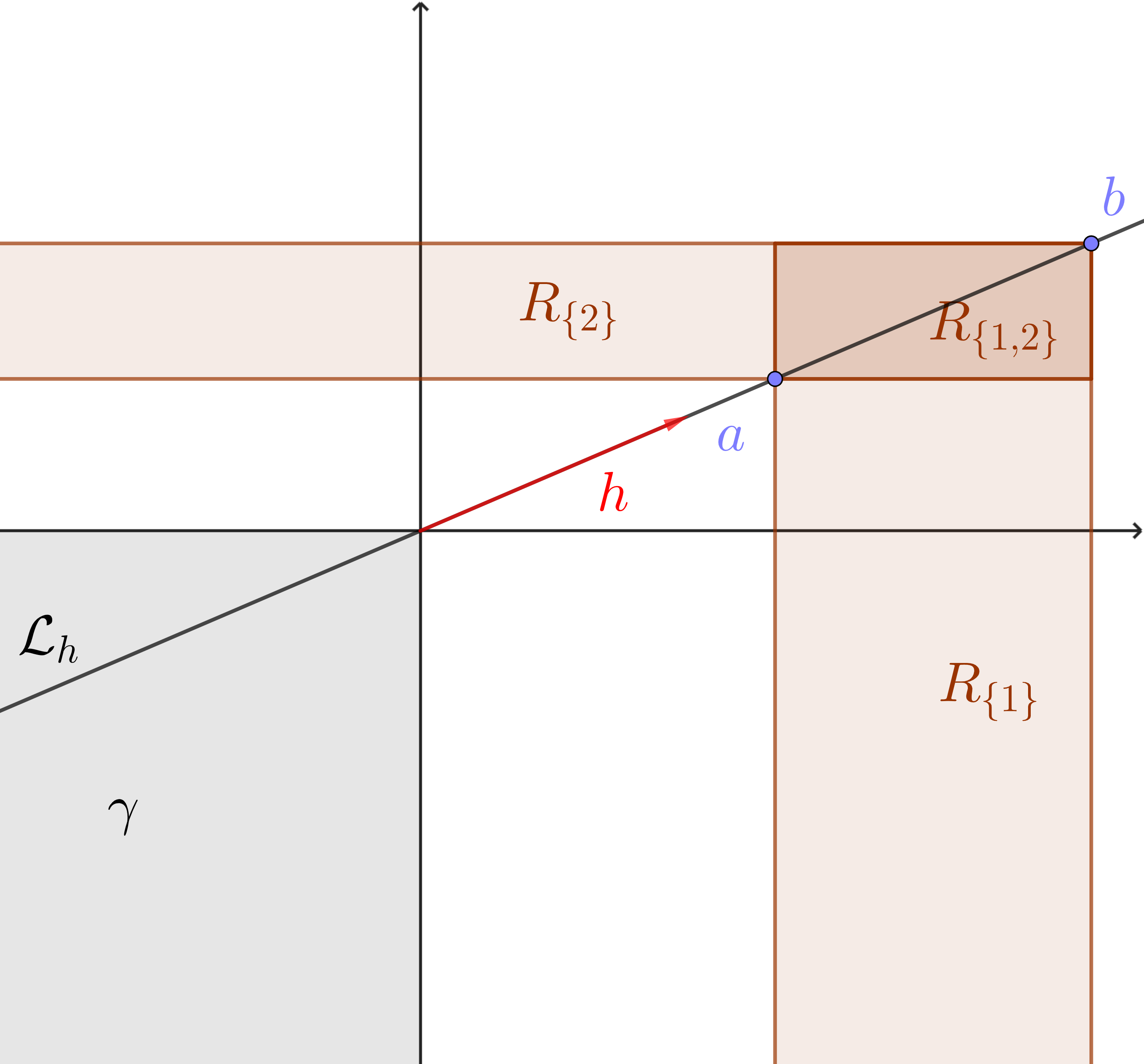}
    \caption{Illustration of the sets $R_I$}
    \label{fig:R}
\end{figure}

Therefore, it follows from the proof of \cite[Prop.~3.5.3]{KS18}, there are isomorphisms of chain complexes in $\mathrm{C}(\Mod(\cor))$
\begin{equation*}
\sect(R_I + \gamma;  \mathcal{I}  ) = \sect(p_{\mathcal{L}}^{-1}(U) + \gamma; \mathcal{I} )  \stackrel{\sim}{\longrightarrow} \sect(R_I ; \phi^{-1} \mathcal{I} ),
\end{equation*}
which commute with the restriction morphisms  $\sect(R_J ; \phi^{-1} \mathcal{I}  ) \longrightarrow \sect(R_I ; \phi^{-1} \mathcal{I}  )$, for $J  \subset I$. Thus, one has the isomorphisms in $\mathrm{C}(\Mod(\cor))$ :

\begin{equation*}
\sect(p_{\mathcal{L}}^{-1}(U) + \gamma; \mathcal{I} ) \simeq \varprojlim_I \sect(R_I ; \phi^{-1} \mathcal{I}  ) \simeq \sect(p_{\mathcal{L}}^{-1}(U) ; \phi^{-1} \mathcal{I}).
\end{equation*}
From the above, we conclude that in $\Der[+](\Mod(\cor))$:
\begin{align*}
    \rsect(U; \phi^{' -1} \roim{p^\gamma_{\mathcal{L}}} G) 
    & \simeq \rsect(U + \lambda;  \roim{p^\gamma_{\mathcal{L}}} G) \\
    & \simeq \rsect(p_{\mathcal{L}}^{-1}(U + \lambda) ; G) \\
    & = \rsect(p_{\mathcal{L}}^{-1}(U) + \gamma ; G) \\
    & \simeq \Gamma(p_{\mathcal{L}}^{-1}(U) + \gamma ; \mathcal{I}) \\
    & \simeq \Gamma(p_{\mathcal{L}}^{-1}(U) ; \phi^{-1} \mathcal{I}) \\
    & \simeq \rsect(p_{\mathcal{L}}^{-1}(U) ; \phi^{-1} G) \\
    & \simeq \rsect(U; \roim{p_{\mathcal{L}}} \phi^{-1}G).
\end{align*}

\end{proof}

We now prove the result announced at the  beginning of the subsection. Namely, that the fibered barcode is a special instance of projected barcode.

\begin{proposition}\label{P:directinverse}
Let $F \in \Der[+]_{\gamma^{\circ,a}}(\cor_{\V})$, then there is a functorial isomorphism $\oim{(j_{\mathcal{L}} \, p_{\mathcal{L}})} F \simeq i_\mathcal{L}^{-1}F$. 
\end{proposition}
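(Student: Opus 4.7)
The plan is to transfer the claim to the analogous statement in the $\gamma$-topology, where the preceding lemma provides the key identification, and then descend via the change-of-topology morphisms using the isomorphism \eqref{map:invplgamma}.

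Since $F$ is a $\gamma$-sheaf, the Kashiwara--Schapira equivalence $\Der[+]_{\gamma^{\circ,a}}(\cor_\V) \simeq \Der[+](\cor_{\V_\gamma})$ allows me to write $F \simeq \opb{\phi_\gamma} G$ where $G := \roim{\phi_\gamma} F$. From the set-theoretic identity $\phi_\gamma \circ i_\mathcal{L} = i_\mathcal{L}^\gamma \circ \phi_\lambda$, one obtains the natural isomorphism $\opb{i_\mathcal{L}} F \simeq \opb{\phi_\lambda} \opb{(i_\mathcal{L}^\gamma)} G$. The first preceding lemma identifies the underived functor $\oim{(j_\mathcal{L}^\gamma \, p_\mathcal{L}^\gamma)}$ with $\opb{(i_\mathcal{L}^\gamma)}$ (modulo the homeomorphism $j_\mathcal{L}^\gamma$); since $\opb{(i_\mathcal{L}^\gamma)}$ is exact, this underived iso shows that $\oim{p_\mathcal{L}^\gamma}$ is itself exact, so $\roim{p_\mathcal{L}^\gamma} G \simeq \opb{(i_\mathcal{L}^\gamma)} G$ in $\Der[+](\cor_{\mathcal{L}_{h,\lambda}})$.

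Combining the above with the second preceding lemma (the isomorphism \eqref{map:invplgamma}), one gets the chain $\opb{i_\mathcal{L}} F \simeq \opb{\phi_\lambda} \opb{(i_\mathcal{L}^\gamma)} G \simeq \opb{\phi_\lambda} \roim{p_\mathcal{L}^\gamma} G \simeq \roim{p_\mathcal{L}} \opb{\phi_\gamma} G = \roim{p_\mathcal{L}} F$. Since $j_\mathcal{L}$ is a homeomorphism, $\oim{j_\mathcal{L}} = \roim{j_\mathcal{L}}$ is an exact equivalence of categories, and applying it yields $\oim{(j_\mathcal{L} \, p_\mathcal{L})} F \simeq \oim{j_\mathcal{L}} \roim{p_\mathcal{L}} F \simeq \oim{j_\mathcal{L}} \opb{i_\mathcal{L}} F$, which is the claimed isomorphism under the implicit identification of sheaves on $\R$ with sheaves on $\mathcal{L}_h$ via $j_\mathcal{L}$ used throughout the section. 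Functoriality in $F$ is preserved at every step.

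The main technical subtlety is ensuring that the underived statement of the first preceding lemma lifts to the derived category: this hinges on exactness of $\opb{(i_\mathcal{L}^\gamma)}$, which is automatic for an inverse image functor, together with $j_\mathcal{L}^\gamma$ being a homeomorphism, which prevents any higher direct images $R^{>0} p_\mathcal{L}^\gamma$ from contributing and lets us apply the morphism \eqref{map:invplgamma} cleanly.
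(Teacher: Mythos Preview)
Your approach is essentially the same as the paper's: pass to the $\gamma$-topology via the Kashiwara--Schapira equivalence, use the preceding proposition identifying $\opb{(i_\mathcal{L}^\gamma)}$ with $\oim{(j_\mathcal{L}^\gamma p_\mathcal{L}^\gamma)}$ (together with your exactness argument to pass to the derived level), and descend back using morphism \eqref{map:invplgamma}. The paper organizes the chain slightly differently, relying more on commutativity of the squares $\phi' \circ p_{\mathcal{L}} = p_{\mathcal{L}}^\gamma \circ \phi$ and $\phi'' \circ j_{\mathcal{L}} = j_{\mathcal{L}}^\gamma \circ \phi'$ and closing with $\opb{\phi''}\roim{\phi''}\simeq \id$; the lemma on \eqref{map:invplgamma} is then what guarantees that $\roim{j_\mathcal{L}}\roim{p_\mathcal{L}}F$ is a $\lambda_0$-sheaf so that this last step applies. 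Your route invokes \eqref{map:invplgamma} directly and thereby sidesteps that verification, which is arguably cleaner.

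One point of care: your chain silently conflates the change-of-topology maps on $\R$ and on $\mathcal{L}_h$ under the single symbol $\phi_\lambda$, and at the end you ``apply $\oim{j_\mathcal{L}}$'' to $\opb{i_\mathcal{L}}F$, which is already a sheaf on $\R$. If you keep the $j$'s explicit, the chain reads
\[
\opb{i_\mathcal{L}}F \;\simeq\; \opb{\phi''}\opb{(i_\mathcal{L}^\gamma)}G \;\simeq\; \opb{\phi''}\roim{j_\mathcal{L}^\gamma}\roim{p_\mathcal{L}^\gamma}G \;\simeq\; \oim{j_\mathcal{L}}\opb{\phi'}\roim{p_\mathcal{L}^\gamma}G \;\simeq\; \oim{j_\mathcal{L}}\roim{p_\mathcal{L}}F,
\]
the third isomorphism using that $j_\mathcal{L}^\gamma$ is a homeomorphism intertwining $\phi'$ and $\phi''$, and the fourth being \eqref{map:invplgamma}. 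With this bookkeeping your argument is complete and matches the paper's.
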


\begin{proof} We have the following commutative diagrams of topological spaces:

\begin{equation*}
\xymatrix{
\R \ar[r]^{i_\mathcal{L}} \ar[d]_{\phi''} & \V \ar[d]^{\phi} \ar[r]^{p_\mathcal{L}} &  \mathcal{L}_h  \ar[d]^{\phi'} \ar[r]^{j_\mathcal{L}}  &  \R  \ar[d]^{\phi^{\prime \prime}}\\
\R_{\lambda_0}  \ar[r]^{i_\mathcal{L}^\gamma}  & \V_\gamma \ar[r]^{p_\mathcal{L}^\gamma} & \mathcal{L}_{h,\lambda} \ar[r]^{j_\mathcal{L}^\gamma}   & \R_{\lambda_0} }
\end{equation*}
 
Therefore, one has the following isomorphisms:
\begin{align*}
    i_\mathcal{L}^{-1} F & \simeq i_\mathcal{L}^{-1} \phi^{-1} \roim{\phi} F \\
    & \simeq (\phi \circ i_\mathcal{L} )^{-1} \roim{\phi} F \\
    & \simeq (i^\gamma_{\mathcal{L}} \circ \phi'')^{-1} \roim{\phi} F \\
    & \simeq \phi^{'' -1} \roim{j_{\mathcal{L}}^\gamma} \roim{p_{\mathcal{L}}^\gamma} \roim{\phi} F \\
    & \simeq \phi^{'' -1} \roim{j_{\mathcal{L}}^\gamma} \roim{\phi '} \roim{p_\mathcal{L}} F \\
    & \simeq \phi^{'' -1} \roim{\phi ''} \roim{j_{\mathcal{L}}}  \roim{p_\mathcal{L}} F \\
    & \simeq \roim{j_{\mathcal{L}}} \roim{p_\mathcal{L}} F.
\end{align*}
\end{proof}

\begin{corollary}
Let $F \in \Derb_{\rc, \gamma^{\circ, a}}(\cor_\V)$, Then 
\begin{equation*}
\opb{i_{\mathcal{L}_h}} \opb{\tau_{c}}F \simeq \roim{j_{\mathcal{L}_h}} \roim{p_{\mathcal{L}_h}} \roim{\tau_{-c}} F
\end{equation*}
In other words, the fibered barcode of $F$ is the $\mathfrak{F}$-projected barcode of $F$ with $\mathfrak{F}=\{ j_{\mathcal{L}_h} p_{\mathcal{L}_h} \tau_{-c} \,|  (h,c) \in \Lambda \times \V\}$.
\end{corollary}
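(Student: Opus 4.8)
The plan is to deduce the corollary directly from Proposition \ref{P:directinverse} together with the translation invariance built into the $\mathfrak F$-projected barcode formalism. First I would observe that for $c \in \V$ the map $\tau_{-c}\colon \V \to \V$, $x \mapsto x+c$, is an isomorphism of topological spaces (for the norm topology), so $\roim{\tau_{-c}}$ is an equivalence of categories and in particular $\opb{\tau_c} \simeq \roim{\tau_{-c}}$ on $\Der[+](\cor_\V)$. Hence $\opb{i_{\mathcal L_h}}\opb{\tau_c} F \simeq \opb{i_{\mathcal L_h}}\roim{\tau_{-c}} F$. The subtlety here is that $\roim{\tau_{-c}} F$ need not be a $\gamma$-sheaf: translation does not preserve the micro-support condition $\SSi(F)\subset \V\times\gamma^{\circ,a}$ in general\ldots actually it does, since $\SSi$ is invariant under translation of the base and $\tau_{-c}$ only translates the base, so $\roim{\tau_{-c}} F \in \Der[+]_{\gamma^{\circ,a}}(\cor_\V)$. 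I would make this remark explicit so that Proposition \ref{P:directinverse} applies to $\roim{\tau_{-c}} F$.

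Then I would simply apply Proposition \ref{P:directinverse} to the $\gamma$-sheaf $\roim{\tau_{-c}} F$, obtaining the functorial isomorphism
\begin{equation*}
\opb{i_{\mathcal L_h}}\roim{\tau_{-c}} F \simeq \roim{j_{\mathcal L_h}}\roim{p_{\mathcal L_h}}\roim{\tau_{-c}} F,
\end{equation*}
and chaining this with the identification $\opb{\tau_c} F \simeq \roim{\tau_{-c}} F$ gives the first displayed isomorphism of the corollary. The second assertion is then a matter of unravelling Definition \ref{def:projectedbarcode}: the $\mathfrak F$-projected barcode of $F$ for $\mathfrak F = \{\, j_{\mathcal L_h} p_{\mathcal L_h}\tau_{-c} \mid (h,c)\in\Lambda\times\V\,\}$ is by definition the collection $\bigl(\roim{(j_{\mathcal L_h} p_{\mathcal L_h}\tau_{-c})} F\bigr)_{(h,c)}$, and since $\roim{(j_{\mathcal L_h} p_{\mathcal L_h}\tau_{-c})} \simeq \roim{j_{\mathcal L_h}}\roim{p_{\mathcal L_h}}\roim{\tau_{-c}}$ by functoriality of $\roim{(-)}$ on the composable maps $\R \xrightarrow{\tau_{-c}}\R$ (resp.\ $\V$) $\to \mathcal L_h \to \R$, this equals the fibered barcode $\bigl(\B(\opb{i_{\mathcal L_h}}\opb{\tau_c} F)\bigr)_{(h,c)}$ by the first part. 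One should also note that each $j_{\mathcal L_h} p_{\mathcal L_h}\tau_{-c}$ is a continuous map $\V \to \R$ that is subanalytic up to infinity, so that it indeed belongs to $\mathcal{SC}(\V_\infty)$ and the notation $\mathfrak F$-projected barcode is legitimate; this follows because $p_{\mathcal L_h}$ is piecewise linear with rational-in-$h$ slopes (formula \eqref{eq:formulaP}) and $j_{\mathcal L_h}$, $\tau_{-c}$ are affine, hence all are subanalytic up to infinity in the sense of the relevant definition.

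The main obstacle, such as it is, is bookkeeping rather than mathematics: one must be careful that Proposition \ref{P:directinverse} is stated for $F\in\Der[+]_{\gamma^{\circ,a}}(\cor_\V)$ and check that $\roim{\tau_{-c}} F$ stays in this class, and one must take care that the composition formula $\roim{(g\circ f)}\simeq \roim{g}\circ\roim{f}$ is applied only to honest composable continuous maps (which it is here: $\tau_{-c}$ is a self-map of $\V$, then $p_{\mathcal L_h}\colon\V\to\mathcal L_h$, then $j_{\mathcal L_h}\colon\mathcal L_h\to\R$). Since we work in bounded-below derived categories and all functors involved are right-derived pushforwards, these compositions are unproblematic. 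I would therefore keep the proof to a few lines: reduce to the untranslated case via the equivalence $\roim{\tau_{-c}}$, invoke Proposition \ref{P:directinverse}, and then read off the restatement in terms of $\mathfrak F$-projected barcodes from Definition \ref{def:projectedbarcode}.
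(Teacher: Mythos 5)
Your proposal is correct and is exactly the implicit argument the paper intends: the corollary has no written proof because it follows by applying Proposition \ref{P:directinverse} to the translated sheaf $\opb{\tau_c}F \simeq \roim{\tau_{-c}}F$, which remains a $\gamma$-sheaf since translation only shifts the base component of the micro-support, and then reading off the composition $\roim{(j_{\mathcal L_h} p_{\mathcal L_h}\tau_{-c})} \simeq \roim{j_{\mathcal L_h}}\roim{p_{\mathcal L_h}}\roim{\tau_{-c}}$ against Definition \ref{def:projectedbarcode}. Your care about checking that $\roim{\tau_{-c}}F$ stays in $\Der[+]_{\gamma^{\circ,a}}(\cor_\V)$ and that $j_{\mathcal L_h}p_{\mathcal L_h}\tau_{-c}$ lies in $\mathcal{SC}(\V_\infty)$ is appropriate and matches the paper's framing.
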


\section{Integral sheaf metrics}

\subsection{Generalities}
In this section, we elaborate on our study of projected barcodes to introduce a family of pseudo-metrics on categories of sheaves inspired by integral probability metrics \cite{Mull97}. Here, the probability measures are replaced by sheaves and the integration of real-valued functions against the probability measure by the pushforward of the sheaves by such functions. In this subsection, we study the general properties of such metrics.

Let $X$ be a good topological space. We denote by $\Cont{0}(X)$ the algebra of continuous functions from $X$ to $\R$ and by capital fraktur letters such that $\mathfrak{F}$, $\mathfrak{B}$, $\mathfrak{R}$ subsets of continuous functions. Given a class $\mathfrak{F}$ of functions and writing $\dist_\R$ for the convolution distance on $\Derb(\cor_\R)$, we introduce the following pseudo-metrics on $\Derb(\cor_X)$
\begin{equation}\label{def:ism}
	\isme{\mathfrak{F}}(F,G)=\sup_{f \in \mathfrak{F}}( \dist_\R(\reim{f}F,\reim{f} G)),
\end{equation}

\begin{equation} \label{def:ismnp}
	\ism{\mathfrak{F}}(F,G)=\sup_{f \in \mathfrak{F}}( \dist_\R(\roim{f}F,\roim{f} G)).
\end{equation}

These pseudo-metrics are called \textit{integral sheaf metrics} (ISM) and $\mathfrak{F}$ is called a generator of $\isme{\mathfrak{F}}$ (resp. $\ism{\mathfrak{F}}$).

\begin{remark}
\begin{enumerate}[(a)]
    \item In the above formula, the convolution distance can be replaced by any distance on $\Derb(\cor_\R)$ as for instance the Wasserstein distance. Here, we focus on the convolution distance.
	\item If $F$, $G \in \Derb(\cor_X)$ have compact supports, then we have $\isme{\mathfrak{F}}(F,G)=\ism{\mathfrak{F}}(F,G)$.
\end{enumerate}
\end{remark}

\begin{proposition}
	The mappings $\isme{\mathfrak{F}}, \ism{\mathfrak{F}} \colon \Ob(\Derb(\cor_X)) \times \Ob(\Derb(\cor_X)) \to \R_+ \cup \lbrace \infty \rbrace$ are pseudo-metrics.
\end{proposition}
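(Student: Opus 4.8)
The plan is to verify the three axioms of a pseudo-metric for $\isme{\mathfrak{F}}$ and $\ism{\mathfrak{F}}$, reducing each to the corresponding property of the convolution distance $\dist_\R$ on $\Derb(\cor_\R)$, which is a pseudo-extended metric by \cite{KS18}. Since the two cases are identical, I would treat $\ism{\mathfrak{F}}$ and merely note that the argument for $\isme{\mathfrak{F}}$ is obtained by replacing $\roim{f}$ with $\reim{f}$ throughout. First I would observe that the functors $\roim{f} \colon \Derb(\cor_X) \to \Derb(\cor_\R)$ are well-defined on objects for every $f \in \mathfrak{F}$, so the supremum defining $\ism{\mathfrak{F}}$ makes sense as an element of $\R_+ \cup \{\infty\}$ (a supremum of a family of elements of $\R_{\geq 0} \cup \{\infty\}$ is always well-defined in that set, even if $\mathfrak{F}$ is empty, in which case one takes the convention $\sup \emptyset = 0$; if one wants a genuine metric-like object one typically assumes $\mathfrak{F} \neq \emptyset$, but this does not affect the pseudo-metric axioms).

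The three points to check are: (1) \emph{Non-negativity and vanishing on the diagonal:} $\ism{\mathfrak{F}}(F,F) = 0$. This is immediate since $\dist_\R(\roim{f}F, \roim{f}F) = 0$ for every $f$, as $\dist_\R$ is a pseudo-metric, hence the supremum is $0$. Non-negativity is clear since each term is $\geq 0$. (2) \emph{Symmetry:} $\ism{\mathfrak{F}}(F,G) = \ism{\mathfrak{F}}(G,F)$. This follows termwise from the symmetry of $\dist_\R$, i.e. $\dist_\R(\roim{f}F, \roim{f}G) = \dist_\R(\roim{f}G, \roim{f}F)$, and taking suprema. (3) \emph{Triangle inequality:} for $F, G, H \in \Derb(\cor_X)$, $\ism{\mathfrak{F}}(F,H) \leq \ism{\mathfrak{F}}(F,G) + \ism{\mathfrak{F}}(G,H)$. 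For each fixed $f \in \mathfrak{F}$, the triangle inequality for $\dist_\R$ gives
\begin{equation*}
\dist_\R(\roim{f}F, \roim{f}H) \leq \dist_\R(\roim{f}F, \roim{f}G) + \dist_\R(\roim{f}G, \roim{f}H) \leq \ism{\mathfrak{F}}(F,G) + \ism{\mathfrak{F}}(G,H),
\end{equation*}
where the last step bounds each summand by the corresponding supremum. Taking the supremum over $f \in \mathfrak{F}$ of the left-hand side yields the claim.

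There is essentially no obstacle here; the only mild subtlety is bookkeeping with the value $+\infty$, which is handled by the usual conventions for arithmetic in $\R_+ \cup \{\infty\}$ (the triangle inequality remains valid, reading $a + \infty = \infty$), and by noting that the supremum of a family in $\R_+ \cup \{\infty\}$ bounded above by some element still satisfies the needed inequalities. I would also remark explicitly that, exactly as for $\dist_\R$, these are genuinely \emph{pseudo}-metrics and not metrics: $\ism{\mathfrak{F}}(F,G) = 0$ need not imply $F \simeq G$ unless $\mathfrak{F}$ is rich enough (this is precisely the content of the nullity/separation results such as Propositions \ref{P:nullitytest} and \ref{P:gammalinearnullity}), and they may take the value $+\infty$, so they are pseudo-extended metrics in the terminology used above for the convolution distance.
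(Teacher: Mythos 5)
Your proposal is correct and follows the same route as the paper, which simply observes that the pseudo-metric axioms are inherited from the fact that $\dist_\R$ is a pseudo-distance; you have merely spelled out the termwise verification and the supremum bookkeeping that the paper leaves implicit.
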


\begin{proof}
This follows from the fact that $\dist_\R$ is a pseudo-distance.
\end{proof}


\begin{definition}
	Let $\mathfrak{F} \subset \Cont{0}(X)$. The set $\maxgen{\mathfrak{F}}$  of all functions $f \in \Cont{0}(X)$ such that
	\begin{equation}
		\dist_\R(\reim{f}F,\reim{f} G) \leq \isme{\mathfrak{F}}(F,G) ; \textnormal{for all} \; F, \, G \in \Derb(\cor_X)
	\end{equation}
 is called the maximal generator of $\isme{\mathfrak{F}}$.
Similarly, we define for $\ism{\mathfrak{F}}$  the set  $\maxgeno{\mathfrak{F}}$ and call it the maximal generator of $\ism{\mathfrak{F}}$ .
\end{definition}

We have the following straightforward lemma.

\begin{lemma} Let $\mathfrak{F} \subset \mathfrak{D}$ and $F$, $G \in \Derb(\cor_X)$. Then,
	\begin{enumerate}[(i)]
		\item $\isme{\mathfrak{F}}(F,G) \leq \isme{\mathfrak{D}}(F,G)$,
		\item $\maxgen{\mathfrak{F}} \subset \maxgen{\mathfrak{D}}$,
		\item if $\mathfrak{D} \subset \maxgen{\mathfrak{F}}$, then $\isme{\mathfrak{F}}=\isme{\mathfrak{D}}$.
	\end{enumerate}
Similar results hold for $\maxgeno{\mathfrak{F}}$  and $\maxgeno{\mathfrak{D}}$.
\end{lemma}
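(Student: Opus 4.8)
The plan is to derive all three assertions directly from the monotonicity of the supremum together with the definitions \eqref{def:ism}, \eqref{def:ismnp} of the integral sheaf metrics and the definition of the maximal generators; there is no genuine difficulty here, only bookkeeping, and I would spell it out in three short steps.

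First, for (i), I would remark that since $\mathfrak{F} \subset \mathfrak{D}$ the set of extended reals $\{\dist_\R(\reim{f}F,\reim{f}G) \mid f \in \mathfrak{F}\}$ is contained in $\{\dist_\R(\reim{f}F,\reim{f}G) \mid f \in \mathfrak{D}\}$, so the supremum of the former, which is $\isme{\mathfrak{F}}(F,G)$, is at most the supremum of the latter, which is $\isme{\mathfrak{D}}(F,G)$. This inequality is valid in $\R_+ \cup \{\infty\}$, so no convergence issue arises.

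Next, for (ii), I would take $f \in \maxgen{\mathfrak{F}}$, that is, a function with $\dist_\R(\reim{f}F,\reim{f}G) \leq \isme{\mathfrak{F}}(F,G)$ for all $F, G \in \Derb(\cor_X)$, and chain this bound with (i) to obtain $\dist_\R(\reim{f}F,\reim{f}G) \leq \isme{\mathfrak{D}}(F,G)$ for all $F, G$, which is exactly the condition $f \in \maxgen{\mathfrak{D}}$. Then, for (iii), assuming $\mathfrak{D} \subset \maxgen{\mathfrak{F}}$, every $f \in \mathfrak{D}$ satisfies $\dist_\R(\reim{f}F,\reim{f}G) \leq \isme{\mathfrak{F}}(F,G)$ for all $F, G$; taking the supremum over $f \in \mathfrak{D}$ yields $\isme{\mathfrak{D}}(F,G) \leq \isme{\mathfrak{F}}(F,G)$, and combined with (i) this gives the equality $\isme{\mathfrak{F}} = \isme{\mathfrak{D}}$. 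The analogous statements for $\ism{\cdot}$ and $\maxgeno{\cdot}$ follow word for word by substituting $\roim{f}$ for $\reim{f}$ throughout, since the argument uses only elementary properties of suprema and the defining inequality of the maximal generator. The only point requiring even minimal attention is to allow $+\infty$ as a value of these suprema, so I expect there to be essentially no hard part in this lemma.
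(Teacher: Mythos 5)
Your proof is correct and follows essentially the same route as the paper: (i) is monotonicity of the supremum, (ii) chains the defining inequality of the maximal generator with (i), and (iii) combines (i) with the observation that $\isme{\mathfrak{D}} \leq \isme{\mathfrak{F}}$ whenever $\mathfrak{D} \subset \maxgen{\mathfrak{F}}$. The paper's proof of (iii) passes explicitly through $\isme{\maxgen{\mathfrak{F}}} = \isme{\mathfrak{F}}$, while you bound $\isme{\mathfrak{D}}$ by $\isme{\mathfrak{F}}$ directly from the definition; these are the same argument stated in marginally different bookkeeping.
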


\begin{proof}
The points (i) and (ii) are clear. For (iii), it follows from (i) that for every $F$ and $G$ in $\Derb(\cor_X)$ $\isme{\mathfrak{F}}(F,G) \leq \isme{\mathfrak{D}}(F,G)$. Applying (i) again we get
\begin{equation*}
    \isme{\mathfrak{D}}(F,G) \leq \isme{\mathfrak{\maxgen{\mathfrak{F}}}}(F,G)=\isme{\mathfrak{F}}(F,G).
\end{equation*}
Hence, $\isme{\mathfrak{F}}=\isme{\mathfrak{D}}$.
\end{proof}

\begin{proposition} Let $\mathfrak{F}$ be a generator of $\isme{\mathfrak{F}}$.
	\begin{enumerate}[(i)]
		\item $f \in \maxgen{\mathfrak{F}}$ implies $a f +b \in \maxgen{\mathfrak{F}}$ for all $a \in \lbrack -1,1\rbrack$ and $b \in \R$
		\item if the sequence $(f_n)_{n \in \N} \subset \maxgen{\mathfrak{F}}$ converges uniformly to $f$, then $f \in \maxgen{\mathfrak{F}}$.
	\end{enumerate}
	
\noindent Results $(i)$ and $(ii)$ hold with $\ism{\mathfrak{F}}$ and $\maxgeno{\fF}$ instead of $\isme{\mathfrak{F}}$ and $\maxgen{\mathfrak{F}}$.
\end{proposition}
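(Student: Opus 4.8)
The plan is to reduce both assertions to the two stability results already available: Theorem~\ref{thm:lipstability} (Lipschitz functoriality of the pushforwards for the convolution distance) for~(i), and Theorem~\ref{thm:stability} (stability under uniform perturbation of the map) for~(ii). In each case I will give the argument for $\isme{\mathfrak{F}}$ and $\maxgen{\mathfrak{F}}$; the statement for $\ism{\mathfrak{F}}$ and $\maxgeno{\mathfrak{F}}$ follows by the identical computation with $\reim{(\cdot)}$ replaced by $\roim{(\cdot)}$, using the $\roim{}$-parts of those theorems (for~(i) this needs the target to be a finite-dimensional euclidean space, which $\R$ is).

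For~(i), fix $f \in \maxgen{\mathfrak{F}}$, $a \in [-1,1]$, $b \in \R$, and set $\ell \colon \R \to \R$, $t \mapsto at+b$, so that $af+b = \ell \circ f$ is again continuous. By functoriality of the proper pushforward, $\reim{(af+b)}H \simeq \reim{\ell}\reim{f}H$ for every $H \in \Derb(\cor_X)$. The map $\ell$ is $|a|$-Lipschitz, hence $1$-Lipschitz since $|a|\le 1$, so Theorem~\ref{thm:lipstability} gives, for all $F,G \in \Derb(\cor_X)$,
\begin{equation*}
\dist_\R(\reim{(af+b)}F,\reim{(af+b)}G) = \dist_\R(\reim{\ell}\reim{f}F,\reim{\ell}\reim{f}G) \le \dist_\R(\reim{f}F,\reim{f}G) \le \isme{\mathfrak{F}}(F,G),
\end{equation*}
the last inequality being the definition of $f \in \maxgen{\mathfrak{F}}$. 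Hence $af+b \in \maxgen{\mathfrak{F}}$.

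For~(ii), let $(f_n)_{n\in\N}\subset\maxgen{\mathfrak{F}}$ converge uniformly to $f$; then $f$ is continuous. Fix $F,G\in\Derb(\cor_X)$. Applying Theorem~\ref{thm:stability} to the pair $(f,f_n)$ yields $\dist_\R(\reim{f}H,\reim{f_n}H)\le\norm{f-f_n}_\infty$ for $H\in\{F,G\}$, so by the triangle inequality for $\dist_\R$,
\begin{equation*}
\dist_\R(\reim{f}F,\reim{f}G) \le \dist_\R(\reim{f_n}F,\reim{f_n}G) + 2\,\norm{f-f_n}_\infty \le \isme{\mathfrak{F}}(F,G) + 2\,\norm{f-f_n}_\infty.
\end{equation*}
Letting $n\to\infty$ gives $\dist_\R(\reim{f}F,\reim{f}G)\le\isme{\mathfrak{F}}(F,G)$, i.e. $f\in\maxgen{\mathfrak{F}}$.

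There is no genuine obstacle here: the only points requiring a moment's care are the functoriality isomorphism $\reim{(\ell\circ f)}\simeq\reim{\ell}\reim{f}$ (and its $\roim{}$-analogue), the bookkeeping that $|a|\le 1$ makes $\ell$ a $1$-Lipschitz map, and, for the $\maxgeno{\mathfrak{F}}$ version of~(i), invoking the second half of Theorem~\ref{thm:lipstability}, which applies because $\R$ is a finite-dimensional euclidean space.
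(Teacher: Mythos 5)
Your proof is correct, and part (ii) is essentially the same argument as the paper's (stability theorem plus triangle inequality, pass to the limit). Part (i), however, goes through a genuinely different route: the paper simply cites Lemma~\ref{lem:transaffine}, which records that the convolution distance on $\R$ is invariant under pushforward by translations and is multiplied by $\lambda$ under pushforward by scalings $h_\lambda$ with $\lambda\geq 0$. You instead factor $af+b=\ell\circ f$ and invoke Theorem~\ref{thm:lipstability} with the observation that $\ell$ is $|a|$-Lipschitz, hence $1$-Lipschitz. Your route is arguably cleaner: Lemma~\ref{lem:transaffine}(ii) is stated only for $\lambda\geq 0$, so the paper's one-line proof implicitly requires the extra (easy but unstated) fact that pushforward by the antipodal map $t\mapsto -t$ is distance-preserving in order to handle $a<0$; your Lipschitz argument absorbs the sign of $a$ automatically. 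You also correctly flag the one place where the $\roim{}$-variant needs care, namely that the second half of Theorem~\ref{thm:lipstability} requires the spaces to be euclidean, which $(\R,|\cdot|)$ is.
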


\begin{proof}
\begin{enumerate}[(i)]
\item Follows from Lemma \ref{lem:transaffine}.
\item Let $(f_n)_{n \in \N}$ be a sequence of continuous functions of $\maxgen{\mathfrak{F}}$ converging uniformly to a function $f$ and let $F, \, G \in \Derb(\cor_X)$. Then,
\begin{align*}
\dist_\R(\reim{f}F,\reim{f} G) \leq& \dist_\R(\reim{f}F,\reim{f_n} F)+\dist_\R(\reim{f_n}F,\reim{f_n} G)\\
&\quad +\dist_\R(\reim{f_n}G,\reim{f} G)\\
& \leq 2 \norm{f-f_n}_\infty+\isme{\mathfrak{F}}(F,G) \quad \textnormal{(Stability Theorem)}
\end{align*}
Hence, taking $n$ to infinity, the above inequality implies
\begin{equation*}
    \dist_\R(\reim{f}F,\reim{f} G) \leq \isme{\mathfrak{F}}(F,G).
\end{equation*}
\end{enumerate}
\end{proof}

\subsection{A comparison result}
In this subsection, we relate the pseudo-distances $\ism{\fF}$ and $\isme{\fF}$ when the underlying space is a real finite dimensional normed vector space $(\V, \norm{\cdot})$.

\begin{proposition}
Let $F, G \in \Derb(\cor_\V)$, then
\begin{equation*}
    \ism{\fF}(\dual_\V(F),\dual_\V(G)) \leq \isme{\fF}(F,G).
\end{equation*}
\end{proposition}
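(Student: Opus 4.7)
The plan is to dispatch this pointwise in $f \in \fF$: for each such $f$, bound $\dist_\R(\roim{f}\dual_\V F, \roim{f}\dual_\V G)$ by $\dist_\R(\reim{f}F, \reim{f}G)$, and then take the supremum. The two ingredients will be Corollary \ref{cor:roimtoreim} (to trade $\roim{f}$ for $\reim{f}$ at the cost of a Verdier dual) and Proposition \ref{prop:dualdis}(i) (to control the convolution distance under duality).

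More precisely, I would first apply Corollary \ref{cor:roimtoreim} to the sheaf $\dual_\V F$, yielding
\begin{equation*}
\roim{f}\dual_\V F \simeq \dual_\R \reim{f}\dual_\V\dual_\V F \simeq \dual_\R \reim{f} F,
\end{equation*}
where the second isomorphism uses biduality $\dual_\V\dual_\V F \simeq F$ for constructible sheaves \cite[Prop.~3.4.3]{KS90} (and likewise for $G$). Thus
\begin{equation*}
\dist_\R(\roim{f}\dual_\V F, \roim{f}\dual_\V G) = \dist_\R(\dual_\R \reim{f}F, \dual_\R \reim{f}G).
\end{equation*}
Applying Proposition \ref{prop:dualdis}(i) on $\R$ to the right-hand side gives
\begin{equation*}
\dist_\R(\dual_\R \reim{f}F, \dual_\R \reim{f}G) \leq \dist_\R(\reim{f}F, \reim{f}G) \leq \isme{\fF}(F,G).
\end{equation*}

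Taking the supremum over $f \in \fF$ then produces the desired inequality
$\ism{\fF}(\dual_\V F, \dual_\V G) \leq \isme{\fF}(F,G)$. There is no real obstacle: the entire argument is a routine juggling of adjunctions and duality. The only mild subtlety is the application of Corollary \ref{cor:roimtoreim} and biduality, which require the objects to be (cohomologically) constructible up to infinity; this is implicit in the context of the section (and indeed the statement should be read in $\Derb_{\rc}(\cor_{\V_\infty})$, where all of these operations behave well). Note also that a converse inequality would follow by the same strategy applied to $\dual_\V F, \dual_\V G$ together with Lemma \ref{lem:dualiso}, so that for constructible sheaves up to infinity the two ISM are interchanged by Verdier duality.
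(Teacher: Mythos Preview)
Your overall strategy—bound each $\dist_\R(\roim{f}\dual_\V F,\roim{f}\dual_\V G)$ by $\dist_\R(\reim{f}F,\reim{f}G)$ via duality, then take the supremum—is exactly what the paper does. The difference lies in how you obtain the identification $\roim{f}\dual_\V F \simeq \dual_\R \reim{f}F$, and this is where your argument loses generality.

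The paper simply invokes the isomorphism $\roim{f}\dual_\V(F)\simeq\dual_\R(\reim{f}F)$ directly: this is the standard Verdier duality relation $\roim{f}\rhom(F,\omega_\V)\simeq\rhom(\reim{f}F,\omega_\R)$ (see \cite[Prop.~3.1.10]{KS90}), valid for any continuous $f$ between good spaces and any $F\in\Derb(\cor_\V)$, with no constructibility hypothesis whatsoever. From there Proposition~\ref{prop:dualdis}(i) gives the inequality, and the proof is done in full generality.

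Your route instead passes through Corollary~\ref{cor:roimtoreim} applied to $\dual_\V F$, followed by biduality $\dual_\V\dual_\V F\simeq F$. Both of these steps genuinely require $F$ (and $G$) to be constructible up to infinity, and Corollary~\ref{cor:roimtoreim} additionally needs $f\in\mathcal{SC}(\V_\infty)$. But the proposition is stated for arbitrary $F,G\in\Derb(\cor_\V)$ and an arbitrary class $\fF$ of continuous maps; your claim that constructibility is ``implicit in the context'' is not correct here—the very next proposition in the paper is the one that adds those hypotheses to upgrade the inequality to an equality. So your argument, as written, does not prove the statement in the generality asserted. The fix is simply to replace the detour through Corollary~\ref{cor:roimtoreim} and biduality by the one-line Verdier duality isomorphism above.
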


\begin{proof}
Let $f \colon \V \to \R$ be a continuous map. Then,
\begin{equation*}
    \roim{f}\dual_\V(F) \simeq \dual_\R(\reim{f}F).
\end{equation*}
Hence,
\begin{align*}
     \dist_\R(\roim{f}\dual_\V(F),\roim{f}\dual_\V(G)) &= \dist_\R(\dual_\V(\reim{f}F),\dual_\V(\reim{f}G))\\
     &\leq \dist_\R(\reim{f}F,\reim{f}G) \quad \textnormal{(by Proposition \ref{prop:dualdis} (ii))}
\end{align*}
which implies that $\ism{\fF}(\dual_\V(F),\dual_\V(G)) \leq \isme{\fF}(F,G)$.
\end{proof}

\begin{proposition}
Assume that $\fF \subset \mathcal{A}(\V_\infty)$ and that $F, \, G \in \Derb_{\rc}(\cor_{\V_\infty})$. Then,
\begin{equation*}
    \ism{\fF}(\dual_\V(F),\dual_\V(G)) = \isme{\fF}(F,G).
\end{equation*}
\end{proposition}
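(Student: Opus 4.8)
The plan is to combine the previous proposition, which already gives $\ism{\fF}(\dual_\V(F),\dual_\V(G)) \leq \isme{\fF}(F,G)$ for arbitrary $F,G \in \Derb(\cor_\V)$, with the reverse inequality, which is where the constructibility hypotheses enter. In fact I would prove the stronger termwise statement that $\dist_\R(\roim{f}\dual_\V(F),\roim{f}\dual_\V(G)) = \dist_\R(\reim{f}F,\reim{f}G)$ for each $f \in \fF$, and then take suprema.

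First I would fix $f \in \fF$. Since $f \in \mathcal{A}(\V_\infty)$ is in particular a continuous map subanalytic up to infinity, and $F, G \in \Derb_{\rc}(\cor_{\V_\infty})$, Proposition~\ref{prop:opconstructinf} ensures that $\reim{f}F$ and $\reim{f}G$ belong to $\Derb_{\rc}(\cor_{\R_\infty}) \subset \Derb_{\rc}(\cor_\R)$. This is the only place where the hypotheses $\fF \subset \mathcal{A}(\V_\infty)$ and $F,G$ constructible up to infinity are used.

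Next I would invoke Poincaré--Verdier duality (exactly as in the proof of the previous proposition, or via Corollary~\ref{cor:roimtoreim}) to obtain $\roim{f}\dual_\V(F) \simeq \dual_\R(\reim{f}F)$ and $\roim{f}\dual_\V(G) \simeq \dual_\R(\reim{f}G)$, so that
\begin{equation*}
\dist_\R(\roim{f}\dual_\V(F),\roim{f}\dual_\V(G)) = \dist_\R(\dual_\R(\reim{f}F),\dual_\R(\reim{f}G)).
\end{equation*}
Since $\reim{f}F$ and $\reim{f}G$ are constructible on $\R$, Lemma~\ref{lem:dualiso} turns the right-hand side into $\dist_\R(\reim{f}F,\reim{f}G)$, giving
\begin{equation*}
\dist_\R(\roim{f}\dual_\V(F),\roim{f}\dual_\V(G)) = \dist_\R(\reim{f}F,\reim{f}G)
\end{equation*}
for every $f \in \fF$. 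Taking the supremum over $f \in \fF$ of both sides yields $\ism{\fF}(\dual_\V(F),\dual_\V(G)) = \isme{\fF}(F,G)$.

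There is essentially no obstacle: all the content is in checking that $\reim{f}F$ and $\reim{f}G$ land in $\Derb_{\rc}(\cor_\R)$ so that the self-duality of $\dist_\R$ on constructible sheaves (Lemma~\ref{lem:dualiso}) can be applied; the rest is formal manipulation with Verdier duality and the definition of the ISM. The only subtlety worth spelling out is the inclusion $\mathcal{A}(\V_\infty) \subset \mathcal{SC}(\V_\infty)$, which is what makes Proposition~\ref{prop:opconstructinf} applicable to the elements of $\fF$.
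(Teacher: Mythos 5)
Your proof is correct and takes essentially the same route as the paper: use Proposition \ref{prop:opconstructinf} to see that $\reim{f}F$ and $\reim{f}G$ are constructible (up to infinity) on $\R$, apply the Verdier duality isomorphism $\roim{f}\dual_\V(F) \simeq \dual_\R(\reim{f}F)$, then conclude by Lemma \ref{lem:dualiso} and take suprema. You are slightly more careful than the published proof in writing $\dual_\R$ (rather than $\dual_\V$, which appears to be a typo in the paper) for the dual on $\R$, and in flagging the inclusion $\mathcal{A}(\V_\infty) \subset \mathcal{SC}(\V_\infty)$ that makes Proposition \ref{prop:opconstructinf} applicable.
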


\begin{proof}
Let $F, G \in \Derb_{\rc}(\cor_{\V_\infty})$. By Proposition \ref{prop:opconstructinf}, $\reim{f}F$ and $\reim{f}G$ are again constructible up to infinity. Hence,
\begin{align*}
     \dist_\R(\roim{f}\dual_\V(F),\roim{f}\dual_\V(G)) &= \dist_\R(\dual_\V(\reim{f}F),\dual_\V(\reim{f}G))\\
     &= \dist_\R(\reim{f}F,\reim{f}G) \quad \textnormal{(by Lemma \ref{lem:dualiso})}.
\end{align*}
The result follows by taking the supremum over $\fF$ on both sides of the equality.
\end{proof}

\subsection{Lipschitz ISM}\label{subsec:lipism}

In order to get well behaved ISMs, it is natural to assume that the set $\fF$ in the definition of the ISM is a subset of the Lipschitz functions.

Let $(X,d)$ be a good metric space, $(\V, \norm{\cdot})$ be a normed finite dimensional real vector space. We denote the space of Lipschitz functions from $X$ to $\V$ by
\begin{equation*}
	\Lip(X,\V)=\lbrace f \colon X \to \V \mid \, f \, \textnormal{is Lipschitz on X} \rbrace.	
\end{equation*}
The space $\Lip(X,\V)$ can be equipped with the following semi-norm
\begin{equation}\label{eq:LipNorm}
	L(f)=\sup \lbrace \norm{f(x)-f(y)}/ d(x,y) \mid  x,y \in X, ~ x \neq y \rbrace.
\end{equation}
We also set $\Lip_{\leq 1}(X,\V)=\{ f \in \Lip(X,\V) \mid \; L(f) \leq 1\}$.
If $\V=\R$, we write $\Lip(X)$ instead of $\Lip(X,\V)$ and similarly for $\Lip_{\leq 1}(X, \V)$.

A pointed metric space $(X,d, x_0)$ is a metric space $(X,d)$ together with a distinguished point $x_0 \in X$. If $X$ is a vector space, we always choose $x_0=0$. For a pointed metric space $(X,d,x_0)$, we set
\begin{equation*}
	\Lip_0(X,\R)=\lbrace f \colon X \to \R \mid \, f \, \textnormal{is Lipschitz on X and} \, f(x_0)=0 \rbrace.
\end{equation*}

The following proposition is immediate in view of Theorem \ref{thm:lipstability}. It asserts that Lipschitz ISMs provide lower bounds for the convolution distance for sheaves on a good metric space.

\begin{proposition}\label{prop:lowerbound}
Let $(X,d)$ be a good metric space and $F, G \in \Derb(\cor_X)$. Assume that $\fF \subset \Lip_{\leq 1}(X)$. Then
\begin{enumerate}[(i)]
    \item $\isme{\fF}(F,G) \leq \dist_X(F,G)$,
    \item if furthermore, $X$ is a real finite dimensional vector space endowed with the euclidean norm, $\ism{\fF}(F,G) \leq \dist_X(F,G)$.
\end{enumerate}
\end{proposition}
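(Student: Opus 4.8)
The plan is to obtain both inequalities by applying the Lipschitz stability estimate of Theorem~\ref{thm:lipstability} to each function $f \in \fF$ individually, viewed as a map $X \to \R$, and then passing to the supremum over $\fF$ that defines the ISM.

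For part (i): I fix $f \in \fF$. By hypothesis $f \in \Lip_{\leq 1}(X)$, so $f \colon X \to \R$ is $1$-Lipschitz, where $\R$ carries its usual metric $|\cdot|$; note that $(\R,|\cdot|)$ is a good metric space, and that the convolution distance $\dist_\R$ on $\Derb(\cor_\R)$ appearing in \eqref{def:ism} is precisely the one featured in Theorem~\ref{thm:lipstability}. Applying that theorem with target $\R$ and Lipschitz constant $K = 1$ gives $\dist_\R(\reim{f}F, \reim{f}G) \leq \dist_X(F,G)$ for all $F, G \in \Derb(\cor_X)$. Since this bound is uniform in $f \in \fF$, taking the supremum over $f \in \fF$ in \eqref{def:ism} yields $\isme{\fF}(F,G) \leq \dist_X(F,G)$.

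For part (ii): under the extra hypothesis that $X$ is a finite-dimensional real vector space with euclidean norm, the target $\R$ with $|\cdot|$ is likewise a finite-dimensional euclidean vector space, so the second assertion of Theorem~\ref{thm:lipstability} applies and gives $\dist_\R(\roim{f}F, \roim{f}G) \leq \dist_X(F,G)$ for every $f \in \fF$. Taking the supremum over $\fF$ in \eqref{def:ismnp} gives $\ism{\fF}(F,G) \leq \dist_X(F,G)$.

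There is no substantive obstacle: the statement is an immediate consequence of the cited Lipschitz stability theorem together with the elementary fact that a supremum of quantities each bounded by $\dist_X(F,G)$ is still bounded by $\dist_X(F,G)$. The only point deserving a line of justification is that the codomain $\R$, with its standard metric, fits the hypotheses of Theorem~\ref{thm:lipstability} — it is a good metric space for part (i), and a finite-dimensional euclidean vector space for part (ii) — so that no integrability or properness condition intervenes.
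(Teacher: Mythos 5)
Your proof is correct and follows exactly the route the paper takes: the paper simply states that the proposition "is immediate in view of Theorem~\ref{thm:lipstability}", and your argument spells out precisely that immediacy — apply the Lipschitz stability bound with $K=1$ to each $f \in \fF$ and pass to the supremum. (Note that the displayed first inequality in the paper's statement of Theorem~\ref{thm:lipstability} has its subscripts on $\dist$ transposed; your reading, with $\dist_\R$ on the left and $\dist_X$ on the right, is the intended one.)
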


The following proposition is a direct corollary of Proposition \ref{prop:lowerbound} combined with Theorem \ref{thm:stability}. It states that Lipschitz ISMs are stable.

\begin{corollary}
	Let $(\V,\norm{\cdot})$ be a finite dimensional normed vector space and  $Z$ be a locally compact space and let $\mathfrak{F} \subset \Lip_{\leq 1}(\V)$. Let $F \in \Derb(\cor_Z)$  and $f_1, f_2 \colon Z \to \V$ be two continuous maps. Then
	\begin{align*}
		\isme{\mathfrak{F}}(\reim{f_1}F,\reim{f_2}F) \leq \norm{f_1-f_2}_\infty && \isme{\mathfrak{F}}(\roim{f_1}F,\roim{f_2}F) \leq \norm{f_1-f_2}_\infty.
	\end{align*}
If furthermore the norm $\norm{\cdot}$ is  euclidean, then
\begin{align*}
		\ism{\mathfrak{F}}(\reim{f_1}F,\reim{f_2}F) \leq \norm{f_1-f_2}_\infty && \ism{\mathfrak{F}}(\roim{f_1}F,\roim{f_2}F) \leq \norm{f_1-f_2}_\infty.
	\end{align*}
\end{corollary}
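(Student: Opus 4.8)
The plan is to derive the corollary by composing two already-established facts: the Lipschitz ISM bound of Proposition \ref{prop:lowerbound} and the pushforward stability estimate of Theorem \ref{thm:stability}. The key observation is that $\reim{f_1}F$ and $\reim{f_2}F$ are objects of $\Derb(\cor_\V)$, so the quantity $\isme{\mathfrak{F}}(\reim{f_1}F,\reim{f_2}F)$ makes sense, and by Proposition \ref{prop:lowerbound}(i) — which applies precisely because $\mathfrak{F}\subset\Lip_{\leq 1}(\V)$ — it is bounded above by $\dist_\V(\reim{f_1}F,\reim{f_2}F)$. Then Theorem \ref{thm:stability}, applied to the good topological space $Z$ and the two continuous maps $f_1,f_2\colon Z\to\V$, gives $\dist_\V(\reim{f_1}F,\reim{f_2}F)\leq\norm{f_1-f_2}_\infty$. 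Chaining the two inequalities yields the first claimed bound; the second (with $\roim{f_i}$) follows identically from the $\roim{\cdot}$ half of Theorem \ref{thm:stability}.

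For the final two inequalities, I would repeat the argument verbatim but invoke Proposition \ref{prop:lowerbound}(ii) in place of (i): when $\norm{\cdot}$ is euclidean, $\ism{\mathfrak{F}}(H,H')\leq\dist_\V(H,H')$ for all $H,H'\in\Derb(\cor_\V)$, and the same two applications of Theorem \ref{thm:stability} close the chain. There is nothing subtle here — the corollary is literally the transitive combination of the two cited results — so I would keep the write-up to a few lines, being careful only to note that $Z$ must be good and locally compact (as assumed in the statement) for Theorem \ref{thm:stability} to apply, and that the euclidean hypothesis is needed solely to access part (ii) of Proposition \ref{prop:lowerbound}.

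Concretely, the proof reads as follows. Let $F\in\Derb(\cor_Z)$ and $f_1,f_2\colon Z\to\V$ be continuous. By Theorem \ref{thm:stability},
\begin{align*}
\dist_\V(\reim{f_1}F,\reim{f_2}F)\leq\norm{f_1-f_2}_\infty,
&&\dist_\V(\roim{f_1}F,\roim{f_2}F)\leq\norm{f_1-f_2}_\infty.
\end{align*}
Since $\mathfrak{F}\subset\Lip_{\leq 1}(\V)$, Proposition \ref{prop:lowerbound}(i) gives $\isme{\mathfrak{F}}(H,H')\leq\dist_\V(H,H')$ for all $H,H'\in\Derb(\cor_\V)$; applying this with $(H,H')=(\reim{f_1}F,\reim{f_2}F)$ and then with $(\roim{f_1}F,\roim{f_2}F)$, and combining with the displayed inequalities, yields
\begin{align*}
\isme{\mathfrak{F}}(\reim{f_1}F,\reim{f_2}F)\leq\norm{f_1-f_2}_\infty,
&&\isme{\mathfrak{F}}(\roim{f_1}F,\roim{f_2}F)\leq\norm{f_1-f_2}_\infty.
\end{align*}
If moreover $\norm{\cdot}$ is euclidean, Proposition \ref{prop:lowerbound}(ii) gives $\ism{\mathfrak{F}}(H,H')\leq\dist_\V(H,H')$, and the same two substitutions produce the remaining two inequalities. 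The main (and only) point to watch is that all hypotheses of the two invoked theorems — $Z$ good and locally compact, $\V$ finite dimensional normed, $\mathfrak{F}$ contained in $\Lip_{\leq 1}(\V)$, and euclidean norm for the $\ism{}$ statements — are exactly those placed in the corollary, so no extra work is required; there is no genuine obstacle.
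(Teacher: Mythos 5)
Your proof is correct and follows exactly the route indicated in the paper, which states the corollary is ``a direct corollary of Proposition \ref{prop:lowerbound} combined with Theorem \ref{thm:stability}''. Your chaining of the two results, including the distinction between parts (i) and (ii) of Proposition \ref{prop:lowerbound} according to whether the norm is euclidean, is precisely the intended argument.
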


\begin{lemma}\label{lem:boulesphere}
	Let  $\mathfrak{F} \subset \lbrace f \in \Lip_0(X,\R) \mid \, L(f) \leq 1 \rbrace$ and $\mathfrak{D}=\lbrace f \in \mathfrak{F}| \, L(f) = 1 \rbrace \cap \mathfrak{F}$. Assume that for every  $f \in \mathfrak{F} \setminus \{0\}$, $f/L(f) \in  \mathfrak{F}$. Then $\isme{\fF}=\isme{\fD}$ and
	$\ism{\fF}=\ism{\fD}$.
\end{lemma}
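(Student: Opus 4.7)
The strategy is to combine the trivial inclusion $\mathfrak{D} \subseteq \mathfrak{F}$ with the homogeneity of the convolution distance on $\R$ under linear rescaling. By the very definition of $\mathfrak{D}$, the supremum defining $\isme{\mathfrak{D}}$ (resp.\ $\ism{\mathfrak{D}}$) is taken over a subset of the supremum defining $\isme{\mathfrak{F}}$ (resp.\ $\ism{\mathfrak{F}}$), giving immediately $\isme{\mathfrak{D}}(F,G) \leq \isme{\mathfrak{F}}(F,G)$ and $\ism{\mathfrak{D}}(F,G) \leq \ism{\mathfrak{F}}(F,G)$ for all $F,G \in \Derb(\cor_X)$. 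The real content is the reverse inequality.

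For the reverse direction, fix $f \in \mathfrak{F}$ with $f \neq 0$. Since $f \in \Lip_0(X,\R)$ vanishes at the basepoint $x_0$ and is not identically zero, we have $\lambda := L(f) > 0$. Setting $g := f/\lambda$, the hypothesis of the lemma places $g \in \mathfrak{F}$, and since $L(g) = 1$, we have $g \in \mathfrak{D}$. Introducing the scaling $h_\lambda : \R \to \R$, $x \mapsto \lambda x$, we factor $f = h_\lambda \circ g$. Since $\lambda > 0$, $h_\lambda$ is a homeomorphism, so $\reim{h_\lambda} \simeq \roim{h_\lambda} \simeq \oim{h_\lambda}$, and we obtain the identities $\reim{f}F \simeq \oim{h_\lambda}\reim{g}F$ as well as $\roim{f}F \simeq \oim{h_\lambda}\roim{g}F$ (and similarly for $G$). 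Applying Lemma \ref{lem:transaffine}(ii) then yields
\begin{equation*}
\dist_\R(\reim{f}F,\reim{f}G) \;=\; \lambda \cdot \dist_\R(\reim{g}F,\reim{g}G) \;\leq\; \dist_\R(\reim{g}F,\reim{g}G) \;\leq\; \isme{\mathfrak{D}}(F,G),
\end{equation*}
where the middle inequality uses $\lambda \leq 1$. The identical computation with $\roim{}$ in place of $\reim{}$ gives the corresponding bound for $\ism{}$. Taking the supremum over $f \in \mathfrak{F} \setminus \{0\}$ then produces $\isme{\mathfrak{F}}(F,G) \leq \isme{\mathfrak{D}}(F,G)$, and similarly for $\ism{}$.

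The only subtlety is the constant-zero function, if it happens to lie in $\mathfrak{F}$, since it cannot be rescaled. I would dispose of this case by the stability theorem (Theorem \ref{thm:stability}): picking any bounded $g \in \mathfrak{D}$, the family $\varepsilon g$ tends to $0$ uniformly as $\varepsilon \to 0^+$, so by the triangle inequality
\begin{equation*}
\dist_\R(\reim{0}F,\reim{0}G) \;\leq\; 2\varepsilon\|g\|_\infty + \varepsilon\,\dist_\R(\reim{g}F,\reim{g}G) \;\xrightarrow[\varepsilon \to 0]{}\; 0,
\end{equation*}
so the zero function contributes nothing to the supremum. The main (and only) non-formal ingredient is thus the scaling identity of Lemma \ref{lem:transaffine}(ii); everything else is bookkeeping.
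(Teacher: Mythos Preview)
Your proof is correct and follows essentially the same route as the paper: both directions use the inclusion $\mathfrak{D}\subset\mathfrak{F}$ for the easy inequality and the rescaling $g=f/L(f)$ together with the homogeneity identity of Lemma~\ref{lem:transaffine}(ii) for the reverse. Your handling of the edge case $f=0$ is more explicit than the paper's (which silently omits it), though your argument there tacitly assumes $\mathfrak{D}$ contains a bounded $g$ with $\dist_\R(\reim{g}F,\reim{g}G)<\infty$; this is not guaranteed in full generality but is harmless in every application the paper makes of the lemma.
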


\begin{proof}

We only prove the statement for  $\isme{\fF}$ and $\isme{\fD}$ as the proof is similar for $\ism{\fF}$ and $\ism{\fD}$. Since $\mathfrak{D} \subset \mathfrak{F}$, $\isme{\fF} \geq \isme{\fD}$.

We now prove the reverse inequality.	
Let $f \in \fF \setminus \lbrace 0\rbrace $. Then $g=f/L(f) \in \fD$ and for every $F, \, G \in \Derb(\cor_X)$
\begin{equation*}
		\dist_\R(\reim{f}F,\reim{f}G) \leq \frac{1}{L(f)}\dist_\R(\reim{f}F,\reim{f}G)=\dist_\R(\reim{g}F,\reim{g}G).
	\end{equation*}
Hence, $\isme{\fF} \leq \isme{\fD}$.
\end{proof}

We now prove some inequalities that we will use to control the regularity of Lipschitz ISM.

\begin{lemma}\label{lem:lip}
Let	$(X,d)$ be a good metric space, $f, g \in \Lip(X)$ and $F,G \in \Derb_{\comp}(\cor_X)$. Set $S=\Supp(F) \cup \Supp(G)$. Then
\begin{equation*}\label{eq:lip}
		|\dist_\R(\reim{f}F,\reim{f}G)-\dist_\R(\reim{g}F,\reim{g}G)| \leq 2 \, 
		\Diam(S) L(f-g).
\end{equation*}
\end{lemma}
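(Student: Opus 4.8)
The plan is to interpolate between $f$ and $g$ via the convex combination $h_t = (1-t)f + tg$ for $t \in [0,1]$, but actually a cleaner route is a direct two-step comparison using the triangle inequality for $\dist_\R$ together with the stability-with-support inequality (Lemma \ref{lem:stabsupport}). First I would write, using the triangle inequality for the pseudo-distance $\dist_\R$ applied to the four objects $\reim{f}F, \reim{g}F, \reim{g}G, \reim{f}G$,
\begin{align*}
\dist_\R(\reim{f}F,\reim{f}G) &\leq \dist_\R(\reim{f}F,\reim{g}F) + \dist_\R(\reim{g}F,\reim{g}G) + \dist_\R(\reim{g}G,\reim{f}G),
\end{align*}
and symmetrically with the roles of $f$ and $g$ swapped. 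Subtracting $\dist_\R(\reim{g}F,\reim{g}G)$ (resp. $\dist_\R(\reim{f}F,\reim{f}G)$) and taking the maximum of the two resulting inequalities yields
\begin{align*}
|\dist_\R(\reim{f}F,\reim{f}G)-\dist_\R(\reim{g}F,\reim{g}G)| \leq \dist_\R(\reim{f}F,\reim{g}F) + \dist_\R(\reim{f}G,\reim{g}G).
\end{align*}

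Next I would bound each of the two terms on the right using Lemma \ref{lem:stabsupport}: since $S = \Supp(F)\cup\Supp(G)$ is a closed set containing $\Supp(F)$, we get $\dist_\R(\reim{f}F,\reim{g}F) \leq \norm{f|_S - g|_S}_\infty = \sup_{x\in S}|f(x)-g(x)|$, and likewise $\dist_\R(\reim{f}G,\reim{g}G) \leq \norm{f|_S - g|_S}_\infty$. So the combined bound becomes $2\,\sup_{x\in S}|(f-g)(x)|$.

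Finally I would convert the sup-norm of $f-g$ on $S$ into a multiple of $\Diam(S)\,L(f-g)$. The function $f-g$ is Lipschitz with Lipschitz constant $L(f-g)$; however, $\sup_{x\in S}|(f-g)(x)|$ is not in general controlled by $\Diam(S)L(f-g)$ unless we know something about the value of $f-g$ somewhere. The point that makes this work — and which I expect to be the main subtlety — is that $\reim{f}F$ depends on $f$ only through its restriction to $S$, and more precisely $\dist_\R(\reim{f}F,\reim{g}F)$ is invariant under adding a constant to both $f$ and $g$ simultaneously (by Lemma \ref{lem:transaffine}(i), translation invariance of $\dist_\R$, applied with the vector $v = c \in \R$). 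Hence, fixing any basepoint $x_0 \in S$, we may replace $f-g$ by $(f-g) - (f-g)(x_0)$ in the estimate, so that $\dist_\R(\reim{f}F,\reim{g}F) \leq \sup_{x\in S}|(f-g)(x)-(f-g)(x_0)| \leq \sup_{x\in S} d(x,x_0)\,L(f-g) \leq \Diam(S)\,L(f-g)$. Combining with the factor $2$ from the two terms gives exactly $|\dist_\R(\reim{f}F,\reim{f}G)-\dist_\R(\reim{g}F,\reim{g}G)| \leq 2\,\Diam(S)\,L(f-g)$, as claimed. (If $S=\emptyset$ both sides are zero and the statement is trivial.)
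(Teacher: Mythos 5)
Your first two steps (the triangle inequality giving $\lvert\dist_\R(\reim{f}F,\reim{f}G)-\dist_\R(\reim{g}F,\reim{g}G)\rvert \leq \dist_\R(\reim{f}F,\reim{g}F) + \dist_\R(\reim{f}G,\reim{g}G)$, then Lemma \ref{lem:stabsupport} to bound each summand by $\norm{(f-g)|_S}_\infty$) are fine and close to the paper's. The gap is in your final step. Adding the \emph{same} constant $c$ to both $f$ and $g$ leaves $f-g$ unchanged, so Lemma \ref{lem:transaffine}(i) applied as you describe cannot produce the replacement of $f-g$ by $(f-g)-(f-g)(x_0)$. To actually center $f-g$ you would need to translate $f$ and $g$ by \emph{different} constants (say $f\mapsto f-f(x_0)$, $g\mapsto g-g(x_0)$), and the cross-term $\dist_\R(\reim{f}F,\reim{g}F)$ is \emph{not} invariant under translating only one side of the pair. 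Indeed your intermediate claim $\dist_\R(\reim{f}F,\reim{g}F)\leq \Diam(S)\,L(f-g)$ is false in general: take $X=\R$, $F=\cor_{[0,1]}$, $f=\mathrm{id}$, $g=f+10$; then $L(f-g)=0$ while $\dist_\R(\cor_{[0,1]},\cor_{[10,11]})=\tfrac12>0$.

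The fix, which is what the paper does, is to perform the centering \emph{before} the triangle inequality, where the two sheaves being compared are pushed forward by the \emph{same} function. Setting $\tilde f=f-f(x_0)$ and $\tilde g=g-g(x_0)$, one has $\reim{\tilde f}F=\oim{\tau_{-f(x_0)}}\reim{f}F$ and $\reim{\tilde f}G=\oim{\tau_{-f(x_0)}}\reim{f}G$ — the \emph{same} shift $-f(x_0)$ on both — so Lemma \ref{lem:transaffine}(i) gives $\dist_\R(\reim{f}F,\reim{f}G)=\dist_\R(\reim{\tilde f}F,\reim{\tilde f}G)$, and likewise for $g$. Only then is the triangle inequality applied to the pair $(\tilde f,\tilde g)$; the resulting cross-terms $\dist_\R(\reim{\tilde f}F,\reim{\tilde g}F)$ are bounded via Lemma \ref{lem:stabsupport} by $\norm{(\tilde f-\tilde g)|_S}_\infty$, and since $(\tilde f-\tilde g)(x_0)=0$ this is $\leq L(f-g)\,\Diam(S)$. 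So your decomposition is right, but the translation must be applied to the ``diagonal'' distances $\dist_\R(\reim{f}F,\reim{f}G)$ (same map on both arguments), not to the cross-terms, which forces it before, not after, the triangle inequality.
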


\begin{proof}
Let $x_0 \in S$. The distance $\dist_\R$ is invariant by translation. Hence setting $\tilde{f}(x)=f(x)-f(x_0)$ and $\tilde{g}(x)=g(x)-g(x_0)$, we get that
\begin{align*}
    \dist_\R(\reim{f}F,\reim{f}G)=\dist_\R(\reim{\tilde{f}}F,\reim{\tilde{f}}G),\\ \dist_\R(\reim{g}F,\reim{g}G)=\dist_\R(\reim{\tilde{g}}F,\reim{\tilde{g}}G).
\end{align*}
Thus,
\begin{align*}
        |\dist_\R(\reim{f}F,\reim{f}G)-\dist_\R(\reim{g}F,\reim{g}G)| & =
		|\dist_\R(\reim{\tilde{f}}F,\reim{\tilde{f}}G)-\dist_\R(\reim{\tilde{g}}F,\reim{\tilde{g}}G)|
\end{align*}
and
\begin{align*}
		|\dist_\R(\reim{\tilde{f}}F,\reim{\tilde{f}}G)-\dist_\R(\reim{\tilde{g}}F,\reim{\tilde{g}}G)| &\leq |\dist_\R(\reim{\tilde{f}}F,\reim{\tilde{f}}G)-\dist_\R(\reim{\tilde{f}}F,\reim{\tilde{g}}G)| \\
		& \hspace{0.35cm}+ |\dist_\R(\reim{\tilde{f}}F,\reim{\tilde{g}}G)-\dist(\reim{\tilde{g}}F,\reim{\tilde{g}}G)|\\
		&\leq \dist_\R(\reim{\tilde{f}}F,\reim{\tilde{g}}F) +\dist_\R(\reim{\tilde{f}}G,\reim{\tilde{g}}G)\\
		&\leq 2 \norm{\tilde{f}|_S-\tilde{g}|_S}_\infty \\ 
		&\leq 2 \, \sup_{x \in S}(d(x,x_0)) L(f-g)\\
		&\leq 2 \Diam(S) \, L(f-g).
	\end{align*}
\end{proof}

\subsection{Distance kernel}

We now give a first example of ISM.

Let $(\V, \norm{\cdot})$ be a finite dimensional normed vector space such that the distance $(x,y) \mapsto \norm{x-y}$ is subanalytic.
We set 
\begin{equation*}
	\ell_{x_0} \colon X \to \R, \quad x \mapsto \ell_{x_0}(x)=\norm{x-x_0}.
\end{equation*}
For every $x$ in $\V$, the map $\ell_x$ is 1-Lipschitz and proper. Let $X$ be a subset of $\V$. We set $\fF_X=\{ \ell_{x}, \; x \in X \}$ and consider the Lipschitz ISM, $\delta_{\fF_X}$ generated by $\fF_X$, namely

\begin{equation*}
    \delta_\fF(F,G)=\sup_{\ell_x \in \fF_X}\dist_\R(\reim{\ell_x}F,\reim{\ell_x}G) , \quad \text{for} \quad F, G \in \Derb(\cor_\V).
\end{equation*}

\begin{remark} \noindent (a) More generally the above ISM can be defined on any real analytic manifold endowed with a subanalytic distance.\\ 
\noindent(b) Since the applications $\ell_x$ are proper, we have that $\isme{\fF}=\ism{\fF}$.
\end{remark}

We recall the definition of the local Euler-Poincaré index of an object  $F \in \Derb_\rc(\cor_X)$.
\begin{equation*}
    \chi(F) \colon X \rightarrow \Z, \quad x \mapsto \chi(F)(x):=\sum_{i \in \Z} (-1)^i \dim(\Hn^i(F_x)).
\end{equation*}

\begin{proposition}\label{prop:almost_close_distance}
Let $F, G \in \Derb_{\rc}(\cor_\V)$ and assume that $\isme{\fF_\V}(F,G)=0$. Then,
\begin{enumerate}[(i)]
	\item for every $x \in \V$ and $r>0$, $\rsect(B^\prime(x,r);F)\simeq \rsect(B^\prime(x,r);G)$,
	\item $F_x \simeq G_x$
	\item $\chi(F)=\chi(G)$
\end{enumerate} 
\end{proposition}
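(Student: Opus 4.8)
The hypothesis $\isme{\fF_\V}(F,G)=0$ means that $\dist_\R(\reim{\ell_x}F,\reim{\ell_x}G)=0$ for every $x\in\V$. Since $F,G\in\Derb_\rc(\cor_\V)$ and the maps $\ell_x$ are proper, the pushforwards $\reim{\ell_x}F,\reim{\ell_x}G$ lie in $\Derb_\rc(\cor_\R)$, so by the closedness theorem for the convolution distance on the line (Theorem \ref{thm:closedis}) we deduce $\reim{\ell_x}F\simeq\reim{\ell_x}G$ for all $x\in\V$. The plan is to extract the three pointwise/cohomological consequences from these isomorphisms by taking stalks, global sections on spheres, and Euler characteristics.

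\begin{proof}
By Theorem \ref{thm:closedis}, the assumption $\isme{\fF_\V}(F,G)=0$ is equivalent to
\begin{equation*}
\reim{\ell_x}F\simeq \reim{\ell_x}G \quad\text{for all } x\in\V.
\end{equation*}
Fix $x\in\V$ and $r>0$, and write $B'(x,r)=\{y\in\V\mid \norm{y-x}< r\}$ for the open ball, so that $B'(x,r)=\opb{\ell_x}((-\infty,r))=\opb{\ell_x}([0,r))$. Let $a_\R\colon\R\to\{\pt\}$ be the constant map. Since $\ell_x$ is proper and $[0,r)$ is a (locally closed) subanalytic subset of $\R$ with $\opb{\ell_x}([0,r))=B'(x,r)$, we compute
\begin{align*}
\rsect_c(B'(x,r);F) &\simeq \rsect_c(\R;\cor_{[0,r)}\otimes \reim{\ell_x}F)\\
&\simeq \rsect_c(\R;\cor_{[0,r)}\otimes \reim{\ell_x}G)\\
&\simeq \rsect_c(B'(x,r);G).
\end{align*}
The same computation with $[0,r)$ replaced by the closed interval $[0,r]$ (whose preimage is the closed ball) and with $\rsect$ in place of $\rsect_c$, using properness of $\ell_x$ to pass between the two, yields $\rsect(B'(x,r);F)\simeq\rsect(B'(x,r);G)$, which is assertion (i). For (ii), take stalks: $(\reim{\ell_x}F)_0\simeq\rsect_c(\opb{\ell_x}(0);F|_{\opb{\ell_x}(0)})$ and $\opb{\ell_x}(0)=\{x\}$, so $(\reim{\ell_x}F)_0\simeq F_x$, and likewise $(\reim{\ell_x}G)_0\simeq G_x$; the isomorphism $\reim{\ell_x}F\simeq\reim{\ell_x}G$ gives $F_x\simeq G_x$. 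Finally, (iii) follows from (ii): $\chi(F)(x)=\sum_i(-1)^i\dim \Hn^i(F_x)=\sum_i(-1)^i\dim\Hn^i(G_x)=\chi(G)(x)$ for every $x\in\V$, i.e.\ $\chi(F)=\chi(G)$.
\end{proof}

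\textbf{Main obstacle.} The only delicate point is the passage, for assertion (i), between the closed-ball statement with ordinary sections and the open-ball statement with compactly supported sections — one must be careful that $\cor_{[0,r)}\otimes(\cdot)$ and $\cor_{[0,r]}\otimes(\cdot)$ are the right kernels and that properness of $\ell_x$ legitimately converts $\reim{\ell_x}$ into $\roim{\ell_x}$ where needed; once $\reim{\ell_x}F\simeq\reim{\ell_x}G$ is in hand the rest is formal. Everything else is a routine application of the projection formula, base change, and Theorem \ref{thm:closedis}.
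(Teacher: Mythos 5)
Your reduction to the isomorphisms $\reim{\ell_x}F \simeq \reim{\ell_x}G$ via Theorem \ref{thm:closedis} matches the paper exactly, and your direct derivation of (ii) by evaluating the stalk of $\reim{\ell_x}F$ at $0$ (using that $\opb{\ell_x}(0)=\{x\}$) is a clean shortcut that bypasses (i) entirely; the paper instead deduces (ii) from (i). Assertion (iii) then follows as you say. So (ii) and (iii) are fine.

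However, your treatment of (i) has a genuine gap. Your first displayed computation correctly yields $\rsect_c(B'(x,r);F)\simeq\rsect_c(B'(x,r);G)$ via the projection formula, but assertion (i) is about \emph{ordinary} sections $\rsect$ over the \emph{open} ball. Your proposed fix---replace $[0,r)$ by $[0,r]$ and $\rsect_c$ by $\rsect$---does not give this: as you note parenthetically, $\opb{\ell_x}([0,r])$ is the \emph{closed} ball, so at best (using compactness of the closed ball to identify $\rsect$ and $\rsect_c$) you would obtain $\rsect(\overline{B(x,r)};F)\simeq\rsect(\overline{B(x,r)};G)$, not the open-ball statement claimed. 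The paper's argument avoids this by working with the adjunction rather than the projection formula: since $B'(x,r)$ is open, $\rsect(B'(x,r);F)\simeq\RHom[\cor_\V](\cor_{B'(x,r)},F)\simeq\RHom[\cor_\V](\opb{\ell_x}\cor_{[0,r)},F)\simeq\RHom[\cor_\R](\cor_{[0,r)},\roim{\ell_x}F)$, and then properness of $\ell_x$ gives $\roim{\ell_x}F\simeq\reim{\ell_x}F$, whence the result from the already-established isomorphism. You should replace the final sentence of your argument for (i) with this adjunction computation.
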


\begin{proof}
It is clear that (i) implies (ii) and (ii) implies (iii). Hence, we only prove (i).
Let $F, G \in \Derb_{\rc}(\cor_\V)$. It follows from the definition of $\isme{\fF_\V}$, that for every $x \in \V$ $\dist_\R(\reim{\ell_{x}}F,\reim{\ell_{x}}G)=0$. Moreover since for every $x$, $\ell_{x}$ is proper and subanalytic, $\reim{\ell_x}F$ and $\reim{\ell_x}G$ are constructible. Hence, by \cite[Thm.~6.3]{BG18}, we have that
\begin{equation}\label{iso:geodesique}
    \reim{\ell_{x_0}}F \simeq \reim{\ell_{x_0}}G.
\end{equation}
Moreover,
 \begin{align*}
 \rsect([0,r[,\reim{\ell_{x_0}}F) &\simeq \RHom[\cor_\V](\cor_{[0,r[},\reim{\ell_{x_0}}F)\\
 & \simeq \RHom[\cor_\V](\opb{\ell_{x_0}}\cor_{[0,r[},F)\\
 & \simeq \RHom[\cor_\V](\cor_{\opb{\ell_{x_0}}([0,r[)},F)\\ 
 & \simeq \rsect(B^\prime(x_0,r),F).
 \end{align*}
Hence, Applying the functor $\rsect([0,r[,\cdot)$ to both sides of the isomorphism \eqref{iso:geodesique} we get that $\rsect(B^\prime(x_0,r),F)\simeq\rsect(B^\prime(x_0,r),G)$.
\end{proof}

\subsection{Linear ISM}
We assume that $(\V,\norm{\cdot})$ is a real finite dimensional normed vector space and denote by $\V^\ast$ its dual. Again, by analogy with integral probability metrics and in view of Subsection \ref{subsec:lipism} , it is natural to consider the set $\fL=\lbrace u \in \V^\ast \mid L(u) \leq 1 \rbrace$ and introduce the distance $\isme{\fL}$ i.e. for $F,\; G \in \Derb(\cor_\V)$
\begin{equation*}
	\isme{\fL}(F,G)=\sup_{\{u \in \V^\ast \mid L(u) \leq 1 \}} \dist_\R(\reim{u}F,\reim{u}G).
\end{equation*}

 Remark that for a linear map $u \in \V^\ast$, $L(u)$ is equal to the operator norm $\opnorm{u}$ of $u$. This, together with Lemma \ref{lem:boulesphere} implies that
 \begin{equation*}
     \isme{\fL}(F,G)=\sup_{u \in \mathbb{S}^\ast } \dist_\R(\reim{u}F,\reim{u}G),
 \end{equation*}
 where $\mathbb{S}^\ast$ is the unit sphere in $(\V^\ast,\opnorm{\cdot})$.

\begin{proposition} \label{prop:vanishinglinism}
Let $F \in \Derb_{\rc }(\cor_{\V_\infty})$. If $\isme{\fL}(F,0)=0$, then $F \simeq 0$.
\end{proposition}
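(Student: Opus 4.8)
The statement to prove is Proposition \ref{prop:vanishinglinism}: if $F \in \Derb_{\rc}(\cor_{\V_\infty})$ satisfies $\isme{\fL}(F,0) = 0$, then $F \simeq 0$. The plan is to reduce this to the vanishing criterion already established in Proposition \ref{prop:vanishproj}, which says that a sheaf $F \in \Derb(\cor_\V)$ with $\reim{u}F \simeq 0$ for all $u \in \V^\ast$ must vanish. The hypothesis $\isme{\fL}(F,0) = 0$ means $\sup_{u \in \mathbb{S}^\ast} \dist_\R(\reim{u}F, \reim{u}0) = 0$, i.e. $\dist_\R(\reim{u}F, 0) = 0$ for every $u$ in the unit sphere $\mathbb{S}^\ast$ of $\V^\ast$.

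First I would observe that since $F$ is constructible up to infinity and each $u \in \V^\ast$ is linear, hence an element of $\mathcal{A}(\V_\infty)$ (a linear form is certainly subanalytic up to infinity — its graph is a linear, hence subanalytic, subset of the projective compactification), Proposition \ref{prop:opconstructinf} guarantees that $\reim{u}F \in \Derb_{\rc}(\cor_{\R_\infty}) \subset \Derb_{\rc}(\cor_\R)$. Therefore $\reim{u}F$ is a constructible sheaf on $\R$, and I can invoke the closedness property of the convolution distance on $\Derb_{\rc}(\cor_\R)$, namely Theorem \ref{thm:closedis} (specifically the final assertion: $\dist_\R(H,0)=0$ iff $H \simeq 0$). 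Applying this with $H = \reim{u}F$ and using $\dist_\R(\reim{u}F,0)=0$, I conclude $\reim{u}F \simeq 0$ for every $u \in \mathbb{S}^\ast$.

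Next I need to extend this from the unit sphere to all of $\V^\ast$. For $u = 0$, $\reim{0}F \simeq \roim{a_\V}F \boxtimes (\text{something supported at }0)$ — more precisely $\reim{0}F$ is the pushforward by the zero map, which is $\rsect_c(\V;F)$ placed at the origin; but in any case, for $u \neq 0$ write $u = \opnorm{u}\cdot v$ with $v = u/\opnorm{u} \in \mathbb{S}^\ast$, so $u = h_{\opnorm{u}} \circ v$ where $h_\lambda$ is the scaling map on $\R$. Then $\reim{u}F \simeq \reim{h_{\opnorm{u}}}\reim{v}F \simeq \reim{h_{\opnorm{u}}} 0 \simeq 0$. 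This handles all nonzero $u$. For $u=0$: if $\V = 0$ the statement is trivial, and otherwise pick any $v \in \mathbb{S}^\ast$; since $\reim{v}F\simeq 0$ we get $\rsect_c(\V;F) \simeq \rsect_c(\R;\reim{v}F) \simeq 0$, and $\reim{0}F$ is precisely $\rsect_c(\V;F)$ supported at the origin of $\R$, hence also $0$. Thus $\reim{u}F \simeq 0$ for all $u \in \V^\ast$, and Proposition \ref{prop:vanishproj} gives $F \simeq 0$.

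The only mildly delicate point — the "main obstacle," though it is minor — is making sure the closedness theorem (Theorem \ref{thm:closedis}) is legitimately applicable, which requires constructibility of $\reim{u}F$ on $\R$; this is exactly why the hypothesis is stated for $F \in \Derb_{\rc}(\cor_{\V_\infty})$ rather than merely $F \in \Derb_{\rc}(\cor_\V)$, and Proposition \ref{prop:opconstructinf} is the tool that supplies it. Everything else is the straightforward reduction to Proposition \ref{prop:vanishproj} together with the scaling invariance of pushforward along linear forms. I would write the argument in three short moves: (1) unravel the hypothesis to get $\dist_\R(\reim{u}F,0)=0$ on $\mathbb{S}^\ast$; (2) upgrade to $\reim{u}F\simeq 0$ via constructibility plus Theorem \ref{thm:closedis}; (3) extend to all $u\in\V^\ast$ by scaling and conclude with Proposition \ref{prop:vanishproj}.
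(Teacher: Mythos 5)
Your proposal is correct and follows essentially the same path as the paper: use constructibility up to infinity (via Proposition \ref{prop:opconstructinf}) to put $\reim{u}F$ into $\Derb_{\rc}(\cor_\R)$, apply the closedness Theorem \ref{thm:closedis} to upgrade $\dist_\R(\reim{u}F,0)=0$ to $\reim{u}F\simeq 0$, and conclude with Proposition \ref{prop:vanishproj}. The only difference is that you spell out the scaling step passing from the unit sphere/ball to all of $\V^\ast$, which the paper states without comment; this is a reasonable bit of extra care rather than a different method.
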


\begin{proof}
	Let $F  \in \Derb_{\rc }(\cor_{\V_\infty})$ such that $\isme{\fL}(F,0)=0$. Since $F \in \Derb_{\rc }(\cor_{\V_\infty})$, it follows that $\reim{u}F \in \Derb_{\rc }(\cor_{\V_\infty})$. Applying Theorem \ref{thm:closedis}, we deduce that for every $u \in \V^\ast$, $\reim{u}F=0$. Now, Proposition \ref{prop:vanishproj} implies that $F \simeq 0$.
\end{proof}

\begin{corollary} \label{cor:contdist}
	Let $u, v \in \V^\ast$ and $F,G \in \Derb_{\comp}(\cor_\V)$. Set $S=\Supp(F) \cup \Supp(G)$. Then
\begin{equation*}
		|\dist(\reim{u}F,\reim{u}G)-\dist(\reim{v}F,\reim{v}G)| \leq 2 \, 
		\Diam(S) \, \opnorm{u-v}.
\end{equation*}
\end{corollary}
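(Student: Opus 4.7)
The plan is to deduce this corollary directly from Lemma \ref{lem:lip}, since the corollary is essentially the specialization of that lemma to linear forms on a normed vector space.

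First I would observe that every element $u \in \V^\ast$ is in particular a continuous linear map, hence a Lipschitz function from $(\V, \norm{\cdot})$ to $(\R, |\cdot|)$ with Lipschitz seminorm $L(u)$ (defined by equation \eqref{eq:LipNorm}) equal to the operator norm $\opnorm{u}$. Since the difference $u - v$ is again an element of $\V^\ast$, we have $L(u-v) = \opnorm{u-v}$.

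Now I would apply Lemma \ref{lem:lip} to the pair $(f,g) = (u,v) \in \Lip(\V) \times \Lip(\V)$ together with the compactly supported objects $F, G \in \Derb_{\comp}(\cor_\V)$ and $S = \Supp(F) \cup \Supp(G)$. This directly yields
\begin{equation*}
|\dist_\R(\reim{u}F,\reim{u}G)-\dist_\R(\reim{v}F,\reim{v}G)| \leq 2 \, \Diam(S) \, L(u-v) = 2 \, \Diam(S) \, \opnorm{u-v},
\end{equation*}
which is the desired inequality. There is no real obstacle here: the whole content is the identification $L(u-v) = \opnorm{u-v}$, which is immediate from the definitions of the operator norm and of $L$ in equation \eqref{eq:LipNorm}, combined with the fact that Lemma \ref{lem:lip} has already been established in the previous subsection.
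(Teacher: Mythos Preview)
Your proposal is correct and matches the paper's own proof essentially verbatim: the paper simply observes that for a linear form $u \in \V^\ast$ one has $L(u) = \opnorm{u}$, and then invokes Lemma~\ref{lem:lip}.
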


\begin{proof}
  For a linear map $u \in \V^\ast$, $L(u)$ is equal to the operator norm $\opnorm{u}$ of $u$. Then, the inequality follows from Lemma \ref{lem:lip}.
\end{proof}

The above proposition implies that for $F,G \in \Derb_{\comp}(\cor_\V)$ the map
\begin{equation*}
	\Upsilon_{F,G} \colon \V^\ast \to \R, \; u\mapsto \dist_\R(\reim{u}F,\reim{u}G)
\end{equation*}
is Lipschitz on $\V^\ast$. It follows from the Rademacher Theorem that the map $\Upsilon_{F,G}$ is $\Cont{1}$ almost everywhere. Since it is continuous and $\lbrace u \in \V^\ast \mid L(u) \leq 1 \rbrace$ is compact $\Upsilon_{F,G}$ reaches its supremum that is for $F,G \in \Derb_{\comp}(\cor_\V)$
\begin{equation*}
	\isme{\fL}(F,G)= \max_{u \in \fL} \Upsilon_{F,G}(u).
\end{equation*}

\subsection{Sliced convolution distance}\label{subsec:sliced}
The result of the preceding section allows to introduce the notion of sliced convolution distance for sheaves in $\Derb_\comp(\cor_\V)$. For the sake of simplicity, we assume that $\V$ is endowed with the euclidean norm $\norm{\cdot}_2$ and let $\dist_\V$ be the convolution distance associated with $\norm{\cdot}_2$.  Corollary \ref{cor:contdist} implies that for $F, G \in \Derb_\comp(\cor_\V)$ the map $\Upsilon_{F,G} \colon \V^\ast \to \R, \; u\mapsto \dist_\R(\reim{u}F,\reim{u}G)$ is measurable. Hence, for $p \in \N^\ast$, we set
\begin{equation} \label{def:sliced}
     \sliced_p(F,G) := \frac{1}{\vol(\mathbb{S}^\ast)} \left(\int_{\mathbb{S}^\ast} \Upsilon_{F,G}(u)^p \, du \right)^\frac{1}{p}
\end{equation}
where $\mathbb{S}^\ast$ is the Euclidean sphere of radius 1 of $\V^\ast$ and $du$ is the canonical volume form on $\mathbb{S}^\ast$. This is the  \emph{$p$-th sliced convolution distance}.

The following proposition is clear.

\begin{proposition}
The application $\sliced_p \colon  \Ob(\Derb_\comp(\cor_\V)) \times \Ob(\Derb_\comp(\cor_\V)) \to \R_+$ is a pseudo-distance and for $F, \, G \in \Derb_\comp(\cor_{\V})$
\begin{equation*}
    \sliced_p(F,G) \leq \dist_\V(F,G).
\end{equation*}
\end{proposition}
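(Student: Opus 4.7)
The plan is to verify the pseudo-distance axioms in turn and then deduce the comparison with $\dist_\V$ from a uniform Lipschitz bound. The integrals make sense because Corollary \ref{cor:contdist} shows that $\Upsilon_{F,G}$ is Lipschitz (in particular continuous) on the compact sphere $\mathbb{S}^\ast$, so $\Upsilon_{F,G}^p$ is bounded and measurable.

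First I would check symmetry and the vanishing on the diagonal. These are immediate from the corresponding properties of $\dist_\R$: for every $u \in \mathbb{S}^\ast$, $\Upsilon_{F,G}(u) = \dist_\R(\reim{u}F,\reim{u}G) = \dist_\R(\reim{u}G,\reim{u}F) = \Upsilon_{G,F}(u)$, and $\Upsilon_{F,F} \equiv 0$. Integrating, $\sliced_p(F,G) = \sliced_p(G,F)$ and $\sliced_p(F,F) = 0$.

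For the triangle inequality, the strategy is to apply the pointwise triangle inequality for $\dist_\R$ on $\Derb(\cor_\R)$, obtaining
\begin{equation*}
\Upsilon_{F,H}(u) \;\leq\; \Upsilon_{F,G}(u) + \Upsilon_{G,H}(u), \qquad u \in \mathbb{S}^\ast,
\end{equation*}
and then to invoke Minkowski's inequality for the $L^p$-norm on $(\mathbb{S}^\ast, du)$. This yields $\|\Upsilon_{F,H}\|_{L^p(\mathbb{S}^\ast)} \leq \|\Upsilon_{F,G}\|_{L^p(\mathbb{S}^\ast)} + \|\Upsilon_{G,H}\|_{L^p(\mathbb{S}^\ast)}$, which upon dividing by $\vol(\mathbb{S}^\ast)$ gives $\sliced_p(F,H) \leq \sliced_p(F,G) + \sliced_p(G,H)$.

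For the comparison with the convolution distance, the key observation is that every $u \in \mathbb{S}^\ast$ is a $1$-Lipschitz linear map $(\V,\norm{\cdot}_2) \to (\R,|\cdot|)$, simply because $\opnorm{u} = 1$ by definition of the dual Euclidean norm. Since $\V$ and $\R$ are finite-dimensional vector spaces equipped with Euclidean distances, the hypotheses of Theorem \ref{thm:lipstability} are satisfied and give $\Upsilon_{F,G}(u) = \dist_\R(\reim{u}F,\reim{u}G) \leq \dist_\V(F,G)$ uniformly in $u \in \mathbb{S}^\ast$. Substituting this uniform bound into the definition of $\sliced_p$ yields the claimed inequality. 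There is no serious obstacle: the entire argument is a routine combination of Theorem \ref{thm:lipstability} with Minkowski's inequality, and the only slightly delicate point, namely measurability of $u \mapsto \Upsilon_{F,G}(u)$, is already handled by the Lipschitz regularity established in Corollary \ref{cor:contdist}.
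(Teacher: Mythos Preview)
Your argument is correct and is exactly what the paper has in mind; the paper gives no proof at all, declaring the proposition ``clear,'' and your verification of symmetry, vanishing on the diagonal, the triangle inequality via Minkowski, and the comparison via Theorem~\ref{thm:lipstability} is the natural unpacking of that claim.

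One small point worth being explicit about in the last step: substituting the uniform bound $\Upsilon_{F,G}(u)\le \dist_\V(F,G)$ into the paper's formula gives
\[
\sliced_p(F,G)\;\le\;\frac{1}{\vol(\mathbb{S}^\ast)}\bigl(\dist_\V(F,G)^p\,\vol(\mathbb{S}^\ast)\bigr)^{1/p}
\;=\;\vol(\mathbb{S}^\ast)^{\,1/p-1}\,\dist_\V(F,G),
\]
so the inequality $\sliced_p\le\dist_\V$ is literally obtained only when $\vol(\mathbb{S}^\ast)\ge 1$ (or $p=1$). This is a harmless artefact of where the paper places the normalizing constant rather than a flaw in your reasoning; with the normalization $\bigl(\tfrac{1}{\vol(\mathbb{S}^\ast)}\int_{\mathbb{S}^\ast}\Upsilon_{F,G}^p\,du\bigr)^{1/p}$ the bound is clean for all $p$.
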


\begin{lemma}
Let $F \in \Derb_{\rc,\comp}(\cor_{\V_\infty})$. If $\sliced_p(F,0)=0$, then $F \simeq 0$.
\end{lemma}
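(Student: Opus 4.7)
The plan is to reduce this to the linear ISM nullity criterion already established in Proposition \ref{prop:vanishinglinism}. Unfolding the definition \eqref{def:sliced}, the hypothesis $\sliced_p(F,0)=0$ means
\begin{equation*}
\int_{\mathbb{S}^\ast} \Upsilon_{F,0}(u)^p\, du = 0,
\end{equation*}
where $du$ is the canonical volume form on the Euclidean unit sphere $\mathbb{S}^\ast \subset \V^\ast$.

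By Corollary \ref{cor:contdist}, since $F$ has compact support, the map $u \mapsto \Upsilon_{F,0}(u) = \dist_\R(\reim{u}F, 0)$ is Lipschitz on $\V^\ast$, hence in particular continuous on the compact manifold $\mathbb{S}^\ast$. It is moreover non-negative. Since $du$ has full support on $\mathbb{S}^\ast$, the vanishing of the integral of a non-negative continuous function forces pointwise vanishing: $\Upsilon_{F,0}(u) = 0$ for every $u \in \mathbb{S}^\ast$.

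Taking the supremum over the unit sphere and using the identification of $\isme{\fL}$ as $\sup_{u \in \mathbb{S}^\ast} \Upsilon_{F,0}(u)$ (which follows from Lemma \ref{lem:boulesphere} applied to $\fF = \fL$, since $L(u) = \opnorm{u}$ for a linear form $u$), I conclude that $\isme{\fL}(F,0)=0$. The hypothesis $F \in \Derb_{\rc,\comp}(\cor_{\V_\infty})$ ensures in particular that $F \in \Derb_{\rc}(\cor_{\V_\infty})$, so Proposition \ref{prop:vanishinglinism} applies directly and yields $F \simeq 0$.

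No serious obstacle arises in this argument: the non-trivial inputs (continuity/Lipschitz regularity of $\Upsilon_{F,0}$, the identification of the linear ISM as a supremum over $\mathbb{S}^\ast$, and the nullity test via linear pushforwards) have all been set up in the preceding sections. The only mildly delicate point is the standard measure-theoretic passage from a vanishing integral to pointwise vanishing, for which continuity of the integrand and full support of the volume form are the required ingredients.
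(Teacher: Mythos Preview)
Your proof is correct and follows essentially the same route as the paper's: both use continuity of $\Upsilon_{F,0}$ (from Corollary~\ref{cor:contdist}) together with the vanishing integral to obtain $\Upsilon_{F,0}\equiv 0$ on $\mathbb{S}^\ast$, then invoke Proposition~\ref{prop:vanishinglinism}. The only cosmetic difference is that you pass through Lemma~\ref{lem:boulesphere} to identify $\isme{\fL}(F,0)$ as the supremum over $\mathbb{S}^\ast$, whereas the paper uses Lemma~\ref{lem:transaffine}~(ii) to extend the vanishing from $\mathbb{S}^\ast$ to all of $\V^\ast$; these are interchangeable for the purpose at hand.
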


\begin{proof}
 It follows from the hypothesis that the function $\Upsilon_{F,0}$ is zero almost everywhere on $\mathbb{S}^\ast$ and as it is continuous, $\Upsilon_{F,0}=0$ on $\mathbb{S}^\ast$. Lemma \ref{lem:transaffine} (ii) implies that $\Upsilon_{F,0}=0$ on $\V^\ast$. Then, the result follows from Proposition \ref{prop:vanishinglinism}.  
\end{proof}

\subsection{A Question}

The following question as well as the study of integral sheaf metrics was already mentioned in \cite{HDRpetit}. Here, we recall this question and remark a few facts.

\begin{question} \label{conj:distance} Let $F, G \in \Derb(\cor_\V)$. When does the following equality hold
	\begin{equation*}
		\dist_\V(F,G) \stackrel{?}{=}\sup_{f \in \Lip_{\leq 1}}(\dist_\R(\roim{f}F,\roim{f}G))
	\end{equation*}
Is it sufficient to assume either $\dist_\V(F,G) < \infty$ or $F, G$ are constructible up to infinity and $\sup_{f \in \Lip_{\leq 1}}(\dist_\R(\roim{f}F,\roim{f}G)) < \infty$?
\end{question}

\begin{remark} The fact that we are working over a vector space, hence contractible, is essential for the question. Indeed, let us provide a counterexample where the space is no longer contractible. Let $\mathbb{S}^1 = \{z \in \C \mid |z| = 1\}$ equipped with its standard riemannian structure. Let $m : \mathbb{S}^1 \to \mathbb{S}^1$ defined by $m(z) = z^2$. Let $F = \cor_{\mathbb{S}^1}$ and $G = \text{R} m_\ast F$. It is clear that $F$ and $G$ are non-isomorphic local systems on $\mathbb{S}^1$. Therefore, by \cite[Proposition 2.3.7]{PS20}, $\dist_{\mathbb{S}^1}(F,G) = \infty$. Let $p : \mathbb{S}^1 \to \R$ be a $1$-Lipschitz map.  Then one has that $\text{supp}(\text{R} p_\ast F) , \text{supp}(\text{R} p_\ast G) \subset \text{Im} (p) $, in particular, the supports of $\text{R} p_\ast F$ and $\text{R} p_\ast G$ are compact. Moreover, since $p$ is 1-Lipschitz, $\text{diam}(\text{Im}(p)) \leq \text{diam}(\mathbb{S}^1) = 2$. Since $\text{R} p_\ast F$ and $\text{R} p_\ast G$ have isomorphic global sections, one deduces by \cite[Remark 2.5 (i)]{KS18} that $\dist_{\R}(\text{R} p_\ast F,\text{R} p_\ast G) \leq 2$. To summarize, we have:
\begin{equation*}
\dist_{\mathbb{S}^1}(F,G) = \infty ~~~~~\text{and}~~~~~ \sup_{f\in \Lip_{\leq 1}}(\dist_\R(\roim{f}F,\roim{f}G)) \leq 2,
\end{equation*}
which contradicts the question over $\mathbb{S}^1$.
\end{remark}

\section{Metric for multi-parameter persistence modules}

In this section, we make use of the structure of $\gamma$-sheaves to construct metrics which are efficiently computable for sublevel sets persistence modules by relying on software dedicated to one-parameter persistence modules and recent advances on optimization of topological functionals \cite{poulenard:hal-02953321}. One of these distances is an ISM whereas the second is a sliced distance. We study the properties of these two metrics.

\subsection{\texorpdfstring{$\gamma$}{gamma}-linear ISM}

We now introduced an ISM tailored for $\gamma$-sheaves. In view of Proposition \ref{prop:oimgamma}, it is natural to consider the following pseudo-distance on $\Derb_{\gamma^{\circ, a}}(\cor_\V)$ (Note that though this pseudo-distance is well defined on $\Derb(\cor_X)$, it is mostly interesting $\Derb_{\gamma^{\circ, a}}(\cor_\V)$).

\begin{equation} \label{dist:gammaism}
	\isme{\gamma^{\circ}}(F,G)=\sup_{\{u \in \gamma^\circ \mid L(u) \leq 1 \}} \dist_\R(\reim{u}F,\reim{u}G)
\end{equation}

\begin{lemma} The following equality holds
\begin{equation*}
    \isme{\gamma^{\circ}}(F,G)=\sup_{\{u \in \Int(\gamma^\circ) \mid L(u) \leq 1 \}} \dist_\R(\reim{u}F,\reim{u}G).
\end{equation*}
\end{lemma}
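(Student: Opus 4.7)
The inequality $\leq$ following from $\Int(\gamma^\circ)\subseteq \gamma^\circ$ requires no argument, so the plan is to establish the reverse inequality. Fix $u\in\gamma^\circ$ with $L(u)\leq 1$; if $u\in\Int(\gamma^\circ)$ there is nothing to prove, so I may assume $u\in\partial\gamma^\circ$. The goal is then to show
\begin{equation*}
\dist_\R(\reim{u}F,\reim{u}G) \;\leq\; \sup_{\{v\in\Int(\gamma^\circ)\mid L(v)\leq 1\}}\dist_\R(\reim{v}F,\reim{v}G),
\end{equation*}
and the natural route is approximation of $u$ from the interior combined with a stability estimate.

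First I would construct the approximating sequence. Pick once and for all $w_0\in\Int(\gamma^\circ)$ with $\opnorm{w_0}\leq 1$ (possible since $\Int(\gamma^\circ)$ is nonempty: recall $\gamma$ has nonempty interior, hence so does $\gamma^\circ$ after a convex geometry check, and one may rescale). Set
\begin{equation*}
u_n \;:=\; \Bigl(1-\tfrac1n\Bigr)u + \tfrac1n\,w_0.
\end{equation*}
Convexity of $\gamma^\circ$ together with the standard fact that $\Int(C)+C\subseteq\Int(C)$ for a convex set $C$ shows that $u_n\in\Int(\gamma^\circ)$. The triangle inequality for $\opnorm{\cdot}$ gives $\opnorm{u_n}\leq(1-\frac1n)\opnorm{u}+\frac1n\opnorm{w_0}\leq 1$, and clearly $u_n\to u$ in the operator norm.

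Next I would invoke the stability machinery. For $F,G\in\Derb_{\comp,\gamma^{\circ,a}}(\cor_\V)$ (or, more generally, for $\gamma$-compactly generated sheaves, via Definition~\ref{df:gammacompact} and Lemma~\ref{lem:gammacomvanishing}), I write a three-term triangle inequality
\begin{equation*}
\dist_\R(\reim{u}F,\reim{u}G)\leq \dist_\R(\reim{u}F,\reim{u_n}F)+\dist_\R(\reim{u_n}F,\reim{u_n}G)+\dist_\R(\reim{u_n}G,\reim{u}G).
\end{equation*}
By Lemma~\ref{lem:continuitylin_projbar} (applied to $F'$ and $G'$ with compact support such that $F\simeq F'\star\cor_{\gamma^a}$, $G\simeq G'\star\cor_{\gamma^a}$, and using Proposition~\ref{prop:dualdis}(ii) together with Lemma~\ref{lem:comsublevel} as in the proof of Proposition~\ref{prop:multi_to_single}), the outer two terms are bounded by $C\opnorm{u_n-u}$ for a constant $C$ depending only on the supports, and therefore tend to $0$. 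Passing to the limit yields
\begin{equation*}
\dist_\R(\reim{u}F,\reim{u}G)\leq \liminf_{n\to\infty}\dist_\R(\reim{u_n}F,\reim{u_n}G)\leq \sup_{v\in\Int(\gamma^\circ),\,L(v)\leq 1}\dist_\R(\reim{v}F,\reim{v}G),
\end{equation*}
which completes the argument.

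The main obstacle is the stability step: it is comfortable only when $F,G$ are $\gamma$-compactly generated (or more generally have compact ``transversal support''), because the stability inequality for pushforwards along linear forms is controlled by $\opnorm{u_n-u}$ times the size of the support, which is infinite for an arbitrary unbounded $\gamma$-sheaf. In the $\gamma$-compactly generated case there is actually a shortcut that avoids approximation altogether: by Lemma~\ref{lem:gammacomvanishing}, $\reim{u}F\simeq 0\simeq\reim{u}G$ for every $u\in\partial\gamma^\circ$, so the distance at a boundary point is zero and trivially bounded by the interior supremum. Either route gives the equality; the approximation argument is the one to present since it applies to the full setting in which the $\gamma$-linear ISM is used in the sequel.
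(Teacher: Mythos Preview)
Your ``shortcut'' is precisely the paper's proof: for $u\in\partial\gamma^\circ$ and $F,G$ $\gamma$-compactly generated, Lemma~\ref{lem:gammacomvanishing} (which is what Proposition~\ref{prop:boundvanish} yields after convolving with a compactly supported sheaf) gives $\reim{u}F\simeq 0\simeq\reim{u}G$, so boundary linear forms contribute zero to the supremum and the two suprema coincide. That single observation is the entire argument in the paper.

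The approximation route you present as the main argument, however, has a genuine gap. You assert that the outer terms $\dist_\R(\reim{u}F,\reim{u_n}F)$ tend to zero, but the map $v\mapsto \reim{v}F$ is \emph{discontinuous} on $\partial\gamma^\circ$ for $\gamma$-compactly generated $F$. Writing $F=F'\star\cor_{\gamma^a}$, Lemma~\ref{lem:comsublevel} gives $\reim{u_n}F\simeq(\reim{u_n}F')\star\cor_{\R_-}$, which converges in convolution distance to $(\reim{u}F')\star\cor_{\R_-}$, typically nonzero; meanwhile $\reim{u}F\simeq 0$. Concretely, for $F=\cor_{\gamma^a}$ one has $\reim{u_n}F\simeq\cor_{\R_-}$ for every $n$, so $\dist_\R(\reim{u}F,\reim{u_n}F)=\dist_\R(0,\cor_{\R_-})=+\infty$. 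Your reference to Proposition~\ref{prop:multi_to_single} does not rescue this: there one compares $(\reim{p}F')\star\cor_\lambda$ with $(\reim{p_n}F')\star\cor_\lambda$ (convolving \emph{after} pushforward), which is indeed continuous in $p$; the quantity you need is $\reim{p}(F'\star\cor_{\gamma^a})$, and the two agree only for $p\in\Int(\gamma^\circ)$, not on the boundary. Nor does the approximation buy extra generality: without the $\gamma$-compactly generated hypothesis the lemma itself can fail (in $\R^2$ with $\gamma=(-\infty,0]^2$, take $F=\cor_{[0,\infty)\times\R}$, $G=\cor_{[1,\infty)\times\R}$, and $u=(-1,0)\in\partial\gamma^\circ$; then $\dist_\R(\reim{u}F,\reim{u}G)=1$ while $\reim{v}F=\reim{v}G=0$ for every $v\in\Int(\gamma^\circ)$). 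Keep only the shortcut.
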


\begin{proof}
This follows from Lemma \ref{prop:boundvanish}.
\end{proof}
We recall that $\fL=\lbrace u \in \V^\ast \mid  L(u) \leq 1 \rbrace$.
\begin{proposition}\label{prop:egalismlin} Let $F,\; G \in \Derb_{\gamma^{\circ, a}}(\cor_\V)$ and assume that $\isme{\fL}(F,G)<\infty$ or $\isme{\gamma^{\circ}}(F,G)=\infty$. Then $\isme{\fL}(F,G)=\isme{\gamma^{\circ}}(F,G)$.
\end{proposition}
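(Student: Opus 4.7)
The inequality $\isme{\gamma^\circ}(F,G) \leq \isme{\fL}(F,G)$ is immediate, since the set $\{u \in \gamma^\circ \mid L(u) \leq 1\}$ over which $\isme{\gamma^\circ}$ is computed is contained in $\fL$. It therefore suffices to show the reverse inequality. If $\isme{\gamma^\circ}(F,G) = \infty$ then this trivial inequality already forces $\isme{\fL}(F,G) = \infty$, and both quantities agree. Otherwise, we work under the hypothesis $\isme{\fL}(F,G) < \infty$, and it is enough to prove that $\dist_\R(\reim{u}F, \reim{u}G) \leq \isme{\gamma^\circ}(F,G)$ for every $u \in \fL$.

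Fix such a $u$ and distinguish three cases according to the position of $u$ relative to the polar cones, as provided by Proposition \ref{prop:oimgamma}. If $u \in \gamma^\circ$, the estimate is immediate by definition of $\isme{\gamma^\circ}$. If $u \in \gamma^{\circ,a}$, then $-u \in \gamma^\circ \cap \fL$ and the antipodal map $a : \R \to \R$, $t \mapsto -t$ yields $\reim{(-u)} F \simeq \reim{a}\reim{u} F$. Applying Theorem \ref{thm:lipstability} both to $a$ and to $a^{-1} = a$ shows that $\reim{a}$ is an isometry of $(\Derb(\cor_\R), \dist_\R)$, whence
\[
\dist_\R(\reim{u}F, \reim{u}G) = \dist_\R(\reim{(-u)}F, \reim{(-u)}G) \leq \isme{\gamma^\circ}(F,G).
\]

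The main obstacle is the remaining case $u \in \fL \setminus (\gamma^\circ \cup \gamma^{\circ,a})$. Here Proposition \ref{prop:oimgamma}(ii) ensures that $\reim{u}F$ and $\reim{u}G$ are constant sheaves on $\R$, say $\reim{u}F \simeq M^\bullet_\R$ and $\reim{u}G \simeq N^\bullet_\R$ for some $M^\bullet, N^\bullet \in \Derb(\cor)$. The key auxiliary claim I would establish is that the convolution distance between two constant sheaves on $\R$ takes only the values $0$ or $+\infty$, and vanishes precisely when the two sheaves are isomorphic. Using the identity $a_\R \circ s = a_\R \circ p_2$ (with $a_\R$ the constant map $\R \to \{\mathrm{pt}\}$), the projection formula provides a canonical isomorphism $K_c \star M^\bullet_\R \simeq \reim{s}K_c \otimes M^\bullet_\R \simeq \cor_\R \otimes M^\bullet_\R \simeq M^\bullet_\R$, and a stalk-wise computation identifies $\chi_{c,0} \star M^\bullet_\R$ with the identity of $M^\bullet_\R$ under this isomorphism. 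Evaluating any $c$-isomorphism between $M^\bullet_\R$ and $N^\bullet_\R$ at a stalk of $\R$ then produces mutually inverse morphisms between $M^\bullet$ and $N^\bullet$ in $\Derb(\cor)$, so $M^\bullet \simeq N^\bullet$ and the distance vanishes. Combined with the standing hypothesis $\isme{\fL}(F,G) < \infty$, the value $+\infty$ is ruled out, so $\dist_\R(\reim{u}F, \reim{u}G) = 0 \leq \isme{\gamma^\circ}(F,G)$. Taking the supremum over $u \in \fL$ establishes $\isme{\fL}(F,G) \leq \isme{\gamma^\circ}(F,G)$ and concludes the argument.
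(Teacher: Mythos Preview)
Your argument is correct and follows essentially the same route as the paper's proof: split according to whether $u$ lies in $\gamma^\circ$, in $\gamma^{\circ,a}$, or in neither, and in the last case use that the pushforwards are constant sheaves whose convolution distance is either $0$ or $\infty$. The only differences are that the paper invokes \cite[Prop.~2.3.7]{PS20} for this last fact rather than arguing it directly as you do, and that you treat the case $u \in \gamma^{\circ,a}$ explicitly via the antipodal map --- a point the paper's proof elides, since its assertion that $\reim{u}F$ is constant for every $u \notin \gamma^\circ$ is, strictly speaking, only supplied by Proposition~\ref{prop:oimgamma} when $u \notin \gamma^\circ \cup \gamma^{\circ,a}$.
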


\begin{proof}
    Since $\isme{\gamma^{\circ}}$ is a lower bound of $\isme{\fL}$, the case $\isme{\gamma^{\circ}}(F,G)=\infty$ is clear.  Hence, we assume that $\isme{\fL}(F,G)<\infty$. 
    
    Let $u \notin \gamma^\circ \cup \gamma^{\circ,a}$ such that $L(u) \leq 1$.  It follows from Proposition \ref{prop:oimgamma} that $\reim{u}F$ is a constant sheaf and similarly for $\reim{u}G$. Moreover, $\dist_\R(\reim{u} F, \reim{u}G) < \infty$ since $\isme{\fL}(F,G)<\infty$. Thus it follows from \cite[Prop.~2.3.7]{PS20}, that $\reim{u}F \simeq \reim{u}G$ which implies that $\dist_\R(\reim{u} F, \reim{u}G)=0$. Morever, if $u \in \gamma^{\circ,a}$, then $v=-u \in \gamma^\circ$ and it follows from Lemma \ref{lem:transaffine} (ii) that $\dist_\R(\reim{u} F, \reim{u}G)=\dist_\R(\reim{v} F, \reim{v}G)$.  Hence, 
    \begin{align*}
       \isme{\fL}(F,G)&=\sup_{\{u \in \V^\ast \mid L(u) \leq 1 \}} \dist_\R(\reim{u}F,\reim{u}G)\\
       &= \sup_{\{u \in \gamma^\circ \mid L(u) \leq 1 \}} \dist_\R(\reim{u}F,\reim{u}G)\\
       &=\isme{\gamma^{\circ}}(F,G).
    \end{align*}
\end{proof}

\begin{corollary}
Let $F,\; G \in \Derb_{\gamma^{\circ, a}}(\cor_\V)$ with compact supports. Then $\isme{\fL}(F,G)=\isme{\gamma^{\circ}}(F,G)$.
\end{corollary}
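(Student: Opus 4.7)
The plan is to invoke Proposition \ref{prop:egalismlin}, whose conclusion follows as soon as either $\isme{\fL}(F,G) < \infty$ or $\isme{\gamma^\circ}(F,G) = \infty$. Since $\gamma^\circ \subset \V^\ast$, the inequality $\isme{\gamma^\circ}(F,G) \leq \isme{\fL}(F,G)$ is automatic; it therefore suffices to establish the reverse bound $\isme{\fL}(F,G) \leq \isme{\gamma^\circ}(F,G)$, as this forces one of the two dichotomies in the hypothesis of the proposition and yields the equality.

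To obtain this reverse bound, I would perform a case analysis on $u \in \fL$ according to its position relative to $\gamma^\circ \cup \gamma^{\circ,a}$. The case $u \in \gamma^\circ$ is immediate from the definition of $\isme{\gamma^\circ}$. For $u \in \gamma^{\circ,a}$, set $v := -u$, so that $v \in \gamma^\circ$ with $\opnorm{v} = \opnorm{u} \leq 1$. The reflection $m \colon \R \to \R$, $x \mapsto -x$, is a $1$-Lipschitz homeomorphism whose inverse is $m$ itself, so by Theorem \ref{thm:lipstability} the pushforward $\oim{m} = \reim{m}$ preserves the convolution distance. Since $\reim{u} H \simeq \oim{m} \reim{v} H$ for every sheaf $H$, we deduce $\dist_\R(\reim{u}F, \reim{u}G) = \dist_\R(\reim{v} F, \reim{v} G) \leq \isme{\gamma^\circ}(F,G)$.

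The decisive case, and the one where the compact-support hypothesis on $F$ and $G$ becomes essential, is $u \notin \gamma^\circ \cup \gamma^{\circ,a}$. Here Proposition \ref{prop:oimgamma}(ii) directly gives $\reim{u} F \simeq 0$ and $\reim{u} G \simeq 0$, so $\dist_\R(\reim{u}F, \reim{u}G) = 0$. Taking the supremum over $u \in \fL$ then yields $\isme{\fL}(F,G) \leq \isme{\gamma^\circ}(F,G)$, at which point Proposition \ref{prop:egalismlin} closes the argument.
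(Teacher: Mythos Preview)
Your argument is correct, but it takes a different route from the paper's. The paper proves the corollary by first showing $\isme{\fL}(F,G) < \infty$: since $F$ and $G$ are compactly supported $\gamma$-sheaves, both have vanishing global sections, and then \cite[Example~2.4]{KS18} gives $\dist_\V(F,G) < \infty$; since $\isme{\fL}$ is bounded above by $\dist_\V$, the finiteness hypothesis of Proposition~\ref{prop:egalismlin} is met and the equality follows.

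Your proof instead establishes $\isme{\fL}(F,G) \leq \isme{\gamma^\circ}(F,G)$ directly by a three-part case analysis on $u$, using the reflection trick for $u \in \gamma^{\circ,a}$ and the compact-support clause of Proposition~\ref{prop:oimgamma}(ii) for $u \notin \gamma^\circ \cup \gamma^{\circ,a}$. This is more elementary---it avoids the detour through $\dist_\V$ and the external reference---but it essentially reproves Proposition~\ref{prop:egalismlin} from scratch in the compact-support setting. In particular, once your case analysis is complete you already have both inequalities and hence the equality; the final appeal to Proposition~\ref{prop:egalismlin} is redundant, and your opening sentence announcing that proposition as the plan does not match what you actually do. The paper's approach is shorter precisely because it leverages the proposition as a black box, at the cost of an extra input about finiteness of the convolution distance.
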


\begin{proof}
Let $F,\; G \in \Derb_{\gamma^{\circ, a}}(\cor_\V)$ with compact supports. Since they are $\gamma$-sheaves with compact support $\rsect(\V;F) \simeq \rsect(\V;G)  \simeq 0$. Then \cite[Example 2.4]{KS18}, implies that $\dist_\V(F,G)< \infty$. Thus, $\isme{\fL}(F,G)< \infty$ and applying Proposition \ref{prop:egalismlin}, the result follows.
\end{proof}

We denote by $\Derb_{\rc , \gamma^{\circ,a} }(\cor_{\V})$ the full triangulated subcategory of $\Derb(\cor_{\V})$ spanned by the objects of $\Derb_{\gamma^{\circ,a}}(\cor_{\V}) \cap \Derb_{\rc}(\cor_{\V}) $

\begin{proposition}\label{prop:vanishingISMgamma}
Let $F \in \Derb_{\rc , \gamma^{\circ,a} }(\cor_{\V})$. Assume that $F$ is $\gamma$-compactly generated. If $\isme{\gamma^\circ}(F,0)=0$, then $F \simeq 0$.
\end{proposition}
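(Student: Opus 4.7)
The plan is to reduce the statement to Proposition \ref{prop:gammavanishproj}, which says that a $\gamma$-compactly generated sheaf $F$ whose $\reim{u}F$ vanishes for every $u \in \Int(\gamma^\circ)$ must itself be zero. The task therefore reduces to showing that $\reim{u}F \simeq 0$ for every $u \in \Int(\gamma^\circ)$, starting from the hypothesis $\isme{\gamma^\circ}(F,0) = 0$.

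First, I would fix $u \in \Int(\gamma^\circ)$ and normalize it to $u' := u/L(u) \in \Int(\gamma^\circ)$ (legitimate since $\gamma^\circ$ is a cone and $u$ is a nonzero linear form, so $L(u) = \opnorm{u} > 0$). By the defining formula \eqref{dist:gammaism}, the vanishing of $\isme{\gamma^\circ}(F,0)$ forces $\dist_\R(\reim{u'}F, 0) = 0$. I then want to apply the closedness of the convolution distance in $\Derb_{\rc}(\cor_\R)$ (Theorem \ref{thm:closedis}) to upgrade this to $\reim{u'}F \simeq 0$, but this requires verifying that $\reim{u'}F$ is $\R$-constructible on $\R$.

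For the constructibility check: since $F$ is $\gamma$-compactly generated, there exists $G \in \Derb_{\comp}(\cor_\V)$ with $F \simeq G \npsconv \cor_{\gamma^a}$, so $\Supp(F) \subset \Supp(G) + \gamma^a$. Now $-u' \in -\Int(\gamma^\circ) = \Int(\gamma^{\circ,a}) = \Int((\gamma^a)^\circ)$, so Lemma \ref{lem:polarpropre} applied to the proper cone $\gamma^a$ and the linear form $-u'$ shows that $-u'$, and hence $u'$, is proper on $\Supp(G) + \gamma^a \supset \Supp(F)$. Consequently $\reim{u'}F \simeq \roim{u'}F$ equals the direct image of the $\R$-constructible sheaf $F$ along the subanalytic map $u'$ restricted to a closed set on which it is proper, which is $\R$-constructible on $\R$. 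Applying Theorem \ref{thm:closedis} then gives $\reim{u'}F \simeq 0$.

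Finally, since $u = h_{L(u)} \circ u'$ where $h_\lambda \colon \R \to \R$ denotes multiplication by $\lambda$, functoriality of the proper direct image yields $\reim{u}F \simeq \reim{h_{L(u)}} \reim{u'}F \simeq 0$. As $u \in \Int(\gamma^\circ)$ was arbitrary, Proposition \ref{prop:gammavanishproj} gives $F \simeq 0$. The main delicate point is the constructibility verification for $\reim{u'}F$: the reduction to the derived isometry theorem requires properness of $u'$ on $\Supp(F)$, which crucially uses that $F$ is $\gamma$-compactly generated rather than merely constructible.
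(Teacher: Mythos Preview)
Your proof is correct and follows the same route as the paper: use the closedness of the convolution distance (Theorem \ref{thm:closedis}) to upgrade $\dist_\R(\reim{u}F,0)=0$ to $\reim{u}F\simeq 0$ for $u\in\Int(\gamma^\circ)$, then invoke Proposition \ref{prop:gammavanishproj}. The paper's proof is terser, writing only ``Since $F$ is constructible'' before applying Theorem \ref{thm:closedis}; your explicit verification that $u'$ is proper on $\Supp(F)$ via Lemma \ref{lem:polarpropre} and the $\gamma$-compactly generated hypothesis, together with the normalization step, fills in details the paper leaves implicit.
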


\begin{proof}
Since $F$ is constructible, it follows from Theorem \ref{thm:closedis} that for every $u \in \Int(\gamma^\circ)$, $\reim{u} F \simeq 0$. Then Proposition \ref{prop:gammavanishproj} implies that $F \simeq 0$.
\end{proof}

\begin{proposition}\label{prop:cohodisgamma}
Let $F,G \in \Derb_{\rc , \gamma^{\circ,\,a}}(\cor_{\V_{\infty}})$, then 
\begin{align*}
\delta_{\gamma^\circ}(F,G)&= \max_{j\in \Z} \sup_{\{u \in \gamma^\circ \mid L(u) \leq 1 \}}d_B (\mathbb{B}(\Hn^j(\reim{u}F)), \mathbb{B}(\Hn^j(\reim{u}G))).
\end{align*}
\end{proposition}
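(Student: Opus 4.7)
My plan is to reduce to the one-dimensional derived isometry theorem by pushing forward along $u$, apply Corollary \ref{C:gradedDistance} to rewrite each $\dist_\R(\reim{u}F,\reim{u}G)$ as a maximum of bottleneck distances, and then interchange the supremum over $u$ with a finite maximum over cohomological degrees.

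For the first step, for any $u \in \Int(\gamma^\circ)$ with $L(u) \leq 1$, Proposition \ref{prop:oimgamma}(i) combined with Proposition \ref{prop:opconstructinf} guarantees that $\reim{u}F$ and $\reim{u}G$ lie in $\Derb_{\rc,\lambda^{\circ,a}}(\cor_{\R_\infty})$ for $\lambda=\R_+$. Corollary \ref{C:gradedDistance} then yields the chain
\begin{equation*}
\dist_\R(\reim{u}F,\reim{u}G) = \max_j \dist_\R(\Hn^j \reim{u}F,\Hn^j \reim{u}G) = \max_j d_B(\mathbb{B}(\Hn^j \reim{u}F),\mathbb{B}(\Hn^j \reim{u}G)).
\end{equation*}
Since $F$ and $G$ are bounded and constructible, the degrees in which the cohomology of their pushforwards may be nonzero form a finite set $J \subset \Z$ which is uniform in $u$ (the stalk of $\Hn^k \reim{u} F$ at $t$ is $\Hn^k_c(u^{-1}(t); F|_{u^{-1}(t)})$, and the cohomological dimension is bounded by $\dim \V - 1$ together with the cohomological range of $F$). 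The trivial identity $\sup_u \max_{j \in J} a_{j,u} = \max_{j \in J} \sup_u a_{j,u}$, combined with the lemma following equation \eqref{dist:gammaism} (which restricts the defining supremum of $\delta_{\gamma^\circ}$ to $u \in \Int(\gamma^\circ)$), produces the equality between the first and third terms of the proposition.

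For the middle equality, I would first observe that each $\Hn^j F$ still belongs to $\Mod_{\rc,\gamma^{\circ,a}}(\cor_{\V_\infty})$: this follows from the distinguished triangles $\tau^{<j}F \to \tau^{\leq j}F \to \Hn^j F[-j] \xrightarrow{+1}$, the stability of the micro-support bound $\SSi(\cdot) \subset \V \times \gamma^{\circ,a}$ under triangulated operations, and the analogous stability of constructibility up to infinity recorded in Proposition \ref{prop:opconstructinf}. Applying the reasoning of the previous paragraph to the pair $(\Hn^j F, \Hn^j G)$ in place of $(F,G)$ produces an expression for $\delta_{\gamma^\circ}(\Hn^j F, \Hn^j G)$ in terms of the bottleneck distances between $\Hn^k \reim{u} \Hn^j F$ and $\Hn^k \reim{u} \Hn^j G$ for $k \in \Z$; one then identifies this with $\sup_u d_B(\mathbb{B}(\Hn^j \reim{u}F), \mathbb{B}(\Hn^j \reim{u}G))$ via the hypercohomology spectral sequence $E_2^{p,q} = \Hn^p \reim{u}\Hn^q F \Rightarrow \Hn^{p+q}\reim{u}F$ together with the canonical splitting $\reim{u}F \simeq \bigoplus_l \Hn^l \reim{u}F [-l]$ on $\R$ provided by Corollary \ref{C:structure}.

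The main obstacle is this last matching step: verifying that the above spectral sequence degenerates and that the resulting extensions are sufficiently split for the bottleneck distances on both sides to coincide. The strategy is to leverage the semisimple-like structure of $\Derb_{\rc}(\cor_\R)$ provided by Corollary \ref{C:structure}, which forces the differentials of higher pages to vanish and produces a canonical comparison between the two graded decompositions of $\reim{u}F$. Once this is established, the proposition follows by assembling the equalities from the previous two paragraphs.
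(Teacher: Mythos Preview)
Your route to the equality of the first and third expressions is exactly the paper's: apply Corollary~\ref{C:gradedDistance} to the pair $\reim{u}F,\reim{u}G$ (which are constructible $\gamma$-sheaves on $\R$ by Propositions~\ref{prop:oimgamma} and~\ref{prop:opconstructinf}), then swap $\sup_u$ and $\max_j$. The paper does not comment on the finiteness of the degree range needed for this swap; your observation that a uniform bound comes from the cohomological amplitude of $F,G$ together with the fiber dimension is the natural justification.

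The divergence is in the middle term. The paper's proof is a bare chain of equalities: it passes from $\max_j\sup_u\dist_\R(\Hn^j\reim{u}F,\Hn^j\reim{u}G)$ directly to $\max_j\delta_{\gamma^\circ}(\Hn^j F,\Hn^j G)$ with no argument, tacitly identifying $\Hn^j\reim{u}(-)$ with $\reim{u}\Hn^j(-)$ on $\gamma$-sheaves. You have correctly singled this out as the nontrivial step. Your spectral-sequence strategy is a reasonable line of attack, but the sketch you give does not close the gap: the splitting of $\reim{u}F$ over $\R$ granted by Corollary~\ref{C:structure} says nothing about the differentials $d_r\colon E_r^{p,q}\to E_r^{p+r,q-r+1}$ of the hypercohomology spectral sequence, which relate different $q$-rows and need not vanish merely because each individual entry decomposes as a sum of interval sheaves. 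So on this point you are more scrupulous than the paper, but neither argument, as written, establishes the middle equality.
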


\begin{proof}
Let $F,G \in \Derb_{\rc , \gamma^{\circ,\,a}}(\cor_{\V})$.
\begin{align*}
\delta_{\gamma^\circ}(F,G)&=\sup_{\{u \in \gamma^\circ \mid L(u) \leq 1 \}} \dist_\R (\reim{u}F,\reim{u}G) \\ 
&= \sup_{\{u \in \gamma^\circ \mid L(u) \leq 1 \}} \max_{j\in \Z} \dist_\R(\Hn^j(\reim{u}F),\Hn^j(\reim{u}G)) \quad \textnormal{ (Corollary (\ref{C:gradedDistance})}\\
&= \max_{j\in \Z} \sup_{\{u \in \gamma^\circ \mid L(u) \leq 1 \}} \dist_\R(\Hn^j(\reim{u}F),\Hn^j(\reim{u}G)) \\
&= \max_{j\in \Z} \sup_{\{u \in \gamma^\circ \mid L(u) \leq 1 \}}d_B (\mathbb{B}(\Hn^j(\reim{u}F)), \mathbb{B}(\Hn^j(\reim{u}G))).
\end{align*}
\end{proof}

\begin{corollary}
Let $S$ and $S^\prime$ be compact good topological spaces, $f \colon S \to \V$ and $ g\colon S^\prime \to \V$  be continuous maps. Assume that $\PH(f)$ and $\PH(g)$ are constructible. Then
\begin{align*}
    \delta_{\gamma^\circ}(\PH(f),\PH(g))&= \max_{j\in \Z} \sup_{\{u \in \gamma^\circ \mid L(u) \leq 1 \}}(d_B (\mathbb{B}(\PH^j(u\circ f)), \mathbb{B}(\PH^j(u\circ g)))).
\end{align*}
\end{corollary}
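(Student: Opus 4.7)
The plan is to combine three ingredients: Proposition \ref{prop:cohodisgamma}, which unfolds $\delta_{\gamma^\circ}$ as a supremum of cohomology-wise bottleneck distances of proper direct images $\reim{u}$; Lemma \ref{lem:comsublevel}(ii), which identifies $\roim{u}\PH(f)$ with $\PH(u\circ f)$ for $u\in\Int(\gamma^\circ)$; and a continuity/density argument to pass from $\Int(\gamma^\circ)$ to $\gamma^\circ$.

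First, I apply Proposition \ref{prop:cohodisgamma} to the constructible $\gamma^{\circ,a}$-microsupported sheaves $\PH(f)$ and $\PH(g)$ to obtain
\begin{equation*}
\delta_{\gamma^\circ}(\PH(f),\PH(g)) = \max_{j\in\Z}\;\sup_{u\in\gamma^\circ,\,L(u)\leq 1} d_B\bigl(\mathbb{B}(\Hn^j(\reim{u}\PH(f))),\mathbb{B}(\Hn^j(\reim{u}\PH(g)))\bigr).
\end{equation*}
Next, for each $u\in\Int(\gamma^\circ)$ I show $\reim{u}\PH(f)\simeq\PH(u\circ f)$. Since $S$ is compact, $f$ is proper and Proposition \ref{prop:bassublevel} gives $\PH(f)\simeq \roim{f}\cor_S\star\cor_{\gamma^a}$ with compactly supported first factor. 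The identity $u\circ s=s\circ(u\times u)$ shows that $\reim{u}$ commutes with $\star$; a direct variant of Lemma \ref{lem:polarpropre} yields properness of $u$ on $\gamma^a$, so $\reim{u}\cor_{\gamma^a}\simeq\roim{u}\cor_{\gamma^a}\simeq\cor_{\R_-}$ by Lemma \ref{lem:oimcone}, and $\reim{(u\circ f)}\cor_S\simeq\roim{(u\circ f)}\cor_S$ since $u\circ f$ is proper. Comparing with the factorization of $\PH(u\circ f)$ (alternatively, combining Lemma \ref{lem:comsublevel}(ii) with properness of $u$ on $\Supp(\PH(f))\subset f(S)+\gamma^a$) settles the identification. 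Since $\PH(u\circ f),\PH(u\circ g)\in\Derb_{\rc,\R_-}(\cor_\R)$, Corollary \ref{C:gradedDistance} then yields the pointwise equality $d_B(\mathbb{B}(\Hn^j(\reim{u}\PH(f))),\mathbb{B}(\Hn^j(\reim{u}\PH(g))))=d_B(\mathbb{B}(\PH^j(u\circ f)),\mathbb{B}(\PH^j(u\circ g)))$ for all $j\in\Z$ and all $u\in\Int(\gamma^\circ)$.

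It then remains to extend both suprema from $\Int(\gamma^\circ)$ to all of $\gamma^\circ$. On the left-hand side, the corollary following Proposition \ref{prop:bassublevel} tells us that $\PH(f)$ is $\gamma$-compactly generated, so Lemma \ref{lem:gammacomvanishing} forces $\reim{u}\PH(f)\simeq 0$ for $u\in\partial\gamma^\circ$; the corresponding contribution to the supremum is therefore zero and can be dropped. On the right-hand side, the map $u\mapsto d_B(\mathbb{B}(\PH^j(u\circ f)),\mathbb{B}(\PH^j(u\circ g)))$ is continuous, with modulus controlled by $\opnorm{u-v}\,(\norm{f(S)}+\norm{g(S')})$, by the stability Theorem \ref{thm:stability} combined with Corollary \ref{C:gradedDistance}. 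Fixing any $w\in\Int(\gamma^\circ)$ with $L(w)=1$, any $u\in\partial\gamma^\circ$ with $L(u)\leq 1$ is approximated by $v_\varepsilon=(1-\varepsilon)u+\varepsilon w\in\Int(\gamma^\circ)$ with $L(v_\varepsilon)\leq 1$; density and continuity therefore give equality of the two suprema. Swapping $\max_j$ and $\sup_u$ then gives the claim.

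The main obstacle is the asymmetry between the proper direct image $\reim{u}$, produced naturally by Proposition \ref{prop:cohodisgamma}, and the non-proper direct image $\roim{u}$ appearing in Lemma \ref{lem:comsublevel}(ii) to define $\PH(u\circ f)$. Closing the gap requires a properness argument that works only on $\Int(\gamma^\circ)$, after which the boundary $\partial\gamma^\circ$ must be handled separately, using on one side the vanishing of $\reim{u}\PH(f)$ for $\gamma$-compactly generated sheaves and on the other side the continuity of sublevel persistence with respect to the linear form.
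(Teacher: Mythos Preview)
Your argument is correct and follows the paper's own strategy: apply Proposition~\ref{prop:cohodisgamma}, then identify $\reim{u}\PH(f)\simeq\PH(u\circ f)$ for $u\in\Int(\gamma^\circ)$ via properness of $u$ on $f(S)+\gamma^a$ and Lemma~\ref{lem:comsublevel}(ii). The paper's proof stops there, having established the pointwise equality on $\Int(\gamma^\circ)$ and implicitly invoking the lemma preceding Proposition~\ref{prop:egalismlin} to restrict the left-hand supremum to the interior; it does not explicitly justify that the \emph{right-hand} supremum over $\{u\in\gamma^\circ\mid L(u)\leq 1\}$ agrees with the one over the interior. Your density/continuity argument (stability under $u\mapsto u\circ f$ combined with Corollary~\ref{C:gradedDistance}) closes this gap and is a welcome addition.
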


\begin{proof}
It follows from Proposition \ref{prop:cohodisgamma} that
\begin{equation*}
\delta_{\gamma^\circ}(\PH(f),\PH(g))= \max_{j\in \Z} \sup_{\{u \in \gamma^\circ \mid L(u) \leq 1 \}}(d_B (\mathbb{B}(\Hn^j(\reim{u}\PH(f)),\mathbb{B}(\Hn^j(\reim{u}\PH(g)))).
\end{equation*}
Let $u \in \Int(\gamma^\circ)$. Then $u$ is proper on $f(S) + \gamma^a$ and on $g(S^\prime) + \gamma^a$. Hence, it is proper on $\Supp(\PH(f))$ and $\Supp(\PH(g))$. Then by Lemma \ref{lem:comsublevel} (ii), $\reim{u}\PH(f) \simeq \PH(u \circ f)$ and similarly $\reim{u}\PH(g) \simeq \PH(u \circ g)$. This implies that
\begin{equation*}
    d_B (\mathbb{B}(\Hn^j(\reim{u}\PH(f)),\mathbb{B}(\Hn^j(\reim{u}\PH(g)))=d_B (\mathbb{B}(\PH^j(u\circ f)), \mathbb{B}(\PH^j(u\circ g)))
\end{equation*}
which conclude the proof.
\end{proof}

The above results leads naturally to introduce the following pseudo-metrics that we call truncated $\gamma$-integral sheaves metrics.

\begin{definition}
Let $F,G \in \Derb_{\rc , \gamma^{\circ,\,a}}(\cor_{\V})$ and $p\leq q \in \Z$. The $(p,q)$-truncated $\gamma$-integral sheaf metric is
\begin{equation*}
   \delta_{\gamma^\circ}^{p,q}(F,G)=\max_{p \leq j \leq q}(\sup_{\{u \in \gamma^\circ \mid L(u) \leq 1 \}} \dist_\R(\Hn^j(\reim{u}F), \Hn^j(\reim{u}G)))
\end{equation*}
\end{definition}

\begin{proposition}
The map    
\begin{equation*}
\delta_{\gamma^\circ}^{p,q} \colon \Ob(\Derb_{\rc , \gamma^{\circ,\,a}}(\cor_{\V})) \times \Ob( \Derb_{\rc , \gamma^{\circ,\,a}}(\cor_{\V})) \to \R_+ \cup \{ \infty \}
\end{equation*}
is a pseudo-metric and
\begin{equation*}
    \delta_{\gamma^\circ}^{p,q}(F,G) \leq \delta_{\gamma^\circ}(F,G) \leq \dist_\V(F,G).
\end{equation*}
\end{proposition}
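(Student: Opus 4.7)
The plan is to assemble the statement from results already established in the paper, without new computation.

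First I would verify the pseudo-metric axioms for $\delta_{\gamma^\circ}^{p,q}$. Non-negativity, vanishing on the diagonal, and symmetry are immediate from the fact that each $\delta_{\gamma^\circ}(\Hn^j(\cdot),\Hn^j(\cdot))$ is a sup of convolution distances $\dist_\R(\reim{u}(\cdot),\reim{u}(\cdot))$, which are themselves pseudo-metrics. For the triangle inequality, fix $F, G, H \in \Derb_{\rc,\gamma^{\circ,a}}(\cor_\V)$ and an index $j \in [p,q]$ realizing (or nearly realizing) the maximum in $\delta_{\gamma^\circ}^{p,q}(F,H)$; applying the triangle inequality of the pseudo-metric $\delta_{\gamma^\circ}$ on $j$-th cohomologies and then bounding each resulting term by the corresponding maximum over $[p,q]$ yields $\delta_{\gamma^\circ}^{p,q}(F,H) \leq \delta_{\gamma^\circ}^{p,q}(F,G) + \delta_{\gamma^\circ}^{p,q}(G,H)$.

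Next I would derive the first inequality $\delta_{\gamma^\circ}^{p,q}(F,G) \leq \delta_{\gamma^\circ}(F,G)$ directly from Proposition \ref{prop:cohodisgamma}, which states
\begin{equation*}
\delta_{\gamma^\circ}(F,G) = \max_{j \in \Z} \delta_{\gamma^\circ}(\Hn^j(F),\Hn^j(G)).
\end{equation*}
Since the right-hand side is a maximum of non-negative quantities taken over all of $\Z$, restricting to the finite subset $\{p, p+1, \dots, q\}$ can only decrease it, giving the desired bound.

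Finally, for the inequality $\delta_{\gamma^\circ}(F,G) \leq \dist_\V(F,G)$, I would invoke Proposition \ref{prop:lowerbound}(i). Indeed, any linear form $u$ on $(\V,\norm{\cdot})$ with $L(u) \leq 1$ is $1$-Lipschitz, so
\begin{equation*}
\{u \in \gamma^\circ \mid L(u) \leq 1\} \subset \Lip_{\leq 1}(\V),
\end{equation*}
and hence
\begin{equation*}
\delta_{\gamma^\circ}(F,G) \leq \isme{\Lip_{\leq 1}(\V)}(F,G) \leq \dist_\V(F,G).
\end{equation*}
There is no genuine obstacle here; the proof is purely an assembly of Proposition \ref{prop:cohodisgamma} and Proposition \ref{prop:lowerbound}, together with standard verification of the pseudo-metric axioms for a maximum of pseudo-metrics.
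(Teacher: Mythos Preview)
Your proposal is correct and is precisely the assembly of earlier results that the paper has in mind; in fact the paper states this proposition without proof, treating it as an immediate consequence of Proposition~\ref{prop:cohodisgamma} and Proposition~\ref{prop:lowerbound} together with the elementary fact that a finite maximum of pseudo-metrics is a pseudo-metric.
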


\subsubsection{Gradient computation}

In this section, we compute explicitly the gradient of the functional $u \mapsto \dist_\R (\reim{u}F, \reim{u}G)$ when F and G are $\gamma$-sheaves arising from sublevel set persistence. We do so in order to approximate $\delta_{\gamma^\circ}$ by gradient ascent.

Let $F, G \in \Der_\gcg(\cor_\V)$. We first study the regularity of the application
\begin{equation}\label{map:upsigamma}
	\Upsilon_{F,G} \colon \Int(\gamma^{\circ}) \to \R, \; u\mapsto \dist_\R(\reim{u} F,\reim{u} G )
\end{equation}
\begin{proposition}\label{prop:Up_gammacomp_Lip}
Let $u, v \in \Int(\gamma^\circ)$ and $F,G \in \Derb_{\gcg}(\cor_\V)$. Then there exists a constant $C_{F,G}$ in $\R_{\geq 0}$ depending on $F$ and $G$ such that

\begin{equation*}
		|\Upsilon_{F,G}(u)-\Upsilon_{F,G}(v)| \leq C_{F,G} \opnorm{u-v}.
\end{equation*}
\end{proposition}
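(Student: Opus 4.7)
The plan is to reduce the estimate to the compactly supported case, where Lemma \ref{lem:continuitylin_projbar} already provides the required Lipschitz bound. First I would unpack the $\gamma$-compactly generated hypothesis: there exist $F', G' \in \Derb_\comp(\cor_\V)$ with $F \simeq F' \npsconv \cor_{\gamma^a}$ and $G \simeq G' \npsconv \cor_{\gamma^a}$. Since $\Supp(F')$ is compact, the sum map is proper on $\Supp(F') \times \gamma^a$, so we may equivalently write $F \simeq F' \star \cor_{\gamma^a}$ (and similarly for $G$), as noted after Definition \ref{df:gammacompact}.

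Next I would fix $u \in \Int(\gamma^\circ)$ and identify $\reim{u} F$. By Lemma \ref{lem:polarpropre} (and the same argument applied to $\gamma^a$), $u$ is proper on $K+\gamma^a$ for any compact $K$, so $\reim{u}F \simeq \roim{u}F$. Applying Lemma \ref{lem:comsublevel}(i) to $F'$ therefore gives
\begin{equation*}
\reim{u} F \;\simeq\; (\reim{u} F') \star \cor_{\R_-},
\end{equation*}
and likewise $\reim{u} G \simeq (\reim{u} G') \star \cor_{\R_-}$. The same identification holds with $u$ replaced by $v$.

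I would then combine the triangle inequality for $\dist_\R$
\begin{equation*}
|\Upsilon_{F,G}(u) - \Upsilon_{F,G}(v)| \;\leq\; \dist_\R(\reim{u} F, \reim{v} F) + \dist_\R(\reim{u} G, \reim{v} G)
\end{equation*}
with Proposition \ref{prop:dualdis}(ii), applied with one factor being $\cor_{\R_-}$ (for which the distance to itself is zero), to obtain
\begin{equation*}
\dist_\R(\reim{u} F, \reim{v} F) \;\leq\; \dist_\R(\reim{u} F', \reim{v} F'),
\end{equation*}
and similarly for $G$. Since $F', G'$ have compact support, Lemma \ref{lem:continuitylin_projbar} yields $\dist_\R(\reim{u} F', \reim{v} F') \leq \norm{\Supp(F')}\, \opnorm{u-v}$ and its analogue for $G'$. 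Summing these two bounds produces the desired inequality with the explicit constant $C_{F,G} = \norm{\Supp(F')} + \norm{\Supp(G')}$.

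The only non-cosmetic step is the identification $\reim{u} F \simeq (\reim{u} F') \star \cor_{\R_-}$, and even this is already packaged into Lemma \ref{lem:comsublevel}(i); the rest of the argument is a routine triangle inequality combined with the stability results of Section 2. No essentially new estimate is required.
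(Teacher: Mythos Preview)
Your proof is correct and follows essentially the same strategy as the paper: unpack the $\gamma$-compactly generated hypothesis, use Lemma~\ref{lem:comsublevel}(i) to identify $\reim{u}F \simeq (\reim{u}F')\star\cor_{\R_-}$, peel off the $\cor_{\R_-}$ factor via Proposition~\ref{prop:dualdis}(ii), and finish with the compactly supported Lipschitz bound. The only difference is cosmetic: the paper inserts a translation $\tilde u = u - u(x_0)$ (mimicking Lemma~\ref{lem:lip}) to obtain the constant $2\,\Diam(\Supp(F')\cup\Supp(G'))$, whereas you invoke Lemma~\ref{lem:continuitylin_projbar} directly and get $C_{F,G}=\norm{\Supp(F')}+\norm{\Supp(G')}$; since the statement only asks for existence of \emph{some} constant, your shortcut is perfectly adequate.
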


\begin{proof}

The proof is similar to the one of Lemma \ref{lem:lip}. Since $F$ and $G$ are $\gamma$-compactly generated, there exists $F^\prime$ and $G^\prime \in \Derb_\comp(\cor_\V)$ such that  $F \simeq F^\prime \npsconv \cor_{\gamma^a}$ and $G \simeq G^\prime \npsconv \cor_{\gamma^a}$. We set $S=\Supp(F^\prime) \cup \Supp(G^\prime)$. Since $S$ is compact, there exists $(x_0,x_1)$ in $S \times S$ such that $\Diam S= d(x_0,x_1)$. Moreover $\dist_\R$ is invariant by translation. Hence, setting $\tilde{u}(x)=u(x)-u(x_0)=\tau_{u(x_0)} \circ u$ and $\tilde{v}(x)=v(x)-v(x_0)=\tau_{v(x_0)} \circ u$ we get that
\begin{equation*}
\dist_\R(\reim{u} (F^\prime \npsconv \cor_{\gamma^a}),\reim{u} (G^\prime \npsconv \cor_{\gamma^a}))=\dist_\R(\reim{\tilde{u}} (F^\prime \npsconv \cor_{\gamma^a}),\reim{\tilde{u}} (G^\prime \npsconv \cor_{\gamma^a})).
\end{equation*}

Moreover,
\begin{align*}
   \roim{\tilde{u}} (F^\prime \npsconv \cor_{\gamma^a}) &\simeq \roim{\tau_{u(x_0)}} \reim{u} (F^\prime \npsconv \cor_{\gamma^a})\\
   &\simeq \roim{\tau_{u(x_0)}} (\roim{u}F^\prime \npsconv \cor_{\R_-}) \quad \textnormal{by Lemma \ref{lem:comsublevel}}\\
   &\simeq (\roim{\tau_{u(x_0)}}\roim{u})F^\prime \npsconv \cor_{\R_-}\\
   & \simeq \roim{\tilde{u}}F^\prime \npsconv \cor_{\R_-}
\end{align*}
and similarly with $\tilde{u}$ replace by $\tilde{v}$ and $F^\prime$ by $G^\prime$.
Hence
\begin{equation*}
    \Upsilon_{F,G}(u) = \dist_\R( \roim{\tilde{u}}F^\prime \npsconv \cor_{\R_-},\roim{\tilde{u}} G^\prime \npsconv \cor_{\R_-}).
\end{equation*}
Now using Proposition \ref{prop:dualdis} (ii), we get
\begin{align*}
    |\Upsilon_{F,G}(u)-\Upsilon_{F,G}(v)|&\leq \dist_\R( \roim{u}F^\prime \npsconv \cor_{\R_-},\roim{v} F^\prime \npsconv \cor_{\R_-}) + \\
    & \hspace{1cm}\dist_\R( \roim{\tilde{u}}G^\prime \npsconv \cor_{\R_-},\roim{\tilde{v}} G^\prime \npsconv \cor_{\R_-})\\
    & \leq \dist_\R( \roim{\tilde{u}}F^\prime ,\roim{\tilde{v}} F^\prime)+\dist_\R( \roim{\tilde{u}}G^\prime ,\roim{\tilde{v}} G^\prime ) \\
    & \leq 2 \norm{\tilde{u}|_S-\tilde{v}|_S}_\infty \leq  2 \Diam(S) \opnorm{u-v}.
\end{align*}
\end{proof}

The Rademacher Theorem combined with the above inequality implies the following corollary.

\begin{corollary}
Let $F, G \in \Derb_\gcg(\cor_\V)$. The application $\Upsilon_{F,G} \colon \Int(\gamma^\circ) \to \R$ (see \eqref{map:upsigamma}) is differentiable Lebesgue almost everywhere on $\Int(\gamma^\circ)$.
\end{corollary}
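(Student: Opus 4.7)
The plan is essentially a one-line invocation of Rademacher's theorem, since the substantive work has been done in Proposition \ref{prop:Up_gammacomp_Lip}. First, I would observe that $\Int(\gamma^\circ)$ is an open subset of $\V^\ast$, which is a finite-dimensional real vector space (indeed, $\gamma^\circ$ is a closed convex cone with non-empty interior in $\V^\ast$, thanks to the hypothesis \eqref{hyp:cone} on $\gamma$). By Proposition \ref{prop:Up_gammacomp_Lip}, the map $\Upsilon_{F,G}$ satisfies the estimate
\begin{equation*}
|\Upsilon_{F,G}(u)-\Upsilon_{F,G}(v)| \leq C_{F,G}\,\opnorm{u-v}
\end{equation*}
uniformly for all $u,v \in \Int(\gamma^\circ)$, hence is globally $C_{F,G}$-Lipschitz with respect to the operator norm $\opnorm{\cdot}$.

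I would then invoke Rademacher's theorem, which asserts that any Lipschitz map from an open subset of a finite dimensional real normed vector space to $\R$ is differentiable at Lebesgue almost every point of its domain. Applied to $\Upsilon_{F,G}$ on $\Int(\gamma^\circ) \subset \V^\ast$, this yields the claim. There is no substantial obstacle: the only conceptual point worth mentioning is that the Lebesgue null sets of $\V^\ast$ (and the notion of differentiability itself) are intrinsic, independent of the choice of basis or of equivalent norm, since all norms on a finite-dimensional space are equivalent and any linear change of coordinates preserves Lebesgue null sets. Thus the statement "$\Upsilon_{F,G}$ is differentiable Lebesgue almost everywhere on $\Int(\gamma^\circ)$" is unambiguous and is a direct consequence of the preceding proposition together with Rademacher's theorem.
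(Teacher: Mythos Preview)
Your proposal is correct and follows exactly the same approach as the paper, which simply states that the corollary follows from Rademacher's theorem combined with the Lipschitz estimate of Proposition~\ref{prop:Up_gammacomp_Lip}.
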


We choose a basis $(e_1,...,e_n)$ of $\V$, and denote $(e_1^\ast,...,e_n^\ast)$ the associated dual basis of $\V^\ast$. Given $u\in \Int{\gamma^\circ}$, we will denote $u_1,...,u_n \in \R$ the coordinates of $u$ in the basis $(e_1^\ast,...,e_n^\ast)$. Let $K_1$ and $K_2$ be two finite simplicial complexes with geometric realizations $|K_1|$ and $|K_2|$, $f : |K_1| \to \V$ and $g : |K_2| \to \V$ be PL maps.  We define: 
\begin{align*}
\mathcal{T}(u) = \dist_\R (\PH(u \circ f), \PH(u \circ g)).
\end{align*}
With $\mathcal{T}^j(u) := \dist_\R (\PH^j(u \circ f), \PH^j(u \circ f))$, we deduce from \eqref{C:gradedDistance} that: 
\begin{align*}
    \mathcal{T}(u)
    &= \max_{j\in \mathbb{Z}} \dist_\R (\PH^j(u \circ f), \PH^j(u \circ g)) \\
    &= \max_{j\in \mathbb{Z}} \mathcal{T}^j(u).
\end{align*} 

Let $\mathcal{F}$ denote the bottleneck distance function between two barcodes. For $j \in \mathbb{Z}$, let $\mathbb{B}_1^j(u) = \{[b^j_1,d^j_1)\}$ (resp. $\mathbb{B}_2^j(u) = \{[b'^j_2,d'^j_2)\}$) be the barcode of $ \PH^j(u \circ f)$ (resp. $\PH^j(u \circ g)$). Therefore, we deduce by \ref{C:gradedDistance} that:
\begin{align*}
    \mathcal{T}^j(u) &=  d_B(\mathbb{B}_1^j(u), \mathbb{B}_2^j(u))\\
   &= \mathcal{F}(\mathbb{B}_1^j(u), \mathbb{B}_2^j(u)).
\end{align*}

We now follow the exposition of \cite[section 13.2.2]{dey2022computational} for the gradient computation. Let $E^j_1 \subset \R$ (resp. $E_2^j \subset \R$) be the set of endpoints of the interval appearing in $\mathbb{B}_1^j$ (resp. $\mathbb{B}_2^j$), which we assume to be all distinct. Therefore, there exists an inverse function $\rho^u_1 : \bigsqcup_j E_1^j \to K_1 $  (resp. $\rho^u_2 : \bigsqcup_j E_2^j \to K_2 $) mapping each birth and death in $\mathbb{B}^j_1$ (resp. $\mathbb{B}^j_2$) to the corresponding simplex in $K_1$ (resp. $K_2$) generating or annihilating this bar. Moreover, this function is locally constant in $u$ in a precise sense (see \cite[section 5]{poulenardcontinuousshapematching}).

Therefore, writing $f_u = u\circ f $  and $g_u = u \circ g$ for brevity, one has the following computations:

\begin{align*}
    \frac{\partial \mathcal{T}^j }{\partial e_i^\ast}(u) &= \sum_{b \in E_1^j \sqcup E_2^j } \frac{\partial \mathcal{F} }{\partial b}(\mathbb{B}_1^j(u), \mathbb{B}_2^j(u)) \frac{\partial b }{\partial e_i^\ast}(u) \\
    &= \sum_{b \in E_1^j  } \frac{\partial \mathcal{F} }{\partial b}(\mathbb{B}_1^j(u), \mathbb{B}_2^j(u)) \frac{\partial f_u }{\partial e_i^\ast}(\rho_1^u(b)) + \sum_{b \in E_2^j  } \frac{\partial \mathcal{F} }{\partial b}(\mathbb{B}_1^j(u), \mathbb{B}_2^j(u)) \frac{\partial g_u }{\partial e_i^\ast}(\rho_2^u(b)) \\
    &= u_i \left ( \sum_{b \in E_1^j  } \frac{\partial \mathcal{F} }{\partial b}(\mathbb{B}_1^j(u), \mathbb{B}_2^j(u)) e_i^\ast (f(\rho^u_1(b)))  +  \sum_{b \in E_2^j  } \frac{\partial \mathcal{F} }{\partial b}(\mathbb{B}_1^j(u), \mathbb{B}_2^j(u))  e_i^\ast (g(\rho^u_2(b))) \right )
\end{align*}

Note that \cite[Prop.~5.3]{BG18} asserts that in the context of $\gamma$-sheaves, the function $\mathcal{F}$ coincides with the usual bottleneck distance of persistence. Therefore, one can use \cite[Lemma 3]{poulenardcontinuousshapematching} to compute the quantity $\frac{\partial \mathcal{F} }{\partial b}(\mathbb{B}_1^j(u), \mathbb{B}_2^j(u))$ which is either worth $-1$,  $0$ or $1$. We will study more in-depth the optimization of $\mathcal{T}$ in a forthcoming work.

\subsection{\texorpdfstring{$\gamma$}{gamma}-sliced convolution distance}\label{subsec:gammasliced}

In this subsection, we introduce a version of the sliced convolution distance tailored for $\gamma$-compactly generated sheaves. For the sake of simplicity, we assume that $\V=\R^n$ and that it is endowed with the norm $\norm{\cdot}_\infty$ and that $\gamma=(-\infty,0]^n$.
We endow $\V^\ast$ with the dual basis of the canonical basis of $\R^n$. We also equip $\V^\ast$ with its canonical scalar product. Then, the affine plane of $\V^\ast$ defined by the equation $\sum_{i=1}^n u_i=1$ is a Riemannian submanifold of $\V^\ast$. Hence, we equip it with its canonical Riemannian measure that we denote $du$. We define $\mathcal{Q}_\gamma= \{u \in \Int(\gamma^\circ) \mid  \opnorm{u}_\infty = 1\}$. In other words $\mathcal{Q}_\gamma=\{(u_1,\ldots,u_n) \in \R_{> 0}^n \mid \sum_{i=1}^n u_i=1 \}$. We consider the restriction of $du$ to $\mathcal{Q}_\gamma$ and set $\vol(\mathcal{Q}_\gamma)=\int_{Q_\gamma} du$. 

Let $F,G \in \Derb_{\gcg}(\cor_\V)$. It follows from Proposition \ref{prop:Up_gammacomp_Lip} that the map
\begin{equation*}
	\Upsilon_{F,G} \colon \Int(\gamma^\circ) \to \R, \quad u \mapsto\dist_\R(\reim{u}F,\reim{u}G)
\end{equation*}
is Lipschitz on $Q_\gamma$, hence measurable.

For $p \in \N^\ast$, we define the $p^\mathrm{th}$ $\gamma$-sliced convolution distance between $F$ and $G$ by
\begin{equation}\label{def:gammasliced}
\sliced_{\gamma,p}(F,G)=\frac{1}{\vol(\mathcal{Q}_\gamma)} \left( \int_\mathcal{Q_\gamma} \dist_\R(\reim{u}F,\reim{u}G)^p \, du \right)^{\frac{1}{p}}.
\end{equation}

\begin{proposition}
The application $\sliced_{\gamma,p} \colon  \Ob(\Derb_{\gcg}(\cor_\V)) \times \Ob(\Derb_{\gcg}(\cor_\V)) \to \R_+$ is a pseudo-distance and for $F, \, G \in \Derb_{\gcg}(\cor_{\V})$
\begin{equation*}
    \sliced_{\gamma,p}(F,G) \leq \dist_\V(F,G).
\end{equation*}
\end{proposition}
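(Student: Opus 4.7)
The plan is to verify that $\sliced_{\gamma,p}$ satisfies the axioms of a pseudo-distance and then derive the upper bound, essentially mirroring the argument for $\sliced_p$ in Subsection \ref{subsec:sliced}. Before anything, I note that thanks to Proposition \ref{prop:Up_gammacomp_Lip} the integrand $u \mapsto \Upsilon_{F,G}(u)^p$ is Lipschitz on the compact set $\mathcal{Q}_\gamma$, hence measurable and bounded, so the integral in \eqref{def:gammasliced} is well-defined.

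For the pseudo-distance axioms, non-negativity and symmetry follow immediately from the corresponding properties of $\dist_\R$. Vanishing on the diagonal is obvious since $\Upsilon_{F,F} \equiv 0$. For the triangle inequality, I would fix $F,G,H \in \Derb_{\gcg}(\cor_\V)$ and use the pointwise triangle inequality for $\dist_\R$ to get
\begin{equation*}
\Upsilon_{F,H}(u) \leq \Upsilon_{F,G}(u) + \Upsilon_{G,H}(u) \qquad \text{for all } u \in \mathcal{Q}_\gamma,
\end{equation*}
then apply Minkowski's inequality in $L^p(\mathcal{Q}_\gamma, du)$ to the non-negative functions $\Upsilon_{F,G}, \Upsilon_{G,H}$ to conclude $\sliced_{\gamma,p}(F,H) \leq \sliced_{\gamma,p}(F,G) + \sliced_{\gamma,p}(G,H)$.

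For the upper bound, the key observation is that with $\V = \R^n$ endowed with $\|\cdot\|_\infty$, the dual operator norm is $\opnorm{u}_\infty = \sum_i |u_i|$; hence every $u \in \mathcal{Q}_\gamma$ (which satisfies $u_i > 0$ and $\sum u_i = 1$) defines a $1$-Lipschitz map from $(\V, \|\cdot\|_\infty)$ to $(\R, |\cdot|)$. Theorem \ref{thm:lipstability} then yields the pointwise bound
\begin{equation*}
\Upsilon_{F,G}(u) = \dist_\R(\reim{u}F, \reim{u}G) \leq \opnorm{u}_\infty \cdot \dist_\V(F,G) = \dist_\V(F,G).
\end{equation*}
Integrating the $p$-th power of this inequality over $\mathcal{Q}_\gamma$ and taking $p$-th roots, the constant $1/\vol(\mathcal{Q}_\gamma)$ cancels and we obtain $\sliced_{\gamma,p}(F,G) \leq \dist_\V(F,G)$.

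There is no genuine obstacle here: the proof is a routine combination of Minkowski's inequality with the Lipschitz stability theorem. The only mildly delicate point is matching the norm conventions so that $\opnorm{u}_\infty = 1$ for $u \in \mathcal{Q}_\gamma$, which is why the setting is specialized to $\|\cdot\|_\infty$ on $\V$ and the affine simplex $\sum u_i = 1$ in $\V^\ast$.
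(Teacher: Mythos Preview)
Your proof is correct and follows the natural route that the paper leaves implicit (the paper states this proposition without proof, just as it calls the analogous $\sliced_p$ proposition ``clear''). One small slip: $\mathcal{Q}_\gamma = \{(u_1,\ldots,u_n) \in \R_{>0}^n \mid \sum_i u_i = 1\}$ is the \emph{open} simplex, hence not compact. This does not harm your argument, since $\Upsilon_{F,G}$ is Lipschitz on the bounded set $\mathcal{Q}_\gamma$ by Proposition~\ref{prop:Up_gammacomp_Lip} and therefore bounded there, and $\mathcal{Q}_\gamma$ has finite measure; so the integral remains well-defined and finite, and the rest of your reasoning goes through unchanged.
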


\begin{lemma}
Let $F \in \Derb_{\rc,\gcg}(\cor_{\V_\infty})$. If $\sliced_{\gamma,p}(F,0)=0$, then $F \simeq 0$.
\end{lemma}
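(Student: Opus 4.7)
The plan is to reduce this to Proposition \ref{prop:gammavanishproj}. The hypothesis $\sliced_{\gamma,p}(F,0)=0$ translates into $\int_{\mathcal{Q}_\gamma} \Upsilon_{F,0}(u)^p\,du = 0$, where $\Upsilon_{F,0}(u) = \dist_\R(\reim{u}F,0)$. By Proposition \ref{prop:Up_gammacomp_Lip}, $\Upsilon_{F,0}$ is Lipschitz (in particular continuous) on $\Int(\gamma^\circ)$, hence on $\mathcal{Q}_\gamma$, and it is non-negative. Therefore a continuous non-negative function whose $p$-th power has zero integral must vanish identically, so $\Upsilon_{F,0}(u)=0$ for every $u \in \mathcal{Q}_\gamma$.

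Next I would upgrade this pointwise vanishing of the distance to an actual isomorphism $\reim{u}F \simeq 0$. Since $F \in \Derb_{\rc}(\cor_{\V_\infty})$ and $u\colon \V \to \R$ is linear (so subanalytic up to infinity), Proposition \ref{prop:opconstructinf} ensures that $\reim{u}F \in \Derb_{\rc}(\cor_{\R_\infty})$. Then the closedness property of the convolution distance for constructible sheaves on $\R$, namely Theorem \ref{thm:closedis}, applied to $\dist_\R(\reim{u}F,0)=0$, yields $\reim{u}F \simeq 0$ for every $u \in \mathcal{Q}_\gamma$.

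It remains to extend this vanishing from $\mathcal{Q}_\gamma$ to all of $\Int(\gamma^\circ)$. For any $u \in \Int(\gamma^\circ) = \R_{>0}^n$, set $\lambda = \opnorm{u}_\infty = \sum_i u_i > 0$ and $u' = u/\lambda \in \mathcal{Q}_\gamma$. Writing $h_\lambda\colon \R \to \R$, $t\mapsto \lambda t$, we have $u = h_\lambda \circ u'$, whence
\begin{equation*}
\reim{u} F \simeq \reim{h_\lambda}\reim{u'} F \simeq 0.
\end{equation*}

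Finally, since $F$ is $\gamma$-compactly generated and $\reim{u}F \simeq 0$ for all $u \in \Int(\gamma^\circ)$, Proposition \ref{prop:gammavanishproj} gives $F \simeq 0$. There is no real obstacle here: the argument is a direct chaining of (i) continuity of $\Upsilon_{F,0}$, (ii) constructibility up to infinity of $\reim{u}F$, (iii) the closedness theorem on $\R$, and (iv) the vanishing criterion for $\gamma$-compactly generated sheaves; the only mild care needed is the scaling step identifying $\mathcal{Q}_\gamma$ as a cross-section of $\Int(\gamma^\circ)$.
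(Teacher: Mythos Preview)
Your proof is correct and follows essentially the same route as the paper. The paper's version is slightly more compressed: after obtaining $\Upsilon_{F,0}=0$ on $\mathcal{Q}_\gamma$ by continuity, it invokes Lemma \ref{lem:transaffine} for the scaling step (in place of your explicit $h_\lambda$ argument) and then concludes via Proposition \ref{prop:vanishingISMgamma}, which packages together exactly the two ingredients you unpack, namely Theorem \ref{thm:closedis} and Proposition \ref{prop:gammavanishproj}.
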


\begin{proof}
It follows from the hypothesis that the function $\Upsilon^\gamma_{F,0}=0$ almost everywhere on $\mathcal{Q}_\gamma$ and as it is continuous, $\Upsilon^\gamma_{F,0}=0$ on $Q_\gamma$. Lemma \ref{lem:transaffine} implies that $\Upsilon^\gamma_{F,0}=0$ on $\gamma^\circ$. Then, the result follows from Proposition \ref{prop:vanishingISMgamma}.  
\end{proof}

\subsection{Some aspects of the computation of the \texorpdfstring{$\gamma$}{gamma}-linear ISM and the \texorpdfstring{$\gamma$}{gamma}-sliced convolution distance}

As a proof of concept, we showcase here the results of our implementation of the estimation of the $\gamma$-linear ISM in the following context. We endow $\R^2$ with the cone $\gamma = (-\infty, 0]^2$ and the norm $\|\cdot\|_\infty$. Therefore, with $p_1$ and $p_2$ the coordinate projections, we have $Q_\gamma = \{p_t = t \cdot p_1 + (1-t) \cdot p_2 \mid t\in [0,1]\}$. 

We sampled $300$ points uniformly on a circle of radius $1$, to which we added a radial uniform noise in $[0, 0.1]$. We call this first dataset $X$. The second dataset $Y$ was obtained by using the same sampling process than for $X$, then appending to it $10$ outlier points drawn uniformly in $[-1,1]^2$.

\begin{figure}[h]
    \centering
    \includegraphics[width = 0.45\textwidth]{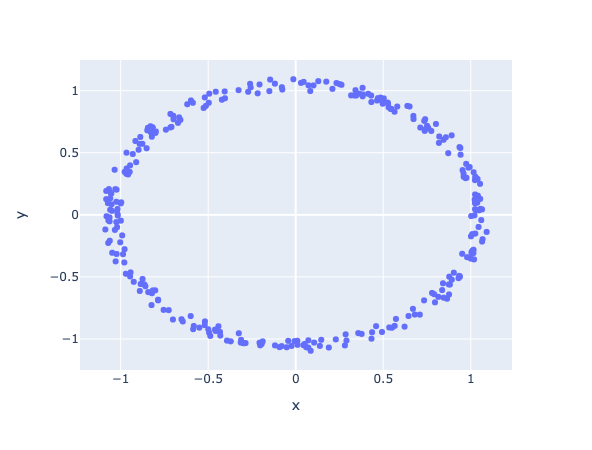}
    \includegraphics[width = 0.45\textwidth]{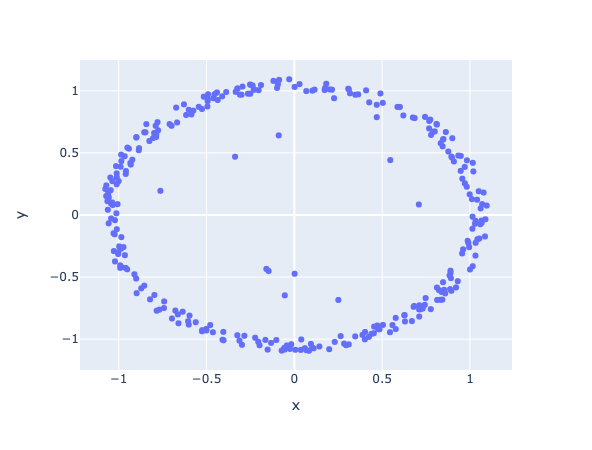}
    \caption{The datasets $X$ (left) and $Y$ (right)}
\end{figure}

We then performed a Kernel Density Estimation for $X$ and $Y$, outputting two functions $\kappa_X, \kappa_Y : [-1,1]^2 \to \R^+$, leading to two bi-filtrations of $[0,1]^2$: $f_X = (d_X, - \kappa_X)$ and $f_Y = (d_Y, - \kappa_Y)$, where $d_A$ denotes the distance function to the set $A$.  We computed the bottleneck distance between the projected barcodes $\mathbb{B}(\PH(p_t \circ f_X)$ and $\mathbb{B}(\PH(p_t \circ f_Y)$, as the parameter $t$ of the $1$-Lispchitz projection varies.  The $\gamma$-linear ISM is the maximum of these distances, while the $p$-sliced distance correspond to the $p$-th root of the integral of the $p$-th power of this function (See figure \eqref{fig:ISM-plot}).

\begin{figure}[h]
\centering
 \begin{subfigure}{0.85\textwidth}
    \centering
    \includegraphics[width =\textwidth]{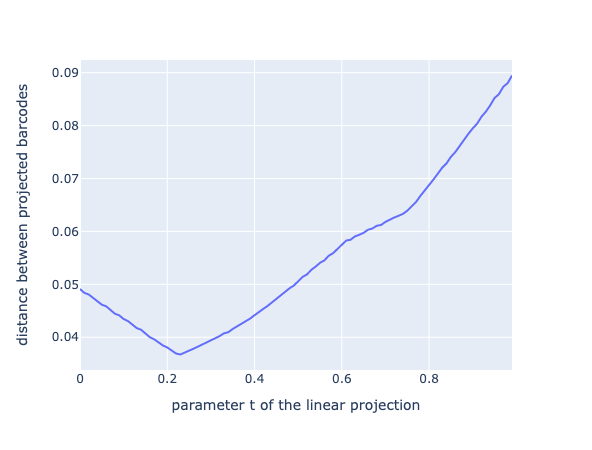}
    \caption{Bottleneck distance between projected barcodes, in function of the parameter of the $1$-Lipschitz projection. Note that $t=0$ corresponds to the $1$-filtration induced by the (co)density estimator on each pointcloud filtration, while $t = 1$ corresponds to the $1$-filtration associated to the distance function to the point cloud. }
    \label{fig:ISM-plot}
 \end{subfigure}
 
 \begin{subfigure}{0.85\textwidth}
    \centering
    \includegraphics[width =\textwidth]{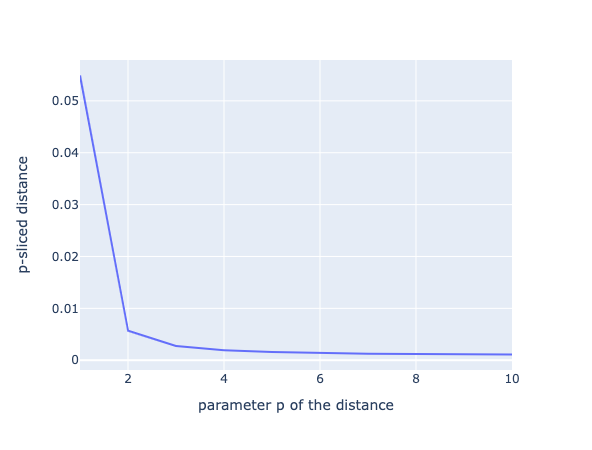}
    \caption{Value of $\mathcal{S}_{\gamma,p}(\PH(f_X), \PH(f_Y))$, as $p$ varies.}
 \end{subfigure}
 \caption{}
\end{figure}

\begin{table}[h]
    \centering
    \begin{tabular}{|l||c|c|c|c|c|}
         \hline
        distance type & $\mathcal{S}_{\gamma,1}$ & $\mathcal{S}_{\gamma,2}$ & $\mathcal{S}_{\gamma,3}$ & $\mathcal{S}_{\gamma,4}$ & $\delta_{\gamma^\circ}$ \\ \hline
         value & 0.055 & 0.0057 & 0.0027 & 0.0019 & 0.090 \\ \hline 
         
    \end{tabular}
    
    \caption{Values of the different distances between $\PH(f_X)$ and $\PH(f_Y)$}.
    \label{tab:value_distances}
\end{table}

\clearpage

\end{document}